%% LyX 2.0.1 created this file.  For more info, see http://www.lyx.org/.
%% Do not edit unless you really know what you are doing.
\documentclass[oneside,english,reqno]{amsart}
\usepackage[T1]{fontenc}
\usepackage[latin9]{inputenc}
\usepackage{color}
\usepackage{babel}
\usepackage{float}
\usepackage{mathrsfs}
\usepackage{amsthm}
\usepackage{amsbsy}
\usepackage{amstext}
\usepackage{amssymb}
\usepackage{graphicx}
\usepackage{esint}
\usepackage[all]{xy}
\PassOptionsToPackage{normalem}{ulem}
\usepackage{ulem}
\usepackage[unicode=true,pdfusetitle,
 bookmarks=true,bookmarksnumbered=false,bookmarksopen=false,
 breaklinks=false,pdfborder={0 0 1},backref=false,colorlinks=false]
 {hyperref}
\hypersetup{
 colorlinks=true,citecolor=blue,linkcolor=blue,linktocpage=true}
\usepackage{breakurl}

\makeatletter

%%%%%%%%%%%%%%%%%%%%%%%%%%%%%% LyX specific LaTeX commands.
%% Because html converters don't know tabularnewline
\providecommand{\tabularnewline}{\\}
%% A simple dot to overcome graphicx limitations
\newcommand{\lyxdot}{.}

%%%%%%%%%%%%%%%%%%%%%%%%%%%%%% Textclass specific LaTeX commands.
\numberwithin{equation}{section}
\numberwithin{figure}{section}
\usepackage{enumitem}		% customizable list environments
      % auxiliary length 
\theoremstyle{plain}
\newtheorem{thm}{\protect\theoremname}[section]
  \theoremstyle{plain}
  \newtheorem{lem}[thm]{\protect\lemmaname}
  \theoremstyle{remark}
  \newtheorem{rem}[thm]{\protect\remarkname}
  \theoremstyle{plain}
  \newtheorem{prop}[thm]{\protect\propositionname}
  \theoremstyle{plain}
  \newtheorem{cor}[thm]{\protect\corollaryname}
  \theoremstyle{definition}
  \newtheorem{example}[thm]{\protect\examplename}

%%%%%%%%%%%%%%%%%%%%%%%%%%%%%% User specified LaTeX commands.

\providecommand{\MR}[1]{}

\makeatother

  \providecommand{\corollaryname}{Corollary}
  \providecommand{\examplename}{Example}
  \providecommand{\lemmaname}{Lemma}
  \providecommand{\propositionname}{Proposition}
  \providecommand{\remarkname}{Remark}
\providecommand{\theoremname}{Theorem}

\begin{document}
\subjclass[2010]{47L60, 47A25, 47B25, 35F15, 42C10. }

\title{Translation representations and scattering by two intervals}

\author{Palle Jorgensen, Steen Pedersen, and Feng Tian}

\address{(Palle E.T. Jorgensen) Department of Mathematics, The University
of Iowa, Iowa City, IA 52242-1419, U.S.A. }

\email{jorgen@math.uiowa.edu }

\urladdr{http://www.math.uiowa.edu/\textasciitilde{}jorgen/}

\address{(Steen Pedersen) Department of Mathematics, Wright State University,
Dayton, OH 45435, U.S.A. }

\email{steen@math.wright.edu }

\urladdr{http://www.wright.edu/\textasciitilde{}steen.pedersen/}

\address{(Feng Tian) Department of Mathematics, Wright State University, Dayton,
OH 45435, U.S.A. }

\email{feng.tian@wright.edu }

\urladdr{http://www.wright.edu/\textasciitilde{}feng.tian/}

\keywords{Unbounded operators, deficiency-indices, Hilbert space, reproducing
kernels, boundary values, unitary one-parameter group, generalized
eigenfunctions, direct integral, multiplicity, scattering theory,
obstacle scattering, quantum states, quantum-tunneling, Lax-Phillips,
exterior domain, translation representation, spectral representation,
spectral transforms, scattering operator, Poisson-kernel, SU(2), Dirac
comb.}

\maketitle
\begin{center}
To the memory of William B. Arveson.
\par\end{center}
\begin{abstract}
Studying unitary one-parameter groups in Hilbert space $(U(t),\mathscr{H})$,
we show that a model for obstacle scattering can be built, up to unitary
equivalence, with the use of translation representations for $L^{2}$-functions
in the complement of two finite and disjoint intervals. 

The model encompasses a family of systems $(U(t),\mathscr{H})$. For
each, we obtain a detailed spectral representation, and we compute
the scattering operator, and scattering matrix. We illustrate our
results in the Lax-Phillips model where $(U(t),\mathscr{H})$ represents
an acoustic wave equation in an exterior domain; and in quantum tunneling
for dynamics of quantum states. 
\end{abstract}
\tableofcontents{}

\section{Introduction\label{sec:Intro}}

For a number of problems in analysis, one is faced with a unitary
one-parameter group acting on a Hilbert space. In such a problem,
if an energy form is preserved, this allows one to create a Hilbert
space $\mathscr{H}$, and then to study how states change, as a function
of time, via a one-parameter group of unitary operators $U(t)$ acting
in $\mathscr{H}$. Here $t$ is representing time.

For the study of scattering theory, Lax and Phillips suggested in
\cite{LP68} that one looks for two unitarily equivalent versions
of $U(t)$. For the acoustic wave equation, for example, with scattering
around a finite obstacle, Lax and Phillips proved that, in each of
these two representations, the equivalent unitary one-parameter group
may be taken to be a \textquotedblleft{}copy of\textquotedblright{}
the group of translations of $L^{2}$-functions on the real line $\mathbb{R}$,
but functions taking values in a fixed Hilbert space $\mathscr{M}$.
By \textquotedblleft{}a copy\textquotedblright{} we mean a one-parameter
group which in unitarily equivalent to $U(t)$. As a result, one gets
two isometric transforms from $\mathscr{H}$ into $L^{2}(\mathbb{R},\mathscr{M})$. 

The two representations are called \textquotedblleft{}translation
representations;\textquotedblright{} one incoming, and the other outgoing.
It is known that the same idea is applicable to certain instances
of dynamics of quantum states governed by a Schrödinger equation.
In the Lax-Phillips model, given, as above, a pair $(\mathscr{H},U(t))$,
Hilbert space, and unitary one-parameter group, one looks for two
closed subspaces $D_{in}$ (incoming states) and $D_{out}$ (outgoing
states) in $\mathscr{H}$. Incoming refers to \textquotedblleft{}before
the obstacle;\textquotedblright{} and outgoing, after. On the incoming
states $f_{in}$, $U(t)$ acts by translation to the left, so acting
before the \textquotedblleft{}obstacle,\textquotedblright{} while
the outgoing states $f_{out}$, $U(t)$ acts by translation to the
right. A scattering operator $S$ will act between the two subspaces,
$Sf_{in}=f_{out}$, sending $f_{in}$ into $f_{out}$. 

A second source of motivation for our analysis of exterior problems
derives from recent work on exterior dynamical systems; now extensively
studied under the heading \textquotedblleft{}outer billiard,\textquotedblright{}
or dual billiard, or anti-billiard; see e.g., \cite{Sc11,Sc9}. Unlike
billiard \cite{Mo08}, the \textquotedblleft{}outer\textquotedblright{}
game is played outside of the table (a convex domain). The role of
unitary operators in Hilbert space is supported by a theorem of Moser
which asserts that the outer billiard map is area-preserving.

Now, in realistic models, detailed properties of an obstacle are often
difficult to come by, and it is therefore useful to work through some
idealized models for obstacle. In the simplest such models, for example
the complement of two bounded disjoint intervals in $\mathbb{R}$,
one can rephrase the problem in the language of von Neumann\textquoteright{}s
deficiency indices, and deficiency subspaces, see \cite{vNeu32,DS88b}
and Section \ref{sec:P} below.

This is the focus of our present analysis, and it covers such examples
from quantum mechanics as quantum tunneling. Now, as above, fix two
bounded intervals $I_{1}$ and $I_{2}$, and let $\Omega$ denote
the complement, i.e., $\Omega=\mathbb{R}\backslash(I_{1}\cup I_{2})$.
So $\Omega$ has one bounded component, and two unbounded. Since translation
of $L^{2}$-functions is generated by the derivative operator $D$,
it is natural to study $D$ as a skew-symmetric operator with domain
dense in $L^{2}(\Omega)$ consisting of functions $f$ such that $f,Df\in L^{2}(\Omega)$,
and $f$ vanishes on the four boundary points. This is called the
\emph{minimal operator}. The corresponding adjoint operator is the
maximal one; see Remark \ref{rmk:Pmin} and \cite{FeJoPed1,DS88b}.

A degenerate instance of this is when $\Omega$ is instead the complement
of 2 points. In both cases, the minimal operator $D$ will have deficiency
indices $(2,2)$. Using our analysis from \cite{FeJoPed1}, one sees
that we then get all the skew-selfadjoint extensions of $D=\frac{d}{dx}$
indexed by the group $U(2)$ of all unitary $2\times2$ matrices.
This can be done such that, for every $B$ in $U(2)$, we realize
a corresponding skew-selfadjoint boundary conditions (bc-B), and therefore
a unitary one-parameter group $U_{B}(t)$ acting on $L^{2}(\Omega)$.
In our paper we find the scattering theory, as well as the spectral
theory, of each of these unitary one-parameter groups. 

For each $U_{B}(t)$ we find a system of generalized eigenfunctions.
They are determined by three functions $a_{B}$, $b_{B}$, and $c_{B}$,
one for each of the three connected components of $\Omega$.

\subsection{Overview}

We undertake a systematic study of interconnections between geometry
and spectrum for a family of selfadjoint operator extensions indexed
by two things: by (i) the prescribed configuration of the two intervals,
and by (ii) the von Neumann parameters (see (\ref{eq:vN1})). This
turns out to be subtle, and we show in detail how variations in both
(i) and (ii) translate into explicit spectral properties for the extension
operators. Indeed, for each choice in (i) , i.e., relative length
of the two intervals, we have a Hermitian operator with deficiency
indices $(2,2).$ Our main theme is spectral theory of the corresponding
family of $(2,2)$-selfadjoint extension operators. 

In section \ref{sec:P}, we introduce some tools, reproducing kernels
and von Neumann deficiency indices, for dealing with the main setting:
A direct integral representation of the boundary value problem. The
selfadjoint realizations correspond to a family of unitary one-parameter
groups, each one generated by skew-selfadjoint extension of a minimal
first order differential operator in open and unbounded subset $\Omega$
of $\mathbb{R}$.

A key point here is that the unitary one-parameter groups are parametrized
by one of the unitary matrix groups $U(n)$. Here, the number $n$
is related to $\Omega$ as follows: $\Omega$ has two unbounded components,
and $n-1$ bounded components.

Section \ref{sec:sp} contains detailed computations of spectral data
for unitary one parameter groups $U_{B}(t)$ acting in $L^{2}(\Omega)$,
indexed by $B$ in $U(n)$: An explicit presentation of the generalized
eigenfunction direct-integral presentation. The essential points in
our analysis are revealed in the case $n=2$, and we therefore present
the details for $U(2)$. We show that the measure in the direct integral
decomposition of $U_{B}(t)$ is of the form $\sigma_{B}(d\lambda)=P_{B}(\lambda)d\lambda$
with periodic density, and where, in each period-interval, $P_{B}$
is a Poisson kernel depending on $B$ in $U(2)$. We further find
that the cases of embedded point-spectrum (Dirac combs) arise as a
limit taking place in the group $U(2)$.

Within each section, the results are illustrated with applications
from physics and from harmonic analysis.

Sections \ref{sec:UB} - \ref{sec:points} deal with scattering theory
for the unitary one-parameter groups $U_{B}(t)$. This is presented
in terms of time-delay operators, translation representations, and
Lax-Phillips scattering operators. Closely connected to the scattering
operator is the Lax-Phillips contraction semigroup; it is computed
in section \ref{sec:ss}.

\subsection{Unbounded Operators}

We recall the following fundamental result of von Neumann on extensions
of Hermitian operators.

In order to make precise the boundary form for the cases (\ref{eq:BoundaryForm2})
- (\ref{eq:BoundaryForm3}) we need a:
\begin{lem}
\label{lem:cont}Let $\Omega\subset\mathbb{R}$ be as above. Suppose
$f$ and $f'=\frac{d}{dx}f$ (distribution derivative) are both in
$L^{2}(\Omega)$; then there is a continuous function $\tilde{f}$
on $\overline{\Omega}$ (closure) such that $f=\tilde{f}$ a.e. on
$\Omega$, and $\lim_{\left|x\right|\rightarrow\infty}\tilde{f}(x)=0$. \end{lem}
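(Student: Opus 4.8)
The plan is to argue one connected component of $\Omega$ at a time. Because $I_1$ and $I_2$ are bounded and disjoint, $\overline{\Omega}$ is a disjoint union of finitely many mutually separated closed intervals, namely two closed half-lines and one bounded closed interval. Hence a function that is continuous on each component is automatically continuous on all of $\overline{\Omega}$, and it suffices to produce, on each component $J$, an absolutely continuous representative $\tilde{f}$ of $f$, together with the decay $\tilde{f}(x)\to 0$ on the two half-lines. The two ingredients I will need are (a) the one-dimensional Sobolev fact that an $L^2$ function on an interval whose distribution derivative is again $L^2$ admits an absolutely continuous representative, and (b) an elementary decay argument at $\pm\infty$.

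For (a), fix a component $J$ and a base point $a\in J$. Since $f'\in L^2(J)\subset L^1_{loc}(J)$, the function $g(x):=\int_a^x f'(t)\,dt$ is well defined and absolutely continuous on $J$, with classical derivative $g'=f'$ a.e. The distribution $f-g$ then has vanishing derivative, so by the standard lemma that a distribution on an interval with zero derivative is (a.e.\ equal to) a constant, we get $f=g+c$ a.e.\ for some constant $c$. Setting $\tilde{f}:=g+c$ yields the desired absolutely continuous representative on $J$; in particular $\tilde{f}$ extends continuously to the finite endpoints of $\overline{J}$, which handles the bounded middle component completely.

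For (b), I would show $\tilde{f}(x)\to 0$ as $x\to\infty$ on a right half-line $J=[b,\infty)$, the left half-line being symmetric. Since $\tilde{f}$ is absolutely continuous, so is $\tilde{f}^2$, with $(\tilde{f}^2)'=2\tilde{f}f'$ a.e.\ (using $\tilde{f}'=f'$). As $\tilde{f},f'\in L^2(J)$, Cauchy--Schwarz gives $\tilde{f}f'\in L^1(J)$, so the fundamental theorem of calculus yields
\[
\tilde{f}(x)^2=\tilde{f}(b)^2+2\int_b^x \tilde{f}(t)\,f'(t)\,dt,
\]
and the right-hand side has a finite limit $L\ge 0$ as $x\to\infty$. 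If $L>0$ then $|\tilde{f}(x)|^2\ge L/2$ for all large $x$, contradicting $\tilde{f}\in L^2(J)$; hence $L=0$, i.e.\ $\tilde{f}(x)\to 0$.

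The routine pieces are the construction of $g$ and the Cauchy--Schwarz estimate. The main point to get right is the \emph{zero distributional derivative implies constant} step in (a): this is precisely what upgrades the merely distributional hypothesis on $f'$ into pointwise, absolutely continuous information about $f$. Everything else then follows by elementary integration, and combining the three components gives a single continuous $\tilde{f}$ on $\overline{\Omega}$ with $\lim_{|x|\to\infty}\tilde{f}(x)=0$.
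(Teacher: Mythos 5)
Your proposal is correct and takes essentially the same route as the paper: both build the continuous representative as an antiderivative of $f'\in L^{2}\subset L_{loc}^{1}$, and you simply make explicit what the paper compresses into ``it can readily be checked,'' namely the constancy theorem for distributions identifying $f$ with $g+c$ a.e.\ and the decay argument at $\pm\infty$. The only adjustment needed is that, since $f$ may be complex-valued here, the decay step should run through $\left|\tilde{f}\right|^{2}$ with $\bigl(|\tilde{f}|^{2}\bigr)'=2\Re\bigl(\overline{\tilde{f}}\,f'\bigr)$ (exactly the form the paper uses in the proof of Lemma \ref{lem:k-1}), after which your limit-and-contradiction argument goes through unchanged.
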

\begin{proof}
Let $p\in\mathbb{R}$ be a boundary point. Then for all $x\in\Omega$,
we have:
\begin{equation}
f(x)-f(p)=\int_{p}^{x}f'(y)dy.\label{eq:tmp-5}
\end{equation}
Indeed, $f'\in L_{loc}^{1}$ on account of the following Schwarz estimate
\[
\left|f(x)-f(p)\right|\leq\sqrt{\left|x-p\right|}\left\Vert f'\right\Vert _{L^{2}(\Omega)}.
\]
Since the RHS in (\ref{eq:tmp-5}) is well-defined, this serves to
make the LHS also meaningful. Now set
\[
\tilde{f}(x):=f(p)+\int_{p}^{x}f'(y)dy,
\]
and it can readily be checked that $\tilde{f}$ satisfies the conclusions
in the Lemma.\end{proof}
\begin{lem}[see e.g. \cite{DS88b}]
\label{lem:vN def-space}Let $L$ be a \textcolor{black}{closed}
Hermitian operator with dense domain $\mathscr{D}_{0}$ in a Hilbert
space. Set 
\begin{eqnarray}
\mathscr{D}_{\pm} & = & \{\psi_{\pm}\in dom(L^{*})\left.\right|L^{*}\psi_{\pm}=\pm i\psi_{\pm}\}\nonumber \\
\mathscr{C}(L) & = & \{U:\mathscr{D}_{+}\rightarrow\mathscr{D}_{-}\left.\right|U^{*}U=P_{\mathscr{D}_{+}},UU^{*}=P_{\mathscr{D}_{-}}\}\label{eq:vN1}
\end{eqnarray}
where $P_{\mathscr{D}_{\pm}}$ denote the respective projections.
Set 
\[
\mathscr{E}(L)=\{S\left.\right|L\subseteq S,S^{*}=S\}.
\]
Then there is a bijective correspondence between $\mathscr{C}(L)$
and $\mathscr{E}(L)$, given as follows: 

If $U\in\mathscr{C}(L)$, and let $L_{U}$ be the restriction of $L^{*}$
to 
\begin{equation}
\{\varphi_{0}+f_{+}+Uf_{+}\left.\right|\varphi_{0}\in\mathscr{D}_{0},f_{+}\in\mathscr{D}_{+}\}.\label{eq:vN2}
\end{equation}
Then $L_{U}\in\mathscr{E}(L)$, and conversely every $S\in\mathscr{E}(L)$
has the form $L_{U}$ for some $U\in\mathscr{C}(L)$. With $S\in\mathscr{E}(L)$,
take 
\begin{equation}
U:=(S-iI)(S+iI)^{-1}\left.\right|_{\mathscr{D}_{+}}\label{eq:vN3}
\end{equation}
and note that 
\begin{enumerate}
\item $U\in\mathscr{C}(L)$, and
\item $S=L_{U}$.
\end{enumerate}

Vectors $f$ in $dom(L^{*})$ admit a unique decomposition $f=\varphi_{0}+f_{+}+f_{-}$
where $\varphi_{0}\in dom(L)$, and $f_{\pm}\in\mathscr{D}_{\pm}$.
For the boundary-form $\mathbf{B}(\cdot,\cdot)$, we have
\begin{eqnarray*}
i\mathbf{B}(f,f) & = & \left\langle L^{*}f,f\right\rangle -\left\langle f,L^{*}f\right\rangle \\
 & = & \left\Vert f_{+}\right\Vert ^{2}-\left\Vert f_{-}\right\Vert ^{2}.
\end{eqnarray*}

\end{lem}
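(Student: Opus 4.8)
The plan is to run the entire argument through the Cayley transform, which trades the additive problem of symmetric extensions of $L$ for the multiplicative problem of unitary extensions of a partial isometry. Three things must be produced: uniqueness of the decomposition $f=\varphi_{0}+f_{+}+f_{-}$, the diagonal form of the boundary form $\mathbf{B}$, and the bijection $\mathscr{C}(L)\leftrightarrow\mathscr{E}(L)$ together with the reciprocity of (\ref{eq:vN2}) and (\ref{eq:vN3}). I would take them in that order, since the first two feed the third.

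First I would equip $dom(L^{*})$ with the graph inner product $\langle f,g\rangle_{*}:=\langle f,g\rangle+\langle L^{*}f,L^{*}g\rangle$. Because $L$ is closed, $\mathscr{D}_{0}=dom(L)$ is $\langle\cdot,\cdot\rangle_{*}$-closed, and a short computation shows that $g\perp_{*}\mathscr{D}_{0}$ is equivalent to $(L^{*}+iI)(L^{*}-iI)g=0$; this exhibits the graph-orthogonal complement of $\mathscr{D}_{0}$ as $\mathscr{D}_{+}\oplus\mathscr{D}_{-}$. The two deficiency spaces are themselves graph-orthogonal, since for $f_{\pm}\in\mathscr{D}_{\pm}$ one gets $\langle f_{+},f_{-}\rangle_{*}=\langle f_{+},f_{-}\rangle+\langle if_{+},-if_{-}\rangle=\langle f_{+},f_{-}\rangle-\langle f_{+},f_{-}\rangle=0$. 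This yields the $\langle\cdot,\cdot\rangle_{*}$-orthogonal direct sum $dom(L^{*})=\mathscr{D}_{0}\oplus\mathscr{D}_{+}\oplus\mathscr{D}_{-}$ and hence the asserted uniqueness. For the boundary form I would simply expand $\langle L^{*}f,f\rangle-\langle f,L^{*}f\rangle$ for $f=\varphi_{0}+f_{+}+f_{-}$, using $L^{*}f_{\pm}=\pm if_{\pm}$ and the symmetry of $L$ on $\mathscr{D}_{0}$: symmetry annihilates the diagonal $\varphi_{0}$ contribution, the identity $\langle L\varphi_{0},f_{\pm}\rangle=\langle\varphi_{0},L^{*}f_{\pm}\rangle$ annihilates the mixed $\varphi_{0}$ terms, the remaining cross terms between $\mathscr{D}_{+}$ and $\mathscr{D}_{-}$ cancel between the two halves, and what survives is a multiple of $\|f_{+}\|^{2}-\|f_{-}\|^{2}$, which is the stated identity.

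For the bijection I would use the Cayley transform $V:=(L-iI)(L+iI)^{-1}$, which (since $\mathscr{D}_{\pm}=\ker(L^{*}\mp iI)=\mathrm{ran}(L\pm iI)^{\perp}$ and $L$ is closed) is a well-defined isometry from $\mathrm{ran}(L+iI)=\mathscr{D}_{+}^{\perp}$ onto $\mathrm{ran}(L-iI)=\mathscr{D}_{-}^{\perp}$. The key structural fact is that symmetric extensions $S$ with $L\subseteq S\subseteq L^{*}$ correspond bijectively to isometric extensions of $V$, and the self-adjoint ones to unitary operators $W$ on all of $\mathscr{H}$ extending $V$, via the inverse Cayley transform $S=i(I+W)(I-W)^{-1}$. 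Since any such $W$ must agree with $V$ on $\mathscr{D}_{+}^{\perp}$, it is determined by its restriction to $\mathscr{D}_{+}$, which must be a unitary $U:\mathscr{D}_{+}\to\mathscr{D}_{-}$; this restriction set is exactly $\mathscr{C}(L)$. Unwinding $S=i(I+W)(I-W)^{-1}$ then identifies $dom(S)$ with the set in (\ref{eq:vN2}) and shows $S=L_{U}$, while taking the Cayley transform of a given $S\in\mathscr{E}(L)$ and restricting to $\mathscr{D}_{+}$ recovers $U$ as in (\ref{eq:vN3}).

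The step I expect to be the real work is the two-way verification just sketched: that $L_{U}$ built from (\ref{eq:vN2}) is not merely symmetric but self-adjoint, and that (\ref{eq:vN2}) and (\ref{eq:vN3}) are genuinely inverse to each other. Symmetry of $L_{U}$ is immediate, since on the domain (\ref{eq:vN2}) the boundary form equals $\|f_{+}\|^{2}-\|Uf_{+}\|^{2}=0$ because $U$ is isometric; self-adjointness is the nontrivial half, and it is precisely here that surjectivity of $U$ onto $\mathscr{D}_{-}$ (equivalently, unitarity rather than mere isometry of $W$) is used, guaranteeing that $L_{U}$ has vanishing deficiency indices and hence no proper symmetric extension. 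Tracking the constants and domains carefully, so that the resolvent description (\ref{eq:vN3}) and the domain description (\ref{eq:vN2}) land on the same operator, is the only genuinely delicate bookkeeping; everything else is a direct consequence of the Cayley correspondence.
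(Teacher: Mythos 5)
The paper offers no proof of this lemma to compare against: it is stated with the citation ``see e.g.\ \cite{DS88b}'' and used as a black box. Your proposal is the classical von Neumann/Cayley-transform argument that such references give, and it is correct in outline. The three ingredients you assemble are exactly the standard ones: the graph-inner-product decomposition $dom(L^{*})=\mathscr{D}_{0}\oplus\mathscr{D}_{+}\oplus\mathscr{D}_{-}$ (your identification of the graph-orthogonal complement of $\mathscr{D}_{0}$ with the kernel of $(L^{*}+iI)(L^{*}-iI)$ is right; to finish that step, split the kernel via $g=\frac{1}{2i}\left[(L^{*}+iI)g-(L^{*}-iI)g\right]$, whose two terms lie in $\mathscr{D}_{+}$ and $\mathscr{D}_{-}$ respectively), the boundary-form expansion, and the correspondence between self-adjoint extensions of $L$ and unitary extensions $W$ of the isometry $V=(L-iI)(L+iI)^{-1}:\mathrm{ran}(L+iI)\to\mathrm{ran}(L-iI)$, with self-adjointness of $L_{U}$ coming from the fact that its Cayley transform $W$ is onto, so $\mathrm{ran}(L_{U}\pm iI)=\mathscr{H}$.

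Two caveats, both traceable to the statement rather than to your strategy. First, the printed identity $\left\langle L^{*}f,f\right\rangle -\left\langle f,L^{*}f\right\rangle =\Vert f_{+}\Vert^{2}-\Vert f_{-}\Vert^{2}$ cannot hold literally, since the left-hand side is purely imaginary; the expansion you describe yields $2i\left(\Vert f_{+}\Vert^{2}-\Vert f_{-}\Vert^{2}\right)$, so your hedge ``a multiple of'' is the correct formulation and the constant must be absorbed into the normalization of $\mathbf{B}$. Second, your closing worry about ``delicate bookkeeping'' is well founded, because (\ref{eq:vN2}) and (\ref{eq:vN3}) as printed are not mutually consistent: with $U:=(S-iI)(S+iI)^{-1}\big|_{\mathscr{D}_{+}}$, unwinding $dom(S)=\mathrm{ran}(I-W)$ gives $dom(S)=\{\varphi_{0}+f_{+}-Uf_{+}\}$, so the domain formula (\ref{eq:vN2}) with its plus sign corresponds to $-U$, not to $U$. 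Both sign conventions give bijections $\mathscr{C}(L)\leftrightarrow\mathscr{E}(L)$ (they differ by the involution $U\mapsto-U$ of $\mathscr{C}(L)$), but your claim that the unwinding lands on (\ref{eq:vN2}) \emph{and} that (\ref{eq:vN3}) recovers the same $U$ cannot both be literally true; a complete write-up must fix one sign and use it in both formulas.
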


\subsection{Prior Literature}

There are related investigations in the literature on spectrum and
deficiency indices. For the case of indices $(1,1)$, see for example
\cite{ST10,Ma11}. For a study of odd-order operators, see \cite{BH08}.
Operators of even order in a single interval are studied in \cite{Oro05}.
The paper \cite{BV05} studies matching interface conditions in connection
with deficiency indices $(m,m)$. Dirac operators are studied in \cite{Sak97}.
For the theory of selfadjoint extensions operators, and their spectra,
see \cite{Smu74,Gil72}, for the theory; and \cite{Naz08,VGT08,Vas07,Sad06,Mik04,Min04}
for recent papers with applications. For applications to other problems
in physics, see e.g., \cite{AH11,PoRa76,Ba49,MK08}. For related problems
regarding spectral resolutions, but for fractal measures, see e.g.,
\cite{DJ07,DJ09,DJ11}.

\section{Momentum Operators\label{sec:P}}

By momentum operator $P$ we mean the generator for the group of translations
in $L^{2}(-\infty,\infty)$, see (\ref{eq:MomentumOperator}) below.
There are several reasons for taking a closer look at restrictions
of the operator $P.$ In our analysis, we study spectral theory determined
by the complement of two bounded disjoint intervals, i.e., the union
of one bounded component and two unbounded components (details below.)
Our motivation derives from quantum theory (see section \ref{sec:scatter}),
and from the study of spectral pairs in geometric analysis; see e.g.,
\cite{DJ07}, \cite{Fu74}, \cite{JP99}, \cite{Laba01}, and \cite{PW01}.
In our model, we examine how the spectral theory depends on both variations
in the choice of the two intervals, as well as on variations in the
von Neumann parameters.

Granted that in many applications, one is faced with vastly more complicated
data and operators; nonetheless, it is often the case that the more
subtle situations will be unitarily equivalent to a suitable model
involving $P$. This is reflected for example in the conclusion of
the Stone-von Neumann uniqueness theorem: The Weyl relations for quantum
systems with a finite number of degree of freedom are unitarily equivalent
to the standard model with momentum and position operators $P$ and
$Q$. For details, see e.g., \cite{Jo81}.

\subsection{The boundary form, spectrum, and the group $U(2)$\label{sub:bform}}

Since the problem is essentially invariant under affine transformations
we may assume the two intervals are $I_{1}=[0,1]$ and $I_{2}=[\alpha,\beta]$,
$\alpha>1$; and the exterior domain 
\begin{equation}
\Omega:=I_{-}\cup I_{0}\cup I_{+}\label{eq:Omega}
\end{equation}
consists of three components 
\begin{equation}
I_{-}:=(-\infty,0),\: I_{0}:=(1,\alpha),\: I_{+}:=(\beta,\infty).\label{eq:Omega-1}
\end{equation}

Let $L^{2}(\Omega)$ be the Hilbert space with respect to the inner
product
\begin{equation}
\langle f\mid g\rangle:=\int_{I_{-}}f\overline{g}+\int_{I_{0}}f\overline{g}+\int_{I_{+}}f\overline{g}\label{eq:InnerProduct}
\end{equation}

The \emph{maximal momentum operator} is 
\begin{equation}
P:=\frac{1}{i2\pi}\frac{d}{dt}\label{eq:MomentumOperator}
\end{equation}
with domain $\mathscr{D}(P)$ equal to the set of absolutely continuous
functions on $\Omega$ where both $f$ and $Pf$ are square-integrable. 

The \emph{boundary form} associated with $P$ is defined as the form
\begin{equation}
\mathbf{B}(f,g):=\langle Pf\mid g\rangle-\langle f\mid Pg\rangle\label{eq:BoundaryForm1}
\end{equation}
on $\mathscr{D}(P).$ Clearly, 
\begin{equation}
\mathbf{B}(f,g)=f(1)\overline{g(1)}-f(0)\overline{g(0)}+f(\beta)\overline{g(\beta)}-f(\alpha)\overline{g(\alpha)}.\label{eq:BoundaryForm2}
\end{equation}
For $f\in\mathscr{D}\left(P\right),$ let $\rho_{1}(f):=\left(f(1),f(\beta)\right)$
and $\rho_{2}(f):=\left(f(0),f(\alpha)\right).$ Then 
\begin{equation}
\mathbf{B}(f,g)=\left\langle \rho_{1}(f)\mid\rho_{1}(g)\right\rangle -\left\langle \rho_{2}(f)\mid\rho_{2}(g)\right\rangle .\label{eq:BoundaryForm3}
\end{equation}
Hence $\left(\mathbb{C}^{2},\rho_{1},\rho_{2}\right)$ is a boundary
triple for $P.$ The set of selfadjoint restrictions of $P$ is parametrized
by the group $U(2)$ of all unitary $2\times2$ matrices, see e.g.,
\cite{dO09}. Explicitly, any unitary $2\times2$ matrix $B$ determines
a selfadjoint restriction $P_{B}$ of $P$ by setting 
\begin{equation}
\mathscr{D}\left(P_{B}\right):=\left\{ f\in\mathscr{D}\left(P\right)\mid B\rho_{1}(f)=\rho_{2}(f)\right\} .\label{eq:ExtensionDomain1}
\end{equation}
Conversely, every selfadjoint restriction of $P$ is obtained in this
manner. 

When $B\in U(2)$ is fixed, we will denote the corresponding selfadjoint
extension operator $P_{B}$. (For our parametrization of $U(2)$ see
(\ref{eq:2-by-2 Unitary}).)

In sections \ref{sec:P} and \ref{sec:sp} below we prove the following
theorem. 
\begin{thm}
\label{thm:UB}If $B\in U(2)$ has its parameter $w$ satisfying $0<w\leq1$,
then there is a system of bounded generalized eigenfunctions $\{\psi_{\lambda}^{(B)};\lambda\in\mathbb{R}\}$,
and a positive Borel function $F_{B}(\cdot)$ on $\mathbb{R}$ such
that the unitary one-parameter group $U_{B}(t)$ in $L^{2}(\Omega)$
generated by $P_{B}$ has the form
\begin{equation}
\left(U_{B}(t)f\right)(x)=\int_{\mathbb{R}}e_{\lambda}(-t)\left\langle \psi_{\lambda}^{(B)},f\right\rangle _{\Omega}\psi_{\lambda}^{(B)}(x)F_{B}(\lambda)d\lambda\label{eq:UB-3}
\end{equation}
for all $f\in L^{2}(\Omega)$, $x\in\Omega$, and $t\in\mathbb{R}$;
where
\[
\left\langle \psi_{\lambda}^{(B)},f\right\rangle _{\Omega}:=\int_{\Omega}\overline{\psi_{\lambda}^{(B)}(y)}f(y)dy.
\]

We further show that (when $w(B)>0$) the density function $F_{B}(\cdot)$
in (\ref{eq:UB-3}) is periodic in $\lambda$, and that, in each period,
$F_{B}(\cdot)$ is a Poisson kernel, determined from a specific action
of the group $U(2)$.
\end{thm}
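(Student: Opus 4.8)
The plan is to diagonalize the self-adjoint operator $P_{B}$ by exhibiting a complete family of bounded generalized eigenfunctions, and then to read off both (\ref{eq:UB-3}) and the spectral density $F_{B}$ from the resulting Plancherel relation. Since $P_{B}$ is a restriction of $P=\frac{1}{i2\pi}\frac{d}{dx}$, any solution of $P_{B}\psi_{\lambda}=\lambda\psi_{\lambda}$ must satisfy $\psi_{\lambda}'=i2\pi\lambda\psi_{\lambda}$ on each component, so $\psi_{\lambda}$ equals $a_{B}(\lambda)e_{\lambda}$, $b_{B}(\lambda)e_{\lambda}$, $c_{B}(\lambda)e_{\lambda}$ on $I_{-},I_{0},I_{+}$ respectively, with $e_{\lambda}(x):=e^{i2\pi\lambda x}$. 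Substituting the four endpoint values into the boundary condition $B\rho_{1}(\psi_{\lambda})=\rho_{2}(\psi_{\lambda})$ of (\ref{eq:ExtensionDomain1}) gives two linear equations in the three amplitudes $(a_{B},b_{B},c_{B})$; for each real $\lambda$ the solution space is one-dimensional (three unknowns, two equations), which is exactly why the direct integral in (\ref{eq:UB-3}) carries a single family $\psi_{\lambda}^{(B)}$ rather than two: the operator is genuinely first order. Eliminating, I obtain $b_{B}(\lambda)=\frac{B_{22}e_{\lambda}(\beta)}{e_{\lambda}(\alpha)-B_{21}e_{\lambda}(1)}\,c_{B}(\lambda)$, and I normalize by $c_{B}(\lambda)=1$. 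A short computation using unitarity of $B$ (which forces $|B_{12}|=|B_{21}|$ and $B_{12}=-(\det B)\overline{B_{21}}$) then shows $|a_{B}(\lambda)|\equiv1$, so $\psi_{\lambda}^{(B)}$ is a unit-modulus plane wave on each unbounded arm, and is bounded on all of $\Omega$ precisely when the cavity amplitude $b_{B}$ is bounded.

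Next I would prove that $(V_{B}f)(\lambda):=\langle\psi_{\lambda}^{(B)},f\rangle_{\Omega}$ is, up to the weight $F_{B}$, a unitary spectral representation diagonalizing $P_{B}$. The cleanest route is through the resolvent: solve $(P-z)G_{z}=\delta$ on $\Omega$ subject to (\ref{eq:ExtensionDomain1}) and the $L^{2}$-decay dictated by the sign of $\operatorname{Im}z$, and then apply Stone's formula, extracting $dE(\lambda)$ as the boundary jump $\frac{1}{2\pi i}\big(G_{\lambda+i0}-G_{\lambda-i0}\big)$. This jump is a rank-one kernel $\psi_{\lambda}^{(B)}(x)\,\overline{\psi_{\lambda}^{(B)}(y)}\,F_{B}(\lambda)$, which simultaneously establishes completeness of $\{\psi_{\lambda}^{(B)}\}$ and identifies $F_{B}$. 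Feeding the resolution $P_{B}=\int\lambda\,dE(\lambda)$ into the functional calculus, with $U_{B}(t)\psi_{\lambda}^{(B)}=e_{\lambda}(-t)\psi_{\lambda}^{(B)}$, yields (\ref{eq:UB-3}) at once.

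For the second assertion I would carry the explicit amplitudes through the preceding step: the density is a constant multiple of $|b_{B}(\lambda)|^{2}=\dfrac{|B_{22}|^{2}}{\,|e_{\lambda}(\alpha-1)-B_{21}|^{2}\,}$. Writing $r:=|B_{21}|$ and $\theta:=2\pi(\alpha-1)\lambda-\arg B_{21}$, the denominator becomes $1-2r\cos\theta+r^{2}$, while unitarity gives $|B_{22}|^{2}=1-|B_{21}|^{2}=1-r^{2}$; hence $F_{B}(\lambda)=\dfrac{1-r^{2}}{1-2r\cos\theta+r^{2}}$ is exactly the Poisson kernel for the disk. As $\theta$ is affine in $\lambda$, $F_{B}$ is periodic with period $1/(\alpha-1)$, the reciprocal length of the bounded component $I_{0}$. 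Finally, in the parametrization (\ref{eq:2-by-2 Unitary}) the relevant von Neumann parameter is $w=|B_{22}|$, so that $1-r^{2}=w^{2}$ and $r=\sqrt{1-w^{2}}$: the hypothesis $0<w\le1$ is precisely the condition $0\le r<1$ that keeps $F_{B}$ a bounded Poisson kernel and keeps $b_{B}$, hence $\psi_{\lambda}^{(B)}$, bounded, while the degenerate limit $w\to0$ drives $r\to1$ and produces the embedded point spectrum (the Dirac combs).

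The main obstacle I anticipate is the Plancherel step. Because the $\psi_{\lambda}^{(B)}$ are not square-integrable, the isometry and inversion of $V_{B}$ cannot be read off from a formal $\delta$-normalization of the eigenfunctions — the principal-value cross terms between the two half-lines obscure the true density — and must instead be obtained from a genuine limiting argument via the resolvent, with care taken over the correct decaying branch of $G_{z}$ on each unbounded arm and over the contribution of the finite cavity $I_{0}$. It is exactly this computation that forces the normalization and pins down $F_{B}$, and recognizing the output as the Poisson kernel, together with tracking how the entries of $B$ enter through $(r,\theta)$ and the parameter $w$, is the second delicate point.
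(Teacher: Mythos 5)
Your strategy (explicit amplitudes, then a resolvent/Stone-formula Plancherel argument) is a legitimate route, and it is genuinely different from the paper's, which proves unitarity of the spectral transform by direct Fourier-multiplier computations on the three components together with the vanishing of cross terms (Theorem \ref{thm:sp-w-1} and Section \ref{sec:vc}). However, there is a genuine inconsistency at exactly the step on which the second assertion of the theorem rests: your normalization of $\psi_{\lambda}^{(B)}$ and your identification of the density $F_{B}$ are incompatible. You set $c_{B}(\lambda)\equiv1$ and correctly deduce $|a_{B}(\lambda)|\equiv1$, so your eigenfunctions have unit modulus on both unbounded arms and modulus $|b_{B}(\lambda)|=m_{B}^{-1}(\lambda)$ on the cavity $I_{0}$. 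With that normalization, Stone's formula cannot produce $F_{B}\propto|b_{B}|^{2}$: the $\delta$-singularity of $\left\langle \psi_{\lambda},\psi_{\lambda'}\right\rangle _{\Omega}$ (equivalently, the jump of the Green's kernel across the real axis) is generated only by the two half-lines, since the contribution of the bounded interval $I_{0}$ is an entire, locally bounded function of $\lambda-\lambda'$; the coefficient of the singular part is
\begin{equation*}
\tfrac{1}{2}\left(\left|a_{B}(\lambda)\right|^{2}+\left|c_{B}(\lambda)\right|^{2}\right)=1,
\end{equation*}
so the density attached to \emph{your} eigenfunctions is $F_{B}\equiv1$ (Lebesgue measure); the Poisson kernel sits inside $|\psi_{\lambda}^{(B)}|^{2}$ on $I_{0}$, not in the density. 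You can see the contradiction concretely: take $f$ supported in $I_{-}$ and $x,\,x-t\in I_{-}$ with $t>0$ small, so that $\left(U_{B}(t)f\right)(x)=f(x-t)$ (Corollary \ref{cor:UB(t)}); inserting your pair $\left(\psi^{(B)},\,F_{B}=|b_{B}|^{2}\right)$ into (\ref{eq:UB-3}) instead gives $\left(m_{B}^{-2}\hat{f}\right)^{\vee}(x-t)$, which differs from $f(x-t)$ whenever $0<w<1$.

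The repair is to use the opposite normalization, which is the paper's: set $b_{B}\equiv1$ on the cavity, as in (\ref{eq:eigen-w}). Then $|a_{B}(\lambda)|=|c_{B}(\lambda)|=m_{B}(\lambda)$ as in (\ref{eq:m(lambda)}), and since rescaling eigenfunctions by $k(\lambda)$ rescales the density by $|k(\lambda)|^{-2}$, passing from your pair to the paper's multiplies the density $1$ by $m_{B}^{-2}$; thus $F_{B}=m_{B}^{-2}$, which is periodic of period $(\alpha-1)^{-1}$ and is the Poisson kernel with radial parameter $\sqrt{1-w^{2}}$ (Lemma \ref{lem:m} and Corollary \ref{cor:poisson}). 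Your algebra for $|b_{B}|^{2}$ is in fact correct and coincides with $m_{B}^{-2}$, so after this renormalization it is precisely the density; but as written, the proposal establishes (\ref{eq:UB-3}) for one pair and asserts the Poisson-kernel property for a different, incompatible pair. A secondary caution: the assertion that ``three unknowns, two equations'' forces a one-dimensional solution space presumes the boundary system has full rank; this is exactly what fails at $w=0$ on the progression $\frac{\psi}{\alpha-1}+\frac{1}{\alpha-1}\mathbb{Z}$ (Lemma \ref{lem:multiplicity-w=00003D0}), so the multiplicity-one claim must explicitly invoke $w>0$, as the paper's Lemma \ref{lem:multiplicity-0<w} does.
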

In section \ref{sec:P}, we prepare with some technical lemmas; and
in section \ref{sec:sp} we compute explicit formulas for the expansion
(\ref{eq:UB-3}) above, and we discuss their physical significance.

In particular, we note that when $w>0$, there are no bound-state
contributions to the expansion (\ref{eq:UB-3}). By contrast if $w=0$,
there are bound-states. This entails embedded point-spectrum. In all
cases the point-spectrum has the form $\frac{1}{l}\mathbb{Z}$ where
$l=\alpha-1$ is the length of the interval $I_{0}$.

\subsection{Reproducing Kernel Hilbert Space}

In this section we introduce a certain reproducing kernel Hilbert
space $\mathscr{H}_{1}(\Omega)$; a first order Sobolev space, hence
the subscript 1. Its reproducing kernel is found (Lemma \ref{lem:k-1}),
and it serves two purposes: First, we show that each of the unbounded
selfadjoint extension operators $P_{B}$, defined from (\ref{eq:ExtensionDomain1})
in sect \ref{sub:bform}, have their graphs naturally embedded in
$\mathscr{H}_{1}(\Omega)$. Secondly, for each $P_{B}$, the reproducing
kernel for $\mathscr{H}_{1}(\Omega)$ helps us pin down the generalized
eigenfunctions for $P_{B}$. The arguments for this are based in turn
on Lemma \ref{lem:vN def-space} and the boundary form $\boldsymbol{B}$
from (\ref{eq:BoundaryForm1}) and (\ref{eq:BoundaryForm2}).
\begin{lem}
\label{lem:k-1}Let 
\begin{equation}
\Omega=I_{-}\cup I_{0}\cup I_{+}\label{eq:Omega-2}
\end{equation}
be as above, and $L^{2}(\Omega)$ be the Hilbert space of all $L^{2}$-functions
on $\Omega$ with inner product $\left\langle \cdot,\cdot\right\rangle _{\Omega}$
and norm $\left\Vert \cdot\right\Vert _{\Omega}$. Set 
\[
\mathscr{H}_{1}(\Omega)=\{f\in L^{2}(\Omega)\left|\right.Df=f'\in L^{2}(\Omega)\};
\]
then $\mathscr{H}_{1}(\Omega)$ is a reproducing kernel Hilbert space
of functions on $\overline{\Omega}$ (closure). \end{lem}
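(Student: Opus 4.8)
The plan is to equip $\mathscr{H}_{1}(\Omega)$ with the graph inner product $\langle f,g\rangle_{1}:=\langle f,g\rangle_{\Omega}+\langle Df,Dg\rangle_{\Omega}$ and to verify the two defining properties of a reproducing kernel Hilbert space: that $(\mathscr{H}_{1}(\Omega),\langle\cdot,\cdot\rangle_{1})$ is complete, and that for every $x\in\overline{\Omega}$ the point-evaluation functional $f\mapsto\tilde{f}(x)$ (where $\tilde{f}$ is the continuous representative supplied by Lemma \ref{lem:cont}) is bounded. Once these are in hand, the existence of a reproducing kernel follows from the Riesz representation theorem.

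First, completeness. Since $D$ denotes the distributional derivative, it is a closed operator on $L^{2}(\Omega)$: if $f_{n}\to f$ and $Df_{n}\to g$ in $L^{2}(\Omega)$, then passing to distributions forces $g=Df$. Hence the graph norm is complete and $\mathscr{H}_{1}(\Omega)$ is a Hilbert space.

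The substantive step is the bound on point evaluations. By Lemma \ref{lem:cont} every $f\in\mathscr{H}_{1}(\Omega)$ has a continuous representative $\tilde{f}$ on $\overline{\Omega}$ that vanishes at $\pm\infty$. On the two unbounded components $I_{-}$ and $I_{+}$ I would use the vanishing at infinity as an anchor: integrating $\frac{d}{dy}|\tilde{f}(y)|^{2}=2\,\mathrm{Re}\bigl(\overline{f(y)}\,f'(y)\bigr)$ over the ray joining $x$ to the infinite endpoint and applying $2|f||f'|\le|f|^{2}+|f'|^{2}$ gives $|\tilde{f}(x)|^{2}\le\|f\|_{1}^{2}$. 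The delicate case is the bounded component $I_{0}=(1,\alpha)$, where there is no endpoint at which $\tilde{f}$ is known to vanish. Here I would fix a point $c\in I_{0}$ at which $|\tilde{f}(c)|^{2}$ equals the average $|I_{0}|^{-1}\int_{I_{0}}|f|^{2}$ (such a $c$ exists by the mean-value theorem for the continuous function $|\tilde{f}|^{2}$), and then integrate from $c$ to $x$ to obtain $|\tilde{f}(x)|^{2}\le(1+|I_{0}|^{-1})\|f\|_{L^{2}(I_{0})}^{2}+\|Df\|_{L^{2}(I_{0})}^{2}$. Combining the three components yields a bound $|\tilde{f}(x)|\le C\|f\|_{1}$ uniform over $x\in\overline{\Omega}$, so every evaluation functional is continuous.

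Finally, with the evaluation functionals bounded, the Riesz representation theorem produces, for each $x\in\overline{\Omega}$, a unique $k_{x}\in\mathscr{H}_{1}(\Omega)$ with $\tilde{f}(x)=\langle k_{x},f\rangle_{1}$ for all $f$; setting $k(x,y):=k_{x}(y)$ exhibits the reproducing kernel and completes the proof. I expect the main obstacle to be precisely the bounded-component estimate: unlike the unbounded pieces, $I_{0}$ offers no free boundary condition, so controlling $|\tilde{f}|$ there requires the averaging trick to supply the missing anchor, and some care is needed to keep the resulting constant uniform across all three components.
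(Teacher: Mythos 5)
Your proof is correct and follows the same skeleton as the paper's: the graph inner product, the continuous representative $\tilde{f}$ supplied by Lemma \ref{lem:cont}, the identity $\frac{d}{dy}\vert\tilde{f}(y)\vert^{2}=2\Re\bigl(\overline{f(y)}f'(y)\bigr)$ combined with Cauchy--Schwarz, and finally the Riesz representation theorem. The one place you genuinely diverge is the bounded component $I_{0}$, and there your treatment is the more complete one: the paper's displayed estimate only bounds the \emph{difference} $\vert\tilde{f}(x)\vert^{2}-\vert\tilde{f}(p)\vert^{2}$ for $p$ a finite endpoint of the component containing $x$, which suffices on $I_{\pm}$ (where one can anchor at $\pm\infty$, since $\tilde{f}$ vanishes there) but leaves $\vert\tilde{f}(p)\vert^{2}$ uncontrolled when the component is $I_{0}$. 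Your mean-value anchor --- choosing $c\in I_{0}$ with $\vert\tilde{f}(c)\vert^{2}=\vert I_{0}\vert^{-1}\int_{I_{0}}\vert f\vert^{2}$ and integrating from $c$ to $x$ --- supplies exactly the missing control, at the cost of the harmless constant $\sqrt{1+\vert I_{0}\vert^{-1}}$; this fills a step the paper leaves implicit. Likewise, your explicit verification of completeness via closedness of the distributional derivative is omitted in the paper (it is standard), but including it is correct and costs nothing.
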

\begin{proof}
For the special case where $\Omega=\mathbb{R}$, the details are in
\cite{Jo81}. For the case where $\Omega$ is the exterior domain
from (\ref{eq:Omega-2}), we already noted (Lemma \ref{lem:cont})
that each $f\in\mathscr{H}_{1}(\Omega)$ has a continuous representation
$\tilde{f}$, and that $\tilde{f}$ vanishes at $\pm\infty$. The
inner product in $\mathscr{H}_{1}(\Omega)$ is
\begin{equation}
\left\langle f,g\right\rangle _{\mathscr{H}_{1}(\Omega)}=\left\langle f,g\right\rangle _{\Omega}+\left\langle f',g'\right\rangle _{\Omega}.\label{eq:H-inner}
\end{equation}
Let $x\in\overline{\Omega}=\overline{I}_{-}\cup\overline{I}_{0}\cup\overline{I}_{+}$,
and denote by $J$ the interval containing $x;$ and let $p$ be a
boundary point in $J$. Then an application of Cauchy-Schwarz yields
\begin{alignat*}{1}
\left|\tilde{f}(x)\right|^{2}-\left|\tilde{f}(p)\right|^{2} & =2\Re\int_{p}^{x}\overline{f(y)}f'(y)dy\\
 & \leq\left\Vert f\right\Vert _{J}^{2}+\left\Vert f'\right\Vert _{J}^{2}\leq\left\Vert f\right\Vert _{\mathscr{H}_{1}(\Omega)}^{2}.
\end{alignat*}

We conclude that the linear functional 
\[
\mathscr{H}_{1}(\Omega)\ni f\rightsquigarrow\tilde{f}(x)\in\mathbb{C}
\]
is continuous on $\mathscr{H}_{1}(\Omega)$ with respect to the norm
from (\ref{eq:H-inner}). By Riesz, applied to $\mathscr{H}_{1}(\Omega)$,
we conclude that there is a unique $k_{x}\in\mathscr{H}_{1}(\Omega)$
such that
\begin{equation}
\tilde{f}(x)=\left\langle k_{x},f\right\rangle _{\mathscr{H}_{1}(\Omega)}\label{eq:rep}
\end{equation}
for all $f\in\mathscr{H}_{1}(\Omega)$. 

If $x$ in (\ref{eq:rep}) is a boundary point, then the formula must
be modified using instead $\tilde{f}(x_{+})=$ limit from the right
if $x$ is a left-hand side end-point in $J$. If $x$ is instead
a right-hand side end-point in $J$, then use $\tilde{f}(x_{-})$
in formula (\ref{eq:rep}). This concludes the proof of the Lemma.
\end{proof}
We are using here standard tools on reproducing kernel Hilbert spaces
(RKHS). For the essential properties of RKHSs, and their use in scattering
theory, see \cite{ASV06,ADR02}.
\begin{lem}
\label{lem:k}Let $\Omega\subset\mathbb{R}$ be an open subset, and
let $J=(a,b)$ be a bounded connected component in $\Omega$. Then
the reproducing kernels for evaluation in $\mathscr{H}_{1}(\Omega)$
at the two endpoints $a$ and $b$ depend only on $\mathscr{H}_{1}(J)$.
The two kernels $k_{a}$ and $k_{b}$ can be taken to be zero in $\Omega\backslash J$.
Let
\begin{equation}
k_{a}(x)=\frac{\mbox{coh}(b-x)}{\mbox{sih}(b-a)}\label{eq:ka}
\end{equation}
and
\begin{equation}
k_{b}(x)=\frac{\mbox{coh}(x-a)}{\mbox{sih}(b-a)}\label{eq:kb}
\end{equation}
defined for all $x\in J$, and $0$ in $\Omega\backslash J$. Here,
$\mbox{coh}$, and $\mbox{sih}$ denote the usual hyperbolic trigonometric
functions. Then
\begin{equation}
\tilde{f}(a_{+})=\left\langle k_{a},f\right\rangle _{\mathscr{H}_{1}(\Omega)},\:\mbox{and }\tilde{f}(b_{-})=\left\langle k_{b},f\right\rangle _{\mathscr{H}_{1}(\Omega)}\label{eq:ab}
\end{equation}
hold for all $f\in\mathscr{H}_{1}(\Omega)$.

For $a<x<b$, the reproducing kernel function $k_{x}(\cdot)$ of 
\begin{equation}
f(x)=\left\langle k_{x},f\right\rangle _{\mathscr{H}_{1}(\Omega)},\: f\in\mathscr{H}_{1}(\Omega)\label{eq:kx}
\end{equation}
 is
\begin{equation}
k_{x}(y)=\frac{\mbox{sih}(b-x)\mbox{coh}(b-y)+\mbox{sih}(x-a)\mbox{coh}(y-a)}{\left(\mbox{sih}(b-a)\right)^{2}}.\label{eq:kx-1}
\end{equation}
\end{lem}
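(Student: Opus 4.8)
The plan is to localize the problem to a single bounded component and then solve a one-dimensional Green's-function problem. First I would record that the inner product (\ref{eq:H-inner}) is a sum of integrals over the three components of $\Omega$ with no coupling between them, so that
\[
\mathscr{H}_{1}(\Omega)=\mathscr{H}_{1}(I_{-})\oplus\mathscr{H}_{1}(I_{0})\oplus\mathscr{H}_{1}(I_{+})
\]
as an orthogonal direct sum. Evaluation at a point of $J=(a,b)$ annihilates the two summands not containing $J$, so by Riesz (as in Lemma \ref{lem:k-1}) the representing vectors $k_{a},k_{b},k_{x}$ lie in the $\mathscr{H}_{1}(J)$-summand and vanish on $\Omega\setminus J$; this is exactly the assertion that the kernels ``depend only on $\mathscr{H}_{1}(J)$.'' It therefore suffices to compute the reproducing kernel of the full first-order Sobolev space $\mathscr{H}_{1}(J)$, carrying no imposed boundary constraints, with inner product $\langle f,g\rangle=\int_{a}^{b}(f\overline{g}+f'\overline{g'})$.

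Next I would pass from the reproducing identity to a boundary value problem. For each evaluation point the defining relation, after integrating the derivative term by parts, takes the form $\int_{a}^{b}(k-k'')\,\overline{f}+[\,k'\overline{f}\,]_{a}^{b}$ set equal to the prescribed value of $f$. Matching the arbitrary interior behaviour of $f$ forces $-k''+k=0$ away from the source, while matching the independently arbitrary boundary values $f(a),f(b)$ forces the natural (Neumann) conditions at the endpoints where no value is prescribed. For an interior point $a<x<b$ this yields an inhomogeneous problem: $k_{x}$ solves $u''=u$ on $(a,x)$ and on $(x,b)$, is continuous at $x$, satisfies $k_{x}'(a)=k_{x}'(b)=0$, and has a unit jump $k_{x}'(x^{+})-k_{x}'(x^{-})=-1$; solving with the basis $\{\cosh,\sinh\}$ and using $\tanh(x-a)+\tanh(b-x)=\sinh(b-a)/(\cosh(x-a)\cosh(b-x))$ fixes the two constants. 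For an endpoint, say $a_{+}$, the interior source is absent and the value $f(a)$ is produced entirely by the boundary term, so $k_{a}$ solves the homogeneous equation with $k_{a}'(b)=0$ and the normalization $-k_{a}'(a)=1$; this gives (\ref{eq:ka}), and symmetrically (\ref{eq:kb}).

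Finally I would confirm each candidate by substituting it back into $\langle k,f\rangle_{\mathscr{H}_{1}(J)}$ and integrating by parts once more, which collapses the identity to the desired value $f(a)$, $f(b)$, or $f(x)$ in a single line. I expect the principal obstacle to be bookkeeping rather than conceptual: extracting the natural boundary conditions and the correct normalization/jump from the weak form, keeping the interior-source and endpoint cases separate, and tracking the one-sided limits $\tilde{f}(a_{+}),\tilde{f}(b_{-})$ from Lemma \ref{lem:k-1} together with the conjugation convention in the inner product. Once the boundary value problem is pinned down, the stated hyperbolic expressions (\ref{eq:ka})--(\ref{eq:kx-1}) follow from solving the resulting $2\times2$ linear systems and simplifying.
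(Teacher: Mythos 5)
Your localization step and your treatment of the two endpoint kernels are correct, and for $k_{a},k_{b}$ your route is essentially the paper's: the paper posits the ansatz $Ae^{a-x}+Be^{x-b}$ (i.e., solutions of $k''=k$) and fixes $A,B$ by the two reproducing conditions (\ref{eq:ab}), while you fix the same two constants by the equivalent Neumann-plus-normalization data $k_{a}'(b)=0$, $-k_{a}'(a)=1$ extracted from the weak form. Both give (\ref{eq:ka}) and (\ref{eq:kb}).

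The gap is in your final sentence, and it is not mere bookkeeping. The boundary value problem you correctly derive for an interior point $x$ --- $k_{x}''=k_{x}$ on $(a,x)$ and $(x,b)$, continuity at $x$, $k_{x}'(a)=k_{x}'(b)=0$, and jump $k_{x}'(x^{+})-k_{x}'(x^{-})=-1$ --- has as its unique solution the piecewise function
\begin{equation*}
k_{x}(y)=\frac{\cosh\left(\min(x,y)-a\right)\,\cosh\left(b-\max(x,y)\right)}{\sinh(b-a)},
\end{equation*}
which has a corner at $y=x$; your own quoted identity $\tanh(x-a)+\tanh(b-x)=\sinh(b-a)/\left(\cosh(x-a)\cosh(b-x)\right)$ is exactly what fixes the constants for this function. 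This does \emph{not} simplify to (\ref{eq:kx-1}): the right-hand side of (\ref{eq:kx-1}) is smooth in $y$ and has $k_{x}'(a)=-\sinh(b-x)/\sinh(b-a)\neq0$, so it violates both the jump condition and the Neumann condition you derived. Your verification step, had you carried it out, would have exposed this: since (\ref{eq:kx-1}) solves $k''=k$ classically on all of $(a,b)$, integration by parts gives
\begin{equation*}
\left\langle k_{x},f\right\rangle _{\mathscr{H}_{1}(J)}=k_{x}'(b)f(b)-k_{x}'(a)f(a)=\frac{\sinh(x-a)f(b)+\sinh(b-x)f(a)}{\sinh(b-a)},
\end{equation*}
which equals $f(x)$ only when $f$ itself solves $f''=f$; it fails for $f\equiv1$ and for any $f$ compactly supported in $(a,b)$. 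In other words, formula (\ref{eq:kx-1}) of the lemma is itself erroneous: it is precisely the sinh-weighted combination $\left(\sinh(b-x)k_{a}+\sinh(x-a)k_{b}\right)/\sinh(b-a)$ that the paper's ``interpolation argument'' produces, and that expression is not the reproducing kernel. So your method is the right one, and the endpoint formulas stand; but the honest conclusion of your argument is the piecewise $\cosh$--$\cosh$ kernel displayed above, which contradicts rather than confirms (\ref{eq:kx-1}). Asserting that your $2\times2$ systems ``simplify'' to the stated expression is the gap.
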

\begin{proof}
Since the two kernels are zero in the complement $\Omega\backslash J$,
we only need to determine them in the interval $J=(a,b)$. A direct
analysis shows that they must have the form
\begin{equation}
Ae^{a-x}+Be^{x-b}\label{eq:tmp-6}
\end{equation}
where $A$ and $B$ are constants to be determined from the two conditions
(\ref{eq:ab}). When this is done we find the values of $A$ and $B$
in (\ref{eq:tmp-6}), and a computation yields the desired formulas
(\ref{eq:ka}) and (\ref{eq:kb}). The formula (\ref{eq:kx-1}) for
the kernel function $k_{x}$, when $x$ is an interior point, may
be obtained from the endpoint formulas (\ref{eq:ka}) and (\ref{eq:kb}),
and an interpolation argument.\end{proof}
\begin{rem}
\label{rmk:Pmin}Consider the operator $P_{min}$ in $L^{2}(\Omega)$
with domain
\begin{equation}
\mathscr{D}(P_{min})=\{f\in\mathscr{H}_{1}(\Omega);\tilde{f}\equiv0\mbox{ on }\partial\Omega\},\label{eq:Pmin}
\end{equation}
and $P_{min}=\frac{1}{i2\pi}\frac{d}{dx}$. Then $P_{min}$ is Hermitian
(symmetric) on its domain in $L^{2}(\Omega)$, and for its adjoint
operator $P_{min}^{*}$ we have $\mathscr{D}(P_{min}^{*})=\mathscr{H}_{1}(\Omega)$.
Moreover, for every $B\in U(2)$, we have two strict inclusions of
graphs:
\begin{equation}
P_{min}\subsetneqq P_{B}\subsetneqq P_{min}^{*}(=P_{max}).\label{eq:Pmin-1}
\end{equation}

\end{rem}

\begin{rem}
\label{rem:UB}The connection between the boundary form formulation
and the von Neumann deficiency space approach is further explored
in \cite{FeJoPed1}.

The family of unitary $2\times2$ matrices is parameterized by 
\begin{equation}
B=\left(\begin{array}{cc}
w\: e(\phi) & -\sqrt{1-w^{2}}\: e(\theta-\psi)\\
\sqrt{1-w^{2}}\: e(\psi) & w\: e(\theta-\phi)
\end{array}\right)\label{eq:2-by-2 Unitary}
\end{equation}
where $0\leq w\leq1$, $\theta,\phi,\psi\in\mathbb{R}$, and 
\begin{equation}
e(x):=e^{i2\pi x}.\label{eq:NormalizedExp}
\end{equation}
From (\ref{eq:2-by-2 Unitary}), note $\det B=e(\theta)$; and (\ref{eq:2-by-2 Unitary})
is consistent with the parametrization of $SU_{2}$ as follows:
\begin{equation}
\left(\begin{array}{cc}
\overline{a} & -b\\
\overline{b} & a
\end{array}\right)\label{eq:B}
\end{equation}
where $a,b\in\mathbb{C}$ satisfy $\left|a\right|^{2}+\left|b\right|^{2}=1$.
We have $a=w\: e(-\phi)$, and so $w=\left|a\right|\in[0,1]$. \end{rem}
\begin{prop}
\label{prop:kD}Let $\alpha,\beta\in\mathbb{R}$, $1<\alpha<\beta<\infty$,
and set 
\begin{equation}
\Omega=(-\infty,0)\cup(1,\alpha)\cup(\beta,\infty)\label{eq:Omega-3}
\end{equation}
be as in (\ref{eq:Omega-4})-(\ref{eq:Omega-1}). Let $B\in U(2)$,
and let $P_{B}$ be the corresponding selfadjoint operator (see Lemma
\ref{lem:vN def-space}). Let $k_{0}$, $k_{1}$, $k_{\alpha}$, and
$k_{\beta}$ be the reproducing kernels of the four boundary points
in (\ref{eq:Omega-3}), see Lemma \ref{lem:k-1}. Set 
\begin{equation}
k_{R}=\left(\begin{array}{c}
k_{0}\\
k_{\alpha}
\end{array}\right)\mbox{ and \,\ }k_{L}=\left(\begin{array}{c}
k_{1}\\
k_{\beta}
\end{array}\right)\label{eq:tmp-7}
\end{equation}
as elements in $\mathscr{H}_{1}(\Omega)\oplus\mathscr{H}_{1}(\Omega)$,
$L$ for points on the left, and $R$ for right-hand side boundary
points. Then $P_{B}$ is characterized by its dense domain in $L^{2}(\Omega)$
as follows:
\begin{equation}
\mathscr{D}(P_{B})=\left\{ f\in\mathscr{H}_{1}(\Omega);f\oplus f\perp\left(k_{R}-Bk_{L}\right)\mbox{ in }\mathscr{H}_{1}(\Omega)\oplus\mathscr{H}_{1}(\Omega)\right\} .\label{eq:tmp-10}
\end{equation}
\end{prop}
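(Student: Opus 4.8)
The plan is to translate the boundary condition $B\rho_1(f)=\rho_2(f)$ that defines $\mathscr{D}(P_B)$ in (\ref{eq:ExtensionDomain1}) directly into the reproducing-kernel language of $\mathscr{H}_1(\Omega)$. First I would record the two facts that place the domain in the right space: by Remark \ref{rmk:Pmin} we have $\mathscr{D}(P)=\mathscr{D}(P_{min}^*)=\mathscr{H}_1(\Omega)$, so every $f$ entering (\ref{eq:ExtensionDomain1}) lies in the RKHS and is represented by its continuous version $\tilde f$ (Lemma \ref{lem:cont}); and by Lemmas \ref{lem:k-1} and \ref{lem:k} each of the four boundary points carries a reproducing kernel implementing the appropriate one-sided evaluation. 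Since $P_B$ is by construction the restriction of $P(=P_{max})$ to $\mathscr{D}(P_B)$, via (\ref{eq:Pmin-1}) it suffices to identify the set (\ref{eq:ExtensionDomain1}) with the set (\ref{eq:tmp-10}).

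The key step is the kernel bookkeeping for the lateral limits. I would match each endpoint to the kernel that reproduces the correct one-sided trace: for the bounded component $I_0=(1,\alpha)$, Lemma \ref{lem:k} gives $\tilde f(1_+)=\langle k_1,f\rangle_{\mathscr{H}_1(\Omega)}$ at the left endpoint and $\tilde f(\alpha_-)=\langle k_\alpha,f\rangle_{\mathscr{H}_1(\Omega)}$ at the right endpoint; for the unbounded components, Lemma \ref{lem:k-1} gives $\tilde f(0_-)=\langle k_0,f\rangle_{\mathscr{H}_1(\Omega)}$ (with $0$ the right endpoint of $I_-$) and $\tilde f(\beta_+)=\langle k_\beta,f\rangle_{\mathscr{H}_1(\Omega)}$ (with $\beta$ the left endpoint of $I_+$). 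With the column notation of (\ref{eq:tmp-7}) this reads $\rho_1(f)=\langle k_L,f\rangle_{\mathscr{H}_1(\Omega)}$ and $\rho_2(f)=\langle k_R,f\rangle_{\mathscr{H}_1(\Omega)}$, where the pairing is taken componentwise and is therefore $\mathbb{C}^2$-valued. Accordingly I would read the relation $f\oplus f\perp(k_R-Bk_L)$ as the vanishing of this $\mathbb{C}^2$-valued pairing, i.e. orthogonality of $f$ to each component of $k_R-Bk_L$, which is precisely what encodes the two scalar boundary conditions.

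I would then rewrite the defining relation. Because $B$ acts only on the $\mathbb{C}^2$-index of $\mathscr{H}_1(\Omega)\oplus\mathscr{H}_1(\Omega)$ while the pairing is linear in the $f$-slot, the matrix passes through the kernel pairing with no conjugation, $B\langle k_L,f\rangle_{\mathscr{H}_1(\Omega)}=\langle Bk_L,f\rangle_{\mathscr{H}_1(\Omega)}$. Hence $B\rho_1(f)=\rho_2(f)$ becomes $\langle k_R-Bk_L,\,f\rangle_{\mathscr{H}_1(\Omega)}=0$ in $\mathbb{C}^2$, which is the orthogonality asserted in (\ref{eq:tmp-10}); combined with $\mathscr{D}(P_B)\subset\mathscr{H}_1(\Omega)$ this gives the claimed characterization of the domain, and the equality of the two sets of functions identifies the two operators.

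The main obstacle is conceptual rather than computational. One must pair the lateral limits with the correct kernels so that the left/right endpoint data land in $\rho_1$ versus $\rho_2$ as in (\ref{eq:tmp-7}), and one must fix the convention under which $B$ — rather than $\bar B$ or $B^*$ — appears in $k_R-Bk_L$. Concretely, the sesquilinearity of (\ref{eq:H-inner}) and the reproducing identity (\ref{eq:rep}) must be reconciled: the $\mathbb{C}^2$-valued pairing carries no conjugation across the index factor and is linear in $f$, so ordinary matrix multiplication of $B$ against the column $k_L$ of functions is the correct operation. Once these conventions are pinned down, the equivalence of (\ref{eq:ExtensionDomain1}) and (\ref{eq:tmp-10}) is a one-line rearrangement.
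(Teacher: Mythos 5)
Your proposal follows essentially the same route as the paper's own proof: note that $\mathscr{D}(P_B)\subset\mathscr{H}_1(\Omega)$, substitute the reproducing identities of Lemmas \ref{lem:k-1} and \ref{lem:k} for the four one-sided boundary values so that the boundary condition $B\rho_1(f)=\rho_2(f)$ of (\ref{eq:ExtensionDomain1}) becomes the kernel identity (\ref{eq:tmp-8}), and then rewrite that $\mathbb{C}^2$-valued identity as orthogonality of $f\oplus f$ to $k_R-Bk_L$. Your insistence that the orthogonality in (\ref{eq:tmp-10}) be read componentwise (a $\mathbb{C}^2$-valued pairing, i.e., $f\perp$ each component of $k_R-Bk_L$) is the correct reading: a literal single inner product in $\mathscr{H}_1(\Omega)\oplus\mathscr{H}_1(\Omega)$ is only one scalar equation and could not encode both boundary conditions.

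The one place where you attempt to be more precise than the paper, the conjugation bookkeeping, you get backwards. Under the paper's conventions, the pairing in (\ref{eq:rep}) is the $\mathscr{H}_1$ inner product (\ref{eq:H-inner}), which is conjugate-linear in the kernel slot. Hence if $Bk_L$ denotes an actual element of $\mathscr{H}_1(\Omega)\oplus\mathscr{H}_1(\Omega)$ with components $(Bk_L)_i=\sum_j B_{ij}\,k_{L,j}$ --- which is what membership of $k_R-Bk_L$ in the direct sum requires --- then
\[
\left\langle (Bk_L)_i,f\right\rangle _{\mathscr{H}_1(\Omega)}=\sum_j\overline{B_{ij}}\left\langle k_{L,j},f\right\rangle _{\mathscr{H}_1(\Omega)},
\]
i.e., $\langle Bk_L,f\rangle=\overline{B}\,\langle k_L,f\rangle$, not $B\langle k_L,f\rangle$. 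Consequently the condition $B\rho_1(f)=\rho_2(f)$ is literally equivalent to componentwise orthogonality $f\oplus f\perp\left(k_R-\overline{B}k_L\right)$, where $\overline{B}$ is the entrywise conjugate of $B$. This does not damage the substance of the result, since $B\mapsto\overline{B}$ is a bijection of $U(2)$ and the paper itself elides exactly this point in passing from (\ref{eq:tmp-8}) to (\ref{eq:tmp-10}); but your stated justification, that ``the matrix passes through the kernel pairing with no conjugation,'' is false as written. A careful write-up should either put $\overline{B}$ in (\ref{eq:tmp-10}), or declare that $Bk_L$ is defined through the pairing by $\langle Bk_L,f\rangle:=B\langle k_L,f\rangle$ rather than by matrix action on the column of kernel functions.
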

\begin{proof}
The graph of $P_{B}$ is 
\[
G(P_{B})=\left\{ \left(\begin{array}{c}
f\\
P_{B}f
\end{array}\right);f\in\mathscr{D}(P_{B})\right\} ,
\]
see (\ref{eq:ExtensionDomain1}), and
\[
\left\Vert f\right\Vert _{L^{2}(\Omega)}^{2}+\left\Vert P_{B}f\right\Vert _{L^{2}(\Omega)}^{2}=\left\Vert f\right\Vert _{\mathscr{H}_{1}(\Omega)}^{2},\: f\in\mathscr{D}(P_{B}).
\]
Hence the characterization of $\mathscr{D}(P_{B})$ in (\ref{eq:ExtensionDomain1})
reads:
\begin{equation}
B\left(\begin{array}{c}
\left\langle k_{1},f\right\rangle _{\mathscr{H}_{1}(\Omega)}\\
\left\langle k_{\beta},f\right\rangle _{\mathscr{H}_{1}(\Omega)}
\end{array}\right)=\left(\begin{array}{c}
\left\langle k_{0},f\right\rangle _{\mathscr{H}_{1}(\Omega)}\\
\left\langle k_{\alpha},f\right\rangle _{\mathscr{H}_{1}(\Omega)}
\end{array}\right),\: f\in\mathscr{D}(P_{B}),\label{eq:tmp-8}
\end{equation}
where we have used Lemma \ref{lem:k-1}. Introducing $k_{L}$ and
$k_{R}$ as in (\ref{eq:tmp-7}), we see that (\ref{eq:tmp-8}) is
indeed equivalent to the characterization in (\ref{eq:tmp-10}).\end{proof}
\begin{rem}
\label{rem:n}The characterization (\ref{eq:tmp-10}) in Proposition
\ref{prop:kD} extends to more general open subsets $\Omega$ in $\mathbb{R}$:
It holds \emph{mutatis mutandis, }that if $\Omega$ is the union of
a finite number of bounded components, and two unbounded, i.e., 
\begin{equation}
\Omega=(-\infty,\beta_{1})\cup\bigcup_{i=1}^{n-1}(\alpha_{i},\beta_{i+1})\cup(\alpha_{n},\infty)\label{eq:Omega-4}
\end{equation}
where
\[
-\infty<\beta_{1}<\alpha_{1}<\beta_{2}<\alpha_{2}<\cdots<\alpha_{n-1}<\beta_{n}<\alpha_{n}<\infty.
\]

Set 
\[
k_{R}=\left(\begin{array}{c}
k_{\beta_{1}}\\
k_{\beta_{2}}\\
\vdots\\
k_{\beta_{n}}
\end{array}\right)\;\mbox{and }\: k_{L}=\left(\begin{array}{c}
k_{\alpha_{1}}\\
k_{\alpha_{2}}\\
\vdots\\
k_{\alpha_{n}}
\end{array}\right)
\]
in $\bigoplus_{i=1}^{n}\mathscr{H}_{1}(\Omega)$. Let $B$ be a unitary
complex $n\times n$ matrix, i.e., $B\in U(n)$; then there is a unique
selfadjoint operator $P_{B}$ with dense domain $\mathscr{D}(P_{B})$
in $L^{2}(\Omega)$ such that
\begin{equation}
\mathscr{D}(P_{B})=\left\{ f\in\mathscr{H}_{1}(\Omega);\underset{n\mbox{ times}}{\underbrace{f\oplus\cdots\oplus f}}\perp(k_{R}-Bk_{L})\mbox{ in }\bigoplus_{i=1}^{n}\mathscr{H}_{1}(\Omega)\right\} ;\label{eq:tmp-11}
\end{equation}
and all the selfadjoint extensions of the minimal operator $D_{min}$
in $L^{2}(\Omega)$ arise this way. In particular, the deficiency
indices are $(n,n)$.
\end{rem}
\begin{figure}[H]
\setlength{\unitlength}{1cm}
\begin{picture}(10,0.1)

\thicklines
\put(0,0){\line(1,0){1}}
\put(2,0){\line(3,0){1}}
\put(4,0){\line(5,0){1}}
%\put(6,0){\line(7,0){1}}
\put(7,0){\line(8,0){1}}
\put(9,0){\line(10,0){1}}

\put(-0.4,-0.5){$-\infty$}
\put(0.8,-0.5){$\beta_{1}$}

\put(1.8,-0.5){$\alpha_{1}$}
\put(2.8,-0.5){$\beta_{2}$}

\put(3.8,-0.5){$\alpha_{2}$}
\put(4.8,-0.5){$\beta_{3}$}

\put(6,0){$\ldots$}

\put(6.8,-0.5){$\alpha_{n-1}$}
\put(7.8,-0.5){$\beta_{n}$}

\put(8.8,-0.5){$\alpha_{n}$}
\put(9.8,-0.5){$+\infty$}

\end{picture}

\vspace{1cm}

\caption{$\Omega=$ the complement in $\mathbb{R}$ of $n$ finite and disjoint
intervals.}

\end{figure}
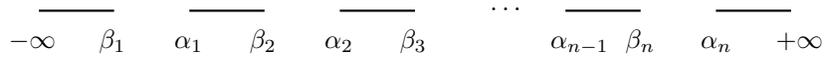

\begin{prop}
\label{prop:n}Let $n>2$; and set $J_{i}=(\alpha_{i},\beta_{i+1})$,
$J_{-}=(-\infty,\beta_{1})$, $J_{+}=(\alpha_{n},\infty)$ as in (\ref{eq:Omega-4}).
Set $\tilde{\Omega}=\cup_{i=1}^{n-1}J_{i}$, so
\begin{equation}
L^{2}(\Omega)\cong L^{2}(\tilde{\Omega})\oplus L^{2}(J_{-}\cup J_{+}).\label{eq:tmp-38}
\end{equation}
Of the selfadjoint extension operators $P_{B}$, indexed by $B\in U(n)$,
we get the $\oplus$ direct decomposition 
\begin{equation}
P_{B}\cong P_{\tilde{\Omega}}\oplus P_{ext}\label{eq:tmp-39}
\end{equation}
where $P_{\tilde{\Omega}}$ is densely defined and s.a. in $L^{2}(\tilde{\Omega})$
and $P_{ext}$ is densely defined and s.a. in $L^{2}(J_{-}\cup J_{+})$,
if and only if $B$ (in $U(n)$) has the form

\begin{equation}
\left(\begin{array}{ccccc}
0 & \cdots & 0 & \vline & e(\theta)\\
\hline  &  &  & \vline & 0\\
 & \huge\mbox{\ensuremath{\tilde{B}}} &  & \vline & \vdots\\
 &  &  & \vline & 0
\end{array}\right)\label{tmp}
\end{equation}
for some $\theta\in\mathbb{R}/\mathbb{Z}$, and $\tilde{B}\in U(n-1)$.\end{prop}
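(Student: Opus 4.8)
The plan is to prove both implications by reducing the operator identity \eqref{eq:tmp-39} to a purely linear-algebraic condition on the matrix $B$. The crucial observation is that $P_{max}=P_{min}^{*}$ is a \emph{local} operator: on each connected component of $\Omega$ it acts as $\tfrac{1}{i2\pi}\tfrac{d}{dx}$, and the only coupling between components enters through the boundary condition of \eqref{eq:tmp-11}, which by Lemma~\ref{lem:k-1} reads $\beta(f)=B\alpha(f)$. Here I write $\alpha(f)=(\tilde f(\alpha_{1}^{+}),\dots,\tilde f(\alpha_{n}^{+}))$ for the vector of values at the left endpoints and $\beta(f)=(\tilde f(\beta_{1}^{-}),\dots,\tilde f(\beta_{n}^{-}))$ for the values at the right endpoints, both well defined by Lemma~\ref{lem:cont}. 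Let $Q$ denote multiplication by $\chi_{\tilde\Omega}$, an orthogonal projection of $L^{2}(\Omega)$ onto $L^{2}(\tilde\Omega)$ as in \eqref{eq:tmp-38}. Because the components of $\Omega$ are separated open intervals, $Q$ preserves $\mathscr{H}_{1}(\Omega)$ and commutes with the differential action; hence \eqref{eq:tmp-39} holds, with both summands densely defined and selfadjoint, if and only if $Q$ reduces $P_{B}$, which (the action already commuting) is equivalent to the single domain-invariance condition $Q\,\mathscr{D}(P_{B})\subseteq\mathscr{D}(P_{B})$.

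For the \textquotedblleft only if\textquotedblright\ direction I would compute the traces of $g:=Qf=f\chi_{\tilde\Omega}$. Zeroing $f$ on $J_{-}\cup J_{+}$ kills exactly the entry at $\beta_{1}$ (the right endpoint of $J_{-}$) and the entry at $\alpha_{n}$ (the left endpoint of $J_{+}$), so $\alpha(g)=(f(\alpha_{1}),\dots,f(\alpha_{n-1}),0)$ and $\beta(g)=(0,f(\beta_{2}),\dots,f(\beta_{n}))$. Imposing $g\in\mathscr{D}(P_{B})$, i.e. $\beta(g)=B\alpha(g)$, and combining with the relation $\beta(f)=B\alpha(f)$ already satisfied by $f$, reduces to the two families of scalar equations $\sum_{j<n}B_{1j}f(\alpha_{j})=0$ and $B_{in}f(\alpha_{n})=0$ for $i\ge 2$. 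The key analytic input now is that the trace map is onto: as $f$ runs through $\mathscr{D}(P_{B})$ the vector $\alpha(f)$ runs through all of $\mathbb{C}^{n}$ (prescribe boundary values freely on each component and set $\beta(f):=B\alpha(f)$). Requiring the identities for every such $f$ forces $B_{1j}=0$ for $j<n$ and $B_{in}=0$ for $i>1$. Unitarity of $B$ then makes the first row equal to $e(\theta)$ times the $n$-th standard basis vector for some $\theta\in\mathbb{R}/\mathbb{Z}$, while orthonormality of the remaining rows and columns forces the block $\tilde B=(B_{ij})_{2\le i\le n,\,1\le j\le n-1}\in U(n-1)$; this is exactly the form \eqref{tmp}.

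For the converse I would run the computation backwards. If $B$ has the form \eqref{tmp}, then $\beta(f)=B\alpha(f)$ splits as $f(\beta_{1})=e(\theta)f(\alpha_{n})$ together with $(f(\beta_{i+1}))_{i}=\tilde B\,(f(\alpha_{j}))_{j}$, $1\le i,j\le n-1$; the first constraint involves only the endpoints of $J_{-}\cup J_{+}$ and the second only those of $\tilde\Omega$. A direct check then yields $Q\,\mathscr{D}(P_{B})\subseteq\mathscr{D}(P_{B})$, so $Q$ reduces the selfadjoint operator $P_{B}$ and the two restrictions are automatically selfadjoint: $P_{ext}$ on $L^{2}(J_{-}\cup J_{+})$ is the $U(1)$-extension with the period-type condition $f(\beta_{1})=e(\theta)f(\alpha_{n})$, and $P_{\tilde\Omega}$ on $L^{2}(\tilde\Omega)$ is the $U(n-1)$-extension attached to $\tilde B$ via Remark~\ref{rem:n} applied to the union $\tilde\Omega$ of $n-1$ bounded intervals.

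I expect the main obstacle to be the reduction in the first paragraph rather than the algebra: one must argue carefully that the operator direct sum \eqref{eq:tmp-39} is genuinely equivalent to $Q$ being a reducing projection, and not merely that $\mathscr{D}(P_{B})$ contains a product of smaller domains, and that multiplication by $\chi_{\tilde\Omega}$ really does preserve $\mathscr{H}_{1}(\Omega)$ and commute with $P_{max}$ across the component boundaries. Once that equivalence and the surjectivity of the trace map are in place, the extraction of the block structure of $B$ is routine linear algebra.
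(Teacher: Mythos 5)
Your proposal is correct, and for the implication the paper actually writes out it follows the same route: with the conventions of Remark \ref{rem:n}, a matrix of the form (\ref{tmp}) decouples the boundary condition $\beta(f)=B\alpha(f)$ into a condition $f(\beta_{1})=e(\theta)f(\alpha_{n})$ involving only the endpoints of the two unbounded components, and a condition $\bigl(f(\beta_{i+1})\bigr)_{i}=\tilde{B}\bigl(f(\alpha_{j})\bigr)_{j}$ involving only the bounded ones, whence the splitting (\ref{eq:tmp-39}). (Your indexing is the consistent one: the paper's stated condition $f(\beta_{n})=e(\theta)f(\alpha_{1})$ does not match the ordering of $k_{R},k_{L}$ in Remark \ref{rem:n}, nor the $n=2$ special case (\ref{eq:w0})/Theorem \ref{thm:sp-w0}, and is evidently a typo.) Where you genuinely add content is the converse, which the paper compresses into "one checks that the converse holds as well": your reduction of the operator identity (\ref{eq:tmp-39}) to the single domain-invariance condition $Q\,\mathscr{D}(P_{B})\subseteq\mathscr{D}(P_{B})$ is legitimate, because $\chi_{\tilde{\Omega}}$ is locally constant on $\Omega$, so $Q$ preserves $\mathscr{H}_{1}(\Omega)$ and commutes with $\frac{1}{i2\pi}\frac{d}{dx}$ there, and a reducing projection for a selfadjoint operator automatically yields selfadjoint summands; and your surjectivity of the trace map $f\mapsto\alpha(f)$ on $\mathscr{D}(P_{B})$ (prescribe $\alpha(f)$ arbitrarily, set $\beta(f):=B\alpha(f)$, and interpolate on each component within $\mathscr{H}_{1}$) is exactly the analytic input needed to force $B_{1j}=0$ for $j<n$ and $B_{in}=0$ for $i\geq2$, after which unitarity gives (\ref{tmp}). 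One minor quibble: your identification of $P_{\tilde{\Omega}}$ as "the $U(n-1)$-extension via Remark \ref{rem:n} applied to $\tilde{\Omega}$" is not literally an application of that remark, since Remark \ref{rem:n} concerns domains with two unbounded components while $\tilde{\Omega}$ is a union of bounded intervals (the setting of \cite{FeJoPed1}); but this is harmless, as selfadjointness of both restrictions already follows from the reduction, as you yourself note.
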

\begin{proof}
Note that presentation (\ref{tmp}) for some $B\in U(n)$ implies
the boundary condition $f(\beta_{n})=e(\theta)f(\alpha_{1})$ for
$f\in\mathscr{D}(P_{B})$ when $P_{B}$ is the selfadjoint operator
in $L^{2}(\Omega)$ determined in Remark \ref{rem:n}. And, moreover,
the $\oplus$ sum decomposition (\ref{eq:tmp-39}) will be satisfied.

One checks that the converse holds as well; see also Theorem \ref{thm:sp-w0}
below; which is a special case.\end{proof}
\begin{rem}[Internal domains vs external]
 It is of interest to compare spectral theory for the selfadjoint
restrictions $P_{B}$ of the momentum operator in $L^{2}(\Omega)$
in the two cases when $\Omega$ is internal, as opposed to external.
By $\Omega$ internal we mean that $\Omega$ is a finite union of
disjoint and finite intervals. The case when $\Omega$ is the union
of two finite disjoint intervals was considered in \cite{FeJoPed1},
and we found that the possibilities for the spectral representation
of $P_{B}$ includes both continuous and discrete; but more importantly,
we found in \cite{FeJoPed1} that the embedded point-spectrum of some
of the selfadjoint operators $P_{B}$ arising this way may be non-periodic.

Contrast this with the external case studied here, i.e., when $\Omega$
is instead the complement of two finite disjoint closed intervals;
so the case when $\Omega$ is the union of three components, one bounded
$I_{0},$ and two unbounded. There are some aspects of this external
problem that are simpler: In the present external problem, the only
possibility for point-spectrum is periodic (see Corollary \ref{cor:su2}).
The reason for this is that point-spectrum corresponds to bound-states
for wave functions trapped in a single bounded interval. In other
words, there are only those bound-states that are trapped in the single
finite component (see Figure \ref{fig:w0-1}.) Note in Fig \ref{fig:w0-1}
the two thick walls (barriers) on either side of $I_{0}$, and the
corresponding periodic motion inside $I_{0}$.

Had we instead taken $\Omega$ to be the complement of three finite
disjoint closed intervals, then the von Neumann deficiency indices
would be $(3,3)$ and there would be examples of $B$ in $U(3)$ such
that $P_{B}$ could have non-periodic embedded point-spectrum; so
cases analogous to the non-periodic case in \cite{FeJoPed1}. And
as a result, the spectral density measure $\sigma_{B}(\cdot)$ might
be non-periodic. 
\end{rem}

\section{Spectral Theory\label{sec:sp}}

In this section, we fix an exterior domain $\Omega$, the complement
of two finite disjoint intervals. For every $B$ in $U(2)$, we introduce
the corresponding selfadjoint operator $P_{B}$ with dense domain
in $L^{2}(\Omega)$, see (\ref{eq:ExtensionDomain1}). We are concerned
about spectral theory for $P_{B}$, and scattering theory for the
unitary one-parameter group $U_{B}(t)$ generated by $P_{B}$. In
our study of spectral theory for $\{U_{B}(t)\}_{t\in\mathbb{R}}$,
we rely on tools from \cite{Sto90}. In Theorem \ref{thm:sp-w-1}
below we show that, for the general case of $B$, $U_{B}(t)$ has
simple spectrum (i.e., multiplicity one). Simple spectrum was introduced
in \cite{Sto90}. For fixed $B$, we further write down the spectral
representation for $U_{B}(t)$.

Our spectral representation formula for $P_{B}$ is presented in Theorem
\ref{thm:sp-w-1} below; and the scattering operator (and scattering
matrix) for $U_{B}(t)$ is given in Theorem \ref{thm:scatter}.

Fix two intervals $I_{1}=[0,1]$ and $I_{2}=[\alpha,\beta]$, $\alpha>1$,
and $\Omega=I_{-}\cup I_{0}\cup I_{+}$ being the exterior domain
in (\ref{eq:Omega-1-1}), where $I_{-}=(-\infty,0)$, $I_{0}=(1,\alpha)$,
and $I_{+}=(\beta,\infty)$. Let $\chi_{-}$, $\chi_{0}$, $\chi_{+}$
be the corresponding characteristic functions. 

There is a one-to-one correspondence between selfadjoint restrictions
$P_{B}$ of the maximal momentum operator $P$ in $L^{2}(\Omega)$,
and the $2\times2$ unitary matrices $B$ parameterized via (\ref{eq:2-by-2 Unitary}). 

The two extreme cases $w=0$, and $w=1$ will be considered separately,
i.e., 
\begin{alignat}{1}
w=0\::\; & \left(\begin{array}{cc}
0 & -e(\theta-\psi)\\
e(\psi) & 0
\end{array}\right)\label{eq:w0}\\
w=1\::\; & \left(\begin{array}{cc}
e(\phi) & 0\\
0 & e(\theta-\phi)
\end{array}\right)\label{eq:w1}
\end{alignat}

We use (\ref{eq:2-by-2 Unitary}) in the computation of the spectrum
of the family of selfadjoint operators $P_{B}$ from Section \ref{sec:Intro}.

We show that $w=0$ is a singularity, and gives rise to embedded point-spectrum
\begin{equation}
pt_{spectrum}(P_{B(w=0,\phi,\psi,\theta)})=\frac{\psi}{\alpha-1}+\frac{1}{\alpha-1}\mathbb{Z}\label{eq:psp}
\end{equation}
embedded in the continuum. (The subscript in (\ref{eq:psp}) refers
to the degenerate matrix (\ref{eq:w0}).) For details, we refer to
Theorem \ref{thm:sp-w0}, Figure \ref{fig:w0-1}, and Remark \ref{rem:w0}
below.

\subsection{Spectrum and Eigenfunctions}

Fix a unitary $2\times2$ matrix $B.$ 

Since the selfadjoint operator has continuous spectrum, possibly with
embedded point-atoms, its spectral representation must entail generalized
eigenfunctions $\psi_{\lambda}^{(B)}$ with $\lambda\in\mathbb{R}$
denoting the spectral-variable. The reason for ``generalized'' is
that, when $\lambda$ is fixed, $\psi_{\lambda}^{(B)}$ is ``trying''
to be an eigenfunction, but it is not in $L^{2}(\Omega)$. Hence to
make precise the spectral resolution of $P_{B}$ we will need some
Gelfand-Schwartz distribution theory.

Let $\mathscr{D}_{B}$ be the $\mbox{\ensuremath{\mathscr{D}}}=C_{c}^{\infty}$
functions on the real line that together with all their derivatives
satisfies the boundary condition $B\rho_{1}(\cdot)=\rho_{2}(\cdot).$
Let $\mathscr{D}_{B}\left(\Omega\right)$ be the restrictions of the
functions in $\mathscr{D}_{B}$ to $\Omega.$ Since $\mathscr{D}$
and $C_{c}^{\infty}\left(\mathbb{R}\setminus\Omega\right)$ are nuclear
and subspaces and quotients of nuclear spaces are nuclear, it follows
that $\mathscr{D}_{B}\left(\Omega\right)$ is nuclear.

Let $\dot{P}_{B}$ denote the restriction of $P_{B}$ to $\mathscr{D}_{B}\left(\Omega\right),$
then $\dot{P}_{B}$ is continuous $\mathscr{D}_{B}\left(\Omega\right)\to\mathscr{D}_{B}\left(\Omega\right)$.
Let $\mathscr{D}_{B}'\left(\Omega\right)$ denote the set of anti-linear
continuous functionals on $\mathscr{D}_{B}\left(\Omega\right).$ Then
$\dot{P}_{B}$ extends by duality to an operator $\dot{P}_{B}'$ on
$\mathscr{D}_{B}'\left(\Omega\right).$ The duality formula for extending
$\dot{P}_{B}$ to $\mathscr{D}_{B}'\left(\Omega\right)$ is $\left(\dot{P}_{B}'\psi\right)\left(\phi\right)=\psi\left(\dot{P}_{B}\phi\right),$
sometimes we will write this as $\left\langle \phi\left|\dot{P}_{B}'\psi\right.\right\rangle =\left\langle \left.\dot{P}_{B}\phi\right|\psi\right\rangle ,$
for all $\phi$ in $\mathscr{D}_{B}\left(\Omega\right)$ and all $\psi$
in $\mathscr{D}_{B}'\left(\Omega\right).$

A \emph{generalized eigenvalue} of $P_{B}$ is a real scalar $\lambda$
for which there is a corresponding \emph{generalized eigenvector},
i.e., a $\psi_{\lambda}$ in $\mathscr{D}_{B}'\left(\Omega\right)$
such that
\begin{equation}
\left\langle \phi\left|\dot{P}_{B}'\psi_{\lambda}\right.\right\rangle =\lambda\left\langle \phi\mid\psi\right\rangle \label{eq:1-1GenEigen}
\end{equation}
for all $\phi$ in $\mathscr{D}_{B}\left(\Omega\right).$ Hence the
generalized eigenvalues/eigenvectors of $P_{B}$ are ordinary eigenvalues/eigenvectors
of $\dot{P}_{B}'.$ 

The following lemmas establish that the spectrum of $P_{B}$ is the
real line and that for fixed $\lambda$ the corresponding generalized
eigenspace is spanned by the functions

\begin{equation}
\psi_{\lambda}:=\left(a_{\lambda}\chi_{-}+b_{\lambda}\chi_{0}+c_{\lambda}\chi_{+}\right)e_{\lambda}\label{eq:2-1GenEigen}
\end{equation}
where $a_{\lambda},b_{\lambda},c_{\lambda}$ are scalars such that
(\ref{eq:ExtensionDomain1}) holds, i.e., $B\rho_{1}(\psi_{\lambda})=\rho_{2}(\psi_{\lambda}).$
Recall, $e_{\lambda}(x)=e(\lambda x)=e^{i2\pi\lambda x},$ so we write
(\ref{eq:2-1GenEigen}) as 
\[
\psi_{\lambda}(x)=\left(a_{\lambda}\chi_{-}(x)+b_{\lambda}\chi_{0}(x)+c_{\lambda}\chi_{+}(x)\right)e_{\lambda}(x)
\]
for $\lambda,x\in\mathbb{R}.$ 
\begin{lem}
Each real number $\lambda$ is a generalized eigenvalue of $P_{B}$
and the corresponding generalized eigenfunctions are the functions
(\ref{eq:2-1GenEigen}). \end{lem}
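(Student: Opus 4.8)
The plan is to verify that each $\lambda\in\mathbb{R}$ is a generalized eigenvalue by exhibiting an explicit generalized eigenvector of the form (\ref{eq:2-1GenEigen}) and then checking that every generalized eigenvector for $\lambda$ has this form. The natural starting observation is that on the open set $\Omega$, away from the boundary, the defining equation (\ref{eq:1-1GenEigen}) says precisely that $\psi_\lambda$ is a distributional solution of $\frac{1}{i2\pi}\psi' = \lambda\psi$ in the interior of each of the three components $I_-,I_0,I_+$. First I would solve this ODE componentwise: on each component the general solution is a scalar multiple of $e_\lambda(x)=e^{i2\pi\lambda x}$, which forces $\psi_\lambda$ to agree with $(a_\lambda\chi_- + b_\lambda\chi_0 + c_\lambda\chi_+)e_\lambda$ for some constants $a_\lambda,b_\lambda,c_\lambda$. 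This already pins down the generalized eigenspace up to the three free scalars, before any boundary condition is imposed.

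\textbf{The boundary condition.} The subtle point is how the boundary data enter. The functionals live in $\mathscr{D}_B'(\Omega)$, the dual of the space $\mathscr{D}_B(\Omega)$ of test functions that \emph{themselves} satisfy $B\rho_1(\cdot)=\rho_2(\cdot)$. So when I integrate $\langle\dot P_B\phi\,|\,\psi_\lambda\rangle$ by parts against such a $\phi$, the boundary terms that survive are controlled by the constraint already imposed on $\phi$. The key computation is to pair $\psi_\lambda$ against $\dot P_B\phi$, integrate by parts on each of the three intervals, and collect the four boundary contributions at $0,1,\alpha,\beta$. Demanding that the resulting boundary form vanish for \emph{all} admissible $\phi$ — using the boundary form identity (\ref{eq:BoundaryForm3}) together with the constraint $B\rho_1(\phi)=\rho_2(\phi)$ — forces exactly the adjoint relation on the scalars of $\psi_\lambda$, which after using $B\in U(2)$ is equivalent to $B\rho_1(\psi_\lambda)=\rho_2(\psi_\lambda)$. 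This is the step I expect to be the \textbf{main obstacle}: one must be careful that the surviving boundary pairing, when written in the $\rho_1,\rho_2$ splitting, does not vanish automatically but rather transfers the constraint from $\phi$ onto $\psi_\lambda$ via the unitarity of $B$. Concretely, the identity $\langle B\rho_1(\phi),\cdot\rangle = \langle\rho_2(\phi),\cdot\rangle$ combined with $B^*B=I$ is what converts the test-side condition into the eigenfunction-side condition.

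\textbf{Solvability and non-triviality.} Having reduced matters to the linear boundary relation $B\rho_1(\psi_\lambda)=\rho_2(\psi_\lambda)$ in the three unknowns $a_\lambda,b_\lambda,c_\lambda$, I would finish by showing this system always admits a nonzero solution, so that the generalized eigenspace is nonempty for every real $\lambda$. Writing out $\rho_1(\psi_\lambda)=(\psi_\lambda(1),\psi_\lambda(\beta)) = (b_\lambda e_\lambda(1), c_\lambda e_\lambda(\beta))$ and $\rho_2(\psi_\lambda)=(\psi_\lambda(0),\psi_\lambda(\alpha))=(a_\lambda, b_\lambda e_\lambda(\alpha))$, the relation $B\rho_1=\rho_2$ is two linear equations in three unknowns; a $2\times 3$ homogeneous system always has a nontrivial kernel, giving existence of $\psi_\lambda$. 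This establishes that every $\lambda\in\mathbb{R}$ is a generalized eigenvalue, and that the eigenfunctions are exactly those in (\ref{eq:2-1GenEigen}). I would relegate the explicit formulas for $a_\lambda,b_\lambda,c_\lambda$ (and the dimension count of the eigenspace, generically one and jumping at the point-spectrum values) to the subsequent lemmas, since the present statement only asserts existence and the functional form.
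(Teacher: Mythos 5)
Your proposal is correct and takes essentially the same route as the paper: solve the distributional ODE componentwise to force the form (\ref{eq:2-1GenEigen}), then integrate by parts and use the arbitrariness of $\phi\in\mathscr{D}_{B}\left(\Omega\right)$ together with unitarity of $B$ to transfer the boundary condition $B\rho_{1}(\cdot)=\rho_{2}(\cdot)$ from the test functions onto $\psi_{\lambda}$. Your final step \textemdash{} observing that the resulting $2\times3$ homogeneous linear system in $a_{\lambda},b_{\lambda},c_{\lambda}$ always has a nontrivial solution, so every real $\lambda$ really is a generalized eigenvalue \textemdash{} is left implicit in the paper (it is only made explicit in the subsequent lemmas giving the formulas for $a_{\lambda}$ and $c_{\lambda}$), and including it is a sensible completion rather than a different method.
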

\begin{proof}
We can write (\ref{eq:1-1GenEigen}) as $\psi_{\lambda}'=i2\pi\lambda\psi_{\lambda}.$
Solving this differential equation using weak solution are also strong
solutions we see that (\ref{eq:2-1GenEigen}) holds. It follows from
(\ref{eq:2-1GenEigen}) that both sides of (\ref{eq:1-1GenEigen})
are given by integrals, hence we can rewrite (\ref{eq:1-1GenEigen})
as 
\[
\int_{\Omega}\phi(t)\,\left(\dot{P}_{B}'\psi_{\lambda}\right)(x)\, dx=\int_{\Omega}\left(\dot{P}_{B}\phi\right)(x)\,\psi_{\lambda}(x)\, dx,
\]
where $\lambda\in\mathbb{R}$ and $\phi\in\mathscr{D}_{B}\left(\Omega\right).$
Integration by parts, then shows that the boundary form 
\[
B(\phi,\psi_{\lambda})=\phi(1)\overline{\psi_{\lambda}(1)}-\phi(0)\overline{\psi_{\lambda}(0)}+\phi(\beta)\overline{\psi_{\lambda}(\beta)}-\phi(\alpha)\overline{\psi_{\lambda}(\alpha)}=0,
\]
for all $\lambda\in\mathbb{R}$ and all $\phi\in\mathscr{D}_{B}\left(\Omega\right).$
Fixing $\lambda$ and using $\phi$ in $\mathscr{D}_{B}\left(\Omega\right)$
is arbitrary, it follows that $\psi_{\lambda}$ satisfies the boundary
condition $B\rho_{1}(\cdot)=\rho_{2}(\cdot).$ 
\end{proof}
The boundary condition (\ref{eq:ExtensionDomain1}) gives 
\begin{align}
wb\: e(\phi+\lambda)-\sqrt{1-w^{2}}c\: e(\theta-\psi+\beta\lambda) & =a\label{eq:3}\\
\sqrt{1-w^{2}}b\: e(\psi+\lambda)+wc\: e(\theta-\phi+\beta\lambda) & =b\: e(\alpha\lambda)\label{eq:4}
\end{align}

\begin{lem}
\label{lem:multiplicity-0<w}If $0<w\leq1,$ then each generalized
eigenvalue has multiplicity one, and the two functions $\lambda\mapsto a_{B}(\lambda)$
and $\lambda\mapsto c_{B}(\lambda)$ are given by the following formulas:

\begin{equation}
a_{\lambda}=b_{\lambda}w^{-1}e(\phi)e(\lambda)\left(1-\sqrt{1-w^{2}}e(-\psi+(\alpha-1)\lambda)\right)\label{eq:a(lambda)}
\end{equation}
and
\begin{equation}
c_{\lambda}=b_{\lambda}w^{-1}e(\phi-\theta)e(-(\beta-\alpha)\lambda)\left(1-\sqrt{1-w^{2}}e(\psi-(\alpha-1)\lambda)\right).\label{eq:c(lambda)}
\end{equation}
\end{lem}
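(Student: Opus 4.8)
The plan is to treat the generalized eigenfunction as an element of $\mathbb{C}^{3}$ via its three scalar coefficients $(a_{\lambda},b_{\lambda},c_{\lambda})$, and to show that, when $w\neq0$, the two boundary relations \eqref{eq:3}--\eqref{eq:4} pin down $a_{\lambda}$ and $c_{\lambda}$ uniquely as functions of the single free parameter $b_{\lambda}$; multiplicity one is then immediate, and the explicit formulas \eqref{eq:a(lambda)}--\eqref{eq:c(lambda)} fall out of the phase algebra.

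First I would recall that substituting $\psi_{\lambda}$ from \eqref{eq:2-1GenEigen} into the boundary condition $B\rho_{1}(\psi_{\lambda})=\rho_{2}(\psi_{\lambda})$ has already produced the pair \eqref{eq:3}--\eqref{eq:4}; here $\rho_{1}(\psi_{\lambda})=(b_{\lambda}e(\lambda),c_{\lambda}e(\beta\lambda))$ and $\rho_{2}(\psi_{\lambda})=(a_{\lambda},b_{\lambda}e(\alpha\lambda))$, so the generalized eigenspace at $\lambda$ is exactly the solution set in $(a_{\lambda},b_{\lambda},c_{\lambda})$ of these two equations.

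Next, invoking the hypothesis $0<w\leq1$, the coefficient $w\,e(\theta-\phi+\beta\lambda)$ of $c_{\lambda}$ in \eqref{eq:4} is nonzero, so I can solve \eqref{eq:4} for $c_{\lambda}$ in terms of $b_{\lambda}$. Pulling a factor $e(\alpha\lambda)$ out of the bracket and recombining phases via $e(s)e(t)=e(s+t)$ yields \eqref{eq:c(lambda)}. Substituting this $c_{\lambda}$ back into \eqref{eq:3} gives $a_{\lambda}$ as a function of $b_{\lambda}$: the cross-term carrying $\sqrt{1-w^{2}}\,c_{\lambda}$ splits into one piece proportional to $e(\phi-\psi+\alpha\lambda)$ and one proportional to $e(\phi+\lambda)$, and the latter contributes $(1-w^{2})/w$, which combines with the leading $w\,e(\phi+\lambda)$ to give $w^{-1}e(\phi)e(\lambda)$; factoring then produces \eqref{eq:a(lambda)}. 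Since $a_{\lambda}$ and $c_{\lambda}$ are both forced by $b_{\lambda}$ while $b_{\lambda}$ ranges freely over $\mathbb{C}$, the solution space is one complex dimension, i.e., each generalized eigenvalue has multiplicity one.

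I expect no conceptual obstacle: the multiplicity statement is an immediate consequence of $w\neq0$ letting me invert the coefficient of $c_{\lambda}$, and everything else is bookkeeping with the phases $e(\cdot)$. The one step that needs care is the cancellation in the $a_{\lambda}$ computation --- getting the sign of the cross-term right so that the $e(\phi+\lambda)$ contributions sum to $w^{-1}e(\phi)e(\lambda)$ rather than leaving a residual $w$-dependence, and confirming that the surviving term is exactly $-w^{-1}\sqrt{1-w^{2}}\,e(\phi-\psi+\alpha\lambda)$. This is the crux of matching \eqref{eq:a(lambda)}.
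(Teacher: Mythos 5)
Your proposal is correct and follows essentially the same route as the paper: solve the second boundary relation (\ref{eq:4}) for $c_{\lambda}$ (possible precisely because $w\neq 0$), substitute into (\ref{eq:3}) to obtain $a_{\lambda}$, and conclude multiplicity one because every solution is the scalar multiple by $b_{\lambda}$ of a single fixed solution. The only cosmetic difference is that the paper first disposes of the case $b_{\lambda}=0$ and then normalizes $b_{\lambda}=1$, whereas you keep $b_{\lambda}$ as the free parameter throughout; the phase algebra, including the key cancellation $w+(1-w^{2})/w=w^{-1}$ in the $a_{\lambda}$ computation, is identical.
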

\begin{proof}
If $b=0,$ then (\ref{eq:3}) and (\ref{eq:4}) shows that $a=c=0.$
If $b=1$ we can solve (\ref{eq:3}) and (\ref{eq:4}) for $a$ and
$c$. If we assume $b=1$, we can re-write the boundary conditions
as
\begin{alignat}{1}
w\: e(\phi+\lambda)-\sqrt{1-w^{2}}c_{\lambda}\: e(\theta-\psi+\beta\lambda) & =a_{\lambda}\label{eq:tmp}\\
\sqrt{1-w^{2}}e(\psi+\lambda)+w\: c_{\lambda}\: e(\theta-\phi+\beta\lambda) & =e(\alpha\lambda).\label{eq:tmp-1}
\end{alignat}
From (\ref{eq:tmp-1}), 
\begin{equation}
c_{\lambda}=\frac{e(\alpha\lambda)-\sqrt{1-w^{2}}e(\psi+\lambda)}{we(\theta-\phi+\beta\lambda)}\label{eq:tmp-2}
\end{equation}
which can be written as \ref{eq:a(lambda)}). Substituting (\ref{eq:tmp-2})
into (\ref{eq:tmp-1}), we get (\ref{eq:c(lambda)}). \end{proof}
\begin{lem}
\label{lem:multiplicity-w=00003D0}If $w=0$ then each point $\lambda\in-\frac{\psi}{1-\alpha}+\frac{1}{1-\alpha}\mathbb{Z}$
is a generalized eigenvalue of multiplicity two and all other generalized
eigenvalues have multiplicity one. In fact, for any $\lambda$ in
$\mathbb{R}$ 
\[
\psi_{\lambda}=c_{\lambda}\left(-e(\theta-\psi+\beta\lambda)\chi_{-}+\chi_{+}\right)e_{\lambda}
\]
is a generalized eigenfunctions and for $\lambda\in-\frac{\psi}{1-\alpha}+\frac{1}{1-\alpha}\mathbb{Z}$
\[
\psi_{\lambda}=b_{\lambda}\chi_{0}e_{\lambda}
\]
is also a (generalized) eigenfunction. \end{lem}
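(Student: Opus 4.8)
The plan is to substitute $w=0$ directly into the boundary-condition equations \eqref{eq:3} and \eqref{eq:4} and then read off the dimension of the solution space of coefficients $(a_\lambda,b_\lambda,c_\lambda)$ for each fixed $\lambda$; the preceding lemma has already reduced the determination of generalized eigenfunctions to solving this system. With $w=0$ we have $\sqrt{1-w^2}=1$, so \eqref{eq:3} collapses to $a_\lambda=-c_\lambda\,e(\theta-\psi+\beta\lambda)$, which expresses $a_\lambda$ in terms of $c_\lambda$ and imposes nothing further, while \eqref{eq:4} collapses to $b_\lambda\bigl(e(\psi+\lambda)-e(\alpha\lambda)\bigr)=0$, which involves only the middle coefficient $b_\lambda$. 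The key structural observation is that the off-diagonal matrix \eqref{eq:w0} decouples the system: the ``exterior'' coefficients $a_\lambda,c_\lambda$ are governed by \eqref{eq:3} alone, and the ``interior'' coefficient $b_\lambda$ is governed by \eqref{eq:4} alone.

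First I would pin down exactly when the scalar factor multiplying $b_\lambda$ vanishes. Using $e(x)=e(y)\iff x-y\in\mathbb{Z}$, the identity $e(\psi+\lambda)=e(\alpha\lambda)$ holds precisely when $\psi+(1-\alpha)\lambda\in\mathbb{Z}$, that is, when $\lambda\in-\frac{\psi}{1-\alpha}+\frac{1}{1-\alpha}\mathbb{Z}$. This resonance condition is what produces the stated arithmetic progression.

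Next I would split into the two cases and count. If $\lambda$ is \emph{not} in this progression, then $e(\psi+\lambda)-e(\alpha\lambda)\neq 0$ forces $b_\lambda=0$, while $c_\lambda$ stays free and $a_\lambda$ is slaved to it by \eqref{eq:3}; the solution space is then one-dimensional, spanned by $\psi_\lambda=c_\lambda\bigl(-e(\theta-\psi+\beta\lambda)\chi_-+\chi_+\bigr)e_\lambda$, so the multiplicity is one. If instead $\lambda$ \emph{is} in the progression, then \eqref{eq:4} is vacuous, so $b_\lambda$ becomes a second independent free parameter alongside $c_\lambda$ (with $a_\lambda$ still determined by $c_\lambda$); the solution space is two-dimensional, spanned by the previous exterior (scattering-type) function together with the extra function $\psi_\lambda=b_\lambda\chi_0 e_\lambda$ supported in the bounded component $I_0$. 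The multiplicity is therefore two on the progression and one elsewhere, exactly as claimed, and the displayed formulas for $\psi_\lambda$ follow by choosing $b_\lambda=0$, respectively $a_\lambda=c_\lambda=0$.

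There is no genuine obstacle: the content is a rank count for a degenerate system of two linear equations in three unknowns. The only point deserving care is the bookkeeping of the endpoint evaluations feeding $\rho_1,\rho_2$ (namely $\psi_\lambda(1)=b_\lambda e(\lambda)$, $\psi_\lambda(\beta)=c_\lambda e(\beta\lambda)$, $\psi_\lambda(0)=a_\lambda$, $\psi_\lambda(\alpha)=b_\lambda e(\alpha\lambda)$), which is precisely what was already used to derive \eqref{eq:3}--\eqref{eq:4}. The one conceptual remark worth isolating is why the extra resonance solution is an honest Hilbert-space vector rather than merely a distribution: since $I_0$ is bounded, $\chi_0 e_\lambda\in L^2(\Omega)$, so it is a true bound state, which is the mechanism generating the embedded point spectrum.
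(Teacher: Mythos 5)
Your proposal is correct and follows essentially the same route as the paper: substitute $w=0$ into the boundary-condition equations (\ref{eq:3})--(\ref{eq:4}), observe that they decouple into $a_{\lambda}=-c_{\lambda}e(\theta-\psi+\beta\lambda)$ and $b_{\lambda}\left(e(\psi+\lambda)-e(\alpha\lambda)\right)=0$, and read off the one- or two-dimensional solution space according to whether the resonance condition $\psi+(1-\alpha)\lambda\in\mathbb{Z}$ holds. The paper's own proof is just this, stated very tersely; your write-up supplies the rank count and the explicit derivation of the arithmetic progression that the paper leaves implicit.
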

\begin{proof}
If $w=0$ then (\ref{eq:3}) and (\ref{eq:4}) reduce to 
\begin{align*}
-c_{\lambda}e(\theta-\psi+\beta\lambda) & =a_{\lambda}\\
b_{\lambda}e(\psi+\lambda) & =b_{\lambda}e(\alpha\lambda)
\end{align*}
Hence the stated formulas for the generalized eigenfunctions follow
from (\ref{eq:2-1GenEigen}). \end{proof}
\begin{lem}
The spectrum of $P_{B}$ is the real line. In particular, the set
of generalized eigenvalues equal the spectrum of $P_{B}.$ \end{lem}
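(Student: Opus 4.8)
The plan is to prove both assertions at once. Since $P_{B}$ is selfadjoint (Lemma \ref{lem:vN def-space} and Proposition \ref{prop:kD}), its spectrum is automatically contained in $\mathbb{R}$, so it suffices to show that every real $\lambda$ lies in $\mathrm{spec}(P_{B})$. I would do this by producing, for each fixed $\lambda\in\mathbb{R}$, a \emph{Weyl sequence}, i.e.\ a sequence $f_{R}\in\mathscr{D}(P_{B})$ with $\|f_{R}\|_{L^{2}(\Omega)}=1$ and $\|(P_{B}-\lambda)f_{R}\|_{L^{2}(\Omega)}\to0$; for a selfadjoint operator the existence of such a sequence is equivalent to $\lambda\in\mathrm{spec}(P_{B})$. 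The raw material is the bounded generalized eigenfunction $\psi_{\lambda}$ of (\ref{eq:2-1GenEigen}), which solves $\psi_{\lambda}'=i2\pi\lambda\psi_{\lambda}$ on each component and satisfies $B\rho_{1}(\psi_{\lambda})=\rho_{2}(\psi_{\lambda})$, yet fails to lie in $L^{2}(\Omega)$ precisely because it has constant modulus along the two unbounded rays $I_{-}$ and $I_{+}$.

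First I would fix a smooth cutoff $\varphi_{R}$ with $\varphi_{R}\equiv1$ on $[-R,R]$, $\varphi_{R}\equiv0$ off $[-2R,2R]$, and $|\varphi_{R}'|\le C/R$, and set $g_{R}:=\varphi_{R}\psi_{\lambda}$. For all $R$ large enough that $[-R,R]$ contains the four endpoints $0,1,\alpha,\beta$, the cutoff is identically $1$ near each of them, so $\rho_{1}(g_{R})=\rho_{1}(\psi_{\lambda})$ and $\rho_{2}(g_{R})=\rho_{2}(\psi_{\lambda})$; hence $g_{R}$ still obeys $B\rho_{1}(g_{R})=\rho_{2}(g_{R})$ and lies in $\mathscr{D}(P_{B})$ by (\ref{eq:ExtensionDomain1}). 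Applying $P_{B}=\frac{1}{i2\pi}\frac{d}{dx}$ and using $\psi_{\lambda}'=i2\pi\lambda\psi_{\lambda}$ componentwise, the interior terms cancel and one is left with only the term in which the derivative lands on the cutoff:
\[
(P_{B}-\lambda)g_{R}=\frac{1}{i2\pi}\,\varphi_{R}'\,\psi_{\lambda}.
\]

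Next I would estimate the two norms. Since $\varphi_{R}'$ is supported in $\{R\le|x|\le2R\}$, where $\psi_{\lambda}$ is bounded, $\|(P_{B}-\lambda)g_{R}\|^{2}\le(2\pi)^{-2}\sup|\psi_{\lambda}|^{2}\int|\varphi_{R}'|^{2}=O(R^{-1})$. For the lower bound on $\|g_{R}\|$ I would use the explicit coefficients: in the case $0<w\le1$, Lemma \ref{lem:multiplicity-0<w} gives $a_{\lambda},c_{\lambda}\neq0$ whenever $b_{\lambda}\neq0$, because $|1-\sqrt{1-w^{2}}\,e(\cdots)|\ge1-\sqrt{1-w^{2}}>0$; and in the case $w=0$, Lemma \ref{lem:multiplicity-w=00003D0} shows the eigenfunction is a nonzero multiple of $(-e(\cdots)\chi_{-}+\chi_{+})e_{\lambda}$, again nonvanishing on $I_{-}\cup I_{+}$. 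Thus $\psi_{\lambda}$ has constant nonzero modulus on at least one unbounded ray, whence $\|g_{R}\|^{2}\ge cR$ for some $c>0$. Normalizing $f_{R}:=g_{R}/\|g_{R}\|$ yields $\|(P_{B}-\lambda)f_{R}\|=O(R^{-1})\to0$, so $\lambda\in\mathrm{spec}(P_{B})$.

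Letting $\lambda$ range over $\mathbb{R}$ gives $\mathbb{R}\subseteq\mathrm{spec}(P_{B})\subseteq\mathbb{R}$, i.e.\ $\mathrm{spec}(P_{B})=\mathbb{R}$. The preceding lemmas already show that \emph{every} real $\lambda$ is a generalized eigenvalue, so the set of generalized eigenvalues is all of $\mathbb{R}$ and coincides with the spectrum just computed, which is the ``in particular'' clause. The main obstacle — indeed the only delicate point — is ensuring the truncation keeps $f_{R}$ inside $\mathscr{D}(P_{B})$: this is exactly why $\varphi_{R}$ must be identically $1$ on a neighborhood of all four finite endpoints, so that the cutting happens only deep inside the unbounded rays, where it cannot disturb the boundary values $\rho_{1},\rho_{2}$ that define the selfadjoint domain.
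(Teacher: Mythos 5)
Your proof is correct and follows essentially the same route as the paper: both construct a Weyl (approximate eigenvector) sequence by multiplying the bounded generalized eigenfunction $\psi_{\lambda}$ by smooth cutoffs that are identically $1$ on a large ball containing the four endpoints $0,1,\alpha,\beta$ (so the boundary condition, hence domain membership, is preserved), then normalize and let the cutoff radius grow. The only cosmetic difference is that the paper uses cutoffs with derivative bounded by $1$ and lets the normalizing constants $c_{k}\to0$ do the work, whereas you build the decay into the cutoff via $|\varphi_{R}'|\le C/R$; you also spell out the lower bound $\|g_{R}\|^{2}\ge cR$ (nonvanishing of $\psi_{\lambda}$ on an unbounded ray), which the paper leaves implicit.
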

\begin{proof}
Let $\lambda$ be a real number and suppose $\psi_{\lambda}$ is determined
by (\ref{eq:2-1GenEigen}). Let $h$ be a smooth functions on the
real line such that $0\leq h,-h'\leq1,$ $h(x)=1$ when $x<0,$ and
$h(x)=0$ when $x>2.$ Let 
\[
g_{k}(x):=h(x-k)h(k-x),\: k\in\mathbb{N},x\in\mathbb{R}.
\]
Then $g_{k}$ is a sequence of smooth functions on the real line such
that $0\leq g_{k},g_{k}'\leq1,$ $g_{k}(x)=0$ when $|x|>k+2,$ and
$g_{k}(x)=1$ when $|x|<k.$ Let $c_{k}$ be a positive real number
such that $\int_{\Omega}\left|c_{k}g_{k}\psi_{\lambda}\right|^{2}=1.$
For $k>\beta$ the functions $f_{\lambda,k}:=c_{k}g_{k}\psi_{\lambda}$
are unit vectors in the domain of $P_{B}$ and 
\[
\left\Vert P_{B}f_{\lambda,k}-\lambda f_{\lambda,k}\right\Vert _{2}^{2}\to0\text{ as }k\to\infty.
\]
Consequently, $\lambda$ is in the spectrum of $P_{B}.$ 
\end{proof}

\subsection{Direct Integral Representation}

von Neumann \cite{vN49} showed there exists a probability measure
$\nu$ on $\mathbb{R}$ and a $\nu$-measurable field $H\left(\xi\right)$
of separable Hilbert spaces such that, if 
\[
K:=\int_{\mathbb{R}}H\left(\xi\right)d\nu\left(\xi\right),
\]
then there is a unitary $F:L^{2}(\Omega)\to K,$ such that 
\begin{equation}
\left(F\left(P_{B}f\right)\right)\left(\xi\right)=\xi\left(Ff\right)\left(\xi\right)\label{eq:1GenEigen}
\end{equation}
for all $\xi\in\Xi=\mathrm{supp}\left(\nu\right)$ and all $f$ in
the domain of $P_{B}.$ Furthermore, if $n\left(\xi\right)$ denotes
the dimension of $H(\xi)$ there exists a sequence $\left(g_{k}\right)$
of $\nu$-measurable vector fields such that 
\[
\left\{ g_{k}\left(\xi\right)\mid k<n\left(\xi\right)+1\right\} 
\]
is an orthonormal basis for $H\left(\xi\right)$ and $g_{k}\left(\xi\right)=0$
when $n\left(\xi\right)<k.$ Note $n\left(\xi\right)=\infty$ is possible. 

Let 
\begin{equation}
\left(Ff\right)_{k}\left(\xi\right):=\left\langle \left(Ff\right)\left(\xi\right)\mid g_{k}\left(\xi\right)\right\rangle _{H\left(\xi\right)}\label{eq:2GenEigen}
\end{equation}
for $f$ in $L^{2}\left(\Omega\right)$ and $k=1,2,\ldots.$ By \cite[p.83]{Mau68}
the mapping $\phi\to\left(F\phi\right)\left(\xi\right)$ is continuous
as a function $\mathscr{D}_{B}\left(\Omega\right)\to H\left(\xi\right).$
Combining this continuity with (\ref{eq:2GenEigen}) we conclude 
\begin{equation}
\delta_{\xi,k}\left(\phi\right):=\left\langle \left(F\phi\right)\left(\xi\right)\mid g_{k}\left(\xi\right)\right\rangle _{H\left(\xi\right)}\label{eq:3GenEigen}
\end{equation}
 is a continuous linear functional on $\mathscr{D}_{B}\left(\Omega\right),$
i.e., a distribution on $\Omega.$ 

Combining (\ref{eq:1GenEigen}) and (\ref{eq:3GenEigen}) we see that
\begin{align*}
\delta_{\xi,k}\left(P_{B}\phi\right) & =\left\langle \left(F\left(P_{B}\phi\right)\right)\left(\xi\right)\mid g_{k}\left(\xi\right)\right\rangle _{H\left(\xi\right)}\\
 & =\left\langle \xi\left(F\left(\phi\right)\right)\left(\xi\right)\mid g_{k}\left(\xi\right)\right\rangle _{H\left(\xi\right)}\\
 & =\xi\delta_{\xi,k}\left(\phi\right)
\end{align*}
for all $\phi$ in $\mathscr{D}_{B}\left(\Omega\right).$ Hence, $\delta_{\xi,k}'=i2\pi\xi\delta_{\xi,k}$
and consequently,
\begin{equation}
\delta_{\xi,k}=\psi_{\xi}=\left(a_{\xi,k}\chi_{-}+b_{\xi,k}\chi_{0}+c_{\xi,k}\chi_{+}\right)e_{\xi}\label{eq:4GenEigen}
\end{equation}
for some choice of constants $a_{\xi,k},b_{\xi,k},c_{\xi,k}$ such
that $\delta_{\xi,k}$ satisfies the boundary condition (\ref{eq:ExtensionDomain1}).
By (\ref{eq:3GenEigen}) these constants all vanish when $k>n\left(\xi\right).$ 

Let $f\in L^{2}(\Omega)$, write $f=f_{-}+f_{0}+f_{+}$, where $f_{-}:=\chi_{-}f$,
$f_{0}:=\chi_{0}f$, and $f_{+}:=\chi_{+}f$. By (\ref{eq:2GenEigen}),
(\ref{eq:3GenEigen}), and (\ref{eq:4GenEigen}) 
\begin{equation}
\left(F\phi\right)_{k}\left(\xi\right)=\overline{a_{\xi,k}}\widehat{\phi_{-}}\left(\xi\right)+\overline{b_{\xi,k}}\widehat{\phi_{0}}\left(\xi\right)+\overline{c_{\xi,k}}\widehat{\phi_{+}}\left(\xi\right)\label{eq:5GenEigen}
\end{equation}
for any test function $\phi$ in $\mathscr{D}{}_{B}\left(\Omega\right)$.
Here $\widehat{\psi}$ denotes the Fourier transform of $\psi.$ 
\begin{thm}
\label{thm:DirectIntegral-0<w}If $0<w\leq1$ then 
\[
\left\langle \phi\mid\psi\right\rangle _{\Omega}=\int_{\mathbb{R}}\left|a_{\xi}\right|^{2}\widehat{\phi_{-}}\left(\xi\right)\overline{\widehat{\psi_{-}}}\left(\xi\right)+\left|b_{\xi}\right|^{2}\widehat{\phi_{0}}\left(\xi\right)\overline{\widehat{\psi_{0}}}\left(\xi\right)+\left|c_{\xi}\right|^{2}\widehat{\phi_{+}}\left(\xi\right)\overline{\widehat{\psi_{+}}}\left(\xi\right)d\nu(\xi)
\]
and 
\[
\left\langle \phi\mid P_{B}\psi\right\rangle _{\Omega}=\int_{\mathbb{R}}\left|a_{\xi}\right|^{2}\widehat{\phi_{-}}\left(\xi\right)\overline{\widehat{\psi_{-}}}\left(\xi\right)+\left|b_{\xi}\right|^{2}\widehat{\phi_{0}}\left(\xi\right)\overline{\widehat{\psi_{0}}}\left(\xi\right)+\left|c_{\xi}\right|^{2}\widehat{\phi_{+}}\left(\xi\right)\overline{\widehat{\psi_{+}}}\left(\xi\right)\xi d\nu(\xi)
\]
for all $\phi,\psi$ in $\mathscr{D}_{B}(\Omega).$\end{thm}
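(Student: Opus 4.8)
The plan is to extract both identities directly from the von Neumann direct integral decomposition recorded above, the only essential input being the multiplicity count.

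First I would specialize to $0<w\le1$ via Lemma \ref{lem:multiplicity-0<w}: each generalized eigenvalue has multiplicity one, so $n(\xi)=1$ for $\nu$-a.e. $\xi$, the fibres collapse to $H(\xi)=\mathbb{C}$, and only the index $k=1$ survives. Dropping $k$, the transform $F$ becomes scalar-valued and is given on test functions by \eqref{eq:5GenEigen},
\[
(F\phi)(\xi)=\overline{a_\xi}\,\widehat{\phi_-}(\xi)+\overline{b_\xi}\,\widehat{\phi_0}(\xi)+\overline{c_\xi}\,\widehat{\phi_+}(\xi).
\]
Since $F$ is unitary from $L^2(\Omega)$ onto $L^2(\mathbb{R},\nu)$ and $\mathscr{D}_B(\Omega)$ is dense, this isometry lets me extend the formula for $(F\phi)(\xi)$ from $\mathscr{D}_B(\Omega)$ to all of $L^2(\Omega)$; in particular it may be applied to each characteristic-function piece $\chi_\bullet\phi$, for which only the corresponding term survives.

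For the first identity I would then use that $\phi=\chi_-\phi+\chi_0\phi+\chi_+\phi$ is an orthogonal decomposition in $L^2(\Omega)$ (the supports $I_-,I_0,I_+$ are disjoint), so
\[
\langle\phi\mid\psi\rangle_\Omega=\langle\chi_-\phi\mid\chi_-\psi\rangle_\Omega+\langle\chi_0\phi\mid\chi_0\psi\rangle_\Omega+\langle\chi_+\phi\mid\chi_+\psi\rangle_\Omega .
\]
Applying the unitary $F$ to each term and reading off $\bigl(F(\chi_-\phi)\bigr)(\xi)=\overline{a_\xi}\,\widehat{\phi_-}(\xi)$, and similarly for the other two pieces, turns $\langle\chi_-\phi\mid\chi_-\psi\rangle_\Omega$ into $\int_\mathbb{R}|a_\xi|^2\widehat{\phi_-}\,\overline{\widehat{\psi_-}}\,d\nu$, and likewise for the $b$- and $c$-terms; summing gives the asserted formula. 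Equivalently, no cross terms ever appear because $F$ carries the mutually orthogonal subspaces $L^2(I_\bullet)$ onto mutually orthogonal subspaces of $L^2(\mathbb{R},\nu)$.

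For the second identity I would apply the first one with $\psi$ replaced by $P_B\psi$, which again lies in $\mathscr{D}_B(\Omega)$. Integration by parts on each component gives $\widehat{(P_B\psi)_\bullet}(\xi)=\xi\,\widehat{\psi_\bullet}(\xi)+(\text{endpoint values of }\psi\text{ on }\partial I_\bullet)$, so the leading term reproduces the claimed integrand with the extra factor $\xi$. I expect the main obstacle to be showing that the accumulated endpoint contributions cancel: this is where the boundary condition $B\rho_1(\cdot)=\rho_2(\cdot)$ from \eqref{eq:ExtensionDomain1}, satisfied simultaneously by $\psi$ and by the eigenfunction coefficients $a_\xi,b_\xi,c_\xi$ in \eqref{eq:4GenEigen}, must be used to see the resulting boundary-form expression vanish pointwise in $\xi$. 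Once that cancellation is in hand the $\xi$-weighted formula follows, completing the proof.
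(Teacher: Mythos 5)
Your treatment of the first formula is correct and is essentially the paper's own proof: multiplicity one (Lemma \ref{lem:multiplicity-0<w}) collapses the fibres, the splitting $\phi=\phi_{-}+\phi_{0}+\phi_{+}$ is orthogonal in $L^{2}(\Omega)$, $F$ is isometric on each piece, and (\ref{eq:5GenEigen}) applied to each piece yields the three diagonal terms.

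The gap is in the second formula. Your skeleton is reasonable (apply the first formula to the pair $(\phi,P_{B}\psi)$, which is legitimate since $\dot{P}_{B}$ maps $\mathscr{D}_{B}(\Omega)$ into itself, then integrate by parts componentwise), but your key claim --- that the accumulated endpoint terms cancel \emph{pointwise in $\xi$} via the boundary condition --- is false. Integration by parts gives $\widehat{(P_{B}\psi)_{-}}(\xi)=\xi\widehat{\psi_{-}}(\xi)+\tfrac{1}{i2\pi}\psi(0)$, $\widehat{(P_{B}\psi)_{0}}(\xi)=\xi\widehat{\psi_{0}}(\xi)+\tfrac{1}{i2\pi}\bigl(\psi(\alpha)\overline{e_{\xi}(\alpha)}-\psi(1)\overline{e_{\xi}(1)}\bigr)$, and $\widehat{(P_{B}\psi)_{+}}(\xi)=\xi\widehat{\psi_{+}}(\xi)-\tfrac{1}{i2\pi}\psi(\beta)\overline{e_{\xi}(\beta)}$, so the discrepancy between $\left\langle \phi\mid P_{B}\psi\right\rangle _{\Omega}$ and the asserted $\xi$-weighted integral is
\[
\mathcal{E}=-\frac{1}{i2\pi}\int_{\mathbb{R}}\Bigl(\left|a_{\xi}\right|^{2}\widehat{\phi_{-}}(\xi)\,\overline{\psi(0)}+\left|b_{\xi}\right|^{2}\widehat{\phi_{0}}(\xi)\bigl(\overline{\psi(\alpha)}e_{\xi}(\alpha)-\overline{\psi(1)}e_{\xi}(1)\bigr)-\left|c_{\xi}\right|^{2}\widehat{\phi_{+}}(\xi)\,\overline{\psi(\beta)}e_{\xi}(\beta)\Bigr)\,d\nu(\xi).
\]
Take $B$ the identity matrix, $\phi$ smooth and compactly supported in the open interval $I_{0}$ (so $\phi\in\mathscr{D}_{B}(\Omega)$ and $\widehat{\phi_{\pm}}\equiv0$), and $\psi\in\mathscr{D}_{B}(\Omega)$ locally constant near the boundary with $\psi(0)=\psi(1)=1$, $\psi(\alpha)=\psi(\beta)=0$. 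Then the integrand of $\mathcal{E}$ equals $\tfrac{1}{i2\pi}\widehat{\phi_{0}}(\xi)e_{\xi}(1)$, which is not identically zero, even though $\phi$, $\psi$, and the eigenfunction coefficients $a_{\xi},b_{\xi},c_{\xi}$ all satisfy $B\rho_{1}(\cdot)=\rho_{2}(\cdot)$. Equivalently: writing $\left\langle \phi\mid P_{B}\psi\right\rangle =\int\xi(F\phi)\overline{(F\psi)}\,d\nu$ via (\ref{eq:1GenEigen}), the theorem amounts to the vanishing of the $\xi$-weighted cross terms, and unlike the unweighted case this does not follow from the orthogonality of the three subspaces.

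What is true is that the \emph{integral} $\mathcal{E}$ vanishes, but that requires information about the still-abstract measure $\nu$ beyond the boundary-condition algebra: one needs the evaluation identities $\int\left|a_{\xi}\right|^{2}\widehat{\phi_{-}}\,d\nu=\phi(0)$, $\int\left|b_{\xi}\right|^{2}\widehat{\phi_{0}}(\xi)e_{\xi}(1)\,d\nu=\phi(1)$, $\int\left|b_{\xi}\right|^{2}\widehat{\phi_{0}}(\xi)e_{\xi}(\alpha)\,d\nu=\phi(\alpha)$, and $\int\left|c_{\xi}\right|^{2}\widehat{\phi_{+}}(\xi)e_{\xi}(\beta)\,d\nu=\phi(\beta)$, which can be extracted from the already-proved first formula by pairing with approximate identities concentrated at the four boundary points (and this limit needs care: when the boundary values of $\phi$ are nonzero the integrands decay only like $1/\xi$, so all the integrals involved converge merely conditionally). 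Once these are in hand, $\mathcal{E}$ assembles into a constant multiple of the boundary expression in (\ref{eq:BoundaryForm2}) evaluated at $(\phi,\psi)$, and only at that stage do the boundary conditions plus unitarity of $B$, via (\ref{eq:BoundaryForm3}), force $\mathcal{E}=0$. This intermediate step --- converting the $\xi$-integrals of the endpoint terms into point evaluations of $\phi$ --- is the missing idea; it cannot be waved through, and it is also precisely the point on which the paper's own (very terse) proof is silent, since its displayed computation covers only the first formula.
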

\begin{proof}
Since $0<w\leq1$ it follows from Lemma \ref{lem:multiplicity-0<w}
that the multiplicity of each generalized eigenvalue is one, consequently
$n(\xi)=1$ for all $\xi$ in $\mathbb{R}$ and each $H(\xi)$ has
dimension one. Since $F$ is an isometry and $\phi=\phi_{-}+\phi_{0}+\phi_{+}$
is orthogonal we have 
\begin{align*}
\left\langle \phi\mid\psi\right\rangle _{\Omega} & =\left\langle \phi_{-}\mid\psi_{-}\right\rangle _{\Omega}+\left\langle \phi_{0}\mid\psi_{0}\right\rangle _{\Omega}+\left\langle \phi_{+}\mid\psi_{+}\right\rangle _{\Omega}\\
 & =\left\langle F\phi_{-}\mid F\psi_{-}\right\rangle _{\nu}+\left\langle F\phi_{0}\mid F\psi_{0}\right\rangle _{\nu}+\left\langle F\phi_{+}\mid F\psi_{+}\right\rangle _{\nu}
\end{align*}
so the result follows from (\ref{eq:5GenEigen}).
\end{proof}
Below we set $d\sigma_{B}(\xi)=\left|b_{\xi}\right|^{2}d\nu(\xi)$,
and we show that this measure $d\sigma_{B}(\xi)$ is absolutely continuous
with respect to Lebesgue measure on $\mathbb{R}$. Moreover, we calculate
the Radon--Nikodym derivative.

\begin{rem}
Setting $\psi=\phi$ we can write the first equation in Theorem \ref{thm:DirectIntegral-0<w}
as 
\begin{align*}
 & \int\left|\overline{a_{\xi}}\widehat{\phi_{-}}\left(\xi\right)+\overline{b_{\xi}}\widehat{\phi_{0}}\left(\xi\right)+\overline{c_{\xi}}\widehat{\phi_{+}}\left(\xi\right)\right|^{2}d\nu\left(\xi\right)\\
 & =\int\left|a_{\xi}\right|^{2}\left|\widehat{\phi_{-}}\left(\xi\right)\right|^{2}+\left|b_{\xi}\right|^{2}\left|\widehat{\phi_{0}}\left(\xi\right)\right|^{2}+\left|c_{\xi}\right|^{2}\left|\widehat{\phi_{+}}\left(\xi\right)\right|^{2}d\nu\left(\xi\right).
\end{align*}
The ``cross terms'' in the expansion of the square on the left hand
side vanish. 
\end{rem}
Similarly, using Lemma \ref{lem:multiplicity-w=00003D0} and separating
out the discrete part of the meausure we have: 
\begin{thm}
\label{thm:DirectIntegral-w=00003D0}If $w=0$ then 
\begin{eqnarray*}
\left\langle \phi\mid\psi\right\rangle _{\Omega} & = & \int_{\mathbb{R}}\left|a_{\xi}\right|^{2}\widehat{\phi_{-}}\left(\xi\right)\overline{\widehat{\psi_{-}}}\left(\xi\right)+\left|c_{\xi}\right|^{2}\widehat{\phi_{+}}\left(\xi\right)\overline{\widehat{\psi_{+}}}\left(\xi\right)d\nu(\xi)\\
 &  & +\sum_{\xi\in-\frac{\psi}{1-\alpha}+\frac{1}{1-\alpha}\mathbb{Z}}\left|b_{\xi}\right|^{2}\widehat{\phi_{0}}\left(\xi\right)\overline{\widehat{\psi_{0}}}\left(\xi\right)
\end{eqnarray*}
and
\begin{eqnarray*}
\left\langle \phi\mid P_{b}\psi\right\rangle _{\Omega} & = & \int_{\mathbb{R}}\xi\left(\left|a_{\xi}\right|^{2}\widehat{\phi_{-}}\left(\xi\right)\overline{\widehat{\psi_{-}}}\left(\xi\right)+\left|c_{\xi}\right|^{2}\widehat{\phi_{+}}\left(\xi\right)\overline{\widehat{\psi_{+}}}\left(\xi\right)\right)d\nu(\xi)\\
 &  & +\sum_{\xi\in-\frac{\psi}{1-\alpha}+\frac{1}{1-\alpha}\mathbb{Z}}\xi\left|b_{\xi}\right|^{2}\widehat{\phi_{0}}\left(\xi\right)\overline{\widehat{\psi_{0}}}\left(\xi\right)
\end{eqnarray*}
 for all $\phi,\psi$ in $\mathscr{D}_{B}(\Omega).$
\end{thm}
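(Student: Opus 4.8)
The plan is to rerun the von Neumann direct-integral argument of Theorem~\ref{thm:DirectIntegral-0<w}, now feeding in the multiplicity data of Lemma~\ref{lem:multiplicity-w=00003D0} and isolating the atomic contribution that the $w=0$ bound states force into the spectral data. The key structural fact is that at $w=0$ the boundary conditions (\ref{eq:3})--(\ref{eq:4}) \emph{decouple}: with the degenerate matrix (\ref{eq:w0}) they collapse to the single relation $-c_{\lambda}e(\theta-\psi+\beta\lambda)=a_{\lambda}$ tying together only the two unbounded components $I_{-}$ and $I_{+}$, together with the self-contained constraint $b_{\lambda}e(\psi+\lambda)=b_{\lambda}e(\alpha\lambda)$ on the $I_{0}$-mode. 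Consequently $P_{B}$ splits as an orthogonal direct sum $P_{ext}\oplus P_{I_{0}}$ on $L^{2}(I_{-}\cup I_{+})\oplus L^{2}(I_{0})$ (the $n=2$ analogue of Proposition~\ref{prop:n}), and both claimed sesquilinear identities split accordingly; it therefore suffices to treat the external and internal summands separately.

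First I would dispatch the external, continuous part. On $L^{2}(I_{-}\cup I_{+})$ the only generalized eigenfunctions are the multiplicity-one functions $\psi_{\xi}=c_{\xi}\bigl(-e(\theta-\psi+\beta\xi)\chi_{-}+\chi_{+}\bigr)e_{\xi}$ of Lemma~\ref{lem:multiplicity-w=00003D0}; equivalently $P_{ext}$ is the first-order operator on two half-lines glued by the phase $f(0)=-e(\theta-\psi)f(\beta)$, which is unitarily equivalent to the momentum operator on all of $\mathbb{R}$ and hence has purely absolutely continuous, multiplicity-one spectrum. Repeating the support-orthogonality-plus-isometry computation of Theorem~\ref{thm:DirectIntegral-0<w} verbatim---splitting $\langle\phi\mid\psi\rangle$ across the disjoint supports \emph{before} applying $F$, as in (\ref{eq:5GenEigen}), so that no cross terms survive---reproduces the integral term $\int_{\mathbb{R}}\bigl(|a_{\xi}|^{2}\widehat{\phi_{-}}\,\overline{\widehat{\psi_{-}}}+|c_{\xi}|^{2}\widehat{\phi_{+}}\,\overline{\widehat{\psi_{+}}}\bigr)\,d\nu(\xi)$, with $\nu$ the (continuous) von Neumann measure of $P_{ext}$; inserting the eigenvalue factor $\xi$ on each mode gives the $P_{B}$-weighted version.

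Next I would handle the internal, discrete part. On $L^{2}(I_{0})$ the restriction $P_{I_{0}}=\frac{1}{i2\pi}\frac{d}{dx}$ carries the quasi-periodic boundary condition $f(\alpha)=e(\psi)f(1)$, whose eigenfunctions are precisely the bound states $\chi_{0}e_{\xi}$ with $\xi$ ranging over the lattice $-\frac{\psi}{1-\alpha}+\frac{1}{1-\alpha}\mathbb{Z}$ identified in Lemma~\ref{lem:multiplicity-w=00003D0}. These lie in $L^{2}(\Omega)$ and, after normalization, form a twisted Fourier orthonormal basis of $L^{2}(I_{0})$; Parseval against this basis, with the normalizing weights recorded as $|b_{\xi}|^{2}$, turns $\langle\phi_{0}\mid\psi_{0}\rangle$ into exactly the sum $\sum_{\xi}|b_{\xi}|^{2}\widehat{\phi_{0}}(\xi)\overline{\widehat{\psi_{0}}(\xi)}$, and the eigenvalue factor $\xi$ produces its $P_{B}$-weighted counterpart. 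Because these eigenvectors are genuinely square-integrable, the part of the spectral data they carry is necessarily atomic and supported exactly on the lattice---this is the ``discrete part of the measure'' being separated out.

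The main obstacle is the bookkeeping at the measure-zero lattice, where the fibre dimension jumps from $1$ to $2$: one must check that the external continuum and the internal atoms never interfere. Concretely, I would verify that $b_{\xi}=0$ off the lattice, so no $I_{0}$-weight leaks into the continuous integral, and that at each lattice point the external generalized eigenfunction (supported on $I_{-}\cup I_{+}$) and the bound state (supported on $I_{0}$) are orthogonal in the fibre because of their disjoint supports. Lemma~\ref{lem:multiplicity-w=00003D0} supplies exactly this separation, and the one remaining analytic input is the standard Fourier-series fact that the quasi-periodic $P_{I_{0}}$ has pure point spectrum with no absolutely continuous part; granting these, the two summands reassemble into the stated identities.
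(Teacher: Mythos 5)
Your proof is correct, but it is organized differently from the paper's. The paper disposes of this theorem in a single sentence: it reruns the proof of Theorem \ref{thm:DirectIntegral-0<w} --- isometry of the direct-integral transform $F$, orthogonality of $\phi=\phi_{-}+\phi_{0}+\phi_{+}$, and formula (\ref{eq:5GenEigen}) --- now fed with the multiplicity data of Lemma \ref{lem:multiplicity-w=00003D0}, and then ``separates out the discrete part'' of the single measure $\nu$ attached to the full operator $P_{B}$. You instead decompose the operator first: the decoupled boundary conditions $f(0)=-e(\theta-\psi)f(\beta)$ and $f(\alpha)=e(\psi)f(1)$ give $P_{B}=P_{ext}\oplus P_{I_{0}}$ on $L^{2}(I_{-}\cup I_{+})\oplus L^{2}(I_{0})$ (the $n=2$ instance of Proposition \ref{prop:n}, cf.\ Remark \ref{rem:w0}), after which the external summand is handled by the direct-integral computation and the internal summand by classical Parseval for the quasi-periodic eigenbasis $\{\chi_{0}e_{\xi}\}$ indexed by the lattice. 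Both routes rest on Lemma \ref{lem:multiplicity-w=00003D0}, but yours makes explicit something the paper's one-line argument leaves unsaid: why the atomic part of the spectral data is carried precisely by the $\widehat{\phi_{0}}\,\overline{\widehat{\psi_{0}}}$ terms and the continuous part precisely by the $\widehat{\phi_{\pm}}\,\overline{\widehat{\psi_{\pm}}}$ terms. In your setup this is forced rather than asserted --- the bound states are genuine $L^{2}$ eigenvectors of $P_{I_{0}}$, which has pure point spectrum, while $P_{ext}$ is unitarily equivalent to free momentum on $\mathbb{R}$ and hence purely absolutely continuous of multiplicity one --- so the measure separation is a consequence of the operator splitting. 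What the paper's route buys in exchange is uniformity: it is word-for-word parallel to the $0<w\leq1$ case and needs no operator-decomposition step.
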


\subsection{Extreme Cases}

Fix $B$ with parameters $w,\theta,\phi,\psi$. Our analysis depends
on the parameter $w.$ We begin by considering the extreme cases $w=0$
and $w=1$. 
\begin{thm}[$w=0$]
\label{thm:sp-w0}Choose a boundary matrix $B\in U(2)$ with parameters
$w,\theta,\phi,\psi$, let $P_{B}$ be the corresponding selfadjoint
restriction of $P$. For $w=0$, there is a mixture of continuous
and discrete spectrum. More precisely, setting $a=c=0$ in (\ref{eq:2-1GenEigen})
gives eigenfunctions that are multiples of
\begin{equation}
\chi_{0}\: e_{\lambda}\label{eq:eigen-w0-disc}
\end{equation}
when $\psi+\lambda-\alpha\lambda$ is an integer, i.e., $\lambda\in-\frac{\psi}{1-\alpha}+\frac{1}{1-\alpha}\mathbb{Z}.$
On the other hand setting $b=0$ and $c=1$ gives generalized eigenfunctions
that are multiples of 
\begin{equation}
\psi_{\lambda}:=\left(-e(\theta-\psi+\beta\lambda)\chi_{-}+\chi_{+}\right)e_{\lambda}\label{eq:eigen-w0-cont}
\end{equation}
for all $\lambda\in\mathbb{R}$. Hence the spectrum equals the real
line with uniform multiplicity one and the points in $-\frac{\psi}{1-\alpha}+\frac{1}{1-\alpha}\mathbb{Z}$
are embedded eigenvalues each with multiplicity one. \end{thm}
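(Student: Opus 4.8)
The plan is to exploit the fact that at $w=0$ the boundary matrix (\ref{eq:w0}) is anti-diagonal, which, as signaled in Proposition \ref{prop:n}, forces the selfadjoint operator $P_{B}$ to \emph{decouple} as an orthogonal direct sum. First I would read off the two boundary conditions encoded in $B\rho_{1}=\rho_{2}$: the second row gives the internal condition $e(\psi)f(1)=f(\alpha)$ relating the two endpoints of the bounded interval $I_{0}=(1,\alpha)$, while the first row gives the gluing condition $f(0)=-e(\theta-\psi)f(\beta)$ relating the right endpoint of $I_{-}$ to the left endpoint of $I_{+}$. Since these two conditions involve disjoint sets of boundary points, $P_{B}$ splits as $P_{I_{0}}\oplus P_{ext}$ on $L^{2}(I_{0})\oplus L^{2}(I_{-}\cup I_{+})$, and I would treat the two summands separately.

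For the discrete summand, the internal condition makes $P_{I_{0}}$ the momentum operator on the bounded interval of length $\alpha-1$ subject to a quasi-periodic boundary condition; its spectrum is therefore pure point. Imposing $a=c=0$ in (\ref{eq:2-1GenEigen}) and using the reduced equation $b_{\lambda}e(\psi+\lambda)=b_{\lambda}e(\alpha\lambda)$ from the proof of Lemma \ref{lem:multiplicity-w=00003D0}, the eigenvalue condition becomes $e((\alpha-1)\lambda)=e(\psi)$, i.e. $\lambda\in-\frac{\psi}{1-\alpha}+\frac{1}{1-\alpha}\mathbb{Z}$, with eigenfunction (\ref{eq:eigen-w0-disc}). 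The essential point to stress here is that $\chi_{0}\,e_{\lambda}\in L^{2}(\Omega)$ because $I_{0}$ is bounded, so these are genuine (not merely generalized) eigenvalues, each with a one-dimensional eigenspace.

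For the continuous summand, the gluing condition identifies $I_{-}\cup I_{+}$ --- after translating $(\beta,\infty)$ onto $(0,\infty)$ and absorbing the unimodular factor $-e(\theta-\psi)$ --- with the momentum operator on all of $\mathbb{R}$. Hence $P_{ext}$ has purely absolutely continuous spectrum equal to $\mathbb{R}$ with multiplicity one. I would then verify directly that the functions (\ref{eq:eigen-w0-cont}) satisfy the gluing condition, since $\psi_{\lambda}(0)=-e(\theta-\psi+\beta\lambda)=-e(\theta-\psi)\psi_{\lambda}(\beta)$, and note that, their coefficients being unimodular on unbounded intervals, they are not square-integrable and so are precisely the generalized eigenfunctions supporting the continuum.

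Finally I would assemble the two pieces: the spectrum of $P_{B}$ is the union of the spectra of $P_{ext}$ and $P_{I_{0}}$, namely the whole real line, with the arithmetic progression sitting inside $\mathbb{R}$ as embedded point spectrum. The step I expect to require the most care is reconciling this with the generalized multiplicity two asserted in Lemma \ref{lem:multiplicity-w=00003D0}: at a progression point the two-dimensional generalized eigenspace is spanned by one honest $L^{2}$ eigenfunction $\chi_{0}\,e_{\lambda}$ (the point mass, multiplicity one) together with one genuinely generalized eigenfunction (the continuum contribution, multiplicity one), and it is exactly the orthogonal decoupling forced by the anti-diagonal form of $B$ that prevents these two from interacting and keeps the absolutely continuous multiplicity uniformly equal to one.
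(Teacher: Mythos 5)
Your proof is correct, but it takes a genuinely different route from the paper's. The paper proves Theorem \ref{thm:sp-w0} in one line, by citing Lemma \ref{lem:multiplicity-w=00003D0} (which solves the generalized eigenvalue problem in the distributional framework and reads the multiplicities off the degenerate boundary equations $-c_{\lambda}e(\theta-\psi+\beta\lambda)=a_{\lambda}$, $b_{\lambda}e(\psi+\lambda)=b_{\lambda}e(\alpha\lambda)$) together with the direct-integral expansion of Theorem \ref{thm:DirectIntegral-w=00003D0}; the operator decoupling you take as your starting point appears in the paper only \emph{after} the theorem, as an interpretive comment (Remark \ref{rem:w0}), and for general $n$ as Proposition \ref{prop:n}. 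You in effect promote that remark to the proof: the anti-diagonal form of $B$ at $w=0$ splits the boundary conditions into one internal to $I_{0}$, namely $e(\psi)f(1)=f(\alpha)$, and one splicing $I_{-}$ to $I_{+}$, namely $f(0)=-e(\theta-\psi)f(\beta)$, so that $P_{B}\cong P_{I_{0}}\oplus P_{ext}$ on $L^{2}(I_{0})\oplus L^{2}(I_{-}\cup I_{+})$; the first summand is the quasi-periodic momentum operator on an interval of length $\alpha-1$ (pure point spectrum $-\frac{\psi}{1-\alpha}+\frac{1}{1-\alpha}\mathbb{Z}$, simple eigenvalues, genuine $L^{2}$ eigenfunctions $\chi_{0}e_{\lambda}$), and the second is unitarily equivalent --- via translation of $(\beta,\infty)$ onto $(0,\infty)$ and multiplication by the unimodular constant $-e(\theta-\psi)$ --- to the momentum operator on $L^{2}(\mathbb{R})$ (purely absolutely continuous spectrum $\mathbb{R}$, multiplicity one). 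Both routes are sound, and your verifications (the quantization condition $e((\alpha-1)\lambda)=e(\psi)$, the gluing identity $\psi_{\lambda}(0)=-e(\theta-\psi)\psi_{\lambda}(\beta)$) check out. What each buys: the paper's argument is uniform in $w$ --- the same generalized-eigenfunction machinery covers $0<w\leq1$ and $w=0$ alike, and it directly produces the Plancherel-type expansion needed in the later scattering sections; yours is more structural and elementary --- it reduces everything to textbook model operators, makes the periodicity of the embedded point spectrum and the ``bound states trapped in $I_{0}$'' picture transparent, and gives the cleanest resolution of the apparent tension with Lemma \ref{lem:multiplicity-w=00003D0}: the two-dimensional generalized eigenspace at a progression point splits as one honest eigenfunction plus one continuum eigenfunction lying in orthogonal reducing subspaces, which is exactly why the absolutely continuous multiplicity stays uniformly equal to one.
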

\begin{proof}
The statement follows from Lemma \ref{lem:multiplicity-w=00003D0}
and Theorem \ref{thm:DirectIntegral-w=00003D0}.
\end{proof}
\begin{figure}[H]
\includegraphics[scale=0.8]{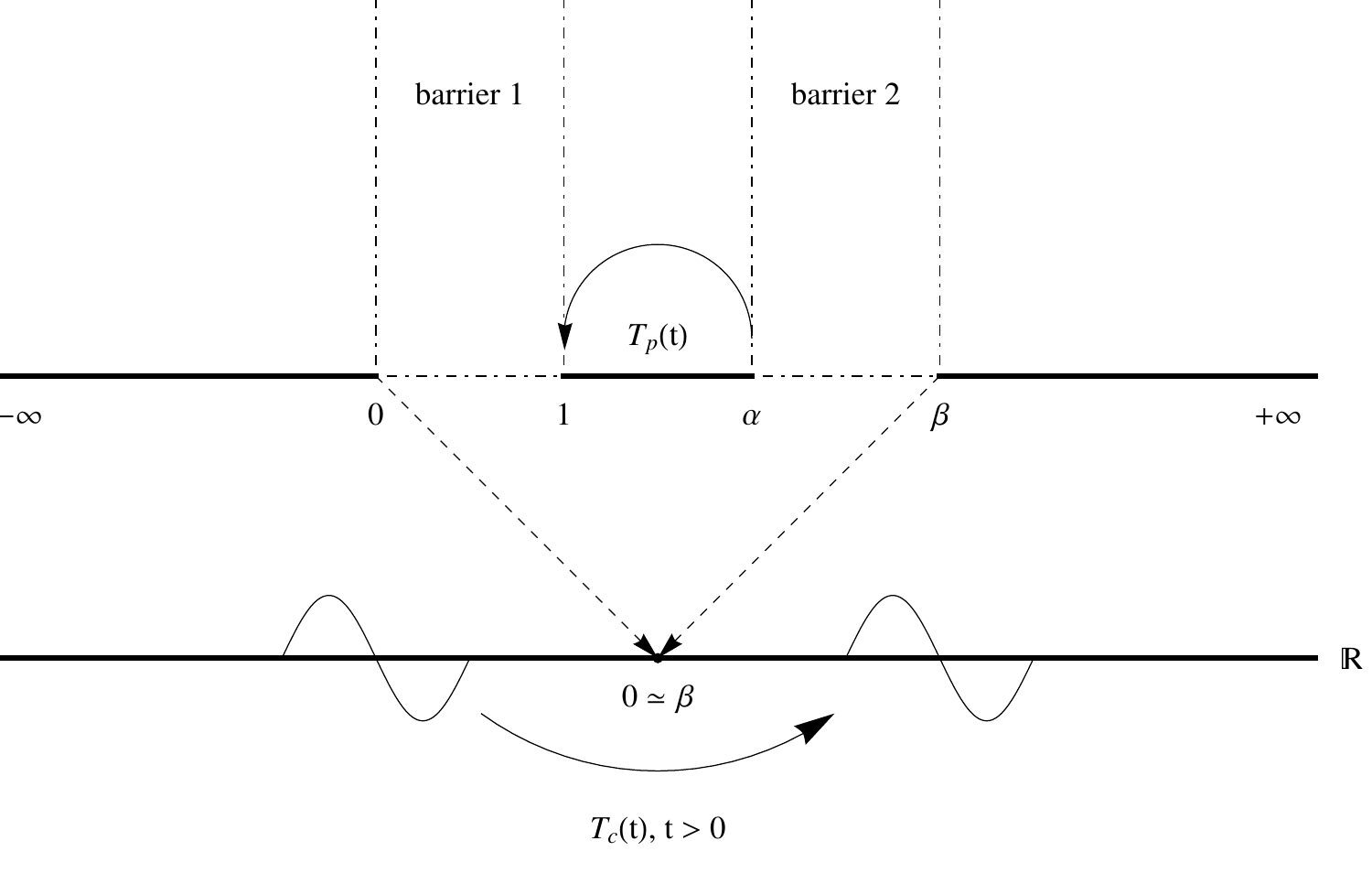}

\caption{\label{fig:w0-1}Infinite barriers.}
\end{figure}

\begin{rem}
\label{rem:w0}For $w=0$, there is no mixing/interaction between
the bounded component $I_{0}$ and the union of the two unbounded
components $I_{-}$ and $I_{+}$, i.e., the two half-lines, $I_{-}$
including $-\infty$ ; and $I_{+}$ including $+\infty$. The unitary
one-parameter group $U_{B}(t)$, acting on $L^{2}(\Omega)$, is unitarily
equivalent to a direct sum of two one-parameter groups, $T_{p}(t)$
and $T_{c}(t)$.

These two one-parameter groups are obtained as follows: Start with
$T(t)$, the usual one-parameter group of right-translation by $t$.
The subscript p indicates periodic translation, i.e., translation
by $t$ modulo $1$, and with a phase factor. 

Hence, $T_{p}(t)$ accounts for the bound-states. By contrast, the
one-parameter group $T_{c}(t)$ is as follows: Glue the rightmost
endpoint of the interval $I_{-}$ starting at $-\infty$ to the leftmost
endpoint in the interval $I_{+}$ out to $+\infty$. These two finite
end-points are merged onto a single point, say $0$, on $\mathbb{R}$
(the whole real line.) This way, the one-parameter group $T_{c}(t)$
becomes a summand of $U_{B}(t)$. $T_{c}(t)$ is just translation
in $L^{2}(\mathbb{R})$ modulo a phase factor at $x=0$.

There is subtlety: Indeed, $d/dx$ as a skew Hermitian operator in
$L^{2}$ of the separate infinite half-lines has deficiency indices
$(1,0)$ or $(0,1)$. Hence no selfadjoint extensions (when a half-line
is taken by itself.) It is only via the splicing of the two infinite
half-lines that one creates a unitary one-parameter group. In summary,
the orthogonal sum of $T_{p}(t)$ and $T_{c}(t)$ is $U_{B}(t)$.
\end{rem}

\begin{rem}
The conclusion illustrated in Figure \ref{fig:w0-1} holds \emph{mutatis
mutandis} with more than three intervals. 

Indeed, the case $n>2$ is covered in Proposition \ref{prop:n}. The
modification of Figure \ref{fig:w0-1} for this case, i.e., $n>2$
is as follows:

\begin{figure}[H]
\includegraphics[scale=0.85]{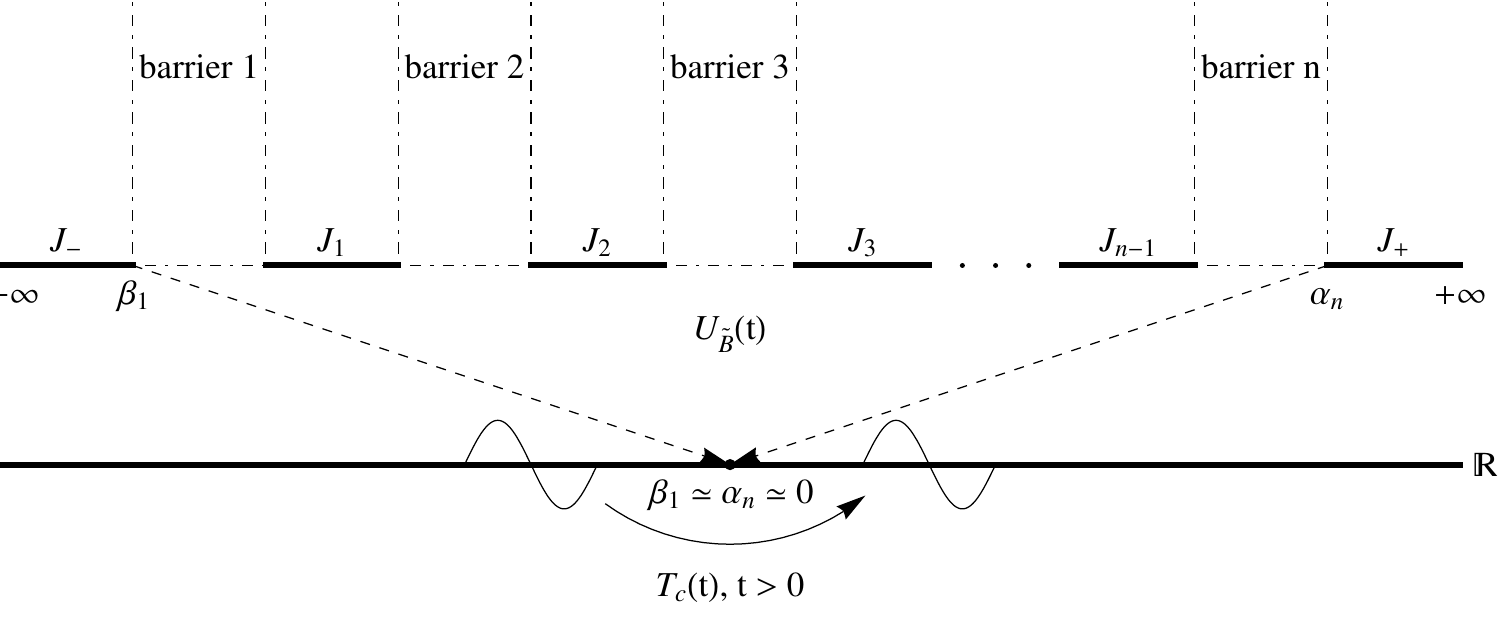}

\caption{The case $n>2$.}

\end{figure}

\end{rem}
Below, we consider the subset in $U(2)$ given by $0<w(B)\leq1$,
but it is of interest to isolate the subfamily specified by $w(B)=1$. 

But by contrast with the case $n=2$ in Fig \ref{fig:w0-1}, note
that now the $\tilde{B}$-part ($\tilde{B}\in U(n-1)$) in the orthogonal
splitting
\[
U_{B}(t)\cong U_{\tilde{B}}(t)\oplus T_{c}(t),\: t\in\mathbb{R}
\]
in 
\[
L^{2}(\Omega)\cong L^{2}(\bigcup{}_{i=1}^{n-1}J_{i})\oplus L^{2}(\mathbb{R})
\]
allows for a rich variety of inequivalent unitary one-parameter groups
$U_{\tilde{B}}(t)$. The case $L^{2}(J_{1}\cup J_{2})$ is covered
in \cite{FeJoPed1}.
\begin{thm}[$w=1$]
\label{thm:sp-w1}Choose a boundary matrix $B\in U(2)$ with parameters
$w,\theta,\phi,\psi$ as in (\ref{eq:2-by-2 Unitary}), and let $P_{B}$
be the corresponding selfadjoint restriction of $P$. For $w=1$,
the generalized eigenfunction is a multiple of 
\begin{equation}
\psi_{\lambda}=\left(e(\phi+\lambda)\chi_{-}+\chi_{0}+e(\phi-\theta-(\beta-\alpha)\lambda)\chi_{+}\right)e_{\lambda}\label{eq:eigen-w1}
\end{equation}
for any $\lambda\in\mathbb{R}$. In particular, the spectrum of $P_{B}$
is $\mathbb{R}$ with uniform multiplicity equal to one. \end{thm}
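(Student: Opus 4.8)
The plan is to recognize that the hypothesis $w=1$ is the boundary case of the range $0<w\le 1$ already treated in Lemma~\ref{lem:multiplicity-0<w} and Theorem~\ref{thm:DirectIntegral-0<w}, so that the entire direct-integral apparatus applies verbatim and only the explicit coefficients need to be evaluated at $w=1$. First I would substitute $w=1$ into the parametrization (\ref{eq:2-by-2 Unitary}); since $\sqrt{1-w^{2}}=0$, the off-diagonal entries vanish and $B$ reduces to the diagonal form (\ref{eq:w1}).

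Next I would impose the boundary condition $B\rho_{1}(\psi_{\lambda})=\rho_{2}(\psi_{\lambda})$ on the ansatz (\ref{eq:2-1GenEigen}). Evaluating $\psi_{\lambda}$ at the four boundary points gives $\rho_{1}(\psi_{\lambda})=(b_{\lambda}e(\lambda),\,c_{\lambda}e(\beta\lambda))$ and $\rho_{2}(\psi_{\lambda})=(a_{\lambda},\,b_{\lambda}e(\alpha\lambda))$. Because $B$ is diagonal, the two scalar equations decouple: the first yields $a_{\lambda}=b_{\lambda}e(\phi+\lambda)$, and the second yields $c_{\lambda}=b_{\lambda}e(\phi-\theta-(\beta-\alpha)\lambda)$. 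Equivalently, these are exactly what one obtains by setting $\sqrt{1-w^{2}}=0$ and $w^{-1}=1$ in the general formulas (\ref{eq:a(lambda)}) and (\ref{eq:c(lambda)}); normalizing $b_{\lambda}=1$ then produces the eigenfunction (\ref{eq:eigen-w1}).

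For the spectral conclusion, the fact that the spectrum is all of $\mathbb{R}$ is independent of $w$ and follows from the Weyl-sequence lemma above, which exhibits approximate eigenvectors $c_{k}g_{k}\psi_{\lambda}$ for every real $\lambda$. Uniform multiplicity one then follows from Lemma~\ref{lem:multiplicity-0<w} specialized to $w=1$: since $b_{\lambda}=0$ forces $a_{\lambda}=c_{\lambda}=0$, the generalized eigenspace at each $\lambda$ is spanned by the single vector (\ref{eq:eigen-w1}), so in the direct-integral representation of Theorem~\ref{thm:DirectIntegral-0<w} we have $n(\xi)=1$ for every $\xi$.

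The argument involves no genuine obstacle; the only point requiring a moment's care is verifying that the closed-form coefficients of Lemma~\ref{lem:multiplicity-0<w} remain valid at the endpoint $w=1$ of the interval $(0,1]$. Since the only potentially singular factor is $w^{-1}$, which is harmless at $w=1$, the specialization is legitimate, and the distinctive feature of this extreme case, as opposed to $0<w<1$, is simply that the $\sqrt{1-w^{2}}$ terms drop out, decoupling the boundary relation into two independent transmission conditions, one across each barrier.
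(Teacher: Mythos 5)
Your proposal is correct and follows essentially the same route as the paper: the paper's proof of Theorem \ref{thm:sp-w1} consists precisely of specializing Lemma \ref{lem:multiplicity-0<w} and Theorem \ref{thm:DirectIntegral-0<w} to $w=1$, which is your argument. Your added details (the decoupled boundary equations from the diagonal form (\ref{eq:w1}), the Weyl-sequence lemma for $\mathrm{spec}(P_{B})=\mathbb{R}$, and the check that $w^{-1}$ is harmless at the endpoint $w=1$) simply spell out what the paper leaves implicit.
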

\begin{proof}
The statement follows from Lemma \ref{lem:multiplicity-0<w} and Theorem
\ref{thm:DirectIntegral-0<w} by setting $w=1.$ \end{proof}
\begin{rem}
\label{rem:w1}For $w=1$, the unitary one-parameter group $U_{B}(t)$
generated by $P_{B}$ is characterized by the phase transitions from
$0$ to $1$, and from $\alpha$ to $\beta$; see Figure \ref{fig:w1}.
Specifically, glue the rightmost endpoint of the interval $I_{-}$
starting at $-\infty$ to the left endpoint in the interval $I_{0}$;
meanwhile, glue the right endpoint in $I_{0}$ to the left endpoint
of the interval $I_{+}$ out to $+\infty$. This way, $U_{B}(t)$
is just translation in $L^{2}(\mathbb{R})$ modulo two phase factors
(see (\ref{eq:w1})) at $x=0$ and $x=\alpha$, respectively.
\end{rem}

\subsection{Generic Case }

Fix $I_{1}$, $I_{2}$, and let $\Omega=I_{-}\cup I_{0}\cup I_{+}$
be the exterior domain as before. Meanwhile, it is convenient to consider
$\mathbb{R}\backslash\{1,\alpha\}$, i.e., the union of three components
\begin{equation}
J_{-}:=(-\infty,1),\: J_{0}=(1,\alpha),\: J_{+}:=(\alpha,\infty)\label{eq:Omega-1-1}
\end{equation}

\begin{thm}[$0<w<1$]
\label{thm:sp-w} Choose $B\in U(2)$ with parameters $w,\theta,\phi,\psi$.
Let $P_{B}$ be the corresponding selfadjoint extension. For $0<w<1$,
the generalized eigenfunction is a multiple of 
\begin{equation}
\psi_{\lambda}:=\left(a(\lambda)\chi_{-}+\chi_{0}+c(\lambda)\chi_{+}\right)e_{\lambda}\label{eq:eigen-w}
\end{equation}
for any $\lambda\in\mathbb{R}$, where

In particular, the spectrum is $\mathbb{R}$ with uniform multiplicity
equal to one. 
\end{thm}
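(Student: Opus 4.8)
The plan is to treat Theorem~\ref{thm:sp-w} as the interior subcase of the range $0<w\leq1$ already analyzed in Lemma~\ref{lem:multiplicity-0<w} and Theorem~\ref{thm:DirectIntegral-0<w}, exactly as the boundary case $w=1$ was handled in Theorem~\ref{thm:sp-w1}. First I would record that any nonzero generalized eigenfunction must have $b_{\lambda}\neq0$: indeed Lemma~\ref{lem:multiplicity-0<w} shows that $b_{\lambda}=0$ forces $a_{\lambda}=c_{\lambda}=0$, hence the trivial solution. Normalizing $b_{\lambda}=1$ then specializes (\ref{eq:2-1GenEigen}) to the displayed form of $\psi_{\lambda}$, with $a(\lambda)$ and $c(\lambda)$ read off directly from (\ref{eq:a(lambda)}) and (\ref{eq:c(lambda)}). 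This already fixes the asserted shape of the eigenfunction, and supplies the formulas that belong in the (truncated) ``where'' clause; together with Lemma~\ref{lem:multiplicity-0<w} it also gives multiplicity one for every $\lambda$.

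The content that genuinely distinguishes $0<w<1$ from the extreme case $w=0$ is the \emph{absence} of embedded point spectrum, and this is where I would place the one nonformal step. Because $w>0$ strictly, the inverse $w^{-1}$ in the formulas is meaningful and $\sqrt{1-w^{2}}<1$, so the factors
\[
1-\sqrt{1-w^{2}}\,e(-\psi+(\alpha-1)\lambda)\quad\text{and}\quad 1-\sqrt{1-w^{2}}\,e(\psi-(\alpha-1)\lambda)
\]
occurring in $a(\lambda)$ and $c(\lambda)$ never vanish, each being $1$ minus a quantity of modulus $\sqrt{1-w^{2}}<1$. Consequently $a(\lambda)\neq0$ and $c(\lambda)\neq0$ for every real $\lambda$, so each $\psi_{\lambda}$ is nonzero on both unbounded components $I_{-}$ and $I_{+}$ and therefore fails to lie in $L^{2}(\Omega)$. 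Thus no $\lambda$ produces an honest bound state, in contrast with $w=0$ where the trapped function $\chi_{0}e_{\lambda}$ survives on the bounded interval; this is exactly why the multiplicity stays uniformly one here.

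To finish, I would invoke the earlier lemma identifying the set of generalized eigenvalues with the spectrum of $P_{B}$: since every real $\lambda$ is a generalized eigenvalue, the spectrum of $P_{B}$ is all of $\mathbb{R}$. Uniform multiplicity one is then immediate from the direct-integral decomposition of Theorem~\ref{thm:DirectIntegral-0<w}, in which $n(\xi)=1$ for all $\xi$. I do not expect any serious obstacle; the only step requiring care is the nonvanishing of $a(\lambda)$ and $c(\lambda)$, which is precisely the strict inequality $w>0$ and is what rules out the embedded eigenvalues that appear at $w=0$.
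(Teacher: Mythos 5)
Your proposal is correct and takes essentially the same route as the paper: the paper's entire proof is a one-line citation of Lemma \ref{lem:multiplicity-0<w} and Theorem \ref{thm:DirectIntegral-0<w}, which are precisely the two results you invoke. Your additional details --- normalizing $b_{\lambda}=1$, observing that $\sqrt{1-w^{2}}<1$ makes $a(\lambda)$ and $c(\lambda)$ nonvanishing (hence no embedded bound states), and citing the lemma that the generalized eigenvalues fill out the spectrum --- simply make explicit what the paper leaves implicit.
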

(We stress that all three systems (\ref{eq:eigen-w})-(\ref{eq:c(lambda)})
depend on the chosen $B\in U(2)$, so $\psi_{\lambda}^{(B)}$, $a_{B}(\lambda)$,
and $c_{B}(\lambda)$, but the variable $B$ will be suppressed on
occasion.)
\begin{proof}
The statement follows from Lemma \ref{lem:multiplicity-0<w} and Theorem
\ref{thm:DirectIntegral-0<w}.
\end{proof}

Rewrite (\ref{eq:a(lambda)}) and (\ref{eq:c(lambda)}) as 
\begin{alignat}{1}
a(\lambda) & =w^{-1}e(\phi)e(\lambda)H(\lambda)^{-1}\label{eq:a(lambda)-1}\\
c(\lambda) & =w^{-1}e(\phi-\theta)e(-(\beta-\alpha)\lambda)\overline{H(\lambda)}^{-1}\label{eq:c(lambda)-1}
\end{alignat}
where
\begin{equation}
H(\lambda):=\frac{1}{1-\sqrt{1-w^{2}}e(-\psi+(\alpha-1)\lambda)}.\label{eq:H(lambda)}
\end{equation}
By assumption, $0<w<1$, so that

\begin{alignat}{1}
a^{-1}(\lambda) & =w\: e(-\phi)\sum_{n=0}^{\infty}\left(1-w^{2}\right)^{\frac{n}{2}}e(-\lambda-n\psi+n(\alpha-1)\lambda)\label{eq:a-inv(lambda)}\\
c^{-1}(\lambda) & =w\: e(\theta-\phi)\sum_{n=0}^{\infty}\left(1-w^{2}\right)^{\frac{n}{2}}e((\beta-\alpha)\lambda+n\psi-n(\alpha-1)\lambda).\label{eq:c-inv(lambda)}
\end{alignat}

\begin{rem}
$e(-(\alpha-1)\lambda)H(\lambda)$ is the the transfer function for
the feedback component in Figure \ref{fig:forward}. Note the RHS
of (\ref{eq:a-inv(lambda)}), (\ref{eq:c-inv(lambda)}) are the corresponding
Fourier series expansions.
\end{rem}
\begin{figure}[H]
\begin{tabular}{cc}
\setlength{\unitlength}{0.9cm}
\begin{picture}(6,0.1)

\thicklines
\put(0,0){\line(1,0){1}}
\put(2,0){\line(3,0){1}}
\put(4,0){\line(5,0){1}}
%\put(6,0){\line(7,0){1}}
%\put(7,0){\line(8,0){1}}
%\put(9,0){\line(10,0){1}}

%\put(5,0){\circle{0.1}}

\put(-0.4,-0.5){$-\infty$}
\put(0.8,-0.5){$0$}

\put(1.8,-0.5){$1$}
\put(2.8,-0.5){$\alpha$}

\put(3.8,-0.5){$\beta$}
\put(4.8,-0.5){$\infty$}

%\put(6,0){$\ldots$}

%\put(6.8,-0.5){$\alpha_{n-1}$}
%\put(7.8,-0.5){$\beta_{n}$}

%\put(8.8,-0.5){$\alpha_{n}$}
%\put(9.8,-0.5){$+\infty$}

\end{picture}
\vspace{0.5cm} & \setlength{\unitlength}{0.9cm}
\begin{picture}(5,0.1)

\thicklines
\put(0,0){\line(1,0){1.96}}
\put(2.04,0){\line(1,0){0.91}}
\put(3.04,0){\line(1,0){1.96}}

\put(2,0){\circle{0.1}}
\put(3,0){\circle{0.1}}

\put(-0.4,-0.5){$-\infty$}
\put(1.9,-0.5){$1$}
\put(2.9,-0.5){$\alpha$}
\put(4.8,-0.5){$\infty$}

\end{picture}
\vspace{0.5cm}\tabularnewline
(a) $\Omega=I_{-}\cup I_{0}\cup I_{+}$ & (b) $\mathbb{R}\backslash\{1,\alpha\}=J_{-}\cup J_{0}\cup J_{+}$\tabularnewline
\end{tabular}

\caption{\label{fig:splitting}The exterior domain $\Omega$ and the splitting
of $\mathbb{R}\backslash\{1,\alpha\}$}

\end{figure}

\begin{rem}
Let $0<w<1$. Note that for $\lambda$ fixed, the function $x\mapsto\psi_{\lambda}^{(B)}(x)$
is not in $L^{2}(\Omega)$, see (\ref{eq:eigen-w}); hence generalized
eigenfunctions. Nonetheless for every finite interval, $l_{1}<\lambda<l_{2}$,
the ``wave packet'': $x\mapsto\int_{l_{1}}^{l_{2}}\psi_{\lambda}(x)d\lambda$
is in $L^{2}(\Omega)$. The role of generalized eigenfunctions here
is consistent with Heisenberg's uncertainty principle. 
\end{rem}
Below, we write $\wedge$ for Fourier transform, and $\vee$ for inverse
Fourier transform.
\begin{cor}
\label{cor:I-J}Let $\Omega=I_{-}\cup I_{0}\cup I_{+}$ be the exterior
domain, and let $J_{-}$, $J_{0}$, and $J_{+}$ be as in (\ref{eq:Omega-1-1}).
See Figure \ref{fig:splitting}. Then
\begin{enumerate}
\item $f\mapsto(a^{-1}\hat{f})^{\vee}$ is an isometric isomorphism from
$L^{2}(I_{-})$ onto $L^{2}(J_{-})$; 
\item $f\mapsto(c^{-1}\hat{f})^{\vee}$ is an isometric isomorphism from
$L^{2}(I_{+})$ onto $L^{2}(J_{+})$; 
\item $f\mapsto((a^{-1}c)\hat{f})^{\vee}$is an isometric isomorphism from
$L^{2}(I_{-})$ onto $L^{2}(-\infty,\beta)$.
\end{enumerate}
\end{cor}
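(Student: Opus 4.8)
The plan is to read each of the three maps as multiplication by a symbol on the Fourier side and to translate it, through the geometric series (\ref{eq:a-inv(lambda)})--(\ref{eq:c-inv(lambda)}), into an explicit norm-convergent superposition of shift operators on $L^{2}(\mathbb{R})$. Writing $l:=\alpha-1$ and $r:=\sqrt{1-w^{2}}$, and using that multiplication of $\widehat{f}$ by $e(\mu\lambda)$ is the translation $f(\cdot)\mapsto f(\cdot+\mu)$, the map in (1) becomes $f\mapsto w\,e(-\phi)\sum_{n\geq0}r^{n}e(-n\psi)\,f(\cdot+\mu_{n})$ with $\mu_{n}=-1+nl$, and the map in (2) the analogous series with shifts $\nu_{n}=(\beta-\alpha)-nl$ and phases $e(n\psi)$. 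Since $a(\lambda)=w^{-1}e(\phi)\big(e(\lambda)-r\,e(-\psi)e(\alpha\lambda)\big)$ (using $1+l=\alpha$), and $c$ has a matching two-term form, the inverse symbols $a$ and $c$ are \emph{finite} shift operators, which is what makes surjectivity explicit.

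First I would carry out the support bookkeeping. In (1), if $\mathrm{supp}\,f\subseteq(-\infty,0)$ then each $f(\cdot+\mu_{n})$ is supported in $(-\infty,1-nl)\subseteq(-\infty,1)=J_{-}$ (here $l>0$ is essential), so the image lies in $L^{2}(J_{-})$; conversely the two-term operator with symbol $a$ sends $g\in L^{2}(J_{-})$ to $w^{-1}e(\phi)\big(g(\cdot+1)-r\,e(-\psi)g(\cdot+\alpha)\big)$, supported in $(-\infty,0)=I_{-}$. As $a^{-1}a\equiv1$ these two maps are mutual inverses, giving the bijection $L^{2}(I_{-})\leftrightarrow L^{2}(J_{-})$. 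The same computation with the $\nu_{n}$ handles (2) and yields $L^{2}(I_{+})\leftrightarrow L^{2}(J_{+})$; for (3) the prefactor $e(-(1+\beta-\alpha)\lambda)$ in $a^{-1}c$ shows that $L^{2}(I_{-})$ is carried into $L^{2}(-\infty,\beta)$, while the inverse symbol $c^{-1}a$ carries $L^{2}(-\infty,\beta)$ back into $L^{2}(I_{-})$.

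It remains to prove the norm identity, and here the three cases separate. Part (3) is immediate: using $\psi-l\lambda=-(-\psi+l\lambda)$ one gets $\lvert a^{-1}(\lambda)c(\lambda)\rvert=\lvert1-r\,e(\psi-l\lambda)\rvert/\lvert1-r\,e(-\psi+l\lambda)\rvert=1$, so the symbol is unimodular and Plancherel already gives $\|(a^{-1}c\,\widehat{f})^{\vee}\|_{2}=\|f\|_{2}$; together with the support analysis this is the isometric isomorphism onto $L^{2}(-\infty,\beta)$. For (1) and (2) the symbol is not unimodular: from (\ref{eq:H(lambda)}) one computes $\lvert a^{-1}(\lambda)\rvert^{2}=\lvert c^{-1}(\lambda)\rvert^{2}=w^{2}/\lvert1-r\,e(-\psi+l\lambda)\rvert^{2}$, which is exactly the Poisson kernel evaluated at $2\pi(l\lambda-\psi)$, periodic in $\lambda$ with period $1/l$ and of mean $1$ over each period.

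The isometry in (1) and (2) is the main obstacle, precisely because this Poisson weight is not pointwise $1$; I would therefore not attempt a bare multiplier estimate but instead derive the identity from the unitarity already in hand. By Theorem \ref{thm:DirectIntegral-0<w} the inner product $\langle\phi\mid\psi\rangle_{\Omega}$ decomposes as a sum of three channel integrals with no cross terms, and choosing $\phi=\psi$ supported in a single unbounded component isolates the left (resp. right) channel. The key structural fact is that the Poisson density $\lvert a\rvert^{-2}=\lvert c\rvert^{-2}$ is precisely the Radon--Nikodym derivative of the spectral measure $\sigma_{B}$ with respect to $d\lambda$, so the weighted channel norm coming from the spectral transform built on (\ref{eq:eigen-w}), (\ref{eq:a(lambda)-1})--(\ref{eq:H(lambda)}) is to be matched against the plain $L^{2}$ norm on the longer half-line. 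The delicate step, where the real work lies, is to track the per-fibre normalization of $\psi_{\lambda}^{(B)}$ so that the Poisson factor is exactly the one absorbed in passing from $I_{\mp}$ to $J_{\mp}$ (a naive identification produces the density \emph{squared} and must be corrected); once this normalization is pinned down, the mean-$1$ periodicity of the Poisson kernel converts the weighted unitarity of the spectral transform into the stated isometry, completing the proof.
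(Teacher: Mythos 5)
Your first two paragraphs (the multiplier-to-shift dictionary, the support bookkeeping, and the mutual-inverse argument via the two-term symbols $a$ and $c$) are correct, and they essentially coincide with --- in fact go beyond --- the paper's own proof, which only verifies the ``into'' halves of (1)--(3) from the series (\ref{eq:a-inv(lambda)})--(\ref{eq:c-inv(lambda)}). The bijectivity of (1)--(2) that you establish is also precisely what the paper later uses (in the surjectivity step of Theorem \ref{thm:sp-w-1} and in the vanishing of the cross terms). The genuine gaps are exactly the two places where you defer the work: the isometry in (1)--(2), and the surjectivity in (3). Neither can be repaired, because both claims are false as stated; what survives is that (1)--(2) are bounded isomorphisms with bounded inverses (by (\ref{eq:m-bound})), and that (3) is an isometry \emph{into} $L^{2}(-\infty,\beta)$.

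For the isometry in (1): Plancherel gives $\|(a^{-1}\hat f)^{\vee}\|_{2}^{2}=\int m_{B}^{-2}(\lambda)|\hat f(\lambda)|^{2}\,d\lambda$, and by (\ref{eq:m-2}) this equals $\sum_{k}(1-w^{2})^{|k|/2}e(-k\psi)\varphi(kl)$, where $\varphi(t)=\int f(x+t)\overline{f(x)}\,dx$ and $l=\alpha-1$. Your ``mean-one periodicity'' mechanism is exactly the statement that only the $k=0$ term survives; that holds when $\mathrm{supp}\,f\subset I_{0}$, since then $\varphi$ vanishes off $(-l,l)$ --- this is how Theorem \ref{thm:sp-w-1} proves $V$ is isometric on $L^{2}(I_{0})$ --- but it fails for half-line supports, where the translates $f(\cdot-1+nl)$ overlap. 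Concretely, for $\phi=\psi=0$, $\alpha=2$, $f=\chi_{(-2,0)}$, $r=\sqrt{1-w^{2}}$: one gets $(a^{-1}\hat f)^{\vee}=w\sum_{n\geq0}r^{n}\chi_{(-1-n,1-n)}$, whose squared norm is $2(1+r)\neq2=\|f\|_{2}^{2}$. For (3): the unimodularity/Plancherel part is right, but your surjectivity argument rests on the claim that $c^{-1}a$ maps $L^{2}(-\infty,\beta)$ into $L^{2}(I_{-})$, which is false --- expanding $c^{-1}$ by (\ref{eq:c-inv(lambda)}) produces right translations, e.g.\ terms $h(\cdot+\beta-\alpha+1-nl)$ supported in $(-\infty,\alpha-1+nl)\not\subset I_{-}$ for $n\geq1$. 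In fact (3) is not onto: writing $a^{-1}c=e(-\theta)e(-\beta\lambda)\Theta(\lambda)$ with $\Theta(\lambda)=\bigl(e(l\lambda)-re(\psi)\bigr)/\bigl(1-re(-\psi)e(l\lambda)\bigr)$, the factor $\Theta$ is a nonconstant inner function of the upper half-plane, so the image of $\widehat{L^{2}(I_{-})}=H^{2}_{+}$ is the proper Beurling-type subspace $e(-\beta\lambda)\Theta H^{2}_{+}\subsetneq e(-\beta\lambda)H^{2}_{+}=\widehat{L^{2}(-\infty,\beta)}$. In short: prove, and rely on, only what the paper actually needs downstream --- the support statements of (1)--(3) and the bounded bijectivity of (1)--(2); the word ``isometric'' cannot be salvaged in (1)--(2), nor ``onto'' in (3).
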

\begin{proof}
Let $f\in L^{2}(I_{-})$. By (\ref{eq:a-inv(lambda)}), 
\[
(a^{-1}\hat{f})^{\vee}(x)=w\: e(-\phi)\sum_{n=0}^{\infty}\left(1-w^{2}\right)^{\frac{n}{2}}e(-n\psi)f(x-1+n(\alpha-1)\lambda);
\]
hence $(a^{-1}\hat{f})^{\vee}\in L^{2}(J_{-})$, where $J_{-}=(-\infty,\alpha)$.
This proves part (1). Part (2) is similar. 

Now, set $g:=(a^{-1}\hat{f})^{\vee}\in L^{2}(J_{-})$, where $f\in L^{2}(I_{-})$
as before. By (\ref{eq:c(lambda)}), we have 
\begin{alignat*}{1}
((a^{-1}c)\hat{f})^{\vee}(x) & =(c\hat{g})^{\vee}(x)\\
 & =w^{-1}e(\phi-\theta)\left(g(x-(\beta-\alpha))-\sqrt{1-w^{2}}e(\psi)g(x-(\beta-1)\lambda)\right);
\end{alignat*}
it follows that $((a^{-1}c)\hat{f})^{\vee}\in L^{2}(-\infty,\beta)$.
Thus, part (3) is true.
\end{proof}
Note the coefficients $a,c$ have equal modulus, and we define
\begin{equation}
m_{B}(\lambda):=\left|a_{B}(\lambda)\right|=\left|c_{B}(\lambda)\right|\label{eq:m(lambda)}
\end{equation}
for all $\lambda\in\mathbb{R}$.
\begin{lem}
\label{lem:m}Let $m(\lambda)$ be as in (\ref{eq:m(lambda)}). 
\begin{enumerate}
\item The following estimate holds:
\begin{equation}
\frac{w}{2}\leq m_{B}(\lambda)\leq\frac{2}{w}\label{eq:m-bound}
\end{equation}
In particular, the Fourier multiplier $m(\lambda)$ is strictly positive,
bounded, and invertible. 
\item Setting $z:=e(-\psi+(\alpha-1)\lambda)$, then $m_{B}^{-2}(\cdot)$
has Fourier series expansion 
\begin{equation}
m_{B}^{-2}(z)=\sum_{k=-\infty}^{\infty}\left(1-w^{2}\right)^{\frac{\vert k\vert}{2}}z^{k}.\label{eq:m-2}
\end{equation}

\end{enumerate}
\end{lem}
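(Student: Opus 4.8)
The plan is to collapse everything onto the single factor $H(\lambda)^{-1}=1-\sqrt{1-w^{2}}\,e(-\psi+(\alpha-1)\lambda)$ appearing in (\ref{eq:a(lambda)-1})--(\ref{eq:H(lambda)}). Since $e(\cdot)$ is unimodular, taking absolute values in (\ref{eq:a(lambda)-1}) annihilates all the phase prefactors and leaves
\[
m_{B}(\lambda)=\frac{1}{w}\,\bigl|1-\sqrt{1-w^{2}}\,z\bigr|,\qquad z:=e(-\psi+(\alpha-1)\lambda),\quad |z|=1.
\]
Applying the same computation to (\ref{eq:c(lambda)-1}), using $\overline{H}^{-1}=\overline{H^{-1}}$ and $|\bar{w}|=|w|$, produces the identical value; this reproves the consistency $|a_{B}|=|c_{B}|$ already asserted in (\ref{eq:m(lambda)}) and makes $m_{B}$ well defined.

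For part (1) I would expand $\bigl|1-\sqrt{1-w^{2}}\,z\bigr|^{2}=(2-w^{2})-2\sqrt{1-w^{2}}\,\Re z$. As $\lambda$ runs over $\mathbb{R}$ the argument of $z$ sweeps the whole circle (because $\alpha-1\neq0$), so $\Re z$ covers $[-1,1]$ and the square ranges over $\bigl[(1-\sqrt{1-w^{2}})^{2},(1+\sqrt{1-w^{2}})^{2}\bigr]$. Hence $m_{B}(\lambda)$ takes values in $\bigl[w^{-1}(1-\sqrt{1-w^{2}}),\,w^{-1}(1+\sqrt{1-w^{2}})\bigr]$. The upper bound $2/w$ is immediate from $\sqrt{1-w^{2}}\le1$; for the lower bound I would rationalize $1-\sqrt{1-w^{2}}=w^{2}/(1+\sqrt{1-w^{2}})\ge w^{2}/2$, giving $m_{B}(\lambda)\ge w/2$. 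Strict positivity, boundedness, and invertibility of the multiplier then follow from $w>0$.

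For part (2) the key observation is that on the unit circle $\bar{z}=z^{-1}$, so writing $r:=\sqrt{1-w^{2}}\in[0,1)$ (this is where $0<w\le1$ enters) and $w^{2}=1-r^{2}$ gives
\[
m_{B}^{-2}(z)=(1-r^{2})\,\frac{1}{(1-rz)(1-rz^{-1})}.
\]
I would then expand each factor geometrically, $\tfrac{1}{1-rz}=\sum_{n\ge0}r^{n}z^{n}$ and $\tfrac{1}{1-rz^{-1}}=\sum_{m\ge0}r^{m}z^{-m}$, both absolutely convergent since $r<1$, form the Cauchy product, and collect the coefficient of $z^{k}$ by summing over $n-m=k$. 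A short computation yields $\sum_{m\ge0}r^{2m+|k|}=r^{|k|}/(1-r^{2})$ for every $k\in\mathbb{Z}$; multiplying by $(1-r^{2})$ leaves $r^{|k|}=(1-w^{2})^{|k|/2}$, which is exactly (\ref{eq:m-2}).

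The arithmetic is elementary throughout. The only steps demanding care are the bilateral coefficient extraction in part (2)—one must keep the two series (in $z$ and in $z^{-1}$) separate and check that the diagonal sums telescope to the symmetric coefficients $r^{|k|}$—and the small estimate $1-\sqrt{1-w^{2}}\ge w^{2}/2$ that underpins the lower bound; both are dispatched by the rationalization and the Cauchy-product bookkeeping indicated above.
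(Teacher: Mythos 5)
Your proposal is correct and takes essentially the same route as the paper's own proof: both reduce $m_{B}(\lambda)$ to $\frac{1}{w}\left|1-\sqrt{1-w^{2}}\,z\right|$ with $z$ on the unit circle for part (1), and part (2) is the identical computation $m_{B}^{-2}=w^{2}H\overline{H}$ expanded as two geometric series whose Cauchy product collapses to the symmetric coefficients $(1-w^{2})^{\vert k\vert/2}$. The only cosmetic difference is in the lower bound, where you rationalize $1-\sqrt{1-w^{2}}=w^{2}/(1+\sqrt{1-w^{2}})\geq w^{2}/2$ while the paper uses $\sqrt{1-w^{2}}\leq1-\frac{1}{2}w^{2}$ directly; these are the same elementary estimate.
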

\begin{proof}
(1) By (\ref{eq:a(lambda)-1}) and (\ref{eq:m(lambda)}) 
\[
m_{B}(\lambda)=\left|a(\lambda)\right|=\frac{1}{w}\left|1-\sqrt{1-w^{2}}\: e(-\psi+(\alpha-1)\lambda)\right|;
\]
hence 
\[
\frac{w}{2}\leq\frac{1}{w}(1-(1-\frac{1}{2}w^{2}))\leq m_{B}(\lambda)\leq\frac{1}{w}(1+\sqrt{1-w^{2}})\leq\frac{2}{w}.
\]

(2) From (\ref{eq:a(lambda)-1}), we have 
\begin{alignat*}{1}
m_{B}^{-2} & (z)=w^{2}H(z)\overline{H(z)}\\
 & =w^{2}\left(\sum_{l=0}^{\infty}\left(1-w^{2}\right)^{\frac{l}{2}}z^{l}\right)\left(\sum_{n=0}^{\infty}\left(1-w^{2}\right)^{\frac{n}{2}}z^{-n}\right)\\
 & =w^{2}\sum_{k=-\infty}^{\infty}\left(\sum_{n=0}^{\infty}\left(1-w^{2}\right)^{\frac{2n+\vert k\vert}{2}}\right)z^{k}\\
 & =\sum_{k=-\infty}^{\infty}\left(1-w^{2}\right)^{\frac{\vert k\vert}{2}}z^{k}.
\end{alignat*}

\end{proof}

\begin{cor}
Fix $B=\left(\begin{array}{cc}
\overline{a} & -b\\
\overline{b} & a
\end{array}\right)\in SU(2)$, $a\neq0$, then $\mathbb{R}\ni\lambda\mapsto m_{B}^{-2}(\lambda)$
is periodic with period $(\alpha-1)^{-1}$, and the integral over
a period is
\begin{equation}
\int_{0}^{(\alpha-1)^{-1}}m_{B}^{-2}(\lambda)d\lambda=\frac{1}{\alpha-1}.\label{eq:sigB-1}
\end{equation}
In particular, for every subset $J\subset\mathbb{R}$ of length $(\alpha-1)^{-1}$,
we have
\begin{equation}
\sigma_{B}(J)=\frac{1}{\alpha-1}.\label{eq:sigB}
\end{equation}
\end{cor}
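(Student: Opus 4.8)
The plan is to read everything off the Fourier expansion (\ref{eq:m-2}) of $m_{B}^{-2}$ established in Lemma \ref{lem:m}, together with the identification furnished by the preceding direct-integral analysis, namely that in the regime $a\neq0$ (equivalently $0<w\leq1$) the spectral density of the bounded-component channel is $d\sigma_{B}(\lambda)=m_{B}^{-2}(\lambda)\,d\lambda$ (this is the Radon--Nikodym derivative of $d\sigma_{B}=\left|b_{\xi}\right|^{2}d\nu$ referred to just after Theorem \ref{thm:DirectIntegral-0<w}, with the normalization $b_{\xi}=1$ of Lemma \ref{lem:multiplicity-0<w}). Everything then reduces to an elementary computation with the geometric series (\ref{eq:m-2}).

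First I would settle periodicity. By (\ref{eq:m-2}), $m_{B}^{-2}$ depends on $\lambda$ only through $z=e(-\psi+(\alpha-1)\lambda)$, and since $e(x)=e^{i2\pi x}$ is $1$-periodic, the shift $\lambda\mapsto\lambda+(\alpha-1)^{-1}$ adds exactly $1$ to the argument of $e$ and hence leaves $z$, and therefore $m_{B}^{-2}$, unchanged. This gives the asserted period $(\alpha-1)^{-1}$.

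Next, for the integral (\ref{eq:sigB-1}) I would integrate the series (\ref{eq:m-2}) over one period, term by term. Writing $z^{k}=e(-k\psi)\,e(k(\alpha-1)\lambda)$ and substituting $u=(\alpha-1)\lambda$, the $k$-th term contributes $(1-w^{2})^{\vert k\vert/2}\,e(-k\psi)\,(\alpha-1)^{-1}\int_{0}^{1}e(ku)\,du$; by orthogonality of the characters $u\mapsto e(ku)$ on $[0,1]$ this vanishes for every $k\neq0$ and equals $(\alpha-1)^{-1}$ for $k=0$. Only the constant term survives, with coefficient $(1-w^{2})^{0}=1$, yielding $\int_{0}^{(\alpha-1)^{-1}}m_{B}^{-2}(\lambda)\,d\lambda=\tfrac{1}{\alpha-1}$. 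The interchange of sum and integral is justified because, under the hypothesis $a\neq0$ we have $\sqrt{1-w^{2}}<1$, so the coefficients $(1-w^{2})^{\vert k\vert/2}$ decay geometrically, are summable, and the series converges uniformly in $\lambda$; this is the one step requiring a word of care, and it is precisely where $a\neq0$ is used (for $w=1$ the density is the constant $1$ and the claim is immediate).

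Finally, (\ref{eq:sigB}) follows by combining the two facts just proved with $d\sigma_{B}=m_{B}^{-2}\,d\lambda$: for an interval $J$ of length $T:=(\alpha-1)^{-1}$ one has $\sigma_{B}(J)=\int_{J}m_{B}^{-2}\,d\lambda$, and since $m_{B}^{-2}$ is $T$-periodic, the standard fact that the integral of a $T$-periodic function over any interval of length $T$ is independent of the interval gives $\int_{J}m_{B}^{-2}=\int_{0}^{T}m_{B}^{-2}=\tfrac{1}{\alpha-1}$; the same holds verbatim for any fundamental domain of the lattice $T\mathbb{Z}$. I would note here the only interpretive subtlety: the phrase ``subset of length $(\alpha-1)^{-1}$'' must be read as a period interval (or fundamental domain), since for an arbitrary measurable set of that Lebesgue measure the integral of the nonconstant periodic density $m_{B}^{-2}$ need not equal $\tfrac{1}{\alpha-1}$.
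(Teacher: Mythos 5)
Your proposal is correct and coincides in substance with the paper's own proof: the paper simply invokes Parseval's identity for $\lambda\mapsto a_{B}(\lambda)^{-1}$ (whose expansion is (\ref{eq:a-inv(lambda)})) over a period interval, and this is the same computation as your term-by-term integration of (\ref{eq:m-2}), since the constant Fourier coefficient of $m_{B}^{-2}=\left|a_{B}(\lambda)\right|^{-2}$ being equal to $1$ is exactly what Parseval yields. Your closing caveat that (\ref{eq:sigB}) must be read for period intervals (or fundamental domains), not arbitrary measurable sets of Lebesgue measure $(\alpha-1)^{-1}$, is a correct clarification of how the statement is intended.
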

\begin{proof}
This follows directly from (\ref{eq:a-inv(lambda)}), $\sigma_{B}(\lambda)=m_{B}^{-2}(\lambda)d\lambda$,
and
\[
\left|\boldsymbol{a}(B,\lambda)\right|^{-2}=m_{B}^{-2}(\lambda),\:\lambda\in\mathbb{R}.
\]
As a result, we may apply Parseval's identity to $\lambda\mapsto\boldsymbol{a}(B,\lambda)^{-1}$
over a period-interval in $\lambda$.\end{proof}
\begin{cor}
\label{cor:poisson}Fix $B$ in $SU(2)$. On a period interval (in
$\lambda$), the function $m_{B}^{-2}(\lambda)$ is a Poisson kernel.
In the complex coordinates $a$ and $b$, see Remark \ref{rem:UB},
i.e., for $B(a,b)$ in $SU(2)$, the radial variable in the $B$-Poisson
kernel is $\left|b\right|$. \end{cor}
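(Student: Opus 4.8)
The plan is to show that the function $m_{B}^{-2}(\lambda)$, restricted to one period, coincides with a Poisson kernel, and then to identify the radial parameter. The starting point is the Fourier-series expansion established in Lemma \ref{lem:m}(2), namely
\[
m_{B}^{-2}(z)=\sum_{k=-\infty}^{\infty}\left(1-w^{2}\right)^{\frac{|k|}{2}}z^{k},
\]
with $z:=e(-\psi+(\alpha-1)\lambda)$. The key observation is that this is exactly a bilateral geometric series in the variable $z$ on the unit circle, with ratio $r:=\sqrt{1-w^{2}}$; since $r\in[0,1)$ when $a\neq0$ (equivalently $w>0$), the series is the classical generating function for the Poisson kernel.

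First I would recall that the Poisson kernel for the unit disc, evaluated at a boundary angle $t$ and radius $r$, has the Fourier expansion $P_{r}(t)=\sum_{k\in\mathbb{Z}}r^{|k|}e^{i k t}=\tfrac{1-r^{2}}{1-2r\cos t+r^{2}}$. Setting $z=e^{i2\pi(-\psi+(\alpha-1)\lambda)}$ and matching $r=\sqrt{1-w^{2}}$, the expansion above is literally $P_{r}$ evaluated at the angle $t=2\pi(-\psi+(\alpha-1)\lambda)$. This gives the closed form
\[
m_{B}^{-2}(\lambda)=\frac{w^{2}}{1-2\sqrt{1-w^{2}}\cos\!\big(2\pi(-\psi+(\alpha-1)\lambda)\big)+(1-w^{2})},
\]
using $1-r^{2}=w^{2}$, which manifestly exhibits $m_{B}^{-2}$ as a Poisson kernel on each period interval of length $(\alpha-1)^{-1}$ in $\lambda$.

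For the identification of the radial variable, I would translate $r$ back into the $SU(2)$ coordinates of Remark \ref{rem:UB}. There we have $a=w\,e(-\phi)$, so $w=|a|$, and the $SU(2)$ constraint gives $|a|^{2}+|b|^{2}=1$, whence $|b|^{2}=1-|a|^{2}=1-w^{2}$ and therefore $r=\sqrt{1-w^{2}}=|b|$. Thus the radial variable in the Poisson kernel is exactly $|b|$, as claimed.

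I do not anticipate a genuine obstacle here: the entire statement is an identification of an already-computed Fourier series with the standard Poisson-kernel generating function, so the content is bookkeeping rather than a new estimate. The only point requiring mild care is the bookkeeping with the normalization $e(x)=e^{i2\pi x}$ and the phase shift $-\psi$, which merely translates the angular variable and does not affect the Poisson-kernel structure; and the observation that $r<1$ strictly (needed for convergence and for a genuine, nondegenerate Poisson kernel) holds precisely because $a\neq0$, i.e.\ $w>0$, which is the standing hypothesis.
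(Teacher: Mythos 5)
Your proposal is correct and follows the same route as the paper: the paper's proof simply cites Lemma \ref{lem:m}(2) (the bilateral series $\sum_{k}(1-w^{2})^{|k|/2}z^{k}$) and the subsequent remark records the same closed form $P_{b}(\lambda)=\frac{1-|b|^{2}}{1-2|b|\cos(2\pi((\alpha-1)\lambda-\psi))+|b|^{2}}$ with $|b|=\sqrt{1-w^{2}}=\sqrt{1-|a|^{2}}$ that you derive. You have merely written out explicitly the matching with the classical Poisson-kernel generating function and the identification $r=|b|$ that the paper treats as immediate.
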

\begin{proof}
The proof is immediate from Lemma \ref{lem:m} (2). See also Corollary
\ref{cor:su2} (2) below.\end{proof}
\begin{rem}[The Poisson-kernel]
 For $b\in\mathbb{C}$, $\left|b\right|<1$, $b=\left|b\right|e(-\psi)$,
and $\left|b\right|=\sqrt{1-w^{2}}$, and recall $B=\left(\begin{array}{cc}
\overline{a} & -b\\
\overline{b} & a
\end{array}\right)\in SU(2)$. Set
\begin{equation}
P_{b}(\lambda)=\frac{1-\left|b\right|^{2}}{1-2\left|b\right|\cos\left(2\pi\left(\left(\alpha-1\right)\lambda-\psi\right)\right)+\left|b\right|^{2}}.\label{eq:possion}
\end{equation}
Hence, $P_{b}(\lambda)=m_{B}^{-2}(\lambda)$, $\lambda\in\mathbb{R}$.
Let $J$ be a period interval (see (\ref{eq:sigB-1})-(\ref{eq:sigB}))
and let $f\in L^{2}(J)$, then the Poisson-kernel in (\ref{eq:possion})
defined a harmonic extension $F$ as follows:

Wrap the period-interval $J$ around the unit circle in $\mathbb{C}$,
and make the identification 
\begin{equation}
f(\lambda)\simeq f(e(\lambda)),\:\lambda\in J.\label{eq:tmp-37}
\end{equation}
Then
\begin{equation}
F(b)=P_{b}[f]=\int_{J}f(\lambda)P_{b}(\lambda)d\lambda\label{eq:hext}
\end{equation}
is a representation of the harmonic extension; see \cite{DyMc72}.
\end{rem}

\subsection{Isometries}

Let $L^{2}(\sigma_{B})$ be the Hilbert space of $L^{2}$-functions
on $\mathbb{R}$ with respect to the Borel measure 
\begin{equation}
\sigma_{B}(d\lambda):=m_{B}^{-2}(\lambda)d\lambda.\label{eq:sigma(B)}
\end{equation}
Here, $d\lambda$ on the right in (\ref{eq:sigma(B)}) is the Lebesgue
measure on $\mathbb{R}^{1}$. 

Define $V_{B}:L^{2}(\Omega)\to L^{2}(\sigma_{B})$ by 
\begin{equation}
\left(V_{B}f\right)(\lambda):=\left\langle \psi_{\lambda}^{(B)},f\right\rangle =\int_{\Omega}\overline{\psi_{\lambda}^{(B)}(x)}f(x)dx\label{eq:V}
\end{equation}
for all $f\in L^{2}(\Omega)$. The adjoint operator $V_{B}^{*}:L^{2}(\sigma_{B})\rightarrow L^{2}(\Omega)$
is given by
\begin{equation}
\left(V_{B}^{*}g\right)(x)=\int_{\mathbb{R}}g(\lambda)\psi_{\lambda}^{(B)}(x)\:\sigma_{B}(d\lambda)\label{eq:V-adj}
\end{equation}
for all $g\in L^{2}(\sigma_{B})$.

Note that, by (\ref{eq:eigen-w})-(\ref{eq:c(lambda)}), the generalized
eigenfunctions $\psi_{\lambda}^{(B)}$ depends on $B$ from $U(2)$;
and as a result the transforms $V_{B}$ and $V_{B}^{*}$ depend on
$B$ as well.

We now spell out for every $U_{B}(t)$, $w>0$, an explicit spectral
representation:
\begin{cor}
Let $d\sigma_{B}(\cdot)$ be the measure in (\ref{eq:sigma(B)}) and
let $V_{B}:L^{2}(\Omega)\rightarrow L^{2}(\mathbb{R},\sigma_{B})$
be the spectral transform in (\ref{eq:V}) with adjoint operator $V_{B}^{*}:L^{2}(\mathbb{R},\sigma_{B})\rightarrow L^{2}(\Omega)$.
Then 
\begin{align*}
V_{B}V_{B}^{*} & =I_{L^{2}(\sigma_{B})}\;\mbox{ and }\\
V_{B}^{*}V_{B} & =I_{L^{2}(\Omega)}.
\end{align*}
Moreover,
\begin{equation}
V_{B}U_{B}(t)V_{B}^{*}=M_{t}\label{eq:Mt}
\end{equation}
where $M_{t}$ is the unitary one-parameter group acting on $L^{2}(\mathbb{R},\sigma_{B})$
as follows 
\[
\left(M_{t}g\right)(\lambda)=e_{\lambda}(-t)g(\lambda)
\]
for all $t,\lambda\in\mathbb{R}$, and all $g\in L^{2}(\mathbb{R},\sigma_{B})$.
\end{cor}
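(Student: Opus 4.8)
The plan is to read the four assertions as a repackaging of the von Neumann direct integral decomposition established just above in Theorem~\ref{thm:DirectIntegral-0<w}, renormalized so that the spectral measure is $\sigma_B(d\lambda)=m_B^{-2}(\lambda)\,d\lambda$ from (\ref{eq:sigma(B)}). First I would match normalizations. Since $0<w\le 1$, Lemma~\ref{lem:multiplicity-0<w} gives multiplicity one, so in the decomposition (\ref{eq:1GenEigen})--(\ref{eq:5GenEigen}) we have $n(\xi)\equiv 1$, the field $K$ collapses to the scalar space $L^{2}(\nu)$, and $F\colon L^{2}(\Omega)\to L^{2}(\nu)$ is unitary with $(Ff)(\xi)=\overline{a_\xi}\,\widehat{f_-}(\xi)+\overline{b_\xi}\,\widehat{f_0}(\xi)+\overline{c_\xi}\,\widehat{f_+}(\xi)$. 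Because the boundary condition (\ref{eq:ExtensionDomain1}) pins the generalized eigenfunction down to a single scalar multiple (and $b_\xi\neq 0$ a.e., since $b_\xi=0$ would force $a_\xi=c_\xi=0$ and hence a vanishing eigenvector), the direct-integral eigenfunction equals $b_\xi$ times the $b=1$-normalized $\psi^{(B)}_\lambda$ used in (\ref{eq:V}). Thus $a_\xi=b_\xi\,a(\xi)$, $c_\xi=b_\xi\,c(\xi)$, and $(Ff)(\xi)=\overline{b_\xi}\,(V_Bf)(\xi)$, i.e. $V_Bf=\overline{b_\xi}^{-1}Ff$.

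Next I would upgrade this to unitarity. By the definition $d\sigma_B=|b_\xi|^{2}\,d\nu$ and its identification with $m_B^{-2}\,d\lambda$, multiplication by $\overline{b_\xi}^{-1}$ is an isometry of $L^{2}(\nu)$ onto $L^{2}(\sigma_B)$: the weight relating the two measures is exactly $|b_\xi|^{2}$, so $\|\overline{b_\xi}^{-1}g\|^2_{\sigma_B}=\|g\|^2_\nu$ and the inverse $h\mapsto\overline{b_\xi}\,h$ maps $L^{2}(\sigma_B)$ back into $L^{2}(\nu)$; the two-sided bound (\ref{eq:m-bound}) of Lemma~\ref{lem:m} guarantees this weight is bounded and bounded below, so no integrability issue arises. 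Hence $V_B$ is a composition of two unitaries and is itself unitary, yielding $V_B^*V_B=I_{L^{2}(\Omega)}$ and $V_BV_B^*=I_{L^{2}(\sigma_B)}$ simultaneously. A short Fubini computation against $\langle\cdot,\cdot\rangle_{\sigma_B}$ then identifies the inverse $V_B^{-1}=V_B^*$ with the integral formula (\ref{eq:V-adj}), closing the first two displayed equations.

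For the intertwining relation (\ref{eq:Mt}) I would use the second displayed identity of Theorem~\ref{thm:DirectIntegral-0<w}, whose integrand carries the extra factor $\xi$. Comparing it termwise with the first identity and applying the same $\overline{b_\xi}^{-1}$-rescaling shows $V_B(P_B\psi)=\lambda\,(V_B\psi)$ for all $\psi\in\mathscr{D}_B(\Omega)$; that is, $V_BP_BV_B^{*}$ is the self-adjoint multiplication operator $\Lambda\colon g(\lambda)\mapsto\lambda\,g(\lambda)$ on $L^{2}(\sigma_B)$. Since $\mathscr{D}_B(\Omega)$ is a core for $P_B$ and $V_B$ is unitary, this extends to the full self-adjoint operators, and then Stone's theorem (functional calculus applied to the generator) converts the one-parameter group generated by $P_B$ into multiplication by $e_\lambda(-t)$, which is precisely $M_t$; hence $V_BU_B(t)V_B^{*}=M_t$.

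The main obstacle, and the place where care is needed, is the bookkeeping that turns the $\nu$-normalized statement into the clean $\sigma_B$-statement: one must verify that $a_\xi,b_\xi,c_\xi$ are a single scalar multiple of $\psi^{(B)}_\lambda$ and that $|b_\xi|^{2}\,d\nu=m_B^{-2}\,d\lambda$ before the cancellations go through. If one instead prefers an elementary derivation of the isometry bypassing $\nu$, the subtle point is that, after the cross terms in $\|V_Bf\|^2_{\sigma_B}$ vanish, the diagonal $\chi_0$-term still reproduces $\|f_0\|^2$: this holds because $m_B^{-2}$ is periodic with period $(\alpha-1)^{-1}$ (Lemma~\ref{lem:m}(2)) while $f_0$ is supported in $I_0$ of length exactly $\alpha-1$, so every nonzero Fourier mode of $m_B^{-2}$ pairs $f_0$ with a disjoint translate and drops out --- the same support/Paley--Wiener mechanism underlying Corollary~\ref{cor:I-J}.
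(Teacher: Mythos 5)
Your main route has a genuine gap at exactly the point you yourself flag as ``bookkeeping'': the identity $\left|b_{\xi}\right|^{2}d\nu=m_{B}^{-2}(\lambda)\,d\lambda$. Nothing in Theorem \ref{thm:DirectIntegral-0<w}, or in von Neumann's direct-integral theorem, gives any control over the abstract measure $\nu$ or the coefficient field $b_{\xi}$: the decomposition determines $\nu$ only up to equivalence and the distributions $\delta_{\xi,k}$ only up to scalars. What your rescaling argument actually yields is that $V_{B}$ is unitary from $L^{2}(\Omega)$ onto $L^{2}(\left|b_{\xi}\right|^{2}d\nu)$ --- a statement about an unidentified measure. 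The Corollary asserts unitarity onto $L^{2}(\sigma_{B})$ with the concrete density $m_{B}^{-2}$ from (\ref{eq:sigma(B)}), and identifying the two measures is not a normalization step; it \emph{is} the analytic content of the statement. The paper never derives this identification from the abstract decomposition either: it proves the concrete statement directly in Theorem \ref{thm:sp-w-1}, whose proof is the intended proof of the Corollary. There one checks by explicit computation that $V_{B}$ is isometric on each of $L^{2}(I_{-})$, $L^{2}(I_{0})$, $L^{2}(I_{+})$ --- the $I_{\pm}$ pieces because $\left|a_{B}\right|^{2}m_{B}^{-2}=\left|c_{B}\right|^{2}m_{B}^{-2}=1$, the $I_{0}$ piece via the Fourier expansion (\ref{eq:m-2}) of $m_{B}^{-2}$ together with the fact that the autocorrelation $f_{0}*\overline{f_{0}(-\cdot)}$ is supported in $[-(\alpha-1),\alpha-1]$ and vanishes at the endpoints --- that the cross terms vanish (Corollary \ref{cor:I-J}), and, separately, that $V_{B}$ is \emph{surjective}, by showing that any $\hat{g}$ orthogonal to the range must vanish, again using Corollary \ref{cor:I-J}.

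Your closing paragraph does sketch the correct elementary mechanism for the diagonal $I_{0}$ term and for the cross terms, so you have located the right ingredients; but even that fallback omits surjectivity, which is needed for $V_{B}V_{B}^{*}=I_{L^{2}(\sigma_{B})}$ and does not follow from the isometry alone. The intertwining step is fine in either route: once unitarity onto $L^{2}(\sigma_{B})$ is established, the identity $\left\langle \psi_{\lambda},P_{B}f\right\rangle =\lambda\left\langle \psi_{\lambda},f\right\rangle$ (integration by parts using the boundary condition $B\rho_{1}(\cdot)=\rho_{2}(\cdot)$, as in the proof of (\ref{eq:Decomposition-Pf})) together with Stone's theorem gives (\ref{eq:Mt}).
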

Let $e_{\xi}(x):=e^{i2\pi\xi x}.$ Following \cite{Ped87,JP99} we
say that a measurable set $\Omega$ is a \emph{spectral set} if there
is a positive Borel measure $\mu$ such that the map 
\begin{equation}
\mathscr{F}_{\Omega}:f\to\widehat{f}(\xi):=\int_{\Omega}f(x)e_{\xi}(x)dx\label{eq:F-3}
\end{equation}
 is an surjective isometry $L^{2}(\Omega)\to L^{2}(\mu).$ In the
affirmative case we say $\left(\Omega,\mu\right)$ is a \emph{spectral
pair}. 

Below we consider the case where the measure $\mu$ has atoms, i.e.,
points $\xi\in\mathbb{R}$ such that $\mu(\{\xi\})>0$.
\begin{lem}
\label{lem:points}If there is a point $\xi_{0}$ such that $\mu(\{\xi_{0}\})>0,$
then $\mu$ is discrete and $\xi\to\mu(\{\xi\})$ is constant on the
support of $\mu.$ \end{lem}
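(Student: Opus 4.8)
The plan is to exploit that $\mathscr{F}=\mathscr{F}_\Omega$ is a unitary from $L^2(\Omega)$ onto $L^2(\mu)$, together with the observation that an atom of $\mu$ makes the point evaluation $g\mapsto g(\xi_0)$ a bounded functional on $L^2(\mu)$; pulling this back through $\mathscr{F}^*$ will identify the indicator $\chi_{\{\xi_0\}}$ with a concrete exponential in $L^2(\Omega)$. Write $d:=\left|\Omega\right|$ for the Lebesgue measure of $\Omega$, which is finite since $e_\xi\in L^2(\Omega)$ is needed for the transform to be defined.

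First I would compute $\mathscr{F}^*\chi_{\{\xi_0\}}$. For $f\in L^2(\Omega)$ one has $\langle \mathscr{F}f,\chi_{\{\xi_0\}}\rangle_{L^2(\mu)}=\mu(\{\xi_0\})\,\widehat f(\xi_0)=\mu(\{\xi_0\})\,\langle f,e_{-\xi_0}\rangle_{L^2(\Omega)}$, where $\widehat f(\xi_0)=\int_\Omega f(x)e_{\xi_0}(x)\,dx=\langle f,e_{-\xi_0}\rangle_{L^2(\Omega)}$ by the paper's inner-product convention. Hence $\mathscr{F}^*\chi_{\{\xi_0\}}=\mu(\{\xi_0\})\,e_{-\xi_0}\big|_\Omega$. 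Since $\mathscr{F}^*$ is isometric, equating norms yields $\mu(\{\xi_0\})=\mu(\{\xi_0\})^2\,\|e_{-\xi_0}\|_\Omega^2=\mu(\{\xi_0\})^2\,d$, so $\mu(\{\xi_0\})=1/d$. The identical computation applies to any atom, so every atom of $\mu$ has mass exactly $1/d$; this already delivers the ``constant on the support'' assertion, once discreteness is in hand.

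The heart of the matter is to rule out a continuous part of $\mu$. For this I would apply $\mathscr{F}$ to the formula just obtained and use $\mathscr{F}\mathscr{F}^*=I_{L^2(\mu)}$. Since $\widehat{e_{-\xi_0}}(\xi)=\int_\Omega e^{i2\pi(\xi-\xi_0)x}\,dx=\widehat{\chi_\Omega}(\xi-\xi_0)$, the relation $\mathscr{F}\mathscr{F}^*\chi_{\{\xi_0\}}=\chi_{\{\xi_0\}}$ becomes the $L^2(\mu)$-identity $\tfrac1d\,\widehat{\chi_\Omega}(\,\cdot\,-\xi_0)=\chi_{\{\xi_0\}}$. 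Reading this $\mu$-a.e. off the support shows $\widehat{\chi_\Omega}(\xi-\xi_0)=0$ for $\mu$-almost every $\xi\neq\xi_0$; equivalently, $\mu$ is carried by $\{\xi_0\}\cup(\xi_0+Z)$, where $Z$ denotes the zero set of $\widehat{\chi_\Omega}$.

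Finally I would invoke the rigidity of $Z$. Because $\Omega$ is bounded (it is the finite interval $I_0$ in the application), $\widehat{\chi_\Omega}$ extends to an entire function of exponential type that is not identically zero, indeed $\widehat{\chi_\Omega}(0)=d>0$; hence $Z$ is discrete, and in particular countable. Thus $\mu$ is concentrated on the countable set $\{\xi_0\}\cup(\xi_0+Z)$ and is therefore purely atomic, and combining this with the mass computation gives $\xi\mapsto\mu(\{\xi\})\equiv 1/d$ on $\mathrm{supp}\,\mu$, which is the claim. The step I expect to be the main obstacle is precisely this last one: passing from ``the atom forces $\widehat{\chi_\Omega}$ to vanish $\mu$-a.e. off $\xi_0$'' to ``no continuous part survives'' rests on the zero set $Z$ being negligible for every non-atomic measure, and this is exactly where the real-analyticity coming from boundedness of $\Omega$ enters; for a merely finite-measure unbounded $\Omega$ one would need a separate argument to control $Z$.
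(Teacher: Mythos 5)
Your argument is correct where it applies, and it takes a genuinely different route from the paper. The paper's own proof is a two-line citation: local finiteness of $\mu$ on compact sets from Lemma 1.6 of \cite{Ped87}, and then Corollary 5 of \cite{JP99} for discreteness and constancy of the point masses. Your proof is self-contained: the identification $\mathscr{F}^{*}\chi_{\{\xi_{0}\}}=\mu(\{\xi_{0}\})\,e_{-\xi_{0}}|_{\Omega}$, the norm identity forcing $\mu(\{\xi_{0}\})=1/|\Omega|$, and then $\mathscr{F}\mathscr{F}^{*}=I$ plus Paley--Wiener rigidity to pin $\mu$ to the set $\{\xi_{0}\}\cup(\xi_{0}+Z)$, $Z$ the (discrete, closed) real zero set of the entire extension of $\widehat{\chi_{\Omega}}$. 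When $\Omega$ is bounded this is complete and buys more than the citations do: the explicit common mass $1/|\Omega|$ and the localization of $\mathrm{supp}\,\mu$ inside a concrete discrete set.

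The gap is in your parenthetical justification of boundedness. The lemma is invoked in the very next Proposition for $\Omega=I_{-}\cup I_{0}\cup I_{+}$, the exterior domain --- not for $I_{0}$ --- and that $\Omega$ is unbounded with $|\Omega|=\infty$. There your normalization $\mu(\{\xi_{0}\})=1/|\Omega|$ is meaningless, the Paley--Wiener step is unavailable, and even the pointwise formula for $\mathscr{F}_{\Omega}$ must be re-interpreted (defined on $L^{1}\cap L^{2}(\Omega)$ and extended by continuity), so the case you dismiss as impossible is exactly the one the paper needs. Fortunately your own first step supplies it: if $\mu(\{\xi_{0}\})>0$, evaluation at $\xi_{0}$ is a bounded functional on $L^{2}(\mu)$, hence $f\mapsto(\mathscr{F}f)(\xi_{0})$ is bounded on $L^{2}(\Omega)$, and Riesz representation produces $h\in L^{2}(\Omega)$ agreeing with $e_{-\xi_{0}}|_{\Omega}$ a.e.\ (test against $f\in L^{1}\cap L^{2}$ supported on finite-measure sets); since $|e_{-\xi_{0}}|\equiv1$ this contradicts $|\Omega|=\infty$. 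So in the infinite-measure case a spectral-pair measure has no atoms at all, the lemma holds vacuously, and the Proposition's conclusion (a mixture of atoms and Lebesgue spectrum is never a spectral-pair measure) still follows. With that case added, your argument is a complete, self-contained replacement for the paper's citations.
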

\begin{proof}
By Lemma 1.6 of \cite{Ped87}, if $K$ is compact, then $\mu(K)<\infty.$
By Corrollary 5 of \cite{JP99}, we then get $\mu(\{\xi\})=\mu(\{\xi_{0}\})$
for all points $\xi$ in the support of $\mu.$ \end{proof}
\begin{prop}
There is no unitary $2\times2$ matrix $B$ such that $\left(\Omega,\sigma_{B}\right)$
is a spectral pair. \end{prop}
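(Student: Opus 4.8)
The plan is to reduce the spectral-pair condition to a comparison between $\sigma_B$ and Lebesgue measure, and then exploit the lattice structure of the density $m_B^{-2}$ recorded in Lemma \ref{lem:m}. First I would note that, extending functions by zero across $\mathbb{R}\setminus\Omega=I_1\cup I_2$, Plancherel's theorem gives $\|f\|_{L^2(\Omega)}^2=\int_{\mathbb{R}}|\widehat f(\xi)|^2\,d\xi$ for every $f\in L^2(\Omega)$. Hence, using $\sigma_B(d\xi)=m_B^{-2}(\xi)\,d\xi$ from (\ref{eq:sigma(B)}), the map $\mathscr{F}_\Omega$ is an isometry of $L^2(\Omega)$ into $L^2(\sigma_B)$ if and only if
\[
\int_{\mathbb{R}}|\widehat f(\xi)|^2\bigl(m_B^{-2}(\xi)-1\bigr)\,d\xi=0\qquad\text{for all }f\in L^2(\Omega).
\]
I would then split into the three regimes $0<w<1$, $w=1$, and $w=0$.

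The crux is the generic case $0<w<1$, where I would show the isometry itself already fails. By Lemma \ref{lem:m}(2) (equation (\ref{eq:m-2})) the density has the absolutely convergent expansion $m_B^{-2}(\lambda)-1=\sum_{k\neq0}(1-w^2)^{|k|/2}e(-k\psi)\,e\bigl(k(\alpha-1)\lambda\bigr)$, so its inverse transform is a sum of point masses on the lattice $(\alpha-1)\mathbb{Z}\setminus\{0\}$, and by Parseval the displayed quadratic form equals $\sum_{k\neq0}(1-w^2)^{|k|/2}e(-k\psi)\int_\Omega f\bigl(x-k(\alpha-1)\bigr)\overline{f(x)}\,dx$. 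I would test against $f=\chi_{(\beta,\,\beta+N(\alpha-1))}\in L^2(I_+)\subset L^2(\Omega)$, for which the overlap integrals compute to $(N-|k|)^{+}(\alpha-1)$; writing $q=\sqrt{1-w^2}\in(0,1)$ the quadratic form becomes $2(\alpha-1)\sum_{k=1}^{N-1}q^{k}(N-k)\cos(2\pi k\psi)$. A short check shows this cannot vanish for all $N$: its $N=2$ value is $2(\alpha-1)q\cos(2\pi\psi)$, and if this is zero then $\cos(2\pi\psi)=0$ forces $\cos(4\pi\psi)=-1$, making the $N=3$ value $2(\alpha-1)q^{2}\cos(4\pi\psi)=-2(\alpha-1)q^{2}\neq0$. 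This contradicts the isometry, so no $B$ with $0<w<1$ works.

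For the boundary value $w=1$, equation (\ref{eq:m-2}) collapses to $m_B^{-2}\equiv1$, so $\sigma_B$ is Lebesgue measure and $\mathscr{F}_\Omega$ is a genuine isometry; here it is \emph{surjectivity} that fails, since the orthogonal complement of the range $\mathscr{F}_\Omega\bigl(L^2(\Omega)\bigr)$ in $L^2(\mathbb{R})$ is $\mathscr{F}\bigl(L^2(I_1\cup I_2)\bigr)$, which is nonzero because $I_1\cup I_2$ has positive Lebesgue measure. Finally, the degenerate case $w=0$ is not covered by (\ref{eq:sigma(B)}), but the corresponding spectral measure carries both an absolutely continuous part and the embedded atoms on $-\tfrac{\psi}{1-\alpha}+\tfrac{1}{1-\alpha}\mathbb{Z}$ (Theorem \ref{thm:DirectIntegral-w=00003D0}); by Lemma \ref{lem:points} a spectral measure possessing an atom must be purely discrete, so the presence of a nontrivial continuous part rules this case out as well.

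The main obstacle I anticipate is the case $0<w<1$: converting the isometry defect into a concrete, manifestly nonzero quantity. The support argument is what makes this work — choosing a long indicator on the half-line $I_+$ whose translates by the lattice spacing $\alpha-1$ have controlled overlaps, so that the Fourier-side identity reduces to a finite trigonometric sum whose non-vanishing is verified by hand. The bounded invertibility of $m_B$ from Lemma \ref{lem:m}(1) guarantees that there are no convergence subtleties in interchanging summation and integration.
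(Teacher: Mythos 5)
Your proposal is correct, and for the central case it takes a genuinely different route from the paper. The two treatments of $w=0$ coincide: both rule out the mixed (atoms plus Lebesgue) measure via Lemma \ref{lem:points}. But for $0<w\leq1$ the paper argues top-down, citing \cite{Ped87}: if $\left(\Omega,\sigma_{B}\right)$ were a spectral pair, the generalized eigenfunctions would have to be the pure exponentials $e_{\lambda}$, and comparison with (\ref{eq:a(lambda)}) then forces $\alpha=0$, contradicting $\alpha>1$. You argue bottom-up, testing the defining isometry itself: writing $q=\sqrt{1-w^{2}}$ and using Plancherel together with the expansion (\ref{eq:m-2}), the isometry defect evaluated at $f=\chi_{(\beta,\,\beta+N(\alpha-1))}\in L^{2}(I_{+})$ becomes $2(\alpha-1)\sum_{k=1}^{N-1}q^{k}(N-k)\cos(2\pi k\psi)$, and the values at $N=2$ and $N=3$ cannot both vanish, since $\cos(2\pi\psi)=0$ forces $\cos(4\pi\psi)=-1$; the case $w=1$ is then handled separately, where $\sigma_{B}$ is Lebesgue measure, the isometry does hold, and it is surjectivity that fails, the orthogonal complement of the range in $L^{2}(\mathbb{R})$ being $\mathscr{F}\left(L^{2}(I_{1}\cup I_{2})\right)\neq\{0\}$. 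Each approach has its advantages: the paper's is shorter, but it leans on the external eigenfunction characterization of spectral pairs from \cite{Ped87}; yours is elementary and self-contained (only Lemmas \ref{lem:m} and \ref{lem:points} are needed), it produces explicit test functions witnessing the failure, and it cleanly separates the two distinct failure modes --- loss of isometry for $0<w<1$ versus loss of surjectivity at $w=1$ --- which the paper's single contradiction does not distinguish.
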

\begin{proof}
As a consequence of Lemma \ref{lem:points}, if a measure $\mu,$
contains a mixture of atoms and Lebesgue spectrum then $(\Omega,\mu)$
is not a spectral pair. That is, no $B$ with $w=0$ gives a spectral
pair. 

Suppose $0<w\leq1$, and the other entries in $B$ are chosen such
that $\left(\Omega,\sigma_{B}\right)$ is a spectral pair. By \cite{Ped87}
the generalized eigenfunctions are $e_{\lambda},\lambda\in\mathbb{R}.$
Hence, it follows from (\ref{eq:a(lambda)}) that $\alpha=0$, contradicting
$1<\alpha.$ \end{proof}
\begin{rem}
Our results below shows that when a revised spectral transform $V$
in $L^{2}(\Omega)$ is used, taking scattering into consideration,
then via this transform $V$ in (\ref{eq:V}), we do have a spectral
pair, a $V$-spectral pair. And, moreover, the spectral density measure
$\sigma_{B}$ computed from $V$ is purely non-atomic. Moreover, $\sigma_{B}$
in (\ref{eq:sigma(B)}) is absolutely continuous with respect to Lebesgue
measure; see (\ref{eq:Decomposition-Pf}) and the details in Theorem
\ref{thm:sp-w-1}. In other words, in the theorem below, we use $V$
in place of $\mathscr{F}_{\Omega}$ from eq (\ref{eq:F-3}).

For comparison, in \cite{FeJoPed1} we studied the complementary case
when $\Omega$ is instead taken as the union of two finite and disjoint
intervals. In this case, there are some configurations which yield
spectral pairs in the sense of \cite{JP99}, and moreover the measures
$\mu$ that arise there are purely discrete.\end{rem}
\begin{thm}
\label{thm:sp-w-1}Fix $B=B(w,\theta,\phi,\psi)\in U(2)$, with $0<w<1$.
Then $V$ in (\ref{eq:V}) is a unitary operator from $L^{2}(\Omega)$
onto $L^{2}(\sigma_{B})$. In particular, 
\begin{equation}
f(x)=\int\left\langle \psi_{\lambda},f\right\rangle _{\Omega}\psi_{\lambda}(x)\:\sigma_{B}(d\lambda),\;\mbox{and}\label{eq:decomp}
\end{equation}
\[
\int_{\Omega}\left|f(x)\right|^{2}dx=\int_{\mathbb{R}}\left|\left\langle \psi_{\lambda},f\right\rangle _{\Omega}\right|^{2}\sigma_{B}(d\lambda)
\]
for all $f\in L^{2}(\Omega)$. 

Here $\{\psi_{\lambda}(\cdot)\}$ is the family of functions in (\ref{eq:eigen-w})
and (\ref{eq:V}). Moreover, the extension operator $P_{B}$ satisfies
\begin{alignat}{1}
P_{B}f(x) & =\int\left\langle \psi_{\lambda},Pf\right\rangle \psi_{\lambda}(x)\sigma_{B}(d\lambda)\nonumber \\
 & =\int\lambda\left\langle \psi_{\lambda},f\right\rangle \psi_{\lambda}(x)\sigma_{B}(d\lambda)\label{eq:Decomposition-Pf}
\end{alignat}
for all $f\in\mathscr{D}(P_{B})$.\end{thm}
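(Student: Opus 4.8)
The plan is to prove the Parseval identity directly for the explicit density $m_B^{-2}$, then promote the resulting isometry to a unitary and read off the two displayed formulas. First I would record the explicit shape of the transform: writing $f=f_-+f_0+f_+$ with $f_\bullet=\chi_\bullet f$ and substituting $\psi_\lambda=(a(\lambda)\chi_-+\chi_0+c(\lambda)\chi_+)e_\lambda$ into (\ref{eq:V}) gives $(V_Bf)(\lambda)=\overline{a(\lambda)}\,\widehat{f_-}(\lambda)+\widehat{f_0}(\lambda)+\overline{c(\lambda)}\,\widehat{f_+}(\lambda)$. Since $|a|=|c|=m_B$ is bounded above and below (Lemma \ref{lem:m}(1)), $V_Bf\in L^2(\sigma_B)$ for every $f\in L^2(\Omega)$, and the elementary identities $\overline a\,m_B^{-2}=a^{-1}$, $a\,m_B^{-2}=\overline{a}^{-1}$, and $c\,m_B^{-2}=\overline{c^{-1}}$ (valid because $m_B^{-2}=|a|^{-2}=|c|^{-2}$) are the computational engine.

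Next I would expand $\|V_Bf\|_{L^2(\sigma_B)}^2=\int_{\mathbb R}|V_Bf|^2\,m_B^{-2}\,d\lambda$. The two ``outer'' diagonal terms are immediate: the weight cancels, so $\int|a|^2|\widehat{f_-}|^2 m_B^{-2}=\int|\widehat{f_-}|^2=\|f_-\|^2$ and likewise $\int|c|^2|\widehat{f_+}|^2 m_B^{-2}=\|f_+\|^2$ by Plancherel. The three cross terms all vanish: rewriting each with the identities above converts it to an ordinary $L^2(\mathbb R)$ inner product, e.g. $\int\overline a\,\widehat{f_-}\,\overline{\widehat{f_0}}\,m_B^{-2}=\int a^{-1}\widehat{f_-}\,\overline{\widehat{f_0}}=\langle(a^{-1}\widehat{f_-})^\vee,f_0\rangle$, and by Corollary \ref{cor:I-J}(1) the function $(a^{-1}\widehat{f_-})^\vee$ lies in $L^2(J_-)=L^2(-\infty,1)$, whose support is disjoint from $(1,\alpha)=\mathrm{supp}\,f_0$; the $(f_-,f_+)$ and $(f_0,f_+)$ cross terms vanish identically, using parts (3) and (2) of Corollary \ref{cor:I-J} respectively. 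The delicate term is the middle diagonal contribution $\int|\widehat{f_0}|^2 m_B^{-2}\,d\lambda$, where the weight does not cancel. Here I would insert the Fourier series of Lemma \ref{lem:m}(2), $m_B^{-2}(\lambda)=\sum_{k\in\mathbb Z}(1-w^2)^{|k|/2}e(-k\psi)\,e(k(\alpha-1)\lambda)$, and observe that $\int_{\mathbb R}|\widehat{f_0}|^2 e(k(\alpha-1)\lambda)\,d\lambda=\langle f_0(\cdot+k(\alpha-1)),f_0\rangle$; because $f_0$ is supported on an interval of length exactly $\alpha-1$, every nonzero integer shift of $f_0$ is disjoint from $f_0$, so only the $k=0$ term (coefficient $1$) survives and this equals $\|f_0\|^2$. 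Summing, $\|V_Bf\|_{L^2(\sigma_B)}^2=\|f_-\|^2+\|f_0\|^2+\|f_+\|^2=\|f\|_\Omega^2$, so $V_B$ is isometric and $V_B^*V_B=I$.

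It remains to see $V_B$ is onto. Since $V_B$ is isometric its range is closed, so onto-ness is equivalent to density of the range, i.e. injectivity of $V_B^*$ (which one can check by restricting $(V_B^*g)(x)=\int g(\lambda)\psi_\lambda(x)\sigma_B(d\lambda)$ to each component and inverting the isometries of Corollary \ref{cor:I-J}); alternatively one invokes the von Neumann direct-integral theorem (Theorem \ref{thm:DirectIntegral-0<w}) with uniform multiplicity one (Lemma \ref{lem:multiplicity-0<w}), which already presents $L^2(\Omega)$ as a scalar $L^2$-space, matched to $L^2(\sigma_B)$ by the Parseval computation just made. Either way $V_BV_B^*=I$, and applying $V_B^*$ to $V_Bf$ produces the reconstruction formula (\ref{eq:decomp}) together with its Parseval companion. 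Finally, for $f\in\mathscr D(P_B)$ I would integrate by parts in $\langle\psi_\lambda,P_Bf\rangle_\Omega$: the boundary term is the boundary form $\mathbf B(\psi_\lambda,f)$, which vanishes since both $\psi_\lambda$ and $f$ obey $B\rho_1=\rho_2$ with $B$ unitary, while $P\psi_\lambda=\lambda\psi_\lambda$ on each component; hence $\langle\psi_\lambda,P_Bf\rangle_\Omega=\lambda\langle\psi_\lambda,f\rangle_\Omega$, i.e. $V_B$ carries $P_B$ to multiplication by $\lambda$. Composing with $V_B^*$ yields (\ref{eq:Decomposition-Pf}).

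I expect the main obstacle to be the middle diagonal term: the nonconstant Poisson-kernel weight $m_B^{-2}$ would destroy the isometry were it not for the exact resonance between its period $(\alpha-1)^{-1}$ and the length $\alpha-1$ of the bounded component $I_0$, which annihilates every Fourier mode except the mean $k=0$. The cross-term cancellations are the second essential ingredient and depend on the precise interval bookkeeping of Corollary \ref{cor:I-J}; by comparison, establishing onto-ness is soft once the isometry and the multiplicity-one fact are in hand.
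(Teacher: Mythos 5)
Your proposal is correct and follows essentially the same route as the paper's own proof: the same decomposition $f=f_-+f_0+f_+$, weight cancellation for the two outer diagonal terms, the Fourier-series/autocorrelation argument (period $(\alpha-1)^{-1}$ versus interval length $\alpha-1$) for the middle term, Corollary \ref{cor:I-J} for the vanishing cross terms, and integration by parts with the boundary form for (\ref{eq:Decomposition-Pf}). Your surjectivity argument via injectivity of $V_B^{*}$ is the same as the paper's (orthocomplement of the range equals the kernel of the adjoint, resolved componentwise through the bijections of Corollary \ref{cor:I-J}), so only your secondary ``alternative'' via the abstract direct-integral theorem is sketchier than what the paper does, and it is not needed.
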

\begin{proof}
For convergence of the integral on the RHS in (\ref{eq:Decomposition-Pf}),
we refer to the theory of direct integral decompositions; see e.g.,
\cite{MM63}, and \cite{Sto90}. 

For all $f\in L^{2}(\Omega)$, write $f=f_{-}+f_{0}+f_{+}$, where
$f_{-}:=\chi_{-}f$, $f_{0}:=\chi_{0}f$, and $f_{+}:=\chi_{+}f$.
Then 
\begin{alignat}{1}
\left(V_{B}f\right)(\lambda)=\left\langle \psi_{\lambda},f\right\rangle  & =\int(\overline{a(\lambda)}\chi_{-}+\chi_{0}+\overline{c(\lambda)}\chi_{+})e_{-\lambda}f\nonumber \\
 & =\int(\overline{a(\lambda)}f_{-}+f_{0}+\overline{c(\lambda)}f_{+})e_{-\lambda}\nonumber \\
 & =\overline{a(\lambda)}\hat{f}_{-}(\lambda)+\hat{f}_{0}(\lambda)+\overline{c(\lambda)}\hat{f}_{+}(\lambda).\label{eq:tmp-9}
\end{alignat}
Now,
\begin{alignat*}{1}
\left\Vert Vf_{-}\right\Vert _{L^{2}(\sigma)}^{2} & =\int\left|\overline{a(\lambda)}\hat{f}_{-}(\lambda)\right|^{2}m^{-2}(\lambda)d\lambda\\
 & =\int\left|\hat{f}_{-}(\lambda)\right|^{2}d\lambda=\left\Vert f_{-}\right\Vert _{L^{2}(\Omega)}^{2}
\end{alignat*}
i.e., $V$ is isometric on $L^{2}(I_{-})$. Similarly, we can readily
check that $V$ is isometric on $L^{2}(I_{+})$. On the other hand,
by Lemma \ref{lem:m}, 
\begin{alignat}{1}
\left\Vert Vf_{0}\right\Vert _{L^{2}(\sigma)}^{2} & =\int\left|\hat{f}_{0}(\lambda)\right|^{2}m^{-2}(\lambda)d\lambda\nonumber \\
 & =\sum_{k=-\infty}^{\infty}\left(1-w^{2}\right)^{\frac{\left|k\right|}{2}}e(-k\psi)\int\left|\hat{f}_{0}(\lambda)\right|^{2}e(k(\alpha-1)\lambda)d\lambda\nonumber \\
 & =\sum_{k=-\infty}^{\infty}\left(1-w^{2}\right)^{\frac{\left|k\right|}{2}}e(-k\psi)\:\varphi(k(\alpha-1))\label{eq:tmp-12}
\end{alignat}
where $\varphi(x):=f_{0}(x)*\overline{f_{0}(-x)}$. Note that $supp(\varphi)\subset\left[-\left|I_{0}\right|,\left|I_{0}\right|\right]$,
and $\varphi$ vanishes on the boundary points $\pm(\alpha-1)$. Thus,
the only non-zero term in (\ref{eq:tmp-12}) is when $k=0$; it follows
that 
\[
\left\Vert Vf_{0}\right\Vert _{L^{2}(\sigma_{B})}^{2}=\varphi(0)=\int\left|\hat{f}_{0}(\lambda)\right|^{2}d\lambda=\Vert f_{0}\Vert_{L^{2}(\Omega)}^{2}.
\]
That is, $V$ is isometric on $L^{2}(I_{0})$. 

For all $f,g\in L^{2}(\Omega)$, 
\begin{alignat}{1}
\left\langle Vf,Vg\right\rangle _{L^{2}(\sigma)} & =\left\langle V\left(f_{-}+f_{0}+f_{+}\right),V\left(g_{-}+g_{0}+g_{+}\right)\right\rangle _{L^{2}(\sigma)}\nonumber \\
 & =\left\langle f_{-},g_{-}\right\rangle _{L^{2}(\Omega)}+\left\langle f_{0},g_{0}\right\rangle _{L^{2}(\Omega)}+\left\langle f_{+},g_{+}\right\rangle _{L^{2}(\Omega)}+\mbox{cross terms};\label{eq:tmp-20}
\end{alignat}
where the cross terms are given by
\begin{flalign}
\mbox{cross terms}= & \left\langle \overline{a}\hat{f}_{-},\hat{g}_{0}\right\rangle _{L^{2}(\sigma)}+\left\langle \overline{a}\hat{f}_{-},\overline{c}\hat{g}_{+}\right\rangle _{L^{2}(\sigma)}\nonumber \\
 & +\left\langle \hat{f}_{0},\overline{a}\hat{g}_{-}\right\rangle _{L^{2}(\sigma)}+\left\langle \hat{f}_{0},\overline{c}\hat{g}_{+}\right\rangle _{L^{2}(\sigma)}\nonumber \\
 & +\left\langle \overline{c}\hat{f}_{+},\overline{a}\hat{g}_{-}\right\rangle _{L^{2}(\sigma)}+\left\langle \overline{c}\hat{f}_{+},\hat{g}_{0}\right\rangle _{L^{2}(\sigma)}.\label{eq:tmp-18}
\end{flalign}
Since $\sigma_{B}(d\lambda)=m^{-2}(\lambda)d\lambda$, we see that
(\ref{eq:tmp-18}) can be written as, after dividing out $m^{-2}(\lambda)$
inside the inner product $\left\langle \cdot,\cdot\right\rangle _{L^{2}(\sigma)}$,
\begin{flalign}
\mbox{cross terms}= & \left\langle a^{-1}\hat{f}_{-},\hat{g}_{0}\right\rangle _{L^{2}(\hat{\mathbb{R}})}+\left\langle a^{-1}c\hat{f}_{-},\hat{g}_{+}\right\rangle _{L^{2}(\hat{\mathbb{R}})}\nonumber \\
 & +\left\langle \hat{f}_{0},a^{-1}\hat{g}_{-}\right\rangle _{L^{2}(\hat{\mathbb{R}})}+\left\langle \hat{f}_{0},c^{-1}\hat{g}_{+}\right\rangle _{L^{2}(\hat{\mathbb{R}})}\nonumber \\
 & +\left\langle \hat{f}_{+},a^{-1}c\hat{g}_{-}\right\rangle _{L^{2}(\hat{\mathbb{R}})}+\left\langle c^{-1}\hat{f}_{+},\hat{g}_{0}\right\rangle _{L^{2}(\hat{\mathbb{R}})}.\label{eq:tmp-19}
\end{flalign}
By Corollary \ref{cor:I-J}, each term in (\ref{eq:tmp-19}) vanishes.
Hence, by (\ref{eq:tmp-20}), 
\[
\left\langle Vf,Vg\right\rangle _{L^{2}(\sigma)}=\left\langle f,g\right\rangle _{L^{2}(\Omega)}
\]
for all $f,g\in L^{2}(\Omega)$. We conclude that $V$ is an isometry,
i.e., $V^{*}V=I$; and (\ref{eq:decomp}) holds.

Next, we show that $V$ is surjective. It suffices to show the range
of $V$ is $L^{2}(\hat{\mathbb{R}})$, as the Fourier multiplier $m^{-2}(\lambda)$
is positive, invertible and bounded away from $0$; see Lemma \ref{lem:m}.
Suppose $\hat{g}\in L^{2}(\hat{\mathbb{R}})$, such that
\[
\int\overline{\hat{g}(\lambda)}\left(Vf\right)(\lambda)d\lambda=0
\]
for all $f\in L^{2}(\Omega)$. That is, by (\ref{eq:tmp-9}), 
\[
\int\overline{a(\lambda)\hat{g}(\lambda)}\hat{f}_{-}(\lambda)d\lambda+\int\overline{\hat{g}(\lambda)}\hat{f}_{0}(\lambda)d\lambda+\int\overline{c(\lambda)\hat{g}(\lambda)}\hat{f}_{+}(\lambda)d\lambda=0
\]
for all $f=f_{-}+f_{0}+f_{+}$ in $L^{2}(\Omega)$. This is true if
and only if 
\[
\chi_{-}(a(\lambda)\hat{g}(\lambda))^{\vee}=\chi_{0}g=\chi_{+}(c(\lambda)\hat{g}(\lambda))^{\vee}=0.
\]
In particular, $g$ vanishes on $I_{0}$. 

If $supp(g)\subset J_{-}$, then by (\ref{eq:c(lambda)-1}), $(c\hat{g})^{\vee}$
is supported in $(-\infty,\beta]$, and so $\chi_{+}(c\hat{g})^{\vee}=0$.
By Corollary \ref{cor:I-J}, the mapping $g\mapsto(a\hat{g})^{\vee}$
is a bijection from $L^{2}(J_{-})$ onto $L^{2}(I_{-})$. Thus, $\chi_{-}(a\hat{g})^{\vee}=0$
implies $g=0$. Similarly, $supp(g)\subset J_{+}$ implies $g=0$.
Hence, $g$ ($\hat{g}$) is identically zero. Consequently, $V$ is
onto.

It remains to establish (\ref{eq:Decomposition-Pf}). Let $f$ be
in the domain of $P_{B}.$ By (\ref{eq:decomp}) 
\[
P_{B}f(x)=\int\left\langle \psi_{\lambda},Pf\right\rangle \psi_{\lambda}(x)\sigma_{B}(d\lambda)
\]
 hence we just need to establish that 
\[
\left\langle \psi_{\lambda},Pf\right\rangle =\lambda\left\langle \psi_{\lambda},f\right\rangle .
\]
But this follows by integration by parts since both $\psi_{\lambda}$
and $f$ satisfy the boundary conditions $B\rho_{1}(\cdot)=\rho_{2}(\cdot).$
This proves (\ref{eq:Decomposition-Pf}).
\end{proof}

\begin{rem}
By (\ref{eq:sigma(B)}) the measures $\sigma_{B}$ all are mutally
absolutely continuous. Hence, it follows from Theorem \ref{thm:sp-w-1}
that the operators $P_{B},$ $B$ the unitary $2\times2$ matrices
parametrized as in (\ref{eq:2-by-2 Unitary}) with $w\neq0,$ are
all pairwise unitarily equivalent equivalent.
\begin{rem}
For $w=0$, as we see in Remark \ref{rem:w1} that the unitary one-parameter
group $U_{B}(t)$, acting on $L^{2}(\Omega)$, is the usual translation
by $t$ in $L^{2}(\mathbb{R})$ modulo two phase factors at $x=0,\alpha$. 
\end{rem}
If, in addition, $\theta=\phi=\psi=0$, i.e., $B$ is the identity
matrix in $U(2)$, then the generalized eigenfunction is specified
by (see (\ref{eq:eigen-w1})) 
\[
\psi_{\lambda}(x)=\left(e(\lambda)\chi_{-}(x)+\chi_{0}(x)+e(-(\beta-\alpha)\lambda)\chi_{+}(x)\right)e_{\lambda}(x),\:\lambda\in\mathbb{R};
\]
and for the measure $\sigma_{B}$ ($B=I$), we get $\sigma_{I}(d\lambda)=d\lambda$.
Moreover, $V_{I}:L^{2}(\Omega)\rightarrow L^{2}(\mathbb{R})$ is given
by 
\begin{alignat*}{1}
\left(V_{I}f\right)(\lambda) & =e(-\lambda)\hat{f}_{-}(\lambda)+\hat{f}_{0}(\lambda)+e((\beta-\alpha)\lambda)\hat{f}_{+}(\lambda)\\
 & =\left(f_{-}(\cdot-1)\right)^{\wedge}(\lambda)+\hat{f}_{0}(\lambda)+\left(f_{+}(\cdot+(\beta-\alpha))\right)^{\wedge}(\lambda)\\
 & =\left(f_{-}(\cdot-1)+f_{0}(\cdot)+f_{+}(\cdot+(\beta-\alpha))\right)^{\wedge}(\lambda).
\end{alignat*}
In this case, $U_{B}(t)$, acting on $L^{2}(\Omega)$, is unitarily
equivalent to the unitary group $T(t)$ of translation by $t$ in
$L^{2}(\mathbb{R})$. 
\end{rem}
For more information about the geometric significance of the vanishing
cross-terms, we refer to section \ref{sec:vc} below.
\begin{cor}
Let $B\in U(2)$, $P_{B}$, $a=a_{B}$, $c=c_{B}$, and $\sigma_{B}(\cdot)$
be as above. Let $P_{\pm}$ and $P_{0}$ be the projections in $L^{2}(\Omega)$
corresponding to the intervals $I_{\pm}$, and $I_{0}$. Then
\begin{alignat*}{1}
\int_{\mathbb{R}}\left|a_{B}(\lambda)\left(P_{-}f\right)^{\wedge}(\lambda)\right|^{2}\sigma_{B}(d\lambda) & =\int_{I_{-}}\left|f(x)\right|^{2}dx,\\
\int_{\mathbb{R}}\left|c_{B}(\lambda)\left(P_{+}f\right)^{\wedge}(\lambda)\right|^{2}\sigma_{B}(d\lambda) & =\int_{I_{+}}\left|f(x)\right|^{2}dx,\mbox{ and}\\
\int_{\mathbb{R}}\left|\left(P_{0}f\right)^{\wedge}(\lambda)\right|^{2}\sigma_{B}(d\lambda) & =\int_{I_{0}}\left|f(x)\right|^{2}dx
\end{alignat*}
for all $f\in L^{2}(\Omega)$.\end{cor}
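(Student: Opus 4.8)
The plan is to recognize that the three stated identities are precisely the component-wise norm computations already carried out inside the proof of Theorem \ref{thm:sp-w-1}, so that no genuinely new machinery is needed. First I would record the elementary reductions. Since the projections act by multiplication by the characteristic functions, $P_-f=\chi_-f=f_-$, $P_+f=\chi_+f=f_+$, and $P_0f=\chi_0f=f_0$, so that $(P_-f)^{\wedge}=\widehat{f_-}$, $(P_+f)^{\wedge}=\widehat{f_+}$, and $(P_0f)^{\wedge}=\widehat{f_0}$. Next, by (\ref{eq:sigma(B)}) together with (\ref{eq:m(lambda)}) we have $\sigma_B(d\lambda)=m_B^{-2}(\lambda)\,d\lambda$ with $m_B(\lambda)=\left|a_B(\lambda)\right|=\left|c_B(\lambda)\right|$, whence the pointwise cancellations $\left|a_B(\lambda)\right|^2 m_B^{-2}(\lambda)\equiv 1$ and $\left|c_B(\lambda)\right|^2 m_B^{-2}(\lambda)\equiv 1$.

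For the first two identities the argument is then immediate. Substituting $\sigma_B(d\lambda)=m_B^{-2}(\lambda)\,d\lambda$ and cancelling the factor $\left|a_B(\lambda)\right|^2 m_B^{-2}(\lambda)=1$ (respectively $\left|c_B(\lambda)\right|^2 m_B^{-2}(\lambda)=1$) collapses the left-hand side to $\int_{\mathbb{R}}\left|\widehat{f_-}(\lambda)\right|^2 d\lambda$ (respectively $\int_{\mathbb{R}}\left|\widehat{f_+}(\lambda)\right|^2 d\lambda$), and Plancherel finishes, yielding $\int_{I_-}\left|f\right|^2$ and $\int_{I_+}\left|f\right|^2$. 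These are exactly the statements that $V_B$ is isometric on $L^2(I_-)$ and on $L^2(I_+)$, as verified in the proof of Theorem \ref{thm:sp-w-1} via (\ref{eq:tmp-9}).

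The third identity is the one that requires real work, and I expect it to be the main obstacle, because here the weight $m_B^{-2}$ is \emph{not} cancelled: the coefficient of $\chi_0$ in the generalized eigenfunction (\ref{eq:eigen-w}) is $1$, so $(V_Bf_0)(\lambda)=\widehat{f_0}(\lambda)$ and the left-hand side is literally $\int_{\mathbb{R}}\left|\widehat{f_0}(\lambda)\right|^2 m_B^{-2}(\lambda)\,d\lambda$. To evaluate this I would reproduce the computation (\ref{eq:tmp-12}): expand $m_B^{-2}$ in the Fourier series of Lemma \ref{lem:m}(2), namely (\ref{eq:m-2}), and recognize each integral $\int \left|\widehat{f_0}(\lambda)\right|^2 e(k(\alpha-1)\lambda)\,d\lambda$ as $\varphi(k(\alpha-1))$, where $\varphi:=f_0*\overline{f_0(-\cdot)}$ is the autocorrelation of $f_0$. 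The decisive geometric fact is that $\mathrm{supp}(\varphi)\subset[-\left|I_0\right|,\left|I_0\right|]=[-(\alpha-1),(\alpha-1)]$ and $\varphi$ vanishes at the endpoints $\pm(\alpha-1)$; since $(\alpha-1)^{-1}$ is exactly the period of $m_B^{-2}$, every term with $k\neq 0$ in the series is annihilated, leaving only $\varphi(0)=\int\left|\widehat{f_0}(\lambda)\right|^2 d\lambda=\int_{I_0}\left|f\right|^2$.

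Thus all three identities reduce to facts established (or established by the same short arguments) within Theorem \ref{thm:sp-w-1}, drawing only on the positivity and Fourier expansion of $m_B^{-2}$ from Lemma \ref{lem:m} and the support property of $\varphi$. The single delicate point is the matching between the support radius $\alpha-1$ of the autocorrelation $\varphi$ and the period $(\alpha-1)^{-1}$ of the weight, which is what isolates the lone surviving term $k=0$; the extreme cases $w=1$ (and, with the discrete summand of Theorem \ref{thm:DirectIntegral-w=00003D0}, $w=0$) follow \emph{mutatis mutandis}.
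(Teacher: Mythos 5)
Your proposal is correct and is essentially the paper's own argument: the paper gives no separate proof of this corollary because the three identities are exactly the component-wise isometry computations inside the proof of Theorem \ref{thm:sp-w-1} --- the pointwise cancellation $\left|a_{B}\right|^{2}m_{B}^{-2}=\left|c_{B}\right|^{2}m_{B}^{-2}\equiv1$ plus Plancherel for the $I_{\pm}$ terms, and the autocorrelation/support argument of (\ref{eq:tmp-12}) isolating the $k=0$ term for the $I_{0}$ term. Nothing further is needed.
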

\begin{rem}[The generalized eigenfunctions from an ODE, and from boundary values
indexed by $U(2)$]
 Fix an element $B\in U(2)$ as above. In the course of the proof,
we saw that the field of functions $\{\psi_{\lambda}\}_{\lambda\in\mathbb{R}}$
from (\ref{eq:eigen-w}) - (\ref{eq:c(lambda)}) is a system of generalized
eigenfunctions for the selfadjoint operator $P_{B}$ in $L^{2}(\Omega)$,
where $\Omega$ is the union of the three open intervals $I_{-}$,
$I_{0}$, and $I_{+}$ in (\ref{eq:Omega-1}). 

Using Lemma \ref{lem:k-1} and Remark \ref{rmk:Pmin}, we conclude
that, for each $\lambda\in\mathbb{R}$, and in each of the three open
intervals, we get the function $\psi_{\lambda}$ as a differentiable
solution to the following ODE,
\begin{equation}
\frac{d}{dx}\psi_{\lambda}(x)=i2\pi\lambda\psi_{\lambda}(x)\label{eq:ode}
\end{equation}
with boundary conditions:
\begin{equation}
\left(\begin{array}{c}
\psi_{\lambda}(1_{+})\\
\psi_{\lambda}(\beta_{+})
\end{array}\right)=B\left(\begin{array}{c}
\psi_{\lambda}(0_{-})\\
\psi_{\lambda}(\alpha_{-})
\end{array}\right)\label{eq:tmp-13}
\end{equation}
where we used (\ref{eq:ExtensionDomain1}). Now a generalized eigenfunction
is determined only up to a constant multiple, and to fix this, we
imposed the condition
\begin{equation}
\psi_{\lambda}(1_{+})=e_{1}(\lambda),\label{eq:tmp-16}
\end{equation}
see (\ref{eq:eigen-w}). Combining (\ref{eq:ode}) - (\ref{eq:tmp-16}),
and using uniqueness of a first order ODE boundary value-problem (in
each of the three intervals), we get uniquely determined constants
$a(\lambda)$ and $c(\lambda)$ such that
\begin{alignat*}{1}
\psi_{\lambda}(x) & =a(\lambda)e^{i2\pi\lambda x},\: x\in I_{-};\\
\psi_{\lambda}(x) & =e^{i2\pi\lambda x},\: x\in I_{0};
\end{alignat*}
and 
\[
\psi_{\lambda}(x)=c(\lambda)e^{i2\pi\lambda x},\: x\in I_{+}.
\]
In other words, $\psi_{\lambda}$ has the form (\ref{eq:eigen-w})
with the two functions $a(\lambda)$ and $c(\lambda)$ determined
uniquely. As a result, (\ref{eq:a(lambda)}) and (\ref{eq:c(lambda)})
are the \uline{only} solution; hence multiplicity-one. It is well
known, see e.g., \cite{Sim82}, that that solving the generalized
eigenfunction equations may lead to to many generalized eigenfunctions.
We saw above that this is not the case in our situation. 
\end{rem}

\subsection{Limit of measures}

In this section we discuss two limit theorems for the measures $\sigma_{B}$,
indexed by $B$ in $U(2)$, arising in the spectral resolution for
the corresponding selfadjoint operators, and the unitary one-parameter
groups $U_{B}(t)$.

Modding out by the determinant of $B$, we reduce to the case of the
subgroup $SU(2)$. If $B$ in $SU(2)$ is represented in the usual
way (Remark \ref{rem:UB}) by a pair of complex numbers $a$ and $b$,
with $\left|a\right|^{2}+\left|b\right|^{2}=1$, we show that in the
limit as $a$ tends to $0$, the corresponding measure $\sigma_{B}$
bifurcate resulting in two measures, the Lebesgue measure on $\mathbb{R}$,
and the sum of the Dirac delta measures picking out the point spectrum
of the unitary one-parameter groups $U_{B}(t)$ arising in the limit;
hence accounting in a direct way for the jump in multiplicity.

Our second result is a Cesaro limit formed from a fixed unitary one-parameter
groups $U_{B}(t)$ .
\begin{cor}
\label{cor:su2}Working with $B\in SU(2)$ in the form 
\begin{equation}
B=\left(\begin{array}{cc}
\overline{a} & -b\\
\overline{b} & a
\end{array}\right),\;\left|a\right|^{2}+\left|b\right|^{2}=1,\label{eq:tmp-31}
\end{equation}
we get
\begin{equation}
\psi_{\lambda}^{(B)}=\left(\boldsymbol{a}(B,\lambda)\chi_{-}(x)+\chi_{0}(x)+\boldsymbol{c}(B,\lambda)\chi_{+}(x)\right)e_{\lambda}(x),\label{eq:tmp-35}
\end{equation}
and the following presentations:
\begin{enumerate}
\item ~
\begin{equation}
\boldsymbol{a}(B,\lambda)=\frac{e(\lambda)}{a}\left(1-b\: e((\alpha-1)\lambda)\right).\label{eq:tmp-32}
\end{equation}
Note $a=w\: e(-\phi)$, $b=\sqrt{1-w^{2}}\: e(-\psi)$, and $a\rightarrow0$
$\Longleftrightarrow$ $b\rightarrow e(-\psi)$.\end{enumerate}
\begin{enumerate}[resume]
\item Poisson-kernel representation:
\begin{equation}
m_{B}^{-2}(z)=\sum_{k\in\mathbb{Z}}\left|b\right|^{k}z^{k}=P_{b}((\alpha-1)\lambda-\psi)\label{eq:tmp-33}
\end{equation}
where $z=e(-\psi+(\alpha-1)\lambda)$; see Corollary \ref{cor:poisson}.
\item In the sense of Schwartz-distributions we get the following two limits
(\ref{enu:tmp}) $\&$ (\ref{enu:tmp-1}):
\item \label{enu:tmp}
\begin{equation}
\lim_{a\rightarrow0}m_{B}^{-2}(z)=\sum_{k\in\mathbb{Z}}e(k(-\psi+(\alpha-1)\lambda))\;\mbox{ (as a distribution),}\label{eq:tmp-36}
\end{equation}
and for the family of measures $\sigma_{B}(d\lambda)$, we get the
following limit-measure
\item Dirac-comb representation:\label{enu:tmp-1}
\begin{equation}
\lim_{a\rightarrow0}\sigma_{\left(\begin{array}{cc}
\overline{a} & -b\\
\overline{b} & a
\end{array}\right)}(d\lambda)=\sum_{n\in\mathbb{Z}}\delta_{\frac{\psi}{\alpha-1}+\frac{n}{\alpha-1}}\label{eq:tmp-34}
\end{equation}
accounting for the embedded point-spectrum inside the continuum spectrum,
and Lebesgue measure $d\lambda$. 
\end{enumerate}
\end{cor}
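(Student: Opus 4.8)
The plan is to derive every assertion from the formulas already in hand---Lemma~\ref{lem:multiplicity-0<w}, Lemma~\ref{lem:m}, and the Poisson-kernel identity \eqref{eq:possion}---after translating the $U(2)$-coordinates $(w,\theta,\phi,\psi)$ into the $SU(2)$-coordinates $(a,b)$. For $B\in SU(2)$ one has $\det B=e(\theta)=1$, so $\theta\equiv 0$, and comparing the parametrization \eqref{eq:2-by-2 Unitary} with \eqref{eq:tmp-31} gives $a=w\,e(-\phi)$ and $b=\sqrt{1-w^{2}}\,e(-\psi)$, as recorded in Remark~\ref{rem:UB}; hence $1/a=w^{-1}e(\phi)$ and $\left|b\right|=\sqrt{1-w^{2}}$. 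Substituting these identities into \eqref{eq:a(lambda)} with the normalization $b_\lambda=1$ collapses the bracket to $1-b\,e((\alpha-1)\lambda)$ and the prefactor to $e(\lambda)/a$, which is exactly \eqref{eq:tmp-32}; the same substitution in \eqref{eq:c(lambda)} yields the companion expression for $\boldsymbol{c}(B,\lambda)$. Formula \eqref{eq:tmp-35} for $\psi^{(B)}_\lambda$ is then just \eqref{eq:eigen-w} rewritten in the new notation, so part~(1) is immediate. The note ``$a\to 0\iff b\to e(-\psi)$'' follows since $w=\left|a\right|\to 0$ forces $\sqrt{1-w^{2}}\to 1$.

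For part~(2) I would quote Lemma~\ref{lem:m}(2) verbatim and merely relabel its coefficients: since $\left|b\right|=(1-w^{2})^{1/2}$ we have $(1-w^{2})^{\left|k\right|/2}=\left|b\right|^{\left|k\right|}$, so $m_{B}^{-2}(z)=\sum_{k\in\mathbb{Z}}\left|b\right|^{\left|k\right|}z^{k}$ with $z=e(-\psi+(\alpha-1)\lambda)$. Recognizing this series as the classical Poisson kernel $P_{r}(\vartheta)=\sum_{k}r^{\left|k\right|}e^{ik\vartheta}=(1-r^{2})/(1-2r\cos\vartheta+r^{2})$ with radial variable $r=\left|b\right|$ and angle $\vartheta=2\pi((\alpha-1)\lambda-\psi)$ reproduces \eqref{eq:possion}, giving $m_{B}^{-2}(z)=P_{b}((\alpha-1)\lambda-\psi)$; this is the content already stated in Corollary~\ref{cor:poisson}.

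The real work is the limit $a\to 0$, equivalently $r=\left|b\right|\to 1^{-}$. For part~\eqref{enu:tmp} I treat $m_{B}^{-2}$ as a $1$-periodic distribution in $u:=(\alpha-1)\lambda-\psi$ and prove weak-$*$ convergence through Fourier coefficients. The $k$-th coefficient is $\left|b\right|^{\left|k\right|}\to 1$ for each fixed $k$, and since $\left|b\right|^{\left|k\right|}\le 1$, testing against a smooth periodic $\phi$ with rapidly decaying Fourier data $\widehat{\phi}(k)$ and applying dominated convergence lets me pass to the limit term by term, yielding $\lim_{a\to 0}m_{B}^{-2}(z)=\sum_{k}e(k(-\psi+(\alpha-1)\lambda))$ in $\mathscr{D}'$, which is \eqref{eq:tmp-36}. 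This is precisely the standard fact that the Poisson kernel is an approximate identity on the circle whose $r\to 1$ boundary limit is the Dirac comb $\sum_{k}e^{iku}$.

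Part~\eqref{enu:tmp-1} is this same limit at the level of the measures $\sigma_{B}=m_{B}^{-2}\,d\lambda$. Here I would invoke the period-normalization established just before Corollary~\ref{cor:poisson}, namely that each period of length $(\alpha-1)^{-1}$ carries $\sigma_{B}$-mass $\tfrac{1}{\alpha-1}$, and combine it with $\delta(cx)=\left|c\right|^{-1}\delta(x)$ (with $c=2\pi(\alpha-1)$) to convert the comb in $u$ supported on $u\in\mathbb{Z}$ into a comb in $\lambda$ supported on $\lambda=\tfrac{\psi+n}{\alpha-1}$, $n\in\mathbb{Z}$, giving \eqref{eq:tmp-34}. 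The support set is exactly the embedded point-spectrum of Lemma~\ref{lem:multiplicity-w=00003D0} and Theorem~\ref{thm:sp-w0}, so the bifurcation of $\sigma_{B}$ into Lebesgue measure plus a Dirac comb is the analytic counterpart of the jump in multiplicity from one (for $w>0$) to two (on the comb, at $w=0$). I expect the main obstacle to be purely the bookkeeping of normalizations in this last step---tracking the Jacobian $(\alpha-1)$ against the Poisson-mass $\tfrac{1}{\alpha-1}$ so that the atoms land with the stated weights---rather than any conceptual difficulty, since the underlying convergence is the robust Poisson-kernel-to-Dirac-comb statement used in part~\eqref{enu:tmp}.
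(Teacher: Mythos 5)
Your route is the same as the paper's: the printed proof of Corollary \ref{cor:su2} is nothing more than a pointer to Lemma \ref{lem:m}, eq.~(\ref{eq:m-2}), and Theorems \ref{thm:sp-w0} and \ref{thm:sp-w-1}, and your parts (1)--(3) carry out exactly those citations. They are correct: the coordinate change $a=w\,e(-\phi)$, $b=\sqrt{1-w^{2}}\,e(-\psi)$, $\theta=0$ turns (\ref{eq:a(lambda)}) into (\ref{eq:tmp-32}); Lemma \ref{lem:m}(2) gives $m_{B}^{-2}(z)=\sum_{k}\left|b\right|^{\left|k\right|}z^{k}$ (you silently, and correctly, repair the exponent $\left|b\right|^{k}$ misprinted in (\ref{eq:tmp-33})); and your coefficientwise dominated-convergence argument is a complete proof of the distributional limit (\ref{eq:tmp-36}), which the paper only outsources to references.

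The gap is precisely the step you defer as ``bookkeeping'': it does not close the way you assert. In $\mathscr{D}'(\mathbb{R}_{\lambda})$ one has
\[
\sum_{n\in\mathbb{Z}}\delta\bigl((\alpha-1)\lambda-\psi-n\bigr)
=\frac{1}{\alpha-1}\sum_{n\in\mathbb{Z}}\delta_{\frac{\psi+n}{\alpha-1}},
\]
so your limit in part (\ref{enu:tmp}) yields
\[
\lim_{a\to0}\sigma_{B}(d\lambda)=\frac{1}{\alpha-1}\sum_{n\in\mathbb{Z}}\delta_{\frac{\psi}{\alpha-1}+\frac{n}{\alpha-1}},
\]
i.e.\ atoms of mass $\frac{1}{\alpha-1}$, not the unit atoms written in (\ref{eq:tmp-34}). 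The Jacobian cannot ``cancel against'' the Poisson mass: they are the same factor, counted once. Indeed, the period-normalization you yourself invoke forces this outcome. By (\ref{eq:sigB-1})--(\ref{eq:sigB}), every period interval of length $(\alpha-1)^{-1}$ carries $\sigma_{B}$-mass exactly $\frac{1}{\alpha-1}$ for all $w>0$; weak-$*$ convergence preserves the mass of such intervals (choose their endpoints off the lattice), and each period contains exactly one point of $\frac{\psi}{\alpha-1}+\frac{1}{\alpha-1}\mathbb{Z}$, so each limiting atom must weigh $\frac{1}{\alpha-1}$. Thus your method, carried out honestly, proves (\ref{eq:tmp-34}) only up to the overall factor $\frac{1}{\alpha-1}$ --- verbatim only in the unit-length normalization $\alpha-1=1$ of $I_{0}$ used elsewhere in the paper. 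The correct resolution is to record the limit with the factor $\frac{1}{\alpha-1}$ and flag the normalization discrepancy in the printed formula (a misprint of the same kind as the exponent in (\ref{eq:tmp-33})), rather than to assert that unit weights emerge from the computation; conservation of per-period mass makes that impossible.
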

\begin{proof}
See Lemma \ref{lem:m}, (\ref{eq:m-2}), and Theorems \ref{thm:sp-w0}
and \ref{thm:sp-w-1}. For the theory of limit of measures, see for
example \cite{Va71}. For the use of ``Dirac combs'' in analysis,
see e.g., \cite{BDM05,BM04}.
\end{proof}
Below we show that the family of unitary one-parameter groups $U_{B}(t)$
acting on $L^{2}(\Omega)$ reduces under unitary equivalence. Nonetheless,
as we note in sections \ref{sec:UB} - \ref{sec:ss} below, unitarily
equivalent one-parameter groups $U_{B}(t)$ can have quite different
scattering properties.
\begin{cor}
The subfamily of unitary one-parameter groups $U_{B}(t)$ acting on
$L^{2}(\Omega)$ corresponding to $B$ in $U(2)$ such that $0<w(B)<1$
represent a single equivalence class under unitary equivalence .\end{cor}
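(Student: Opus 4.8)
The plan is to pass to the spectral representation furnished by Theorem \ref{thm:sp-w-1} and thereby reduce the assertion to a statement about mutual absolute continuity of the spectral measures $\sigma_B$. Concretely, for each $B$ with $0<w(B)<1$ the Corollary yielding (\ref{eq:Mt}) provides a unitary $V_B:L^2(\Omega)\to L^2(\sigma_B)$ with
\[
V_B U_B(t) V_B^* = M_t,
\]
where $M_t$ is multiplication by $e_\lambda(-t)$ on $L^2(\sigma_B)$. Hence it suffices, given any two parameters $B,B'$ in the range $0<w<1$, to produce a single ($t$-independent) unitary $W:L^2(\sigma_{B'})\to L^2(\sigma_B)$ that intertwines the two multiplication groups; for then $Z:=V_B^* W V_{B'}$ is a unitary on $L^2(\Omega)$ realizing $Z U_{B'}(t) Z^* = U_B(t)$ for \emph{all} $t$ at once, which is exactly unitary equivalence of the one-parameter groups.

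The construction of $W$ is where the hypothesis $0<w<1$ enters. By Lemma \ref{lem:m}(1) the density $m_B^{-2}$ of $\sigma_B$ with respect to Lebesgue measure (see (\ref{eq:sigma(B)})) is bounded above and bounded away from $0$ — precisely, the two-sided bound $\tfrac{w}{2}\le m_B(\lambda)\le \tfrac{2}{w}$ gives $\tfrac{w^2}{4}\le m_B^{-2}(\lambda)\le \tfrac{4}{w^2}$ — and the same holds for $m_{B'}^{-2}$. Consequently $\sigma_B$, $\sigma_{B'}$, and Lebesgue measure are pairwise mutually absolutely continuous, and the Radon--Nikodym derivative $d\sigma_{B'}/d\sigma_B = m_{B'}^{-2}/m_B^{-2}$ is a positive, bounded function bounded away from $0$. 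I would then define $W$ as the multiplication operator by $h:=(d\sigma_{B'}/d\sigma_B)^{1/2}=m_B/m_{B'}$. A one-line change-of-measure computation gives $\|Wg\|_{L^2(\sigma_B)}=\|g\|_{L^2(\sigma_{B'})}$, so $W$ is unitary; and since $W$ and $M_t$ are both multiplication operators they commute, whence $W M_t W^* = M_t$.

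Assembling the pieces yields
\[
Z U_{B'}(t) Z^* = V_B^* W \bigl(V_{B'} U_{B'}(t) V_{B'}^*\bigr) W^* V_B = V_B^* W M_t W^* V_B = V_B^* M_t V_B = U_B(t),
\]
as desired, with $Z$ independent of $t$. The step I would flag as the crux is the uniform two-sided bound on $m_B^{-2}$ from Lemma \ref{lem:m}(1): it is what guarantees \emph{genuine} (two-sided) mutual absolute continuity and hence a bounded, boundedly invertible multiplier $h$. This is exactly the feature that breaks down at the endpoint $w=0$, where by Theorem \ref{thm:sp-w0} the measure acquires atoms and the equivalence class changes — which is why the statement is restricted to the open interval $0<w<1$.
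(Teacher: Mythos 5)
Your proposal is correct, and it reaches the conclusion by a genuinely more self-contained route than the paper. The paper's own proof is classification-based: it invokes the general theorem (citing Arveson) that two strongly continuous unitary one-parameter groups are unitarily equivalent if and only if they have the same spectrum, the same multiplicity, and spectral measures in the same absolute-continuity class, and then simply checks that for $0<w(B)<1$ all three invariants agree (multiplicity one from Theorem \ref{thm:sp-w-1}, Lebesgue spectral class and mutual absolute continuity of the $\sigma_{B}$ from Lemma \ref{lem:m} and (\ref{eq:sigma(B)})). You use exactly the same two ingredients — the unitarity of $V_{B}$ onto $L^{2}(\sigma_{B})$ with $V_{B}U_{B}(t)V_{B}^{*}=M_{t}$, and the two-sided bound $\tfrac{w}{2}\le m_{B}\le\tfrac{2}{w}$ — but instead of citing the classification theorem you prove the relevant ("if") direction by hand: the multiplication operator $W$ by $h=(d\sigma_{B'}/d\sigma_{B})^{1/2}=m_{B}/m_{B'}$ is unitary by the change-of-measure identity, it intertwines the two multiplication groups since all operators involved are multipliers, and $Z=V_{B}^{*}WV_{B'}$ is then a single $t$-independent unitary with $ZU_{B'}(t)Z^{*}=U_{B}(t)$. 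What your approach buys is an explicit, constructive intertwiner and independence from the external reference; note also that mutual absolute continuity alone (not the two-sided bound) already suffices for $W$ to be unitary, since its inverse is multiplication by $(d\sigma_{B}/d\sigma_{B'})^{1/2}$. What the paper's approach buys is brevity and the full strength of the "only if" direction, which is what one would need to conclude, conversely, that the $w=0$ groups (with their Dirac-comb atoms, Theorem \ref{thm:sp-w0}) lie in a \emph{different} equivalence class — a point your closing remark gestures at but which your constructive argument by itself does not establish.
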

\begin{proof}
It is known (see \cite{Ar09}) that two strongly continuous unitary
one-parameter groups are unitarily equivalent if and only if they
have the same spectrum, including counting multiplicity, and measure
in the corresponding spectral representation. We saw that when $0<w(B)<1$,
the spectrum is continuous in the Lebesgue class. As a result of our
computation of the measures $\sigma_{B}$ in this subfamily, we note
that any two of the measures must be mutually absolutely continuous.
As a result, all of our unitary one-parameter groups $U_{B}(t)$,
for $0<w(B)<1$, are pairwise unitarily equivalent.\end{proof}
\begin{cor}
Let $1<\alpha<\beta<\infty$ be fixed, set $\Omega=(-\infty,0)\cup(1,\alpha)\cup(\beta,\infty)$,
and let $B\in U(2)$ be chosen as in (\ref{eq:2-by-2 Unitary}), $0<w<1$.
Let $P_{B}$ be the corresponding selfadjoint operator in $L^{2}(\Omega)$. 

(i) Then the three terms in the spectral transform, $V_{B}:L^{2}(\Omega)\rightarrow L^{2}(\mathbb{R},\sigma_{B})$
are as follows:
\begin{equation}
(V_{B}f)(\lambda)=\overline{a}(\lambda)\left(P_{-}f\right)^{\wedge}(\lambda)+\left(P_{0}f\right)^{\wedge}(\lambda)+\overline{c}(\lambda)\left(P_{+}f\right)^{\wedge}(\lambda),\:\lambda\in\mathbb{R}\label{eq:VB}
\end{equation}
where $\lambda\rightarrow a(\lambda)$, and $\lambda\rightarrow c(\lambda)$
are given by (\ref{eq:a(lambda)}) and (\ref{eq:c(lambda)}); $\hat{.}$
denotes the usual $L^{2}$-Fourier transform, and $P_{-}f=\chi_{(-\infty,0)}f$,
$P_{0}f:=\chi_{(1,\alpha)}f$, and $P_{+}f:=\chi_{(\beta,\infty)}f$.\\

(ii) The first term on the RHS in (\ref{eq:VB}) is in the Hardy-space
$H_{\widehat{up}}^{2}$ of analytic functions in the upper half-plane
in $\mathbb{C}$ with $L^{2}$-boundary values on the real line; i.e.,
referring to analytic continuation in the $\lambda$-variable from
(\ref{eq:VB}).\\

(iii) The third term on the RHS in (\ref{eq:VB}) is in the Hardy-space
$H_{\widehat{down}}^{2}$ of analytic functions in the lower half-plane
in $\mathbb{C}$ with $L^{2}$-boundary values.\\

(iv) The middle term on the RHS in (\ref{eq:VB}) is in the Hilbert
space of band-limited functions with frequency band equal to the interval
$[1,\alpha]$.\end{cor}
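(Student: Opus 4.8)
The plan is to read part (i) off directly from the definition of the transform, and then reduce parts (ii)--(iv) to the classical Paley--Wiener correspondence between the (time-domain) support of an $L^{2}$ function and the half-plane of holomorphy of its Fourier transform.

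For (i) I would start from the definition (\ref{eq:V}), $(V_{B}f)(\lambda)=\int_{\Omega}\overline{\psi_{\lambda}^{(B)}(x)}f(x)\,dx$, insert the explicit eigenfunction (\ref{eq:eigen-w}), and use $\overline{e_{\lambda}}=e_{-\lambda}$ together with the orthogonal splitting $f=P_{-}f+P_{0}f+P_{+}f$, supported on $I_{-},I_{0},I_{+}$ respectively. Since $\overline{\psi_{\lambda}}=\bigl(\overline{a(\lambda)}\chi_{-}+\chi_{0}+\overline{c(\lambda)}\chi_{+}\bigr)e_{-\lambda}$, the three characteristic functions pick out the three pieces, and each integral $\int_{I_{\bullet}}(P_{\bullet}f)\,e_{-\lambda}$ is exactly $(P_{\bullet}f)^{\wedge}(\lambda)$; this reproduces (\ref{eq:VB}) and is the same computation as (\ref{eq:tmp-9}) in the proof of Theorem \ref{thm:sp-w-1}, now recorded in the projection notation. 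Throughout I would note that $|\overline{a(\lambda)}|=|\overline{c(\lambda)}|=m_{B}(\lambda)$ is bounded by Lemma \ref{lem:m}, so all three summands lie in $L^{2}(\mathbb{R})$.

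For (iv), the middle term is simply $(P_{0}f)^{\wedge}$ with $P_{0}f\in L^{2}$ supported in the bounded interval $I_{0}=(1,\alpha)$. By Paley--Wiener for compactly supported $L^{2}$ functions, its Fourier transform is the restriction to $\mathbb{R}$ of an entire function of exponential type, band-limited with band equal to the support interval $[1,\alpha]$; this is precisely the claimed membership. For (ii) and (iii) the only extra ingredient is the explicit exponential structure of the multipliers. Expanding (\ref{eq:a(lambda)}) gives
\[
\overline{a(\lambda)}=w^{-1}e(-\phi)\bigl(e(-\lambda)-\sqrt{1-w^{2}}\,e(\psi)\,e(-\alpha\lambda)\bigr),
\]
a two-term exponential sum carrying only the frequencies $-1$ and $-\alpha$. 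Since multiplication by $e(-\mu\lambda)$ in the spectral variable is right-translation by $\mu$ in the time variable, and $P_{-}f$ is supported in $(-\infty,0)$, the inverse transform of $\overline{a}\,(P_{-}f)^{\wedge}$ is a combination of the right-translates of $P_{-}f$ by $1$ and by $\alpha$, hence supported in $(-\infty,\alpha)$. A left half-line support means, by Paley--Wiener, that $\overline{a}\,(P_{-}f)^{\wedge}$ continues holomorphically to the upper half-plane in $\lambda$ with $L^{2}$ boundary values, i.e. lies in $H_{\widehat{up}}^{2}$. Part (iii) is the mirror image: from (\ref{eq:c(lambda)}), $\overline{c(\lambda)}$ is a two-term sum with frequencies $+(\beta-\alpha)$ and $+(\beta-1)$, both positive because $1<\alpha<\beta$; these produce left-translations, so the inverse transform of $\overline{c}\,(P_{+}f)^{\wedge}$ (with $P_{+}f$ supported in $(\beta,\infty)$) is supported in $(1,\infty)$, a right half-line, whence the term continues holomorphically to the lower half-plane and lies in $H_{\widehat{down}}^{2}$.

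The computations are routine once the supports are pinned down; the one point that needs care, and which I regard as the crux, is the sign bookkeeping: one must verify that $a$ carries only positive frequencies and $c$ only negative ones (so that $\overline{a},\overline{c}$ shift the half-line supports to the correct side), and that the shift amounts stay bounded above by $\alpha$ on the left and below by $1$ on the right --- this is exactly where the geometric ordering $1<\alpha<\beta$ of the endpoints enters, and one must match the Fourier-transform sign convention to the correct (upper versus lower) half-plane. I would also flag that ``$H_{\widehat{up}}^{2}$'' is to be read in the analytic-continuation sense indicated in the statement, namely an $L^{2}(\mathbb{R})$ boundary function that extends holomorphically to the half-plane; this is precisely the content guaranteed by one-sided support, and it is all that is needed for the scattering-theoretic constructions in the following sections.
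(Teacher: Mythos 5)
Your proposal is correct and takes essentially the same route as the paper: part (i) is exactly the computation (\ref{eq:tmp-9}) from the proof of Theorem \ref{thm:sp-w-1}, and parts (ii)--(iv) rest, as in the paper's proof, on the explicit two-exponential form of $a(\lambda)$ and $c(\lambda)$ from (\ref{eq:a(lambda)})--(\ref{eq:c(lambda)}) combined with the Paley--Wiener correspondence and the definition of band-limited functions. Your support bookkeeping in the time domain (right-translating $(-\infty,0)$ into $(-\infty,\alpha)$, left-translating $(\beta,\infty)$ into $(1,\infty)$) is simply the time-side restatement of the paper's direct appeal to analytic continuation of the factors $(\chi_{\mp}f)^{\wedge}$ and $\overline{a}$, $\overline{c}$, with both arguments (and the corollary itself) reading $H_{\widehat{up}}^{2}$, $H_{\widehat{down}}^{2}$ in the loose sense stated in (ii), namely analytic continuation off the real line with $L^{2}$-boundary values.
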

\begin{proof}
Parts (i) - (iii) follow directly from the formulas (\ref{eq:m-2}),
(\ref{eq:a(lambda)}) and (\ref{eq:c(lambda)}) which we already derived. 

Indeed, the stated analytic continuation properties of 
\[
\lambda\mapsto\left(\chi_{-}f\right)^{\wedge}(\lambda),\mbox{ and }\:\lambda\mapsto\left(\chi_{+}f\right)^{\wedge}(\lambda)
\]
are clear. And it follows from (\ref{eq:a(lambda)}) \& (\ref{eq:c(lambda)})
that the two functions $\overline{a}$ and $\overline{c}$ in (\ref{eq:VB})
have the stated analytic continuation properties. 

Part (iv) follows from (\ref{eq:VB}) and the definition of Hilbert
spaces of band-limited functions; see e.g. \cite{DyMc72}.

The latter conclusion is important because Shannon's interpolation
formula holds for the Hilbert spaces of band-limited functions.\end{proof}
\begin{cor}
Let $\Omega=I_{-}\cup I_{0}\cup I_{+}$ be as above, and let $P_{\pm}$
and $P_{0}$ be the respective projections in $L^{2}(\Omega)$ onto
the subspaces $L^{2}(I_{\pm})$ and $L^{2}(I_{0})$. Let $B=\left(\begin{array}{cc}
\overline{a} & -b\\
\overline{b} & a
\end{array}\right)\in SU(2)$ satisfy $a\neq0$, and let $P_{b}(\lambda)$ be the Poisson-kernel.
Then the unitary one-parameter group $U_{B}(t)$ in $L^{2}(\Omega)$
has the following block-operator matrix-representation: 

\begin{center}
\begin{tabular}{|c|c|c|c|}
\hline 
$U_{B}(t)$ in $L^{2}(\Omega)$ & $L^{2}(I_{-})$ & $L^{2}(I_{0})$ & $L^{2}(I_{+})$\tabularnewline
\hline 
$L^{2}(I_{-})$ & $P_{-}U_{B}(t)P_{-}$ & $P_{-}U_{B}(t)P_{0}$ & $P_{-}U_{B}(t)P_{+}$\tabularnewline
\hline 
$L^{2}(I_{0})$ & $P_{0}U_{B}(t)P_{-}$ & $P_{0}U_{B}(t)P_{0}$ & $P_{0}U_{B}(t)P_{+}$\tabularnewline
\hline 
$L^{2}(I_{+})$ & $P_{+}U_{B}(t)P_{-}$ & $P_{+}U_{B}(t)P_{0}$ & $P_{+}U_{B}(t)P_{+}$\tabularnewline
\hline 
\end{tabular}
\par\end{center}

\medskip{}

The inside of the block-operator matrix may be indexed as follow:
$i,j\in\{\pm,0\}$, $a_{0}^{(B)}(\lambda)\equiv1$. Then, for all
$f\in L^{2}(\Omega)$, 
\begin{equation}
\left(P_{i}U_{B}(t)P_{j}f\right)(x)=\chi_{I_{i}}(x)\left(P_{b}(\lambda)a_{i}(\lambda)\overline{a_{j}(\lambda)}\widehat{\left(P_{j}f\right)}(\lambda)\right)^{\vee}(x-t),\label{eq:tmp-40}
\end{equation}
for all $x\in\Omega$, and $t\in\mathbb{R}$.
\end{cor}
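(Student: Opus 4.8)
The plan is to read the block entries directly off the diagonalization $U_{B}(t)=V_{B}^{*}M_{t}V_{B}$, which follows from the intertwining identity (\ref{eq:Mt}) together with the unitarity of $V_{B}$ established in Theorem \ref{thm:sp-w-1}. The whole computation is a substitution of the explicit shapes of $V_{B}$, $V_{B}^{*}$, and $\psi_{\lambda}^{(B)}$, organized by the three-channel indexing $i,j\in\{-,0,+\}$ with the convention $a_{-}=a_{B}$, $a_{+}=c_{B}$, and $a_{0}\equiv 1$. With this notation the spectral transform (\ref{eq:tmp-9}) reads $(V_{B}f)(\lambda)=\sum_{j}\overline{a_{j}(\lambda)}\,\widehat{(P_{j}f)}(\lambda)$, and the generalized eigenfunction (\ref{eq:eigen-w}) is $\psi_{\lambda}^{(B)}(x)=\sum_{i}a_{i}(\lambda)\chi_{I_{i}}(x)e_{\lambda}(x)$.

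The first step is to note that the three characteristic functions have disjoint supports, so $P_{k}P_{j}=\delta_{kj}P_{j}$; applying $V_{B}$ to $P_{j}f$ therefore collapses the sum to the single term
\[
(V_{B}P_{j}f)(\lambda)=\overline{a_{j}(\lambda)}\,\widehat{(P_{j}f)}(\lambda).
\]
Next I would apply $M_{t}$, which multiplies by $e_{\lambda}(-t)$, and then $V_{B}^{*}$ in the integral form (\ref{eq:V-adj}), using $\sigma_{B}(d\lambda)=m_{B}^{-2}(\lambda)\,d\lambda=P_{b}(\lambda)\,d\lambda$ from (\ref{eq:sigma(B)}) and the Poisson-kernel identity (\ref{eq:possion}). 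Because $\psi_{\lambda}^{(B)}$ splits over the index $i$, this produces
\[
(U_{B}(t)P_{j}f)(x)=\sum_{i}\chi_{I_{i}}(x)\int_{\mathbb{R}}e_{\lambda}(-t)\,a_{i}(\lambda)\overline{a_{j}(\lambda)}\,\widehat{(P_{j}f)}(\lambda)\,e_{\lambda}(x)\,P_{b}(\lambda)\,d\lambda.
\]
Applying the projection $P_{i}$ selects the single summand $\chi_{I_{i}}$, and combining the exponentials via $e_{\lambda}(-t)e_{\lambda}(x)=e_{\lambda}(x-t)$ lets me recognize the $\lambda$-integral, in the transform conventions of (\ref{eq:tmp-9}), as an inverse Fourier transform evaluated at $x-t$. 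This yields exactly
\[
(P_{i}U_{B}(t)P_{j}f)(x)=\chi_{I_{i}}(x)\Bigl(P_{b}(\lambda)\,a_{i}(\lambda)\overline{a_{j}(\lambda)}\,\widehat{(P_{j}f)}(\lambda)\Bigr)^{\vee}(x-t),
\]
which is (\ref{eq:tmp-40}).

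There is no deep obstacle here; the content is the bookkeeping of the three channels and the Fourier conventions. The only point meriting a word of justification is the interchange of the finite $i$-sum with the $\lambda$-integral and the convergence of the representing integral, both of which are licensed by the direct-integral decomposition already invoked for Theorem \ref{thm:sp-w-1} together with the bounds of Lemma \ref{lem:m}: since $\tfrac{w}{2}\le m_{B}(\lambda)\le\tfrac{2}{w}$, the Poisson weight $P_{b}=m_{B}^{-2}$ and each multiplier $P_{b}\,a_{i}\overline{a_{j}}$ lie in $L^{\infty}$. I would also remark that the convention $a_{0}\equiv 1$ makes the middle entry $L^{2}(I_{0})\to L^{2}(I_{0})$ a pure Poisson-weighted translation by $t$, consistent with the trapped bound-state picture for the component $I_{0}$.
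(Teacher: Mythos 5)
Your proof is correct. In fact the paper states this corollary without any proof at all (it is left as an immediate consequence of the spectral representation), so your writeup supplies the missing argument, and it is exactly the intended one: diagonalize via $U_{B}(t)=V_{B}^{*}M_{t}V_{B}$ from (\ref{eq:Mt}) and Theorem \ref{thm:sp-w-1}, use the channel decomposition of $V_{B}$ from (\ref{eq:tmp-9}) and of $\psi_{\lambda}^{(B)}$ from (\ref{eq:eigen-w}), and absorb $\sigma_{B}(d\lambda)=m_{B}^{-2}(\lambda)\,d\lambda=P_{b}(\lambda)\,d\lambda$. This is the same computation the paper itself performs later when proving the closely related Corollary \ref{cor:UB}: there one expands $f_{j}=\int\bigl(\overline{a_{j}}\hat{f}_{j}\bigr)\psi_{\lambda}\,\sigma_{B}(d\lambda)$ and applies $U_{B}(t)$, and your block formula reduces to (\ref{eq:UB(-)})--(\ref{eq:UB(+)}) upon simplifying with $m_{B}^{2}=a\overline{a}=c\overline{c}$, e.g.\ $P_{b}\,\overline{a}=a^{-1}$ and $P_{b}\,c\overline{a}=a^{-1}c$, which is a useful consistency check. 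The only caveat worth recording is that the corollary's hypothesis $a\neq0$ allows $w=1$, while Theorem \ref{thm:sp-w-1} is stated for $0<w<1$; for $w=1$ one invokes Theorem \ref{thm:sp-w1} instead, where $P_{b}\equiv1$ and the same bookkeeping goes through trivially.
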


\begin{cor}
Let $f,g\in L^{2}(\Omega)$, and let $B\in U(2)$ as in (\ref{eq:2-by-2 Unitary}).
Then
\begin{enumerate}
\item ~
\begin{equation}
\lim_{t\rightarrow\infty}\left\langle f,U_{B}(t)g\right\rangle _{L^{2}(\Omega)}=0,\mbox{ and}\label{eq:UB-1}
\end{equation}

\item ~
\begin{equation}
\lim_{T\rightarrow\infty}\frac{1}{2T}\int_{-T}^{T}\left|\left\langle f,U_{B}(t)g\right\rangle _{L^{2}(\Omega)}\right|^{2}dt=0.\label{eq:UB-2}
\end{equation}

\end{enumerate}
\end{cor}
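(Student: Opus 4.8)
The plan is to pass to the spectral representation of Theorem~\ref{thm:sp-w-1} and reduce both assertions to the Riemann--Lebesgue lemma. Assume first that $0<w\le 1$, so that (by Theorem~\ref{thm:sp-w-1} and the corollary establishing (\ref{eq:Mt})) the spectral transform $V_B\colon L^2(\Omega)\to L^2(\sigma_B)$ is unitary and intertwines $U_B(t)$ with the multiplication group $M_t$, where $(M_tG)(\lambda)=e_\lambda(-t)G(\lambda)$. Writing $F:=V_Bf$ and $G:=V_Bg$ in $L^2(\sigma_B)$ and using $V_B^*V_B=I$, I would compute
\[
\langle f,U_B(t)g\rangle_{L^2(\Omega)}=\langle V_Bf,M_tV_Bg\rangle_{L^2(\sigma_B)}=\int_{\mathbb{R}}\overline{F(\lambda)}\,G(\lambda)\,e_\lambda(-t)\,\sigma_B(d\lambda).
\]
Since $\sigma_B(d\lambda)=m_B^{-2}(\lambda)\,d\lambda$ by (\ref{eq:sigma(B)}), the right-hand side is exactly the Fourier transform $\widehat h(t)$ of the single function $h(\lambda):=\overline{F(\lambda)}\,G(\lambda)\,m_B^{-2}(\lambda)$.

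The next step is to verify $h\in L^1(\mathbb{R},d\lambda)$. Because $m_B^{-2}\,d\lambda=\sigma_B(d\lambda)$ and $F,G\in L^2(\sigma_B)$, the Cauchy--Schwarz inequality in $L^2(\sigma_B)$ gives
\[
\int_{\mathbb{R}}|h|\,d\lambda=\int_{\mathbb{R}}|F||G|\,d\sigma_B\le\|F\|_{L^2(\sigma_B)}\|G\|_{L^2(\sigma_B)}=\|f\|_{L^2(\Omega)}\|g\|_{L^2(\Omega)}<\infty,
\]
where the last equality uses that $V_B$ is isometric. Hence $h\in L^1$, and the Riemann--Lebesgue lemma yields $\langle f,U_B(t)g\rangle=\widehat h(t)\to 0$ as $|t|\to\infty$, which is assertion~(\ref{eq:UB-1}). (Lemma~\ref{lem:m}(1) may also be invoked to note that $m_B^{-2}$ is bounded, so $h$ is comparably integrable against either $d\lambda$ or $\sigma_B$.)

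For assertion~(\ref{eq:UB-2}) no appeal to Wiener's theorem is needed: it is a soft consequence of~(\ref{eq:UB-1}) together with the uniform bound $|\langle f,U_B(t)g\rangle|=|\widehat h(t)|\le\|h\|_{L^1}$. Given $\varepsilon>0$, choose $R$ with $|\widehat h(t)|^2<\varepsilon$ for $|t|>R$; splitting the Ces\`aro integral at $\pm R$ gives
\[
\frac{1}{2T}\int_{-T}^{T}\big|\langle f,U_B(t)g\rangle\big|^2\,dt\le\frac{R\,\|h\|_{L^1}^2}{T}+\varepsilon,
\]
so $\limsup_{T\to\infty}$ of the left side is $\le\varepsilon$, and letting $\varepsilon\to0$ proves~(\ref{eq:UB-2}).

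The only genuine obstacle is the degenerate value $w=0$, and I expect it to be where care is needed. There $\sigma_B$ acquires the atomic (Dirac-comb) part of Corollary~\ref{cor:su2}, the bound states $\chi_0 e_\lambda$ of Theorem~\ref{thm:sp-w0} are honest eigenvectors, and for such $f=g$ the coefficient $\langle f,U_B(t)f\rangle=e_\lambda(-t)\|f\|^2$ is a nonzero periodic function of $t$; thus \emph{both} limits fail on the point-spectrum part. Consequently the absolute continuity of $\sigma_B$ (equivalently $w>0$) is precisely the feature the argument exploits, and this is the hypothesis under which the corollary should be read. For $w=0$ only the restriction of the two statements to the continuous subspace $L^2(I_-\cup I_+)$ survives, while the discrete contribution to~(\ref{eq:UB-2}) is instead governed by Wiener's theorem, whose atomic sum equals $\sum_\lambda\big|\langle f,E(\{\lambda\})g\rangle\big|^2$.
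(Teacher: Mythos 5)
Your part (1) coincides with the paper's own proof: both pass to the spectral transform $V_B$ of Theorem \ref{thm:sp-w-1}, write $\langle f,U_B(t)g\rangle_{L^2(\Omega)}$ as the Fourier transform of $h(\lambda)=\overline{(V_Bf)(\lambda)}\,(V_Bg)(\lambda)\,m_B^{-2}(\lambda)$, check $h\in L^1(\mathbb{R},d\lambda)$ (you make the Cauchy--Schwarz step explicit where the paper merely asserts integrability), and invoke Riemann--Lebesgue. For part (2) you genuinely diverge: the paper appeals to Wiener's lemma, using that the spectral measure $|\langle\psi_{\lambda}^{(B)},f\rangle_{\Omega}|^{2}\sigma_B(d\lambda)$ is non-atomic, whereas you deduce (\ref{eq:UB-2}) directly from (\ref{eq:UB-1}) together with the uniform bound $|\widehat{h}(t)|\le\|h\|_{L^1}$, by splitting the Ces\`aro average at $\pm R$. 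Your route is more elementary, and it handles general $f\neq g$ without the polarization that is implicit in the paper's appeal to Wiener (which, as written there, concerns the positive autocorrelation measure with $f=g$); the paper's route is the one that generalizes, since Wiener's lemma yields the Ces\`aro statement from non-atomicity alone, even in situations where no pointwise decay like (\ref{eq:UB-1}) is available (e.g., singular continuous spectrum). Your closing observation about $w=0$ is also well taken: the corollary as printed allows any $B\in U(2)$, but both your proof and the paper's use the absolute continuity of $\sigma_B$, and for $w=0$ the bound states $\chi_{0}e_{\lambda}$ of Theorem \ref{thm:sp-w0} produce periodic, non-decaying correlations, so both limits fail on the point-spectrum part; the statement must indeed be read with $w(B)>0$, consistent with the paper's reliance on Theorem \ref{thm:sp-w-1}.
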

\begin{proof}
By Theorem \ref{thm:sp-w-1}, we get with the use of the transform
$V_{B}$ (\ref{eq:V}) and the direct integral decomposition (\ref{eq:decomp}):
\begin{equation}
\left\langle f,U_{B}(t)g\right\rangle _{L^{2}(\Omega)}=\int_{\mathbb{R}}\overline{\left(V_{B}f\right)(\lambda)}\: e_{\lambda}(t)\left(V_{B}g\right)(\lambda)\: d\sigma_{B}(\lambda)\label{eq:tmp-17}
\end{equation}
where $e_{\lambda}(t)=e^{i2\pi\lambda t}$, and $d\sigma_{B}(\lambda)=m^{-2}(\lambda)d\lambda$,
see (\ref{eq:m-bound}) and (\ref{eq:sigma(B)}). But 
\[
\lambda\mapsto\overline{\left(V_{B}f\right)(\lambda)}\:\left(V_{B}g\right)(\lambda)\: m^{-2}(\lambda)\in L^{1}(\mathbb{R},d\lambda)
\]
and so (\ref{eq:UB-1}) follows from the Riemann-Lebesgue theorem.

Part (2) of the Corollary follows from the absence of bounded-states,
and Wiener's lemma. Indeed, $t\mapsto\left\langle f,U_{B}(t)f\right\rangle _{L^{2}(\Omega)}$
is the Fourier transform of the spectral measure
\[
\left|\left\langle \psi_{\lambda}^{(B)},f\right\rangle _{\Omega}\right|^{2}\sigma_{B}(d\lambda),
\]
and the assertion in Theorem \ref{thm:sp-w-1} is that this measure
is non-atomic.
\end{proof}

\section{Unitary One-Parameter Groups: Time Delay Operators\label{sec:UB}}

Consider the Hilbert space $L^{2}(\Omega)$ with $\Omega=I_{-}\cup I_{0}\cup I_{+}$
as before. Choose a boundary matrix $B$ with parameters $w,\theta,\phi,\psi$.
Let $P_{B}$ be the corresponding selfadjoint extension, and form
the one-parameter unitary group
\begin{equation}
U_{B}(t):=e^{-itP_{B}},t\in\mathbb{R}.\label{eq:U_B}
\end{equation}

\textbf{Barriers and bound states.} The reference here is to quantum
states. Since $\Omega$ here is the complement of two finite and disjoint
intervals, we think of these two intervals as barriers. The height
of the barriers is a function of the parameter $w$ from $B$ in (\ref{eq:2-by-2 Unitary}),
see Fig \ref{fig:forward}. The extreme cases are $w=0$, infinite
height, and $w=1$, zero height. Our unitary one-parameter group $U_{B}(t)$
is acting in $L^{2}(\Omega)$, so in the exterior of the two barriers.
For the parameters of $B$ in $U(2)$, see (\ref{eq:2-by-2 Unitary}):
The case $w=0$, is two infinite barriers, and this produces bound
states (Figure \ref{fig:w0-1}), i.e., states trapped between the
two barriers. The other extreme $w=1$ means no barrier. The conclusion
in sect \ref{sec:sp} is that there are bound states only in the case
of infinite barriers ($w=0$). If the barriers have finite height
($w>0$), we prove that there are no bound states; in other words,
the translation representations for the unitary one-parameter group
$U_{B}(t)$ are isometries on all of $L^{2}(\Omega)$; and $U_{B}(t)$
has pure Lebesgue spectrum, i.e., only generalized eigenfunctions
$\psi_{\lambda}$ indexed by $\lambda$ in $\mathbb{R}$. For fixed
$\lambda$, the function $\psi_{\lambda}$ is not in $L^{2}(\Omega)$.

We are using the term \textbf{bound state} as follows. We use $L^{2}(\Omega)$
for modeling quantum mechanical particles (wave functions), not potential
scattering, rather barriers. We identify when an idealized particle
has a tendency to remain localized in the region between the two barriers.
Referring to a Hilbert space of states, this corresponds to interaction
of states where the localized energy is smaller than the total energy.
Therefore these particles cannot be separated unless energy is spent.
The energy spectrum of a bound state (eigenstate) is discrete, unlike
the continuous spectrum of free particles. In the present model, a
finite ``energy barrier'' will be tunneled through. 
\begin{cor}
\label{cor:UB}Let $f=f_{-}+f_{0}+f_{+}$ in $L^{2}(\Omega)$, then
\begin{alignat}{1}
U_{B}(t)f_{-} & =\chi_{-}f_{-}(\cdot-t)+\chi_{0}(a^{-1}\hat{f}_{-})^{\vee}(\cdot-t)+\chi_{+}(a^{-1}c\hat{f}_{-})^{\vee}(\cdot-t)\label{eq:UB(-)}\\
U_{B}(t)f_{0} & =\chi_{-}(\overline{a}^{-1}\hat{f}_{0})^{\vee}(\cdot-t)+\chi_{0}(m^{-2}\hat{f}_{0})^{\vee}(\cdot-t)+\chi_{+}(\overline{c}^{-1}\hat{f}_{0})^{\vee}(\cdot-t)\label{eq:UB(0)}\\
U_{B}(t)f_{+} & =\chi_{-}(c^{-1}a\hat{f}_{+})^{\vee}(\cdot-t)+\chi_{0}(c^{-1}\hat{f}_{0})^{\vee}(\cdot-t)+\chi_{+}f_{+}(\cdot-t).\label{eq:UB(+)}
\end{alignat}
\end{cor}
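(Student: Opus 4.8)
The plan is to read everything off the spectral representation established in Theorem \ref{thm:sp-w-1}. By the corollary containing \eqref{eq:Mt} we have $U_B(t)=V_B^{*}M_tV_B$, where $(M_tg)(\lambda)=e_\lambda(-t)g(\lambda)$, so for any one of the pieces $h\in\{f_-,f_0,f_+\}$ the vector $U_B(t)h$ is produced by a three-step recipe: apply $V_B$, multiply by $e_\lambda(-t)$, and apply $V_B^{*}$. Since the three asserted identities are linear and the decomposition $f=f_-+f_0+f_+$ is orthogonal, it suffices to treat each source piece separately and, for each, to evaluate the answer on each of the three intervals $I_-,I_0,I_+$; this yields the nine entries populating the right-hand sides of \eqref{eq:UB(-)}--\eqref{eq:UB(+)}.

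First I would compute $V_Bh$ using \eqref{eq:tmp-9}: because $f_-,f_0,f_+$ are supported on a single component each, only one of the three terms $\overline{a}\hat f_-$, $\hat f_0$, $\overline{c}\hat f_+$ survives, e.g. $(V_Bf_-)(\lambda)=\overline{a(\lambda)}\hat f_-(\lambda)$. Multiplying by $e_\lambda(-t)$ and inserting into the adjoint formula \eqref{eq:V-adj}, together with the explicit eigenfunction $\psi_\lambda^{(B)}=(a(\lambda)\chi_-+\chi_0+c(\lambda)\chi_+)e_\lambda$ from \eqref{eq:eigen-w} and the density $\sigma_B(d\lambda)=m^{-2}(\lambda)\,d\lambda$ from \eqref{eq:sigma(B)}, one gets, for $x$ in a fixed component, an integral of the form $\int_{\mathbb{R}}e_\lambda(-t)\,\kappa(\lambda)\,\hat h(\lambda)\,e_\lambda(x)\,m^{-2}(\lambda)\,d\lambda$, where the scalar $\kappa(\lambda)$ is the product of the surviving $V_B$-factor and the value of $\psi_\lambda^{(B)}$ on that component (namely $a$, $1$, or $c$).

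The computation then reduces to two elementary simplifications. Using $e_\lambda(-t)e_\lambda(x)=e_\lambda(x-t)$, each such integral is an inverse Fourier transform evaluated at the translated point $x-t$, which is exactly why every entry appears as $(\cdots)^{\vee}(\cdot-t)$. And using the modulus identity $m=|a|=|c|$ from \eqref{eq:m(lambda)}, the multipliers collapse: for the $f_-$ source one finds $\overline{a}\cdot a\cdot m^{-2}=1$ on $I_-$ (plain translation $f_-(\cdot-t)$), $\overline{a}\cdot 1\cdot m^{-2}=a^{-1}$ on $I_0$, and $\overline{a}\cdot c\cdot m^{-2}=a^{-1}c$ on $I_+$; the $f_0$ and $f_+$ sources are handled identically, producing the multipliers $\overline{a}^{-1},m^{-2},\overline{c}^{-1}$ (applied to $\hat f_0$) and $c^{-1}a,c^{-1},1$ (applied to $\hat f_+$), respectively. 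Here one repeatedly uses $\overline{a}/|a|^2=a^{-1}$, $a/|a|^2=\overline{a}^{-1}$, and $|c|^2/|a|^2=1$.

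I do not expect a genuine obstacle; the only points needing care are bookkeeping. First, one must check that every multiplier that appears ($a^{-1}$, $c^{-1}$, $a^{-1}c$, $m^{-2}$, and so on) is bounded and bounded away from zero, which is precisely the estimate \eqref{eq:m-bound} of Lemma \ref{lem:m}; this makes all the Fourier integrals honest $L^2$ operations and legitimizes the interchange of integration and evaluation. Second, one should confirm that each output lands in the correct $L^2$ space, which is the content of Corollary \ref{cor:I-J} (for instance, it guarantees that $(a^{-1}c\,\hat f_-)^{\vee}$ is supported as needed on $I_+$). With those two facts in hand, the nine entries follow by direct substitution, so the proof is essentially a one-line appeal to $U_B(t)=V_B^{*}M_tV_B$ followed by the identity $|a|=|c|=m$.
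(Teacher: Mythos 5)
Your proposal is correct and is essentially the paper's own argument: the paper likewise expands each piece in the generalized eigenfunctions via (\ref{eq:decomp}) and (\ref{eq:tmp-9}), writing for instance $f_{-}=\int(a^{-1}\hat{f}_{-})\psi_{\lambda}\,d\lambda$ (the factor $m^{-2}$ absorbed by $\overline{a}\,m^{-2}=a^{-1}$), applies the time evolution to get $e_{\lambda}(\cdot-t)$, and reads off the three components of $\psi_{\lambda}$. Your formulation through $U_{B}(t)=V_{B}^{*}M_{t}V_{B}$ and the collapse of multipliers via $|a|=|c|=m$ is the same computation, just packaged operator-theoretically.
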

\begin{proof}
By (\ref{eq:decomp}) and (\ref{eq:tmp-9}), $f_{-}=\int(a^{-1}\hat{f}_{-})\psi_{\lambda}d\lambda$.
Hence, 
\begin{alignat*}{1}
U_{B}(t)f_{-} & =\int(a^{-1}\hat{f}_{-})(a\chi_{-}+\chi_{0}+c\chi_{+})e_{\lambda}(\cdot-t)d\lambda\\
 & =\chi_{-}f_{-}(\cdot-t)+\chi_{0}(a^{-1}\hat{f}_{-})^{\vee}(\cdot-t)+\chi_{+}(a^{-1}c\hat{f}_{-})^{\vee}(\cdot-t).
\end{alignat*}
This is (\ref{eq:UB(-)}). Similarly, we get the other two equations.

\end{proof}
\begin{cor}
\label{cor:UB(t)}~
\begin{enumerate}
\item Let $I$ be any of the three components $I_{-},I_{0},I_{+}$. Let
$f$ be some wave-function localized in $I$. If both $x$ and $x-t$
are in $I$, then 
\[
\left(U_{B}(t)f\right)(x)=f(x-t).
\]

\item Suppose $f$ is supported in $I_{-}$. As the support of $U_{B}(t)f$
hits $x=0$, then it transfers to $1$ with probability $w^{2}$ and
a phase-shift $e(-\phi)$; and to $\beta$ with probability $1-w^{2}$
and a phase-shift $-e(\psi-\theta)$.
\item Suppose $f$ is supported in $I_{0}$. As the support of $U_{B}(t)f$
hits $x=\alpha$, then it transfers to $\beta$ with probability $w^{2}$
and a phase-shift $e(\phi-\theta)$; and to $1$ with probability
$1-w^{2}$ and a phase-shift $e(-\psi)$. 
\item The boundary conditions are preserved by $U_{B}(t)$ for all $t\in\mathbb{R}$;
i.e., we have
\[
\left(\begin{array}{c}
\left(U_{B}(t)f\right)(1)\\
\left(U_{B}(t)f\right)(\beta)
\end{array}\right)=B\left(\begin{array}{c}
\left(U_{B}(t)f\right)(0)\\
\left(U_{B}(t)f\right)(\alpha)
\end{array}\right)
\]
for all $f\in\mathrm{dom}(P_{B})$.
\end{enumerate}
\end{cor}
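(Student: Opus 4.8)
The plan is to read all four assertions off the explicit propagation formulas of Corollary \ref{cor:UB} together with the domain characterization (\ref{eq:ExtensionDomain1}); no computation beyond what is already recorded in (\ref{eq:UB(-)})--(\ref{eq:UB(+)}) and in the Fourier expansion (\ref{eq:m-2}) is needed. Throughout I write $u(\cdot,t):=U_{B}(t)f$, and I use that by Remark \ref{rmk:Pmin} and Lemma \ref{lem:cont} every element of $\mathscr{D}(P_{B})\subset\mathscr{H}_{1}(\Omega)$ has a continuous representative, so the endpoint values below are well defined.

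For (1), suppose $f$ is supported in a single component $I$. If $I=I_{-}$ then (\ref{eq:UB(-)}) gives $\chi_{-}\,U_{B}(t)f=\chi_{-}f(\cdot-t)$, and (\ref{eq:UB(+)}) handles $I=I_{+}$ in the same way; in both cases $(U_{B}(t)f)(x)=f(x-t)$ whenever $x,x-t\in I$, with no spreading term. The only case needing an argument is $I=I_{0}$: by (\ref{eq:UB(0)}) the $I_{0}$-component of $U_{B}(t)f$ is $\chi_{0}(m_{B}^{-2}\hat{f}_{0})^{\vee}(\cdot-t)$, and by (\ref{eq:m-2}) the multiplier $m_{B}^{-2}$ is a sum of exponentials $e(k(\alpha-1)\lambda)$, $k\in\mathbb{Z}$, each of which acts on $f_{0}$ as a translation by $k(\alpha-1)=k\,|I_{0}|$. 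Since the length of $I_{0}$ is exactly $\alpha-1$, the translates of $\mathrm{supp}(f_{0})\subseteq\overline{I_{0}}$ by multiples of $|I_{0}|$ are pairwise disjoint and tile $\mathbb{R}$, so only the $k=0$ term can take a value inside $I_{0}$; this is the same vanishing mechanism used in the proof of Theorem \ref{thm:sp-w-1}. Hence for $x,x-t\in I_{0}$ only $k=0$ survives and $(U_{B}(t)f)(x)=f_{0}(x-t)$, proving (1).

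For (4), I would invoke Stone's theorem: since $P_{B}$ is selfadjoint and $U_{B}(t)=e^{-itP_{B}}$, the group leaves $\mathscr{D}(P_{B})$ invariant, so $u(\cdot,t)\in\mathscr{D}(P_{B})$ for every $t$ and every $f\in\mathscr{D}(P_{B})$. By (\ref{eq:ExtensionDomain1}) this is precisely the statement that $u(\cdot,t)$ obeys the defining boundary relation $B\rho_{1}(u)=\rho_{2}(u)$ at all times; writing it in the equivalent form $\rho_{1}(u)=B^{*}\rho_{2}(u)$ (using $B^{-1}=B^{*}$) is what yields the transfer laws (2) and (3). For (2), let $f$ be supported in $I_{-}$, away from the junctions. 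By (1) the packet translates freely to the right at unit speed inside $I_{-}$, so there is a first time $t_{0}$ at which its leading edge reaches $x=0$; at that instant $u(\alpha,t_{0})=0$, because by finite (unit) propagation speed the disturbance, having only just arrived at $x=1$, has not yet crossed the bounded interval $I_{0}$ to reach $\alpha$. Feeding $\rho_{2}(u)=(u(0,t_{0}),0)$ into $\rho_{1}(u)=B^{*}\rho_{2}(u)$ and reading the first column of $B^{*}$ from (\ref{eq:2-by-2 Unitary}) gives $u(1,t_{0})=w\,e(-\phi)u(0,t_{0})$ and $u(\beta,t_{0})=-\sqrt{1-w^{2}}\,e(\psi-\theta)u(0,t_{0})$, i.e. transfer to $1$ with probability $w^{2}$ and phase $e(-\phi)$, and to $\beta$ with probability $1-w^{2}$ and phase $-e(\psi-\theta)$. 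Assertion (3) is identical with the roles interchanged: for $f$ supported in $I_{0}$ the first contact is at $x=\alpha$ with $u(0,t_{0})=0$, and the second column of $B^{*}$ gives $u(1,t_{0})=\sqrt{1-w^{2}}\,e(-\psi)u(\alpha,t_{0})$ and $u(\beta,t_{0})=w\,e(\phi-\theta)u(\alpha,t_{0})$, which is exactly the claimed transfer.

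The only genuinely delicate point is the interpretive hypothesis ``as the support hits $x$'' in (2) and (3): one must justify that at the first instant of contact the amplitude at the opposite interior endpoint vanishes, so that the preserved relation $\rho_{1}=B^{*}\rho_{2}$ collapses to a single column of $B^{*}$ and produces clean transfer coefficients. This is precisely the finite-propagation-speed content of (1) --- the disturbance moves at unit speed and has not yet traversed the bounded interval $I_{0}$ --- so that the transfer amplitudes are exactly the matrix entries of $B^{*}$, independent of the spreading tail carried by the higher-order ($n\neq0$, $k\neq0$) terms in (\ref{eq:a-inv(lambda)}) and (\ref{eq:m-2}).
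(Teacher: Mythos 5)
Your proposal is correct in substance, but for parts (2)--(3) it takes a genuinely different route from the paper. The paper gives this corollary no separate proof: it is meant to be read directly off the explicit propagation formulas of Corollary \ref{cor:UB}. Your part (1) does exactly that (your tiling argument for the $I_{0}$-case is the same autocorrelation-support mechanism used in the proof of Theorem \ref{thm:sp-w-1}), and your part (4), Stone's theorem plus invariance of $\mathscr{D}(P_{B})$ under $U_{B}(t)$, is the intended argument. For (2)--(3), however, the paper's implicit proof is again a direct reading: in (\ref{eq:UB(-)}) the middle term $\chi_{0}(a^{-1}\hat{f}_{-})^{\vee}(\cdot-t)$ has, by (\ref{eq:a-inv(lambda)}), leading ($n=0$) term $w\,e(-\phi)f_{-}(x-t-1)$, which is the transfer $0\to1$ with amplitude $w\,e(-\phi)$, while the direct-propagation term $-\sqrt{1-w^{2}}\,e(\psi-\theta)f_{-}(x-t-\beta)$ of $a^{-1}c=\hat{S}$ in (\ref{eq:scatter-8}) is the transfer $0\to\beta$; likewise (\ref{eq:m-2}) and (\ref{eq:c-inv(lambda)}) give (3). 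Your alternative --- deduce (2)--(3) from the preserved boundary relation evaluated at the first contact time, when the amplitude at the opposite interior endpoint vanishes --- is more structural, and it explains why the transfer coefficients are exactly matrix entries of the boundary matrix. But it has one soft spot you should not paper over: the finite-propagation-speed claim $u(\alpha,t_{0})=0$ is \emph{not} a consequence of part (1), which only governs the evolution inside the component carrying the initial support and says nothing about how fast the disturbance appears in the other components. Its actual justification is the delay $\alpha-1$ visible in the $n\geq1$ terms of (\ref{eq:a-inv(lambda)}) inside formula (\ref{eq:UB(-)}) --- i.e., the very expansions your detour was meant to avoid. So your argument is repairable (cite (\ref{eq:UB(-)}) for causality), but as written it is mildly circular where the paper's direct reading is not.

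One further point, in your favor: part (4) as printed in the statement, $\rho_{1}(U_{B}(t)f)=B\rho_{2}(U_{B}(t)f)$, is inconsistent with the paper's own domain definition (\ref{eq:ExtensionDomain1}), which reads $B\rho_{1}(f)=\rho_{2}(f)$; the two differ by $B\leftrightarrow B^{-1}=B^{*}$. The version you actually prove, $B\rho_{1}(u)=\rho_{2}(u)$, equivalently $\rho_{1}(u)=B^{*}\rho_{2}(u)$, is the one consistent with (\ref{eq:ExtensionDomain1}), with (\ref{eq:3})--(\ref{eq:4}), and with the phases asserted in (2)--(3): had you used the printed form, the first column of $B$, namely $(w\,e(\phi),\sqrt{1-w^{2}}\,e(\psi))^{T}$, would yield transfer phases $e(\phi)$ and $+e(\psi)$, contradicting claim (2). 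Your silent correction of the direction is the right reading of the paper.
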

The dynamics generated by $P_{B}$ corresponds to the following diagrams:

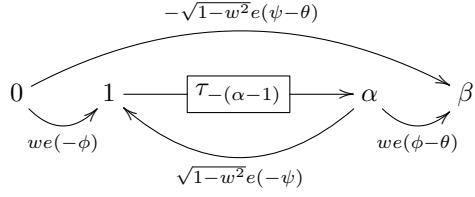
\begin{figure}[H]
\[
\xymatrix{0\ar@/_{1pc}/[r]_{we(-\phi)}\ar@/^{2pc}/[0,4]^{-\sqrt{1-w^{2}}e(\psi-\theta)} & 1\ar@{-}[r] & *+[F]{\tau_{-(\alpha-1)}}\ar[r] & \alpha\ar@/^{2pc}/[0,-2]^{\sqrt{1-w^{2}}e(-\psi)}\ar@/_{1pc}/[r]_{we(\phi-\theta)} & \beta}
\]

\caption{\label{fig:forward}Forward system diagram}

\end{figure}

\begin{rem}
Here $\tau_{-(\alpha-1)}$ denotes the \emph{time delay operator}.
For $w=0$, the transitions from $0$ to $1$ and $\alpha$ to $\beta$
are disconnected, and the the diagram reduces to the union of a compact
(discrete spectrum) and a non-compact (continuous spectrum) component,
see Theorem \ref{thm:sp-w0}. For $w=1$, the transition from $0$
to $\beta$, and the feedback from $\alpha$ to $1$ are reduced,
see Theorem \ref{thm:sp-w1}. For an application, see also section
\ref{sec:scatter}, especially Figure \ref{fig:w}.
\end{rem}

\begin{figure}[H]
\[
\xymatrix{0\ar@/^{2pc}/[0,4]^{-e(\psi-\theta)} & 1\ar@{-}[r] & *+[F]{\tau_{-(\alpha-1)}}\ar[r] & \alpha\ar@/^{2pc}/[0,-2]^{e(-\psi)} & \beta}
\]

\caption{\label{fig:w0}$w=0$}

\end{figure}

\begin{figure}[H]
\[
\xymatrix{0\ar@/_{1pc}/[r]_{e(-\phi)} & 1\ar@{-}[r] & *+[F]{\tau_{-(\alpha-1)}}\ar[r] & \alpha\ar@/_{1pc}/[r]_{e(\phi-\theta)} & \beta}
\]

\caption{\label{fig:w1}$w=1$}
\end{figure}

\begin{rem}
The results above record the cross-overs, and mixing, for the three
components $I_{\pm}$ and $I_{0}$ in $\Omega=I_{-}\cup I_{0}\cup I_{+}$,
$I_{-}=(-\infty,0)$, $I_{0}=(1,\alpha)$, and $I_{+}=(\beta,\infty)$.
Hence the selfadjoint extensions $P_{B}$ of $P_{min}$, with $\mathscr{D}(P_{min})=\{f\in\mathscr{H}_{1}(\Omega)\left|\right.\tilde{f}=0\mbox{ on }\partial\Omega\}$
yield scattering as $U_{B}(t)=e^{itP_{B}}$ is acting on $L^{2}(\Omega)$. 

The individual boundary value problem for the three separate intervals
$I_{-}$, $I_{0}$, and $I_{+}$ do not compare with that for the
union $\Omega$ of the intervals: For example, the operator $P_{min}^{(-)}$
in $L^{2}(I_{-})$ with boundary condition $\tilde{f}(0_{-})=0$ has
deficiency indices $(1,0)$; and so it has no selfadjoint extensions.
Similarly, $P_{min}^{(+)}$ in $L^{2}(I_{+})$ with boundary condition
$\tilde{f}(\beta_{-})=0$ has deficiency indices $(0,1)$, and so
it too does not have any selfadjoint extension. The operator $P_{min}^{(0)}$
with boundary conditions $\tilde{f}(1_{-})=\tilde{f}(\alpha_{+})=0$
has deficiency indices $(1,1)$ and selfadjoint extensions $P_{z}$
corresponds to $\tilde{f}(1_{+})=z\tilde{f}(\alpha_{-})$ as $z$
varies in $\{z\in\mathbb{C},\left|z\right|=1\}$. 

The individual boundary value problems for the three intervals are
not subproblems for the one studied here for $P=\frac{1}{i2\pi}\frac{d}{dx}$
in $L^{2}(\Omega)$.
\end{rem}

\section{Scattering Theory\label{sec:scatter}}

In this section we find the Lax-Phillips scattering operators, one
for each of the selfadjoint operators $P_{B}$ (see Theorems \ref{thm:sp-w1}
and \ref{thm:sp-w}). Recall, from $P_{B}$, we get the corresponding
unitary one parameter groups $U_{B}(t)$; it is computed in Corollary
\ref{cor:UB}. The one-parameter group is needed as Lax-Phillips data
always refer to $U_{B}(t)$. 

When $B$ in $U(2)$ is fixed, we are able in section \ref{sub:scatter}
to explicitly compute both the incoming and outgoing translation representations
for the unitary one parameter group $U_{B}(t)$. From this, in Theorem
\ref{thm:scatter} below, we then compute the Lax-Phillips scattering
operator $S_{B}$, and scattering matrix. Recall the scattering operator
$S_{B}$ commutes with the translation group, and the scattering matrix
with multiplication operators. As a result, $S_{B}$ is a (unitary)
convolution operator, and its transform (the scattering matrix) is
a multiplication operators in the Fourier dual variable $\lambda$;
i.e., the scattering matrix is a unitary valued function of $\lambda$.
It is presented in Theorem \ref{thm:scatter}: Eq (\ref{eq:scatter-2})
gives an expression for this function, with an explicit dependence
on $B$.

\subsection{\label{sub:scatter}Translation Representations and Scattering Operators}

Fix $I_{1}$, $I_{2}$ as before, let $\Omega=I_{-}\cup I_{0}\cup I_{+}$
be the exterior domain. Choose a boundary matrix $B(w,\theta,\phi,\psi)\in U(2)$,
let $P_{B}$ be the selfadjoint extension, and $U_{B}(t)$ the corresponding
unitary one-parameter group.

For $0<w<1$, there is mixing/interaction between the bounded and
unbounded components of $\Omega$, as shown in Corollary \ref{cor:UB(t)},
and Figure \ref{fig:forward}. This fits nicely into the Lax-Phillips
scattering theory \cite{LP68}. 

To begin with, the interacting group $U_{B}(t)$ acts in the perturbed
space $L^{2}(\Omega)$, with $I_{1}$, $I_{2}$ being the obstacles;
meanwhile, there is a free group $U_{0}(t)$ acting in the unperturbed
space $L^{2}(\mathbb{R})$, containing $L^{2}(\Omega)$ as a closed
subspace. Here, $U_{0}(t)$ is the right-translation by $t$ in $L^{2}(\mathbb{R})$.
That is, 
\[
U_{0}(t)f:=f(\cdot-t)
\]
for all $f\in L^{2}(\mathbb{R})$. 

Let $D_{\pm}:=L^{2}(I_{\pm})$ be the outgoing/incoming subspace.
By Corollary \ref{cor:UB(t)}, we have
\begin{enumerate}
\item $U_{B}(t)D_{+}\subset D_{+}$, for all $t>0$; $U_{B}(t)D_{-}\subset D_{-}$,
for all $t<0$.
\item $\bigcap_{t}\left(U_{B}(t)D_{\pm}\right)=\{0\}$.
\item For all $t>0$, $U_{B}(t)=U_{0}(t)$ on $D_{+}$.
\item For all $t<0$, $U_{B}(t)=U_{0}(t)$ on $D_{-}$.
\item Suppose $supp(\varphi)\subset I_{0}$. If $x$, $x-t$ in $I_{0}$,
then $U_{B}(t)\varphi=U_{0}(t)\varphi$.\\

\end{enumerate}

Recall that $P_{B}$ has generalized eigenfunction 
\begin{equation}
\psi_{\lambda}=(a_{\lambda}\chi_{-}+\chi_{0}+c_{\lambda}\chi_{+})e_{\lambda}\label{eq:eigen}
\end{equation}
for all $\lambda\in\mathbb{R}$. See Theorem \ref{thm:sp-w}, and
eq. (\ref{eq:a(lambda)-1}), (\ref{eq:c(lambda)-1}). 

Setting $\psi_{\lambda,+}:=c_{\lambda}^{-1}\psi_{\lambda}$, and $\psi_{\lambda,-}:=a_{\lambda}^{-1}\psi_{\lambda}$,
and define $V_{\pm}:L^{2}(\Omega)\rightarrow L^{2}(\hat{\mathbb{R}})$
by 
\begin{equation}
\left(V_{\pm}f\right)(\lambda):=\left\langle \psi_{\lambda,\pm},f\right\rangle =\int_{\Omega}\overline{\psi_{\lambda,\pm}(x)}f(x)dx\label{eq:S1}
\end{equation}
for all $f\in L^{2}(\Omega)$. 

The adjoint operator $V_{\pm}^{*}:L^{2}(\hat{\mathbb{R}})\rightarrow L^{2}(\Omega)$
is given by
\begin{equation}
(V_{\pm}^{*}\hat{f})(x)=\int_{\mathbb{R}}\hat{f}(\lambda)\psi_{\lambda,\pm}(x)d\lambda\label{eq:S2}
\end{equation}
for all $\hat{f}\in L^{2}(\hat{\mathbb{R}})$.
\begin{rem}
\label{rem:V}In fact, $V_{+}=\overline{c}^{-1}V$ and $V_{-}=\overline{a}^{-1}V$,
where $V$ is given in (\ref{eq:V}). \end{rem}
\begin{thm}
$V_{\pm}$ are unitary operators from $L^{2}(\Omega)$ onto $L^{2}(\hat{\mathbb{R}})$.
In particular, 
\begin{equation}
f(x)=\int_{\mathbb{R}}\left\langle \psi_{\lambda,\pm},f\right\rangle \psi_{\lambda,\pm}(x)\: d\lambda\label{eq:decomp-1}
\end{equation}
 for all $f$ in $L^{2}(\Omega)$. Convergence is in the $L^{2}$-norm
w.r.t. $\sigma_{B}(d\lambda)$.\end{thm}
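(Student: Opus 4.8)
The plan is to reduce everything to the unitarity of $V$ already established in Theorem \ref{thm:sp-w-1}, using the factorization recorded in Remark \ref{rem:V}. Recall from (\ref{eq:S1}) that $\psi_{\lambda,+}=c_\lambda^{-1}\psi_\lambda$ and $\psi_{\lambda,-}=a_\lambda^{-1}\psi_\lambda$, so that pulling the scalars out of the pairing $\langle\psi_{\lambda,\pm},f\rangle=\int_\Omega\overline{\psi_{\lambda,\pm}}\,f$ gives
\[
(V_+f)(\lambda)=\overline{c(\lambda)}^{-1}(Vf)(\lambda),\qquad (V_-f)(\lambda)=\overline{a(\lambda)}^{-1}(Vf)(\lambda),
\]
i.e. $V_\pm=M_{\overline c^{-1}}V$ and $M_{\overline a^{-1}}V$, where $M_h$ denotes multiplication by $h$. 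Thus $V_\pm$ is the composition of the unitary $V\colon L^2(\Omega)\to L^2(\sigma_B)$ with a multiplication operator, and it remains only to show that these multiplication operators are unitary from $L^2(\sigma_B)$ onto $L^2(\hat{\mathbb R})=L^2(\mathbb R,d\lambda)$.

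The key observation — and the only place measure bookkeeping enters — is that the density of $\sigma_B$ is exactly $|a|^{-2}=|c|^{-2}$. Indeed, by (\ref{eq:m(lambda)}) and (\ref{eq:sigma(B)}) we have $m_B(\lambda)=|a_B(\lambda)|=|c_B(\lambda)|$ and $\sigma_B(d\lambda)=m_B^{-2}(\lambda)\,d\lambda$. Hence for $g\in L^2(\sigma_B)$,
\[
\big\|M_{\overline c^{-1}}g\big\|_{L^2(d\lambda)}^2=\int_{\mathbb R}|c(\lambda)|^{-2}|g(\lambda)|^2\,d\lambda=\int_{\mathbb R}|g(\lambda)|^2 m_B^{-2}(\lambda)\,d\lambda=\|g\|_{L^2(\sigma_B)}^2,
\]
so $M_{\overline c^{-1}}$ is isometric; its inverse is $M_{\overline c}$, which by the same identity carries $L^2(\hat{\mathbb R})$ back into $L^2(\sigma_B)$ isometrically, so it is in fact onto. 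The bounds $\tfrac{w}{2}\le m_B(\lambda)\le\tfrac{2}{w}$ of Lemma \ref{lem:m}(1) guarantee that $\overline c^{-1}$ and $\overline c$ are bounded away from $0$ and $\infty$, so there is no integrability issue. The same argument applies verbatim to $M_{\overline a^{-1}}$. Composing with $V$ then shows $V_\pm$ is unitary from $L^2(\Omega)$ onto $L^2(\hat{\mathbb R})$.

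Finally, the reconstruction formula (\ref{eq:decomp-1}) is the statement $V_\pm^*V_\pm=I$ read on $L^2(\Omega)$. I would first verify that the adjoint $V_\pm^*$ has the integral form (\ref{eq:S2}): writing out $\langle V_\pm g,\hat f\rangle_{L^2(\hat{\mathbb R})}$ and interchanging the $x$- and $\lambda$-integrations (justified by the uniform bounds on $a,c$ together with $g\in L^2(\Omega)$, $\hat f\in L^2(\hat{\mathbb R})$) identifies $(V_\pm^*\hat f)(x)=\int_{\mathbb R}\hat f(\lambda)\psi_{\lambda,\pm}(x)\,d\lambda$. Substituting $\hat f=V_\pm f$ and using the unitarity just proved yields $f=V_\pm^*V_\pm f=\int_{\mathbb R}\langle\psi_{\lambda,\pm},f\rangle\,\psi_{\lambda,\pm}\,d\lambda$, with convergence handled exactly as in the proof of Theorem \ref{thm:sp-w-1} via the direct-integral theory. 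I do not expect a genuine obstacle here: the substantive content is entirely contained in Theorem \ref{thm:sp-w-1}, and the present statement is a change of spectral normalization, the only subtlety being the consistent passage between the weighted measure $\sigma_B$ and Lebesgue measure through the factor $\overline{c}^{-1}$ (resp.\ $\overline{a}^{-1}$).
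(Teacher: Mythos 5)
Your proof is correct and takes essentially the same route as the paper: both rest on the factorization $V_{+}=\overline{c}^{-1}V$, $V_{-}=\overline{a}^{-1}V$ from Remark \ref{rem:V} and reduce unitarity of $V_{\pm}$ to the unitarity of $V$ established in Theorem \ref{thm:sp-w-1}. The only difference is one of explicitness: you spell out the measure bookkeeping (that $\sigma_{B}$ has density $m_{B}^{-2}=|a|^{-2}=|c|^{-2}$, so multiplication by $\overline{c}^{-1}$ is unitary from $L^{2}(\sigma_{B})$ onto $L^{2}(\hat{\mathbb{R}})$), which the paper leaves implicit in its operator identities $V_{+}^{*}V_{+}=(V^{*}\overline{c})(\overline{c}^{-1}V)=I$ and $V_{+}V_{+}^{*}=\overline{c}^{-1}VV^{*}\overline{c}=I$.
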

\begin{proof}
It follows from Remark \ref{rem:V} that 
\begin{alignat*}{1}
V_{+}^{*}V_{+} & =\left(V^{*}\overline{c}\right)\left(\overline{c}^{-1}V\right)=V^{*}V=I.\\
V_{+}V_{+}^{*} & =\left(\overline{c}^{-1}V\right)\left(V^{*}\overline{c}\right)=\overline{c}^{-1}\overline{c}=I.
\end{alignat*}
Hence $V_{+}$ is unitary. Similarly, $V_{-}$ is unitary. Eq. (\ref{eq:decomp-1})
follows from this.
\end{proof}
Pulling the operators $V_{\pm}$ back to $L^{2}(\mathbb{R})$ via
the Fourier transform, we get the outgoing/incoming translation representations
\begin{equation}
R_{\pm}:=\mathscr{F}^{*}V_{\pm}.\label{eq:scatter-1}
\end{equation}

\begin{thm}
$R_{\pm}$ are unitary operators from $L^{2}(\Omega)$ onto $L^{2}(\mathbb{R})$.
Moreover,
\begin{enumerate}
\item $R_{\pm}\big|_{D_{\pm}}=identity$;
\item For all $t\in\mathbb{R}$, we have the following two representations:
\begin{equation}
U_{B}(t)=R_{\pm}^{*}U_{0}(t)R_{\pm}\label{eq:UB}
\end{equation}
i.e., the following diagram commute: 
\[
\xymatrix{L^{2}(\Omega)\ar[r]^{U_{B}(t)}\ar[d]_{V_{\pm}}\ar@/_{3pc}/[dd]_{R_{\pm}} & L^{2}(\Omega)\ar[d]^{V_{\pm}}\ar@/^{3pc}/[dd]^{R_{\pm}}\\
L^{2}(\hat{\mathbb{R}})\ar[r]^{e(-\lambda t)}\ar[d]_{\mathscr{F}^{*}} & L^{2}(\hat{\mathbb{R}})\ar[d]^{\mathscr{F}^{*}}\\
L^{2}(\mathbb{R})\ar[r]^{U_{0}(t)} & L^{2}(\mathbb{R})
}
\]

\end{enumerate}
\end{thm}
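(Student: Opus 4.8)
The plan is to deduce everything from three ingredients already in place: the unitarity of $V_{\pm}$ (the theorem immediately preceding), the factorizations $V_{+}=\overline{c}^{-1}V$ and $V_{-}=\overline{a}^{-1}V$ of Remark \ref{rem:V}, and the spectral diagonalization $V_{B}U_{B}(t)V_{B}^{*}=M_{t}$ with $(M_{t}g)(\lambda)=e_{\lambda}(-t)g(\lambda)$, recorded in (\ref{eq:Mt}). Since $\mathscr{F}^{*}$ is unitary on $L^{2}(\mathbb{R})$ and $V_{\pm}:L^{2}(\Omega)\to L^{2}(\hat{\mathbb{R}})$ is unitary, the composite $R_{\pm}=\mathscr{F}^{*}V_{\pm}$ from (\ref{eq:scatter-1}) is at once a unitary from $L^{2}(\Omega)$ onto $L^{2}(\mathbb{R})$; this disposes of the first assertion with no work.

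For part (1), I would observe that on the half-line $I_{\pm}$ the normalized eigenfunction collapses to a pure exponential: since $\psi_{\lambda,+}=c_{\lambda}^{-1}\psi_{\lambda}$ and $\psi_{\lambda}=(a_{\lambda}\chi_{-}+\chi_{0}+c_{\lambda}\chi_{+})e_{\lambda}$, the restriction of $\psi_{\lambda,+}$ to $I_{+}$ is exactly $e_{\lambda}$, and likewise $\psi_{\lambda,-}\big|_{I_{-}}=e_{\lambda}$. Hence for $f\in D_{\pm}=L^{2}(I_{\pm})$ the only surviving term in (\ref{eq:S1}) gives $(V_{\pm}f)(\lambda)=\int_{I_{\pm}}\overline{e_{\lambda}(x)}f(x)\,dx=(\mathscr{F}f)(\lambda)$; applying $\mathscr{F}^{*}$ yields $R_{\pm}f=\mathscr{F}^{*}\mathscr{F}f=f$, so $R_{\pm}\big|_{D_{\pm}}$ is the identity.

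For part (2), the crux is to promote the diagonalization of $U_{B}(t)$ from $V=V_{B}$ to the pair $V_{\pm}$. By Remark \ref{rem:V}, $V_{\pm}$ differs from $V$ by multiplication by a function of $\lambda$ alone ($\overline{c}^{-1}$ or $\overline{a}^{-1}$), and such a multiplication operator commutes with $M_{t}$; therefore $V_{\pm}U_{B}(t)=M_{t}V_{\pm}$, i.e. $(V_{\pm}U_{B}(t)f)(\lambda)=e_{\lambda}(-t)(V_{\pm}f)(\lambda)$. Finally the Fourier transform carries $M_{t}$ to translation, $\mathscr{F}^{*}M_{t}=U_{0}(t)\mathscr{F}^{*}$, whence $R_{\pm}U_{B}(t)=\mathscr{F}^{*}V_{\pm}U_{B}(t)=\mathscr{F}^{*}M_{t}V_{\pm}=U_{0}(t)R_{\pm}$; rearranging and using unitarity of $R_{\pm}$ gives $U_{B}(t)=R_{\pm}^{*}U_{0}(t)R_{\pm}$ together with the asserted commuting diagram.

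I expect the only real subtlety to be notational rather than conceptual: one must keep the Fourier-sign conventions consistent, so that $\mathscr{F}^{*}$ is precisely the inverse of the exponential transform to which $V_{\pm}$ restricts on $D_{\pm}$, and one must note that it is the \emph{full} multiplicative factor $\overline{a}^{-1},\overline{c}^{-1}$ (not merely the moduli $m=|a|=|c|$) that commutes with $M_{t}$, so that both $V_{+}$ and $V_{-}$ — and not only the $\sigma_{B}$-isometry $V$ — diagonalize the one-parameter group. Once the factorization of Remark \ref{rem:V} is invoked, commutation with $M_{t}$ is immediate and no new estimates are required.
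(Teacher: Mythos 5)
Your proposal is correct and takes essentially the same approach as the paper: unitarity of $R_{\pm}$ follows from that of $V_{\pm}$ and $\mathscr{F}^{*}$, part (1) follows because $V_{\pm}$ reduces to the Fourier transform on $D_{\pm}$ (the paper writes this as $V_{-}f_{-}=\overline{a}^{-1}\overline{a}\hat{f}_{-}=\hat{f}_{-}$ via Remark \ref{rem:V}, which is the same computation as your observation that $\psi_{\lambda,\pm}$ restricts to $e_{\lambda}$ on $I_{\pm}$), and part (2) follows from the intertwining $V_{\pm}U_{B}(t)=M_{t}V_{\pm}$ pulled back through $\mathscr{F}^{*}$. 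The only cosmetic difference is that you obtain this intertwining by transporting (\ref{eq:Mt}) across the multiplier factorization of Remark \ref{rem:V}, whereas the paper derives it directly by applying $U_{B}(t)$ under the integral in the expansion (\ref{eq:decomp-1}) and using the generalized-eigenfunction property; both rest on the same facts.
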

\begin{proof}
Clearly, $R_{\pm}$ are unitary. Let $f_{-}\in D_{-}=L^{2}(I_{-})$.
By Remark \ref{rem:V}, 
\[
V_{-}f_{-}=\overline{a}^{-1}Vf_{-}=\overline{a}^{-1}\overline{a}\hat{f}_{-}=\hat{f}_{-};
\]
also see eq. (\ref{eq:tmp-9}). Hence, $R_{-}f_{-}=\mathscr{F}^{*}V_{-}f_{-}=f_{-}$.
Similarly, $R_{+}f_{+}=f_{+}$, for all $f_{+}\in D_{+}=L^{2}(I_{+})$.
Thus, $R_{\pm}$ restricted to $D_{\pm}$ as the identity operator.

From (\ref{eq:decomp-1}), we have 
\begin{alignat*}{1}
\left(U_{B}(t)f\right)(x) & =\int\left\langle \psi_{\lambda,\pm},f\right\rangle U_{B}(t)\psi_{\lambda,\pm}(x)\: d\lambda\\
 & =\int\left\langle \psi_{\lambda,\pm},f\right\rangle e(-\lambda t)\psi_{\lambda,\pm}(x)\: d\lambda.
\end{alignat*}
Hence, $V_{\pm}U_{B}(t)f=e(-\lambda t)V_{\pm}f$, i.e., 
\[
U_{B}(t)=V_{\pm}^{*}e(-\lambda t)V_{\pm}
\]
for all $t\in\mathbb{R}$. Eq (\ref{eq:UB-1}) follows from pulling
the above identity to $L^{2}(\mathbb{R})$ via the Fourier transform. \end{proof}
\begin{rem}
Aside from a possible shift by $\beta$, $R_{\pm}$ are the outgoing/incoming
translation representations in the Lax-Phillips theory.
\end{rem}
Define the scattering operators by
\begin{alignat}{1}
S & :=R_{-}^{*}R_{+}\label{eq:scatter}\\
\tilde{S} & :=R_{+}R_{-}^{*}\label{eq:scatter-3}\\
\hat{S} & :=V_{+}V_{-}^{*}\label{eq:scatter-4}
\end{alignat}
The three operators in (\ref{eq:scatter-8})-(\ref{eq:scatter-4})
are all unitarily equivalent. Specifically, 
\begin{flalign}
S & =R_{-}^{*}\tilde{S}R_{-}\label{eq:scatter-5}\\
\tilde{S} & =\mathscr{F}^{*}\hat{S}\mathscr{F}.\label{eq:scatter-6}
\end{flalign}

In our settings, the usual wave operators $W_{\pm}:L^{2}(\mathbb{R})\rightarrow L^{2}(\Omega)$,
i.e., from the unperturbed space to the perturbed space, are 
\begin{equation}
W_{\pm}:=R_{\pm}^{*};\label{eq:wo}
\end{equation}
and 
\begin{equation}
\tilde{S}=W_{+}^{-1}W_{-}.\label{eq:scatter-7}
\end{equation}
For all $\varphi\in L^{2}(\mathbb{R})$, we have 
\begin{alignat*}{1}
W_{-}\varphi & =s\mbox{ -}\lim_{t\rightarrow-\infty}U_{B}(-t)U_{0}(t)\varphi\\
 & =s\mbox{ -}\lim_{t\rightarrow+\infty}U_{B}(-t)U_{0}(t)\tilde{S}\varphi.
\end{alignat*}
That is, $Range(W_{-})=L^{2}(\Omega)$ consists of scattering states.
Note that $\tilde{S}$ commutes with the free group $\{U_{0}(t)\}$.

The next two results give formulas for the scattering operator and
the scattering matrix.
\begin{thm}
\label{thm:scatter}Let $\hat{S}$ be as in (\ref{eq:scatter-4}),
then $\hat{S}$ is unitary on $L^{2}(\hat{\mathbb{R}})$, and
\begin{equation}
\hat{S}(\lambda)=a(\lambda)^{-1}c(\lambda)\label{eq:scatter-2}
\end{equation}
where $a(\lambda)$, $c(\lambda)$ are the coefficients in the generalized
eigenfunction (\ref{eq:eigen}). More precisely,
\begin{equation}
\hat{S}(\lambda)=e(-\theta-(\beta-\alpha+1)\lambda)\frac{1-\sqrt{1-w^{2}}e(\psi-(\alpha-1)\lambda)}{1-\sqrt{1-w^{2}}e(-\psi+(\alpha-1)\lambda)}.\label{eq:scatter-9}
\end{equation}
\end{thm}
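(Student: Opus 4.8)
The plan is to exploit Remark~\ref{rem:V}, which expresses both transforms $V_{\pm}$ through the single spectral transform $V$ of \eqref{eq:V}. This collapses the scattering operator to an explicit multiplication operator, so the theorem reduces to bookkeeping together with the already-established isometry properties.

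First, unitarity of $\hat{S}$ is immediate and needs no new work: the theorem preceding this one states that $V_{+}$ and $V_{-}$ are unitary from $L^{2}(\Omega)$ onto $L^{2}(\hat{\mathbb{R}})$, so $V_{-}^{*}=V_{-}^{-1}$, and $\hat{S}=V_{+}V_{-}^{*}$ from \eqref{eq:scatter-4} is a composition of two unitaries. To identify its symbol I would use that, by the definition \eqref{eq:S1} of $\psi_{\lambda,\pm}$ and conjugate-linearity of the inner product in the first slot, $(V_{+}f)(\lambda)=\overline{c(\lambda)}^{-1}(Vf)(\lambda)$ and $(V_{-}f)(\lambda)=\overline{a(\lambda)}^{-1}(Vf)(\lambda)$, which is exactly Remark~\ref{rem:V}. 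Then, given $g\in L^{2}(\hat{\mathbb{R}})$, I set $f:=V_{-}^{-1}g$, so that $(Vf)(\lambda)=\overline{a(\lambda)}\,g(\lambda)$, and substitute into the formula for $V_{+}$ to get
\begin{equation*}
(\hat{S}g)(\lambda)=(V_{+}f)(\lambda)=\overline{c(\lambda)}^{-1}\overline{a(\lambda)}\,g(\lambda).
\end{equation*}
Thus $\hat{S}$ is multiplication by $\overline{a(\lambda)}/\overline{c(\lambda)}$, which already proves it is the scattering \emph{matrix} (a unitary-valued function of $\lambda$, as discussed before the theorem).

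The second step is to reconcile $\overline{a}/\overline{c}$ with the claimed symbol $a^{-1}c$ in \eqref{eq:scatter-2}. This is where the key identity $\lvert a_{B}(\lambda)\rvert=\lvert c_{B}(\lambda)\rvert=m_{B}(\lambda)$ from \eqref{eq:m(lambda)} enters: since $a\overline{a}=c\overline{c}=m_{B}^{2}$, one has $\overline{a}=m_{B}^{2}/a$ and $\overline{c}=m_{B}^{2}/c$, whence $\overline{a}/\overline{c}=c/a=a^{-1}c$. This equal-modulus coincidence is the one genuinely load-bearing point — without it the two displayed forms of the symbol would not match — so I would flag it explicitly rather than treat it as routine.

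Finally, for the explicit expression \eqref{eq:scatter-9}, I would substitute the closed forms \eqref{eq:a(lambda)} and \eqref{eq:c(lambda)} into the ratio $a^{-1}c=c/a$. The normalization factor $b_{\lambda}w^{-1}$ common to $a_{\lambda}$ and $c_{\lambda}$ cancels, the phases combine as $e(\phi-\theta)/e(\phi)=e(-\theta)$ and $e(-(\beta-\alpha)\lambda)/e(\lambda)=e(-(\beta-\alpha+1)\lambda)$, and the two parenthetical factors become the quotient
\begin{equation*}
\frac{1-\sqrt{1-w^{2}}\,e(\psi-(\alpha-1)\lambda)}{1-\sqrt{1-w^{2}}\,e(-\psi+(\alpha-1)\lambda)},
\end{equation*}
giving \eqref{eq:scatter-9}. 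I do not anticipate any real obstacle here; the entire argument is a short reduction, and the only care required is the conjugate/measure bookkeeping in passing between $V$, $V_{\pm}$ and their adjoints, together with the $\lvert a\rvert=\lvert c\rvert$ step above.
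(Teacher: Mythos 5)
Your proposal is correct and follows essentially the same route as the paper's proof: both use Remark \ref{rem:V} together with unitarity of $V$ (equivalently of $V_{\pm}$) to collapse $\hat{S}=V_{+}V_{-}^{*}$ to multiplication by $\overline{c}^{-1}\overline{a}$, invoke $\left|a\right|=\left|c\right|$ to rewrite this as $a^{-1}c$, and then substitute the closed forms of $a(\lambda)$ and $c(\lambda)$ to obtain (\ref{eq:scatter-9}). The equal-modulus step you flag as load-bearing is exactly the point the paper also singles out (``the last step follows from $\left|a\right|^{2}=\left|c\right|^{2}$'').
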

\begin{proof}
By Remark \ref{rem:V}, 
\[
\hat{S}=\left(\overline{c}^{-1}V\right)\left(\overline{a}^{-1}V\right)^{*}=\overline{c}^{-1}VV^{*}\overline{a}=\overline{c}^{-1}\overline{a}=a^{-1}c.
\]
Note the last step follows from $\left|a\right|^{2}=\left|c\right|^{2}$.
By (\ref{eq:a(lambda)-1}) - (\ref{eq:H(lambda)}), we have 
\[
a(\lambda)^{-1}c(\lambda)=e(-\theta-(\beta-\alpha+1)\lambda)H(\lambda)\overline{H(\lambda)}^{-1}
\]
where
\begin{equation}
H(\lambda)=\frac{1}{1-\sqrt{1-w^{2}}e(-\psi+(\alpha-1)\lambda)}=\frac{1}{1-b\: e_{\alpha-1}(\lambda)}\label{eq:H}
\end{equation}
in the $B=\left(\begin{array}{cc}
\overline{a} & -b\\
\overline{b} & a
\end{array}\right)$ presentation (\ref{eq:B}). This yields (\ref{eq:scatter-9}).
\end{proof}
The following alternative characterization of the scattering operator
$\hat{S}(\lambda)$ reveals its effect on incoming wave-packets.
\begin{cor}
\label{cor:scatter}Given $B\in U(2)$ with parameters as in (\ref{eq:2-by-2 Unitary}),
let $\hat{S}(\lambda)$ be as in (\ref{eq:scatter-4}). Then 
\begin{equation}
\hat{S}(\lambda)=e(-\theta)e(-(\beta-\alpha+1)\lambda)w^{2}H(\lambda)-e(\psi-\theta)\sqrt{1-w^{2}}e(-\beta\lambda)\label{eq:scatter-8}
\end{equation}
\end{cor}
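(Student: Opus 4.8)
The plan is to start from the closed form already obtained in Theorem~\ref{thm:scatter}, namely
\[
\hat{S}(\lambda)=e(-\theta-(\beta-\alpha+1)\lambda)\,H(\lambda)\,\overline{H(\lambda)}^{-1},
\]
and merely rearrange it into the two-term expression \eqref{eq:scatter-8}. First I would introduce the shorthand $z:=e(-\psi+(\alpha-1)\lambda)$ and $r:=\sqrt{1-w^{2}}$, so that $\left|z\right|=1$, $H(\lambda)=(1-rz)^{-1}$, $\overline{H(\lambda)}=(1-r\bar{z})^{-1}$, and consequently
\[
H(\lambda)\,\overline{H(\lambda)}^{-1}=\frac{1-r\bar{z}}{1-rz}=(1-r\bar{z})\,H(\lambda).
\]

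Next I would split the numerator by means of the relation $w^{2}=1-r^{2}$, writing $1-r\bar{z}=w^{2}+r(r-\bar{z})$, which yields
\[
\hat{S}(\lambda)=e(-\theta-(\beta-\alpha+1)\lambda)\,w^{2}H(\lambda)+e(-\theta-(\beta-\alpha+1)\lambda)\,r(r-\bar{z})\,H(\lambda).
\]
The first summand is already the first term in \eqref{eq:scatter-8}. The one step that is not pure bookkeeping is the observation that, because $\left|z\right|=1$ forces $z\bar{z}=1$, one has $r-\bar{z}=-\bar{z}(1-rz)$; hence $r(r-\bar{z})H(\lambda)=-r\bar{z}$ and the factor $H(\lambda)$ cancels entirely from the second summand.

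Finally I would return $\bar{z}=e(\psi-(\alpha-1)\lambda)$ to exponential form and collect the $\lambda$-exponents: since $-(\beta-\alpha+1)-(\alpha-1)=-\beta$, we get $e(-(\beta-\alpha+1)\lambda)\bar{z}=e(\psi)\,e(-\beta\lambda)$, so the second summand becomes $-e(\psi-\theta)\sqrt{1-w^{2}}\,e(-\beta\lambda)$, exactly as in \eqref{eq:scatter-8}. I do not anticipate any genuine obstacle: the whole content is the unit-circle identity $r-\bar{z}=-\bar{z}(1-rz)$ together with careful tracking of the phase factors, so the corollary is really a reformulation of Theorem~\ref{thm:scatter} that isolates the ``transmitted'' term $w^{2}H(\lambda)$ from the ``reflected'' term carrying the factor $\sqrt{1-w^{2}}$, rather than a new computation.
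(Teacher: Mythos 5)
Your proposal is correct and is essentially the paper's own proof: the paper likewise starts from Theorem \ref{thm:scatter} (eq.\ (\ref{eq:scatter-9})) and performs the same algebraic split, writing $\frac{1-\overline{z}}{1-z}=\frac{1-\left|z\right|^{2}}{1-z}-\overline{z}$ with $z:=\sqrt{1-w^{2}}\,e(-\psi+(\alpha-1)\lambda)$, which is exactly your identity $1-r\overline{z}=w^{2}+r(r-\overline{z})$ combined with $r-\overline{z}=-\overline{z}(1-rz)$, just in different notation. The only cosmetic difference is that the paper absorbs the modulus $\sqrt{1-w^{2}}$ into $z$, whereas you keep $z$ unimodular and carry $r$ separately; the phase bookkeeping $-(\beta-\alpha+1)-(\alpha-1)=-\beta$ is identical.
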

\begin{proof}
Set $z:=\sqrt{1-w^{2}}e(-\psi+(\alpha-1)\lambda)$. Then (\ref{eq:scatter-9})
reads 
\begin{alignat*}{1}
\hat{S}(\lambda) & =e(-\theta-(\beta-\alpha+1)\lambda)\frac{1-\overline{z}}{1-z}\\
 & =e(-\theta-(\beta-\alpha+1)\lambda)\left(\frac{1-\left|z\right|^{2}}{1-z}-\overline{z}\right)\\
 & =e(-\theta-(\beta-\alpha+1)\lambda)\left(\frac{w^{2}}{1-z}-\overline{z}\right)\\
 & =e(-\theta-(\beta-\alpha+1)\lambda)\left(w^{2}H(\lambda)-\sqrt{1-w^{2}}e(\psi-(\alpha-1)\lambda)\right)
\end{alignat*}
and (\ref{eq:scatter-8}) follows.\end{proof}
\begin{rem}
The pole of $H(z)$ on the right-side of (\ref{eq:scatter-8}) accounts
for the resonance caused by the two obstacles $I_{1},I_{2}$; the
second term on the right-side corresponds to a direct propagation
from $D_{-}$ into $D_{+}$. See the examples below. \end{rem}
\begin{example}
Consider $I_{1}=[0,1]$, $I_{2}=[2,3]$, and the exterior domain $\Omega$
is the union of three components 
\[
I_{-}=(-\infty,0),\: I_{0}=(1,2),\: I_{+}=(3,\infty).
\]
See Figure \ref{fig:w} below. 

Let $f$ be a unit-step function supported on $[-\frac{1}{2},0]$,
i.e., $f(x)=1$, for all $x\in[-\frac{1}{2},0]$, and vanishes elsewhere;
then $f\in D_{-}$. 

The action of $U_{B}(t)$ is given in section \ref{sec:UB}. For details,
see Corollary \ref{cor:UB(t)} and Figure \ref{fig:forward}.\end{example}
\begin{enumerate}
\item On $D_{-}=L^{2}(I_{-})$, the interacting group acts the same as the
free group, i.e., right-translation by $t$. Hence the wave-packet
vanishes at $t=\frac{1}{2}$. 
\item On $D_{0}:=L^{2}(I_{0})$,
\[
\left(U_{B}(t)f\right)(x)\big|_{D_{0}}=(a^{-1}\hat{f})^{\vee}(x-t)\big|_{D_{0}}.
\]
Recall that 
\begin{equation}
a(\lambda)^{-1}=w\: e(-\phi)e(-\lambda)H(\lambda)\label{eq:tmp-15}
\end{equation}
see eq. (\ref{eq:H}), and (\ref{eq:a-inv(lambda)}).\\
\\
At $t=0$, $f$ moves into $D_{0}$ with a magnitude $w\: e(-\phi)$;
and it propagates within $D_{0}$ until hitting the right-end point
of $I_{0}$ ($x=2$) at $t=1$.\\
\\
For $t>1$, $U_{B}(t)$ generates resonance, as seen in the pole of
the transfer function $H(z)$ in (\ref{eq:tmp-15}). Specifically,
$f$ propagates out of $I_{0}$ at the right-end point ($x=2$), and
moves back into $D_{0}$ from the left-end point ($x=1$), modulated
by $\sqrt{1-w^{2}}e(-\psi)$.\\

\item On $D_{+}=L^{2}(I_{+})$, the scattered wave propagates as
\begin{equation}
\left(U_{B}(t)f\right)(x)\big|_{D_{+}}=(a^{-1}c\hat{f})^{\vee}(x-t)\big|_{D_{+}}.\label{eq:tmp-14}
\end{equation}
The right-side of (\ref{eq:tmp-14}) is the restriction of $(\hat{S}\hat{f})^{\vee}$,
i.e., $\tilde{S}f$, to $D_{+}$. See (\ref{eq:scatter-2}) and (\ref{eq:scatter-6}).
From (\ref{eq:scatter-8}), we see that $\hat{S}\hat{f}$ consists
of two parts:

\begin{itemize}
\item direct propagation from $D_{-}$ into $D_{+}$
\[
-e(\psi-\theta)\sqrt{1-w^{2}}e(-\beta\lambda)\hat{f}(\lambda)
\]
where $f$ is modulated by $-e(\psi-\theta)\sqrt{1-w^{2}}$;
\item resonance caused by the obstacles
\[
e(-\theta-(\beta-\alpha+1)\lambda)w^{2}H(\lambda)\hat{f}(\lambda)
\]
This differs from (\ref{eq:tmp-15}) by $w\: e(\phi-\theta)$. That
is, the scattered wave is transmitted out of the interacting region
$D_{0}$, into $D_{+}$, and is modulated by $w\: e(\phi-\theta)$.
\end{itemize}
\end{enumerate}

\begin{example}
\label{ex:wave}Continue with the previous example. Set $\theta=\phi=\psi=0$,
and $w=\frac{\sqrt{3}}{2}$, so $B=\left(\begin{array}{cc}
\frac{\sqrt{3}}{2} & -\frac{1}{2}\\
\frac{1}{2} & \frac{\sqrt{3}}{2}
\end{array}\right)$We construct three functions:
\begin{enumerate}
\item incoming wave
\[
f(x)=\begin{cases}
1 & x\in[-\frac{1}{2},0]\\
0 & \mbox{otherwise}
\end{cases};
\]

\item in the interacting region
\begin{alignat*}{1}
(a^{-1}\hat{f})^{\vee}(x) & =w\sum_{n=0}^{\infty}\left(1-w^{2}\right)^{\frac{n}{2}}f(x-1+n(\alpha-1))\\
 & =\frac{\sqrt{3}}{2}\sum_{n=0}^{\infty}\frac{1}{2^{n}}f(x-1+n(\alpha-1));
\end{alignat*}

\item outgoing wave
\begin{alignat*}{1}
(\tilde{S}f)(x) & =-\sqrt{1-w^{2}}f(x-\beta)\\
 & \quad+w^{2}\sum_{n=0}^{\infty}\left(1-w^{2}\right)^{\frac{n}{2}}f(x-\beta+(n+1)(\alpha-1))\\
 & =-\frac{1}{2}f(x-\beta)+\frac{3}{4}\sum_{n=0}^{\infty}\frac{1}{2^{n}}f(x-\beta+(n+1)(\alpha-1)).
\end{alignat*}

\end{enumerate}

\noindent Moreover,
\[
\lim_{t\rightarrow+\infty}\left\Vert U_{B}(t)f-U_{0}(t)\tilde{S}f\right\Vert =0.
\]
The propagation of $f$ through $I_{1}\cup I_{2}$ is shown in Figure
\ref{fig:w}.

\end{example}
\begin{figure}
\begin{tabular}{c}
\includegraphics{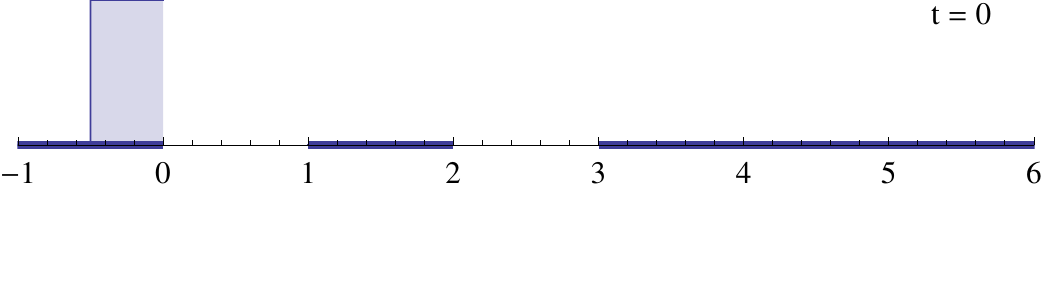}\tabularnewline
\includegraphics{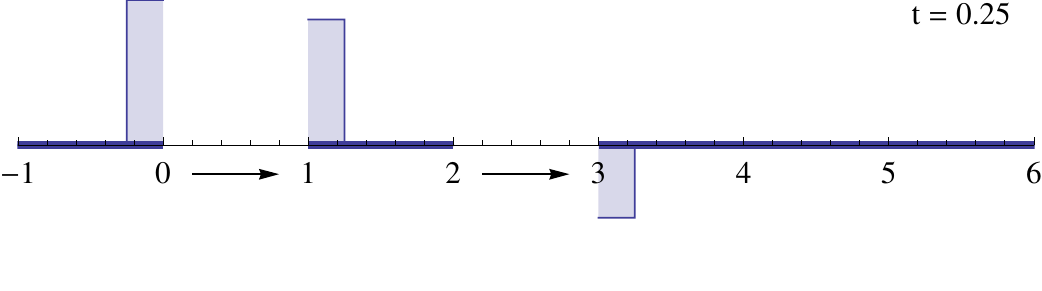}\tabularnewline
\includegraphics{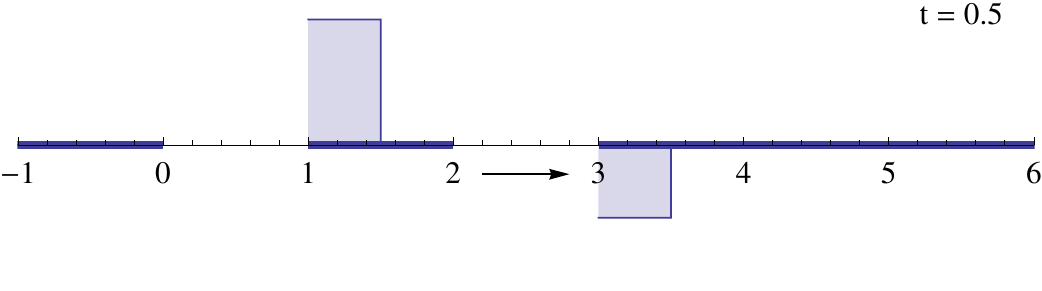}\tabularnewline
\includegraphics{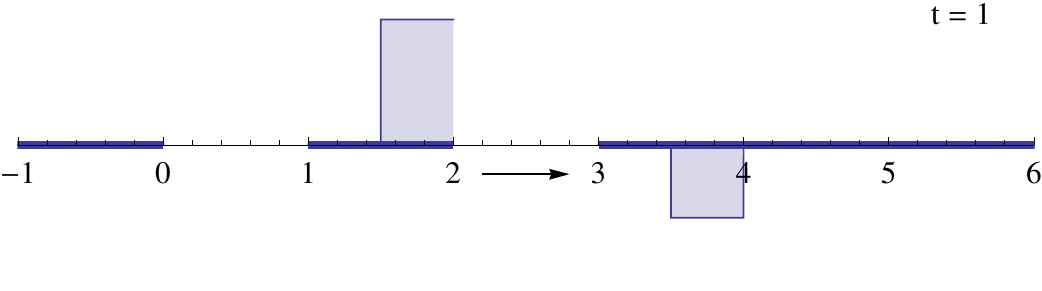}\tabularnewline
\includegraphics{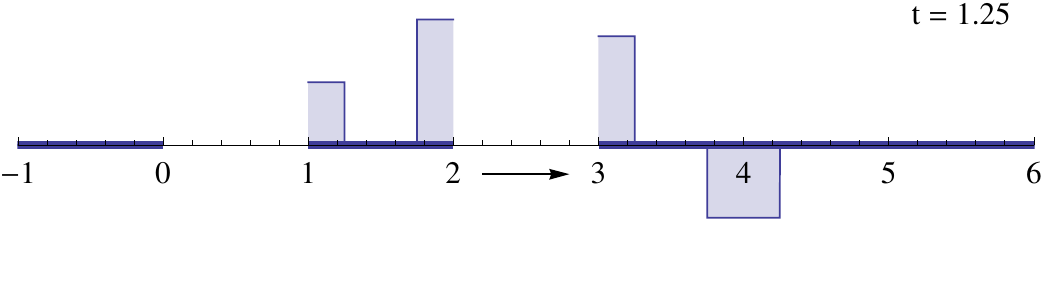}\tabularnewline
\includegraphics{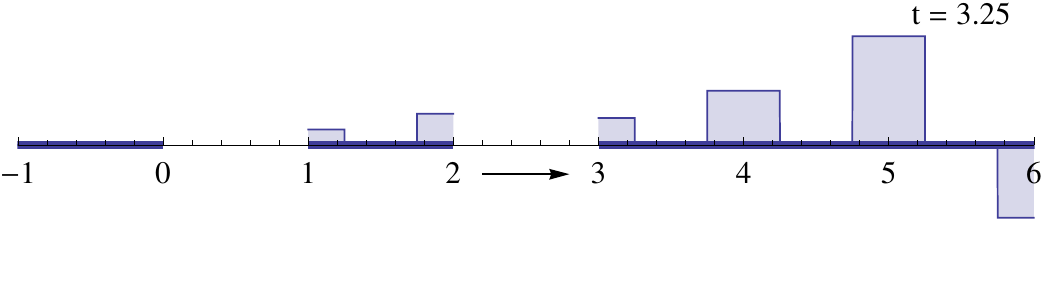}\tabularnewline
\end{tabular}

\caption{\label{fig:w}Wave-packet propagating through a double-barrier $I_{1}\cup I_{2}$
 where $I_{1}=[0,1]$, $I_{2}=[2,3]$, and $\Omega=\mathbb{R}\backslash(I_{1}\cup I_{2})$.
See Example \ref{ex:wave}.}

\end{figure}

\section{Spectral representation and scattering \label{sec:ss}}

In this section we calculate more details regarding spectral and scattering.
Since the scattering information is encoded in $L^{2}(I_{0})$, and
$I_{0}$ is a finite interval, the Fourier transform of functions
in $L^{2}(I_{0})$ are band-limited. As a result, by restricting one
of the variables in the Shannon kernel, we get an orthonormal basis
(ONB). We compute the scattering operator, and the Lax-Phillips semigroup
in this ONB.

\subsection{Obstacle scattering\label{sub:OS}}

\subsubsection{Two normalizations}

We continue our analysis of analysis in $L^{2}(\Omega)$ when $\Omega$
is the union of three disjoint open intervals, two infinite half-lines,
and a bounded interval $I_{0}$ in the middle. As we will be working
with Shannon\textquoteright{}s kernel, it will be convenient in some
computations to choose $I_{0}$ to have unit length.
\begin{enumerate}
\item $I_{-}=(-\infty,0)$, $I_{0}=(1,\alpha)$, and $I_{+}=(\beta,\infty)$;
\item $I_{-}=(-\infty,\tilde{\alpha})$, $I_{0}=(-\frac{1}{2},\frac{1}{2})$,
and $I_{+}=(\tilde{\beta},\infty)$.
\end{enumerate}
In both cases,
\[
\Omega:=I_{-}\cup I_{0}\cup I_{+};
\]
and let $P_{0}$ and $P_{\pm}$ be the projection operators given
by multiplication: 
\[
P_{0}:=multi\chi_{0},\ P_{\pm}:=multi\chi_{\pm}
\]
acting in the Hilbert space $L^{2}(\Omega)$. 

We need the Shannon kernel for both cases. 
\begin{lem}
Let 
\begin{eqnarray*}
\varphi(x) & = & \begin{cases}
1 & x\in[-\frac{T}{2},\frac{T}{2}]\\
0 & \mbox{otherwise}
\end{cases};
\end{eqnarray*}
then $\hat{\varphi}(\lambda)=\frac{\sin(\pi\lambda T)}{\pi\lambda}$. \end{lem}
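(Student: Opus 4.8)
The plan is to compute the integral directly from the definition of the Fourier transform recalled earlier (eq.~\eqref{eq:F-3}), namely $\hat{f}(\lambda)=\int f(x)\,e_{\lambda}(x)\,dx$ with $e_{\lambda}(x)=e^{i2\pi\lambda x}$. Since $\varphi$ is the indicator function of the symmetric interval $[-\tfrac{T}{2},\tfrac{T}{2}]$, the transform collapses to
\[
\hat{\varphi}(\lambda)=\int_{-T/2}^{T/2}e^{i2\pi\lambda x}\,dx,
\]
so the entire content of the lemma is the evaluation of one elementary exponential integral over an interval centered at the origin.

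First, for $\lambda\neq 0$ I would antidifferentiate and insert the endpoints, obtaining
\[
\int_{-T/2}^{T/2}e^{i2\pi\lambda x}\,dx=\frac{e^{i\pi\lambda T}-e^{-i\pi\lambda T}}{i2\pi\lambda}.
\]
Applying Euler's identity $e^{i\vartheta}-e^{-i\vartheta}=2i\sin\vartheta$ with $\vartheta=\pi\lambda T$ turns the numerator into $2i\sin(\pi\lambda T)$; the factors $2i$ in numerator and denominator cancel, leaving exactly $\sin(\pi\lambda T)/(\pi\lambda)$ as claimed. The symmetry of the interval about $0$ is what forces the two boundary contributions to combine into a pure sine rather than a general complex exponential, and the outcome is insensitive to the sign convention in the exponent, since passing from $e^{i2\pi\lambda x}$ to $e^{-i2\pi\lambda x}$ flips the sign of the numerator and of the denominator simultaneously.

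The one point deserving a word of care is the value at $\lambda=0$: the right-hand side $\sin(\pi\lambda T)/(\pi\lambda)$ has a removable singularity there, and I would note that, read in the continuous (sinc) sense, its value is $T$, which matches the direct evaluation $\int_{-T/2}^{T/2}dx=T$. Thus the stated formula holds for every $\lambda\in\mathbb{R}$. There is no genuine obstacle in this lemma — it is a routine computation — and the only thing that must be fixed before writing it down is the normalization of the Fourier transform; with the convention $e(x)=e^{i2\pi x}$ adopted earlier in the paper, the closed form follows immediately.
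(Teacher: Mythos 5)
Your computation is correct and is exactly the ``direct computation'' the paper invokes: the paper's own proof consists of the single line that the result follows from a direct computation (with a reference to \cite{DyMc72}), and your evaluation of $\int_{-T/2}^{T/2}e^{i2\pi\lambda x}\,dx$ via Euler's identity, together with the remark that the sign convention in the exponent is immaterial by symmetry and that $\lambda=0$ is a removable singularity with value $T$, fills in precisely those routine details. Nothing further is needed.
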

\begin{proof}
This follows from a direct computation, see also \cite{DyMc72}.\end{proof}
\begin{rem}
For case (1), we choose $T=1$, and the Shannon kernel is
\begin{equation}
Shann(x):=\frac{\sin(\pi\lambda)}{\pi\lambda}=\mbox{Sinc}(\pi\lambda).\label{eq:sha1}
\end{equation}
For case (2), we choose $T=\alpha-1$ (length of the middle interval
$I_{0}$), and translation $\varphi$ to the right by $(\alpha+1)/2$
(i.e., the mid-point of $I_{0}$). 
\begin{alignat}{1}
Shann(x) & :=e^{i2\pi(\frac{\alpha+1}{2})}\frac{\sin(\pi(\alpha-1)\lambda)}{\pi\lambda}\nonumber \\
 & =e^{i\pi(\alpha+1)}\frac{\sin(\pi(\alpha-1)\lambda)}{\pi\lambda}\nonumber \\
 & =e^{i\pi(\alpha+1)}(\alpha-1)\frac{\sin(\pi(\alpha-1)\lambda)}{\pi(\alpha-1)\lambda}\nonumber \\
 & =e^{i\pi(\alpha+1)}(\alpha-1)\mbox{Sinc}(\pi(\alpha-1)\lambda).\label{eq:sha2}
\end{alignat}
Compare with the kernel in (\ref{eq:sha1}). Note the argument used
in the proofs applies to both kernels (\ref{eq:sha1}) and (\ref{eq:sha2}).
\end{rem}

\begin{lem}
The Shannon kernel on $I_{0}=(1,\alpha)$ is
\[
e\left(\frac{\alpha+1}{2}\lambda\right)\frac{\sin(\pi(\alpha-1)\lambda)}{\pi\lambda}.
\]
\end{lem}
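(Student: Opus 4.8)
The plan is to deduce the formula directly from the preceding lemma by writing the interval $I_{0}=(1,\alpha)$ as a rigid translate of the symmetric interval centred at the origin. First I would record that $I_{0}$ has length $\alpha-1$ and midpoint $\frac{\alpha+1}{2}$, so that, setting $T:=\alpha-1$ and $s:=\frac{\alpha+1}{2}$, one has the pointwise identity
\[
\chi_{I_{0}}(x)=\varphi(x-s),\qquad\varphi:=\chi_{[-T/2,\,T/2]},
\]
because $[-T/2,T/2]+s=[1,\alpha]$.

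Next I would apply the translation rule for the Fourier transform in the normalization under which the preceding lemma was stated, namely $\widehat{f}(\lambda)=\int f(x)\,e_{\lambda}(x)\,dx$ with $e_{\lambda}(x)=e^{i2\pi\lambda x}$. A single change of variables gives, for $g(x)=f(x-s)$,
\[
\widehat{g}(\lambda)=\int f(x-s)\,e^{i2\pi\lambda x}\,dx=e^{i2\pi\lambda s}\int f(u)\,e^{i2\pi\lambda u}\,du=e(s\lambda)\,\widehat{f}(\lambda).
\]

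Finally I would combine these two observations with the preceding lemma, which supplies $\widehat{\varphi}(\lambda)=\frac{\sin(\pi\lambda T)}{\pi\lambda}$. Substituting $T=\alpha-1$ and $s=\frac{\alpha+1}{2}$ then yields
\[
\widehat{\chi_{I_{0}}}(\lambda)=e\!\left(\frac{\alpha+1}{2}\lambda\right)\frac{\sin(\pi(\alpha-1)\lambda)}{\pi\lambda},
\]
which is precisely the asserted Shannon kernel on $I_{0}$. I do not expect any genuine obstacle here: the entire content is the bookkeeping of the midpoint shift $s$, and the only point demanding care is the sign of the modulation factor. One must verify that right-translation produces $e(s\lambda)$ rather than its conjugate $e(-s\lambda)$; this is forced by the convention $e_{\lambda}(x)=e^{i2\pi\lambda x}$ and agrees with the explicit computation displayed in the preceding Remark (up to the evident typographical omission of the factor $\lambda$ in its exponent).
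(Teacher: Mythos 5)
Your proposal is correct: the identification $\chi_{I_{0}}(x)=\varphi(x-s)$ with $T=\alpha-1$, $s=\frac{\alpha+1}{2}$ is exact, the shift rule $\widehat{f(\cdot-s)}(\lambda)=e(s\lambda)\widehat{f}(\lambda)$ holds under the paper's convention $e_{\lambda}(x)=e^{i2\pi\lambda x}$, $\widehat{f}(\lambda)=\int f\,e_{\lambda}$, and combining it with the preceding lemma gives the stated kernel. The route, however, differs from the paper's own proof, which never invokes the box-function lemma or the shift theorem: it simply evaluates
\[
\int_{1}^{\alpha}e(\lambda x)\,dx=\frac{1}{i2\pi\lambda}\bigl(e(\alpha\lambda)-e(\lambda)\bigr)
\]
and factors out $e\bigl(\frac{\alpha+1}{2}\lambda\bigr)$ to expose $2i\sin(\pi(\alpha-1)\lambda)$, obtaining the formula in two lines of direct computation. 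What your argument buys is modularity and structural insight: it makes explicit that the modulation factor is precisely the midpoint shift and the sinc factor is the transform of the centered box, so it actually puts the preceding lemma to work (in the paper's text that lemma feeds only the Remark on the rescaled setup, whose displayed formula, as you note, drops a $\lambda$ in the exponent); your reduction also generalizes verbatim to any interval $(a,b)$ with $T=b-a$, $s=\frac{a+b}{2}$. What the paper's computation buys is self-containment: no appeal to the translation rule or to a prior lemma, and no sign-convention bookkeeping to verify. Your care about the sign of the modulation ($e(s\lambda)$ versus $e(-s\lambda)$) is exactly the one point where your route could have gone wrong, and you resolved it correctly.
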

\begin{proof}
We check that 
\begin{eqnarray*}
\int_{1}^{\alpha}e(\lambda x)dx & = & \frac{1}{i2\pi\lambda}\left(e(\alpha\lambda)-e(\lambda)\right)\\
 & = & \frac{1}{\pi\lambda}e\left(\frac{\alpha+1}{2}\lambda\right)\sin\left(\pi(\alpha-1)\lambda\right)\\
 & = & e\left(\frac{\alpha+1}{2}\lambda\right)\frac{\sin\left(\pi(\alpha-1)\lambda\right)}{\pi\lambda}.
\end{eqnarray*}

\end{proof}

\subsubsection{Summary }

For convenience, here is a quick summary of the comparison between
the two setups:
\begin{enumerate}
\item If $\Omega=(-\infty,0)\cup(1,\alpha)\cup(\beta,\infty)$; \\
Shannon kernel:
\[
K_{Shann}(x)=\frac{\sin(\pi\lambda T)}{\pi\lambda};
\]

\item Rescaled version - $\Omega=(-\infty,\tilde{\alpha})\cup(-\frac{1}{2},\frac{1}{2})\cup(\tilde{\beta},\infty)$;\\
Shannon kernel:
\[
K_{Shann}(x)=e^{i\pi(\alpha+1)}(\alpha-1)\frac{\sin(\pi(\alpha-1)\lambda)}{\pi(\alpha-1)\lambda}.
\]

\end{enumerate}
In both cases, the unitary group is
\[
U_{B}(t)=e^{-itP_{B}},t\in\mathbb{R}
\]
so that
\[
\left(V_{B}U_{B}(t)f\right)(\lambda)=e(-\lambda t)\left(V_{B}f\right)(\lambda).
\]
This amounts to a right-translation by $t$, i.e., 
\[
f\mapsto f(\cdot-t).
\]

\subsection{Computation of direct integral decomposition}

Fix $B(w,\theta,\phi,\psi)\in U(2)$, $0<w<1$, we have
\begin{itemize}
\item $P_{B}$ selfadjoint operator in $L^{2}(\Omega)$
\item $\{U_{B}(t)\}_{t\in\mathbb{R}}$ acting in $L^{2}(\Omega)$; here
$U_{B}(t):=e^{-itP_{B}}$.
\item A unitary operator $V_{B}:L^{2}(\Omega)\rightarrow L^{2}(\mathbb{R},\sigma_{B})$,
where $\sigma_{B}(d\lambda)=m^{-2}(\lambda)d\lambda$.
\end{itemize}
Let $f\in L^{2}(\Omega)$, $t,\lambda\in\mathbb{R}$, recall that
\[
\left(V_{B}U_{B}(t)f\right)(\lambda)=e_{\lambda}(-t)\left(V_{B}f\right)(\lambda)
\]
and 
\[
L^{2}(\Omega)\ni f=\int_{\mathbb{R}}^{\oplus}\left(V_{B}f\right)(\lambda)\psi_{\lambda}^{(B)}d\sigma_{B}(\lambda).
\]
i.e., a direct integral decomposition. 
\begin{rem}
The transform, generalized eigenfunctions, and the measure all depend
on $B$. This is indicated with the sup/sub-scripts.\end{rem}
\begin{lem}
For $f\in L^{2}(\Omega)$, we have 
\begin{alignat*}{1}
\int_{\Omega}\left|f(x)\right|^{2}dx & =\int_{\mathbb{R}}\left|\left(V_{B}f\right)(\lambda)\right|^{2}d\sigma_{B}(\lambda)\\
\left(V_{B}f\right)(\lambda) & =\int_{\Omega}\overline{\psi_{\lambda}(x)}f(x)dx\\
 & =\overline{a(\lambda)}\left(P_{-}f\right)^{\wedge}+\left(P_{0}f\right)^{\wedge}+\overline{a(\lambda)}\left(P_{+}f\right)^{\wedge}
\end{alignat*}
where
\[
\psi_{\lambda}=\psi_{\lambda}^{(B)}=a(\lambda)\chi_{-}+\chi_{0}+c(\lambda)\chi_{+}.
\]
Moreover, 
\begin{eqnarray}
P_{-}\psi_{\lambda} & = & a(\lambda)e_{\lambda}\mbox{ on }I_{-}\label{eq:4-1}\\
P_{0}\psi_{\lambda} & = & e_{\lambda}\mbox{ on }I_{0}\label{eq:5}\\
P_{+}\psi_{\lambda} & = & c(\lambda)e_{\lambda}\mbox{ on }I_{+}.\label{eq:6}
\end{eqnarray}
\end{lem}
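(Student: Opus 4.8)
The plan is to recognize this Lemma as a consolidation of facts already established in Theorem \ref{thm:sp-w-1}, together with a direct unpacking of the definition of $\psi_\lambda$; so the proof is essentially a matter of citing the unitarity of $V_B$ and recording the component computation. I would proceed in three short steps and flag one cosmetic point.

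First, for the Parseval identity $\int_\Omega|f|^2=\int_{\mathbb{R}}|(V_Bf)(\lambda)|^2\,d\sigma_B(\lambda)$, I would invoke Theorem \ref{thm:sp-w-1}: for $0<w<1$ the operator $V_B$ is unitary from $L^2(\Omega)$ onto $L^2(\sigma_B)$, so in particular $V_B^*V_B=I_{L^2(\Omega)}$, which is exactly norm-preservation. Since $(V_Bf)(\lambda)=\langle\psi_\lambda,f\rangle$ holds by the very definition (\ref{eq:V}), the first displayed line is immediate.

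Second, for the explicit formula for $(V_Bf)(\lambda)$, I would reproduce the computation (\ref{eq:tmp-9}) from the proof of Theorem \ref{thm:sp-w-1}. Writing $f=f_-+f_0+f_+$ with $f_-=\chi_-f$, $f_0=\chi_0 f$, $f_+=\chi_+f$, and inserting $\psi_\lambda=(a(\lambda)\chi_-+\chi_0+c(\lambda)\chi_+)e_\lambda$ into $\int_\Omega\overline{\psi_\lambda}\,f$, the disjointness of the supports of $\chi_-,\chi_0,\chi_+$ splits the integral into three pieces, each of which is a Fourier transform of the corresponding piece of $f$ weighted by the conjugate coefficient. This yields $(V_Bf)(\lambda)=\overline{a(\lambda)}\,\hat{f}_-(\lambda)+\hat{f}_0(\lambda)+\overline{c(\lambda)}\,\hat{f}_+(\lambda)$. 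The three component identities (\ref{eq:4-1})--(\ref{eq:6}) are then read off directly: since $P_\pm$ and $P_0$ are multiplication by $\chi_\pm$ and $\chi_0$ with disjoint supports, applying each to $\psi_\lambda$ annihilates the other two terms, leaving $a(\lambda)e_\lambda$ on $I_-$, $e_\lambda$ on $I_0$, and $c(\lambda)e_\lambda$ on $I_+$.

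There is essentially no obstacle here, since everything has been prepared in Theorem \ref{thm:sp-w-1} and in the definition of the generalized eigenfunctions. The one point worth flagging is that the weight attached to the $P_+$-component must be $\overline{c(\lambda)}$ rather than $\overline{a(\lambda)}$: by (\ref{eq:m(lambda)}) one has $|a(\lambda)|=|c(\lambda)|=m_B(\lambda)$, so the moduli agree and the Parseval identity is insensitive to the choice, but the phases of $a$ and $c$ differ (compare (\ref{eq:a(lambda)-1}) and (\ref{eq:c(lambda)-1})), so the transform formula itself genuinely requires the $c$-coefficient on the $I_+$ piece.
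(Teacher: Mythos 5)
Your proposal is correct and coincides with the paper's approach: the paper's own proof reads simply ``A direct calculation,'' and your three steps---Parseval via the unitarity of $V_{B}$ from Theorem \ref{thm:sp-w-1}, the support-splitting computation reproducing (\ref{eq:tmp-9}), and reading off (\ref{eq:4-1})--(\ref{eq:6}) from the disjointness of the multiplication operators $P_{\pm},P_{0}$---are exactly the calculation intended. You are also right to flag the typo: the coefficient on the $\left(P_{+}f\right)^{\wedge}$ term in the lemma's display should be $\overline{c(\lambda)}$ rather than $\overline{a(\lambda)}$, in agreement with (\ref{eq:tmp-9}) and (\ref{eq:VB}), and the discrepancy matters for the transform formula even though $\left|a(\lambda)\right|=\left|c(\lambda)\right|$ makes the Parseval identity insensitive to it.
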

\begin{proof}
A direct calculation.
\end{proof}
The spectral representation is summarized in the following theorem:
\begin{thm}
Let $B(w,\theta,\phi,\psi)\in U(2)$ be the boundary matrix in (\ref{eq:2-by-2 Unitary}),
$0<w<1$; and let $P_{B}$ be the corresponding selfadjoint extension.
The spectral representation theorem (in its fancy version) applied
to $P_{B}$ as a selfadjoint operator in $L^{2}(\Omega)$ has multiplicity-one,
and its direct integral measure is $d\sigma_{B}(\lambda):=m^{-2}(\lambda)d\lambda$
on the whole Hilbert space $L^{2}(\Omega)$.
\end{thm}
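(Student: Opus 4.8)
The plan is to read this theorem off from the results already assembled in Section \ref{sec:sp}, chiefly Theorem \ref{thm:sp-w-1} together with the multiplicity count in Lemma \ref{lem:multiplicity-0<w}. The statement contains two separate assertions --- that the spectral multiplicity is one, and that the direct integral measure equals $\sigma_{B}(d\lambda)=m_{B}^{-2}(\lambda)\,d\lambda$ --- and I would treat them in that order.

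First, for the multiplicity. Theorem \ref{thm:sp-w-1} provides a unitary $V_{B}\colon L^{2}(\Omega)\to L^{2}(\sigma_{B})$, and equation (\ref{eq:Decomposition-Pf}) shows that $V_{B}$ intertwines $P_{B}$ with multiplication by the independent variable, i.e. $\left(V_{B}P_{B}f\right)(\lambda)=\lambda\,\left(V_{B}f\right)(\lambda)$ for $f\in\mathscr{D}(P_{B})$. Thus $P_{B}$ is unitarily equivalent to the operator of multiplication by $\lambda$ on the \emph{scalar} space $L^{2}(\mathbb{R},\sigma_{B})$. Now $L^{2}(\mathbb{R},\sigma_{B})$ is precisely the direct integral $\int^{\oplus}_{\mathbb{R}}\mathbb{C}\,d\sigma_{B}(\lambda)$ in which every fiber $H(\lambda)=\mathbb{C}$ is one-dimensional, so the multiplicity function is $n(\lambda)\equiv 1$. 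Since, by von Neumann's theorem (the decomposition recalled at (\ref{eq:1GenEigen})), the measure class of $\nu$ and the multiplicity function $n(\cdot)$ are complete unitary invariants of a selfadjoint operator, this identifies the multiplicity of $P_{B}$ as one throughout its spectrum. This is consistent with the pointwise count in Lemma \ref{lem:multiplicity-0<w}, which showed that for $0<w\le 1$ every generalized eigenvalue $\lambda$ carries a one-dimensional generalized eigenspace, spanned by the single function $\psi_{\lambda}^{(B)}$ of (\ref{eq:eigen-w}).

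Second, for the measure. The abstract decomposition (\ref{eq:1GenEigen}) comes with some probability measure $\nu$, but the explicit transform $V_{B}$ realizes the spectral representation against $\sigma_{B}(d\lambda)=m_{B}^{-2}(\lambda)\,d\lambda$ from (\ref{eq:sigma(B)}); since the representing measure in a multiplicity-one spectral representation is unique up to mutual absolute continuity, $\nu$ and $\sigma_{B}$ lie in the same class and we may take $\sigma_{B}$ as the direct integral measure. To pin that class down concretely, I would invoke the two-sided estimate (\ref{eq:m-bound}) of Lemma \ref{lem:m}, namely $w/2\le m_{B}(\lambda)\le 2/w$, whose reciprocal-square form shows that the density $m_{B}^{-2}$ is bounded above and below by positive constants. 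Hence $\sigma_{B}$ is mutually absolutely continuous with Lebesgue measure on $\mathbb{R}$, confirming that the spectrum is all of $\mathbb{R}$, purely of Lebesgue class, with no atoms.

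There is essentially no obstacle left once Theorem \ref{thm:sp-w-1} is in hand; the only point that needs care is the logical step of converting ``$V_{B}$ is a unitary onto the scalar space $L^{2}(\sigma_{B})$ diagonalizing $P_{B}$'' into a statement about the canonical direct integral invariants. This rests on the uniqueness half of the spectral multiplicity theorem, which guarantees that the scalar model we have built \emph{is} the genuine direct integral representation, and not merely one unitary realization among several of higher multiplicity.
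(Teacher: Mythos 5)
Your proposal is correct and follows essentially the same route as the paper: the theorem there is stated as a summary whose intended proof is exactly the assembly you give, namely Theorem \ref{thm:sp-w-1} (the unitary $V_{B}$ onto the scalar space $L^{2}(\sigma_{B})$ diagonalizing $P_{B}$), the multiplicity-one count of Lemma \ref{lem:multiplicity-0<w} (used in the paper via Theorem \ref{thm:DirectIntegral-0<w} to force $n(\xi)\equiv 1$ in the von Neumann decomposition), and the bounds of Lemma \ref{lem:m} identifying the measure class of $\sigma_{B}$ with Lebesgue. Your explicit appeal to the uniqueness half of the spectral multiplicity theorem is a careful way of phrasing the step the paper leaves implicit, not a different argument.
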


\subsection{Shannon kernel and scattering}

Suppose we are in case (1), i.e., the middle interval is $I_{0}=[-\frac{1}{2},\frac{1}{2}]$.
The Shannon kernel is
\begin{equation}
K(\lambda,\xi)=\frac{\sin(\pi(\lambda-\xi))}{\pi(\lambda-\xi)},\;\lambda,\xi\in\mathbb{R};\label{eq:tmp-22}
\end{equation}
See \cite{DyMc72} for its properties. 

Recall that the Shannon is the kernel of the projection operator onto
the the space of band-limited functions:
\[
BL:=\{\hat{f}(\cdot);\chi_{I_{0}}f=f\}\subset L^{2}(\mathbb{R},d\lambda).
\]
Note the identifications: 
\[
f\in L^{2}(I_{0})\Longleftrightarrow\{f\in L^{2}(\Omega);\chi_{I_{0}}f=f\}
\]
and 
\[
L^{2}(I_{0})\simeq P_{0}L^{2}(\Omega).
\]
So, 
\[
f\in L^{2}(I_{0})\Longleftrightarrow\hat{f}\in BL.
\]

\begin{lem}[Shannon Interpolation]
 If $f\in L^{2}(I_{0})=P_{0}L^{2}(\Omega)$, then
\[
\hat{f}(\lambda)=\sum_{n\in\mathbb{Z}}\hat{f}(n)\frac{\sin\pi(\lambda-n)}{\pi(\lambda-n)}
\]
and $\left\{ \frac{\sin\pi(\lambda-n)}{\pi(\lambda-n)}\right\} _{n\in\mathbb{Z}}$
is an ONB in $BL$ (band-limited functions, frequency band = $[-\frac{1}{2},\frac{1}{2}]$). \end{lem}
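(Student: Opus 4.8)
The plan is to move the standard Fourier orthonormal basis of $L^{2}(I_{0})$ over to $BL$ by means of the Fourier transform, which on $L^{2}(I_{0})$ is unitary, and then to read off both assertions of the lemma at once. First I would record that, since $I_{0}=[-\tfrac12,\tfrac12]$ has unit length, the exponentials $e_{n}(x):=e^{i2\pi nx}$, $n\in\mathbb{Z}$, form an orthonormal basis for $L^{2}(I_{0})$ (the classical Fourier series on a unit interval). Next I would combine Plancherel's theorem with the very definition $BL=\{\hat{f};\chi_{I_{0}}f=f\}$ of the band-limited space: the transform $\mathscr{F}$ restricted to $L^{2}(I_{0})$ is isometric by Plancherel, and its range is exactly $BL$ by definition, so $\mathscr{F}|_{L^{2}(I_{0})}\colon L^{2}(I_{0})\to BL$ is \emph{unitary}. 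Because each element of $BL$ is the transform of an $L^{2}$-function supported in a bounded interval, it is (Paley--Wiener) the restriction of an entire function of exponential type, hence has a canonical continuous representative; this is what makes the sample values $\hat{f}(n)$ meaningful.

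The second step is the single computation
\[
\widehat{e_{n}}(\lambda)=\int_{-1/2}^{1/2}e_{n}(x)\,\overline{e_{\lambda}(x)}\,dx=\frac{\sin(\pi(\lambda-n))}{\pi(\lambda-n)}=\mbox{Sinc}(\pi(\lambda-n)),
\]
valid up to the harmless reindexing $n\mapsto-n$ forced by the paper's sign convention for $\mathscr{F}$. Since a unitary carries an orthonormal basis to an orthonormal basis, $\{\mbox{Sinc}(\pi(\lambda-n))\}_{n\in\mathbb{Z}}$ is an ONB of $BL$, which is the second assertion. For the interpolation formula, I would expand $f=\sum_{n}c_{n}e_{n}$ in $L^{2}(I_{0})$ with $c_{n}=\langle e_{n},f\rangle$, and apply the bounded operator $\mathscr{F}$ term by term (justified by its continuity) to get $\hat{f}=\sum_{n}c_{n}\,\mbox{Sinc}(\pi(\lambda-n))$, with convergence in $L^{2}(\mathbb{R})$. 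It then remains only to identify $c_{n}$ with the sample $\hat{f}(n)$: here the sampling property $\mbox{Sinc}(\pi(m-n))=\delta_{m,n}$ enters, since $\sin\pi k$ vanishes at nonzero integers $k$ while the quotient tends to $1$ at $k=0$. Evaluating the continuous function $\hat{f}$ at an integer $m$ collapses the series to $c_{m}$, so $c_{m}=\hat{f}(m)$, and substituting back yields
\[
\hat{f}(\lambda)=\sum_{n\in\mathbb{Z}}\hat{f}(n)\,\frac{\sin(\pi(\lambda-n))}{\pi(\lambda-n)}.
\]

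The only genuinely delicate points are (i) that $\mathscr{F}$ maps $L^{2}(I_{0})$ \emph{onto} $BL$, so that it is unitary rather than merely isometric, and (ii) that pointwise evaluation at the integers is legitimate. Both are supplied by the Paley--Wiener description of $BL$ as continuous functions of exponential type with $L^{2}$-boundary data, so I do not expect any serious obstacle beyond keeping the paper's Fourier sign convention consistent; everything else is the standard Plancherel/Fourier-series machinery specialized to the unit interval.
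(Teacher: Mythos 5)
Your proof is correct and is essentially the argument the paper itself defers to: the paper's entire proof of this lemma is ``A calculation, see e.g.\ Dym--McKean,'' and transporting the Fourier orthonormal basis $\{e_{n}\}$ of $L^{2}(I_{0})$ through the unitary restriction $\mathscr{F}|_{L^{2}(I_{0})}\colon L^{2}(I_{0})\to BL$, computing $\widehat{e_{n}}(\lambda)=\mathrm{Sinc}(\pi(\lambda-n))$, and sampling at the integers is exactly that calculation. The one point worth tightening is that evaluating the $L^{2}$-convergent series termwise at an integer is licensed because point evaluation is a \emph{bounded} functional on $BL$ (by Cauchy--Schwarz, $|\hat{h}(\lambda)|\le\|h\|_{L^{2}(I_{0})}$ for all $\lambda$, since $I_{0}$ has unit length), which is slightly sharper than the mere continuity of each element that your Paley--Wiener appeal provides.
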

\begin{proof}
A calculation, see e.g., \cite{DyMc72}.
\end{proof}

\subsection{Computation of the scattering semigroup\label{sub:sg}}

In our model $\Omega$ has two unbounded components, and one bounded
$I_{0}$ in the middle. (By rescaling we may arrange that $I_{0}$
has unit length.) In the language of Lax-Phillips \cite{LP68}, $I_{0}$
then represents \textquotedbl{}obstacle\textquotedbl{} for the unitary
one-parameter group $U_{B}(t)$ transforming the global states. As
predicted by \cite{LP68}, we show below that the cut-down of $U_{B}(t)$
will then be a contraction semigroup (now in $L^{2}(I_{0})$). We
are further able to compute this semigroup and show how it depends
on the unitary matrix $B$ classifying our selfadjoint extension operators.
Moreover we show that the semigroup carries detailed scattering information;
and it is also of relevance to model theory; see \cite{JoMu80}.

\subsubsection{Key lemmas}

Recall some key steps that will be used below.
\begin{lem}
If $f\in L^{2}(I_{0})$, then
\begin{equation}
\left(V_{B}f\right)(\lambda)=\hat{f}(\lambda).\label{eq:tmp-21}
\end{equation}
\end{lem}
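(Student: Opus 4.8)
The plan is to read the identity off directly from the explicit three-term formula for $V_{B}$ established in the proof of Theorem \ref{thm:sp-w-1}. Recall from (\ref{eq:eigen-w}) that the generalized eigenfunction has the form
\[
\psi_{\lambda}^{(B)}=\left(a(\lambda)\chi_{-}+\chi_{0}+c(\lambda)\chi_{+}\right)e_{\lambda},
\]
so that the coefficient multiplying $\chi_{0}$ is identically $1$, with no spectral multiplier attached. The decisive observation is that a function $f\in L^{2}(I_{0})$, under the identification $L^{2}(I_{0})\simeq P_{0}L^{2}(\Omega)$, satisfies $f=\chi_{0}f$; that is, in the decomposition $f=f_{-}+f_{0}+f_{+}$ with $f_{-}=\chi_{-}f$, $f_{0}=\chi_{0}f$, $f_{+}=\chi_{+}f$ used in (\ref{eq:tmp-9}), we have $f_{-}=f_{+}=0$ and $f_{0}=f$.

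First I would unfold the definition (\ref{eq:V}), writing $\left(V_{B}f\right)(\lambda)=\langle\psi_{\lambda}^{(B)},f\rangle=\int_{\Omega}\overline{\psi_{\lambda}^{(B)}(x)}f(x)\,dx$. Since $\overline{e_{\lambda}}=e_{-\lambda}$, the conjugated eigenfunction is $\overline{\psi_{\lambda}^{(B)}}=(\overline{a(\lambda)}\chi_{-}+\chi_{0}+\overline{c(\lambda)}\chi_{+})e_{-\lambda}$, and because $f$ vanishes off $I_{0}$ only the middle summand contributes. This yields $\left(V_{B}f\right)(\lambda)=\int_{I_{0}}e_{-\lambda}(x)f(x)\,dx=\widehat{f}(\lambda)$. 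Equivalently, and even more quickly, one may simply specialize the already-derived formula (\ref{eq:tmp-9}), namely $\left(V_{B}f\right)(\lambda)=\overline{a(\lambda)}\widehat{f_{-}}(\lambda)+\widehat{f_{0}}(\lambda)+\overline{c(\lambda)}\widehat{f_{+}}(\lambda)$, to the case $f_{-}=f_{+}=0$: the two outer terms drop out and one is left with $\left(V_{B}f\right)(\lambda)=\widehat{f_{0}}(\lambda)=\widehat{f}(\lambda)$, which is exactly (\ref{eq:tmp-21}).

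There is essentially no obstacle to overcome here; the entire content is that the $\chi_{0}$-coefficient of $\psi_{\lambda}^{(B)}$ equals $1$ rather than a nontrivial Fourier multiplier, so that $V_{B}$ acts as the \emph{unmodified} Fourier transform on the band-limited piece $L^{2}(I_{0})$. The only bookkeeping point is to keep the Fourier convention consistent with (\ref{eq:tmp-9}), i.e. $\widehat{f}(\lambda)=\int f(x)\,e_{-\lambda}(x)\,dx$ with $e_{\lambda}(x)=e^{i2\pi\lambda x}$; with that convention the identity is exact. This lemma is precisely what permits transporting the Shannon / band-limited orthonormal basis of $BL$ onto the spectral side in the subsequent computation of the scattering semigroup.
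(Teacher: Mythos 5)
Your proof is correct and is essentially the paper's own argument: the paper writes $\left\langle \psi_{\lambda}^{(B)},f\right\rangle _{\Omega}=\left\langle P_{0}\psi_{\lambda}^{(B)},f\right\rangle _{\Omega}=\left\langle e_{\lambda},f\right\rangle _{\Omega}=\hat{f}(\lambda)$, using self-adjointness of the projection $P_{0}$ and the fact that the $\chi_{0}$-coefficient of $\psi_{\lambda}^{(B)}$ is $1$, which is exactly the observation your direct expansion (or your specialization of the three-term formula with $f_{-}=f_{+}=0$) rests on. The two write-ups differ only in bookkeeping, not in substance.
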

\begin{proof}
In fact,
\begin{alignat*}{1}
\left(V_{B}f\right)(\lambda) & =\left\langle \psi_{\lambda}^{(B)},f\right\rangle _{\Omega}=\left\langle \psi_{\lambda}^{(B)},P_{0}f\right\rangle _{\Omega}=\left\langle P_{0}\psi_{\lambda}^{(B)},f\right\rangle _{\Omega}\\
 & =\left\langle e_{\lambda},f\right\rangle _{\Omega}=\int_{I_{0}}\overline{e_{\lambda}(x)}f(x)dx=\hat{f}(\lambda).
\end{alignat*}
See (\ref{eq:5}).\end{proof}
\begin{lem}
\label{lem:middle}If $f\in L^{2}(I_{0})$ then
\[
f=\int_{\mathbb{R}}\hat{f}(\lambda)e_{\lambda}d\sigma_{B}(\lambda).
\]
In particular, for all $x\in I_{0}$, 
\[
f(x)=\left(m^{-2}\hat{f}\right)^{\vee}(x)=\sum_{n\in\mathbb{Z}}a_{n}f(x+n)
\]
where 
\[
m^{-2}(\lambda)=\sum_{n\in\mathbb{Z}}a_{n}e_{n}(\lambda)
\]
is the Fourier series.\end{lem}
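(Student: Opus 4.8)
The plan is to derive both displayed identities from the spectral inversion formula (\ref{eq:decomp}) of Theorem \ref{thm:sp-w-1}, specialized to the bounded middle component. First I would take $f\in L^{2}(I_{0})$, regarded as the element $P_{0}f$ of $L^{2}(\Omega)$, and insert it into (\ref{eq:decomp}). By the preceding lemma, equation (\ref{eq:tmp-21}), the transform coefficient is $\langle\psi_{\lambda},f\rangle=\hat{f}(\lambda)$; and by (\ref{eq:5}) the generalized eigenfunction restricts to $\psi_{\lambda}=e_{\lambda}$ on $I_{0}$. Substituting these two facts into (\ref{eq:decomp}) and reading the resulting identity on $I_{0}$ gives at once the first displayed formula $f=\int_{\mathbb{R}}\hat{f}(\lambda)e_{\lambda}\,d\sigma_{B}(\lambda)$, with convergence in $L^{2}(\sigma_{B})$ (equivalently in $L^{2}(I_{0})$, since $V_{B}$ is unitary).

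Next I would make the multiplier explicit. Using $\sigma_{B}(d\lambda)=m^{-2}(\lambda)\,d\lambda$ from (\ref{eq:sigma(B)}), the right-hand side becomes $\int_{\mathbb{R}}\hat{f}(\lambda)m^{-2}(\lambda)e_{\lambda}(x)\,d\lambda=(m^{-2}\hat{f})^{\vee}(x)$, which is the middle expression. For the last equality I would expand $m^{-2}$ in its Fourier series. In the normalized setting $I_{0}=[-\tfrac12,\tfrac12]$ one has $\alpha-1=1$, so Lemma \ref{lem:m}(2), eq. (\ref{eq:m-2}), reads $m^{-2}(\lambda)=\sum_{n\in\mathbb{Z}}a_{n}e_{n}(\lambda)$ with $a_{n}=(1-w^{2})^{|n|/2}e(-n\psi)$ and $e_{n}(\lambda)=e^{i2\pi n\lambda}$. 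Term-by-term inversion together with the modulation--translation duality $(e_{n}\hat{f})^{\vee}(x)=f(x+n)$ then yields $(m^{-2}\hat{f})^{\vee}(x)=\sum_{n\in\mathbb{Z}}a_{n}f(x+n)$, the asserted formula.

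The main point requiring care is the interchange of the Fourier summation with the inverse transform. This is harmless here because $0<w<1$ forces $0<1-w^{2}<1$, so the coefficients $a_{n}$ decay geometrically and the series for $m^{-2}$ converges absolutely and uniformly; since $m^{-2}$ is also bounded above and below away from $0$ by Lemma \ref{lem:m}(1), the multiplier acts boundedly and the term-by-term inversion is justified. I would also record the internal consistency check that validates reading (\ref{eq:decomp}) on $I_{0}$: because $f$ is supported in the unit-length interval $I_{0}$, the translates $f(\cdot+n)$ have pairwise disjoint supports, so for $x$ in the interior of $I_{0}$ only the $n=0$ term survives, and since $a_{0}=1$ the right-hand side collapses to $f(x)$ --- precisely the reproduction already obtained in the isometry computation for $L^{2}(I_{0})$ inside the proof of Theorem \ref{thm:sp-w-1}. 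Finally, pointwise evaluation in the ``in particular'' statement is legitimate because the relevant functions are band-limited (Shannon), equivalently because $\mathscr{H}_{1}(\Omega)$ is a reproducing kernel space by Lemma \ref{lem:k-1}, so point evaluations on $I_{0}$ are continuous.
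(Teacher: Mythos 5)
Your proof is correct and follows essentially the same route as the paper: the paper likewise obtains the first display by combining the inversion formula (\ref{eq:decomp}) with $(V_{B}f)(\lambda)=\hat{f}(\lambda)$ from (\ref{eq:tmp-21}) and $P_{0}\psi_{\lambda}^{(B)}=e_{\lambda}$ from (\ref{eq:5}). The only difference is that you spell out the term-by-term Fourier-series justification for the ``in particular'' identity (geometric decay of $a_{n}$, the normalization $\alpha-1=1$, and the disjoint-translates consistency check), which the paper leaves implicit.
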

\begin{proof}
By (\ref{eq:tmp-21}), we get
\[
f=\int_{\mathbb{R}}\hat{f}(\lambda)\psi_{\lambda}^{(B)}d\sigma_{B}(\lambda).
\]
Apply $P_{0}$ on both sides, we get
\begin{alignat*}{1}
f=P_{0}f & =\int_{\mathbb{R}}\hat{f}(\lambda)P_{0}\psi_{\lambda}^{(B)}d\sigma_{B}(\lambda)\\
 & =\int_{\mathbb{R}}\hat{f}(\lambda)e_{\lambda}d\sigma_{B}(\lambda)
\end{alignat*}
by (\ref{eq:5}).\end{proof}
\begin{cor}
$\hat{f}\mapsto\widehat{Z_{B}(t)f}$ is expressed in terms of $K_{Shann}$
as 
\begin{equation}
\widehat{Z_{B}(t)f}(\lambda)=\int_{\mathbb{R}}\frac{\sin\pi(\lambda-\xi)}{\pi(\lambda-\xi)}e_{\xi}(-t)\hat{f}(\xi)d\sigma_{B}(\lambda).\label{eq:tmp-23}
\end{equation}
Moreover, 
\[
\sum_{n\in\mathbb{Z}}\left|\widehat{Z_{B}(t)f}(n)\right|^{2}\leq\frac{4}{w^{2}}\left\Vert f\right\Vert _{\Omega}^{2}.
\]
\end{cor}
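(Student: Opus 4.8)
The plan is to read off both assertions from the direct-integral diagonalization of $P_B$ together with the band-limited structure of functions supported in the bounded component. I interpret $Z_B(t)$ as the Lax--Phillips cut-down $Z_B(t)=P_0U_B(t)\big|_{L^2(I_0)}$ of the unitary group to $I_0$, and I work in the normalization in which $I_0$ has unit length, so that $\widehat{g}$ is band-limited for $g\in L^2(I_0)$ and the Shannon kernel $K$ of (\ref{eq:tmp-22}) is the reproducing kernel of $BL$.

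First I would establish the integral formula (\ref{eq:tmp-23}). For $f\in L^2(I_0)$, the identity $(V_Bf)(\lambda)=\widehat{f}(\lambda)$ of (\ref{eq:tmp-21}) and Lemma \ref{lem:middle} give $f=\int_{\mathbb R}\widehat f(\xi)\,\psi_\xi^{(B)}\,d\sigma_B(\xi)$. Applying $U_B(t)$ pulls the scalar $e_\xi(-t)$ out of each fiber, and applying $P_0$ replaces $\psi_\xi^{(B)}$ by its restriction $e_\xi$ to $I_0$ via (\ref{eq:5}); hence $(Z_B(t)f)(x)=\int_{\mathbb R}e_\xi(-t)\widehat f(\xi)\,e_\xi(x)\,d\sigma_B(\xi)$ for $x\in I_0$, which is exactly the middle term $\chi_0(m^{-2}\widehat f)^\vee(\cdot-t)$ of (\ref{eq:UB(0)}). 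Taking the Fourier transform over $I_0$, interchanging the order of integration, and evaluating the inner integral $\int_{I_0}e^{i2\pi(\xi-\lambda)x}\,dx=\frac{\sin\pi(\lambda-\xi)}{\pi(\lambda-\xi)}$ produces the Shannon kernel and yields (\ref{eq:tmp-23}). The one delicate point here is the Fubini interchange and the $L^2$-convergence of the direct integral, which I would justify using the uniform comparison $w/2\le m_B\le 2/w$ from Lemma \ref{lem:m} (so $\sigma_B$ and Lebesgue measure are mutually comparable) together with a density argument over test functions.

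For the inequality I would argue as follows. Since $Z_B(t)f\in L^2(I_0)$, its Fourier transform is band-limited, so the Shannon ONB lemma gives $\sum_{n}\bigl|\widehat{Z_B(t)f}(n)\bigr|^2=\int_{\mathbb R}\bigl|\widehat{Z_B(t)f}(\lambda)\bigr|^2\,d\lambda$. By Lemma \ref{lem:m} one has $m_B^{-2}(\lambda)\ge w^2/4$, i.e. $d\lambda\le\frac{4}{w^2}\,d\sigma_B$, whence $\int\bigl|\widehat{Z_B(t)f}\bigr|^2\,d\lambda\le\frac{4}{w^2}\int\bigl|\widehat{Z_B(t)f}\bigr|^2\,d\sigma_B$. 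Because $Z_B(t)f$ lies in $L^2(I_0)$, formula (\ref{eq:tmp-21}) identifies $\widehat{Z_B(t)f}=V_B\bigl(Z_B(t)f\bigr)$, and the isometry of $V_B$ (Theorem \ref{thm:sp-w-1}) turns the last integral into $\|Z_B(t)f\|_\Omega^2$. Finally $Z_B(t)=P_0U_B(t)P_0$ is a product of a projection and a unitary, hence a contraction, so $\|Z_B(t)f\|_\Omega\le\|f\|_\Omega$, giving the stated bound $\frac{4}{w^2}\|f\|_\Omega^2$.

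The main obstacle I anticipate is bookkeeping rather than depth: one must keep the two normalizations of $I_0$ straight (the sinc computation uses unit length) and carefully justify the interchange of integration and summation when passing between the direct-integral representation and the Shannon sampling. Conceptually the estimate is generous, since the contraction bound already gives $\|Z_B(t)f\|_\Omega\le\|f\|_\Omega$ and the factor $\frac{4}{w^2}>1$ (for $0<w\le1$) absorbs the comparison between $d\lambda$ and $d\sigma_B$, which on band-limited functions is in fact an equality by the vanishing of cross-terms in Theorem \ref{thm:sp-w-1}. Thus the only genuine work is making the representation (\ref{eq:tmp-23}) rigorous.
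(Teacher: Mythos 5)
Your proposal is correct and follows essentially the same route as the paper's own proof: both derive $(Z_{B}(t)f)(x)=\chi_{0}\bigl(e_{\lambda}(-t)m^{-2}\hat{f}\bigr)^{\vee}(x)$ from Lemma \ref{lem:middle} (equivalently the middle term of (\ref{eq:UB(0)})), turn multiplication by $\chi_{0}$ into convolution with the Shannon kernel to obtain (\ref{eq:tmp-23}) (with the measure correctly read as $d\sigma_{B}(\xi)$, silently fixing the typo in the displayed formula), and then prove the estimate via Parseval with the Shannon ONB in $BL$, the bound $m_{B}^{2}\leq 4/w^{2}$ from Lemma \ref{lem:m} and Proposition \ref{prop:m}, the $V_{B}$-isometry of Theorem \ref{thm:sp-w-1}, and contractivity of $Z_{B}(t)=P_{0}U_{B}(t)P_{0}$. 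Your closing remark that the constant could in fact be taken to be $1$, because $d\lambda$ and $d\sigma_{B}$ agree on band-limited functions, is a nice extra observation but does not change the argument.
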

\begin{proof}
Here we use the middle interval $I_{0}=[-\frac{1}{2},\frac{1}{2}]$,
so the kernel is given in (\ref{eq:tmp-22}). By Lemma \ref{lem:middle},
we have 
\begin{eqnarray}
\left(Z_{B}(t)f\right)(x) & = & \chi_{0}\left(e(-\lambda t)m^{-2}(\lambda)\hat{f}(\lambda)\right)^{\vee}\nonumber \\
 & = & \left(\widehat{\chi_{0}}*\left(e_{\lambda}(-t)m^{-2}\hat{f}\right)\right)^{\vee}\nonumber \\
 & = & \int_{\mathbb{R}}\frac{\sin\pi(\lambda-\xi)}{\pi(\lambda-\xi)}e_{\xi}(-t)\hat{f}(\xi)d\sigma_{B}(\xi).\label{eq:tmp-24}
\end{eqnarray}
Using the interpolation formula, the RHS above is
\begin{alignat*}{1}
RHS & =\sum_{n\in\mathbb{Z}}\widehat{Z_{B}(t)f}(n)\frac{\sin\pi(\lambda-n)}{\pi(\lambda-n)}\\
 & =\sum_{n\in\mathbb{Z}}\int_{\mathbb{R}}\frac{\sin\pi(\lambda-\xi)}{\pi(\lambda-\xi)}e_{\xi}(-t)\hat{f}(\xi)d\sigma_{B}(\xi)\times\frac{\sin\pi(\lambda-n)}{\pi(\lambda-n)};
\end{alignat*}
and
\begin{alignat*}{1}
\sum_{n\in\mathbb{Z}}\left|\widehat{Z_{B}(t)f}(n)\right|^{2} & =\int_{\mathbb{R}}\left|\widehat{Z_{B}(t)f}(\lambda)\right|^{2}d\lambda\mbox{ (Parseval + Shannon ONB in BL)}\\
 & =\int_{\mathbb{R}}\left|m(\lambda)\right|^{2}\left|\widehat{Z_{B}(t)f}(\lambda)\right|^{2}d\sigma_{B}(\lambda)\\
 & \leq\frac{4}{w^{2}}\int\left|\widehat{Z_{B}(t)f}(\lambda)\right|^{2}d\sigma_{B}(\lambda)\\
 & =\frac{4}{w^{2}}\left\Vert Z_{B}(t)f\right\Vert _{\Omega}^{2}\leq\frac{4}{w^{2}}\left\Vert f\right\Vert _{\Omega}^{2}.
\end{alignat*}
Note the last two steps follows from Prop \ref{prop:m}.
\end{proof}

\subsubsection{Semigroups}

Below we derive explicit formulas for the Lax-Phillips semigroup $Z_{B}(t)$
making use of Shannon\textquoteright{}s kernel, as well as the Shannon
interpolation formula (see e.g., \cite{DyMc72}.) This material leads
up to Theorem \ref{thm:res}, giving a formula for the analytic resolvent
operator $R_{B}(\cdot)$, analytic in the complex right-half plane,
and computed from the infinitesimal generator of $Z_{B}(t)$.
\begin{thm}
\label{thm:ZB}$Z_{B}(t):=P_{0}U_{B}(t)P_{0}:L^{2}(I_{0})\rightarrow L^{2}(I_{0})$,
$t\geq0$, is a contraction semigroup, i.e.
\begin{enumerate}
\item For all $s,t\geq0$, 
\begin{equation}
Z_{B}(t)Z_{B}(s)=Z_{B}(t+s);\label{eq:ZB-1}
\end{equation}
 
\item $Z_{B}(0)=P_{0}$, acting as the identity operator in $L^{2}(I_{0})$.
\end{enumerate}
\end{thm}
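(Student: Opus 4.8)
The plan is to recognize $Z_{B}(t)$ as the Lax--Phillips compression semigroup attached to the incoming subspace $D_{-}=L^{2}(I_{-})$ and the outgoing subspace $D_{+}=L^{2}(I_{+})$, and to exploit the orthogonal splitting $L^{2}(\Omega)=D_{-}\oplus L^{2}(I_{0})\oplus D_{+}$, so that $P_{0}=I-P_{D_{-}}-P_{D_{+}}$ with all three projections mutually orthogonal. The whole argument rests on the two invariance properties recorded in item (1) preceding the definition of $V_{\pm}$ (a consequence of Corollary \ref{cor:UB(t)}): $U_{B}(t)D_{+}\subseteq D_{+}$ for $t\geq0$, and $U_{B}(t)D_{-}\subseteq D_{-}$ for $t\leq0$.

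First I would derive the two consequences that make the cross terms disappear. Since $U_{B}(t)D_{+}\subseteq D_{+}$ for $t\geq0$ and $D_{+}\perp L^{2}(I_{0})$, for any vector $h$ the vector $U_{B}(t)P_{D_{+}}h$ lies in $D_{+}$, whence
\begin{equation}
P_{0}\,U_{B}(t)\,P_{D_{+}}=0,\qquad t\geq0.\label{eq:plan-out}
\end{equation}
Dually, since $U_{B}(-s)D_{-}\subseteq D_{-}$ for $s\geq0$ and each $U_{B}(s)$ is unitary, taking orthogonal complements gives $U_{B}(s)D_{-}^{\perp}\subseteq D_{-}^{\perp}$ for $s\geq0$; as $L^{2}(I_{0})\subseteq D_{-}^{\perp}$, this yields $U_{B}(s)P_{0}f\in D_{-}^{\perp}$ for every $f$, so
\begin{equation}
P_{D_{-}}\,U_{B}(s)\,P_{0}=0,\qquad s\geq0.\label{eq:plan-in}
\end{equation}

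With these in hand the semigroup identity is a one-line expansion. For $t,s\geq0$ I would write, inserting $P_{0}=I-P_{D_{-}}-P_{D_{+}}$ into the middle slot of $Z_{B}(t)Z_{B}(s)=P_{0}U_{B}(t)P_{0}U_{B}(s)P_{0}$,
\begin{align*}
Z_{B}(t)Z_{B}(s) & =P_{0}U_{B}(t)U_{B}(s)P_{0}-\bigl(P_{0}U_{B}(t)P_{D_{+}}\bigr)U_{B}(s)P_{0}\\
 & \qquad-P_{0}U_{B}(t)\bigl(P_{D_{-}}U_{B}(s)P_{0}\bigr).
\end{align*}
By \eqref{eq:plan-out} and \eqref{eq:plan-in} the last two terms vanish, and the group law $U_{B}(t)U_{B}(s)=U_{B}(t+s)$ reduces the first to $P_{0}U_{B}(t+s)P_{0}=Z_{B}(t+s)$, which is \eqref{eq:ZB-1}. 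The identity $Z_{B}(0)=P_{0}U_{B}(0)P_{0}=P_{0}$ is immediate from $U_{B}(0)=I$, giving the second assertion; and contractivity follows at once from $\Vert Z_{B}(t)f\Vert=\Vert P_{0}U_{B}(t)P_{0}f\Vert\leq\Vert U_{B}(t)P_{0}f\Vert=\Vert P_{0}f\Vert\leq\Vert f\Vert$, since $P_{0}$ is a projection and $U_{B}(t)$ is unitary. Strong continuity on $t\geq0$ is inherited from that of $\{U_{B}(t)\}$.

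The main obstacle is conceptual rather than computational: one must correctly pair each invariance with the cross term it annihilates, namely that the outgoing invariance of $D_{+}$ controls the left-hand factor (yielding \eqref{eq:plan-out}) while the incoming invariance of $D_{-}$, transported to its orthogonal complement, controls the right-hand factor (yielding \eqref{eq:plan-in}). The only genuine input beyond bookkeeping is the implication $U_{B}(-s)D_{-}\subseteq D_{-}\Rightarrow U_{B}(s)D_{-}^{\perp}\subseteq D_{-}^{\perp}$, which uses unitarity of $U_{B}(s)$; everything else is the orthogonality of the three summands of $L^{2}(\Omega)$ together with the group law.
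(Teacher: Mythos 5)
Your proof is correct and takes essentially the same route as the paper's: both arguments rest on the two Lax--Phillips invariances $U_{B}(t)D_{+}\subseteq D_{+}$ for $t\geq0$ and $U_{B}(-s)D_{-}\subseteq D_{-}$ for $s\geq0$ (the latter transported to $D_{-}^{\perp}$ via unitarity), which eliminate the interaction coming from the middle projection, and the contractivity and $Z_{B}(0)=P_{0}$ steps are identical. The only cosmetic difference is bookkeeping: you expand the additive decomposition $P_{0}=I-P_{D_{-}}-P_{D_{+}}$ and show the two cross terms vanish, whereas the paper factors $P_{0}=P_{(\beta,\infty)}^{\perp}P_{(-\infty,0)}^{\perp}$ and absorbs each complementary projection using the same two invariance facts.
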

\begin{proof}
For all $f\in L^{2}(I_{0})$, and $t>0$, 
\begin{alignat*}{1}
\left\Vert Z_{B}(t)f\right\Vert _{I_{0}} & =\left\Vert P_{0}U_{B}(t)P_{0}f\right\Vert _{\Omega}\\
 & \leq\left\Vert U_{B}(t)P_{0}f\right\Vert _{\Omega}=\left\Vert P_{0}f\right\Vert _{\Omega}=\left\Vert f\right\Vert _{I_{0}}
\end{alignat*}
since $\left\Vert P_{0}\right\Vert _{L^{2}(\Omega)\rightarrow L^{2}(\Omega)}\leq1$,
i.e., the projection $P_{0}$ is contractive. This proves that $\left\Vert Z_{B}(t)\right\Vert _{I_{0}}\leq1$.

Let $s,t\geq0$, then 
\begin{eqnarray}
Z_{B}(s)Z_{B}(t) & = & P_{0}U_{B}(s)P_{0}U_{B}(t)P_{0}\nonumber \\
 & = & P_{0}U_{B}(s)\left(P_{(\beta,\infty)}^{\perp}P_{(-\infty,0)}^{\perp}\right)U_{B}(t)P_{0}\nonumber \\
 & = & \left(P_{0}U_{B}(s)P_{(\beta,\infty)}^{\perp}\right)\left(P_{(-\infty,0)}^{\perp}U_{B}(t)P_{0}\right)\nonumber \\
 & = & \left(P_{(\beta,\infty)}^{\perp}U_{B}(-s)P_{0}\right)^{*}\left(P_{(-\infty,0)}^{\perp}U_{B}(t)P_{0}\right).\label{eq:1}
\end{eqnarray}
Since for $s\geq0$, we have $U_{B}(s)L^{2}(I_{+})\subset L^{2}(I_{+})$,
and it follows that 
\[
P_{(\beta,\infty)}^{\perp}U_{B}(-s)P_{0}=U_{B}(-s)P_{0}.
\]
Similarly, $t\geq0$ implies that 
\[
P_{(-\infty,0)}^{\perp}U_{B}(t)P_{0}=U_{B}(t)P_{0}.
\]
Therefore, (\ref{eq:1}) reads
\begin{eqnarray*}
Z_{B}(s)Z_{B}(t) & = & \left(U_{B}(-s)P_{0}\right)^{*}\left(U_{B}(t)P_{0}\right)\\
 & = & \left(P_{0}U_{B}(s)\right)\left(U_{B}(t)P_{0}\right)\\
 & = & P_{0}U_{B}(s+t)P_{0}.
\end{eqnarray*}
This shows that $Z_{B}(t)$ satisfies the semigroup law in (\ref{eq:ZB-1}).

Clearly, $Z_{B}(0)=P_{0}$; and this completes the proof of the theorem.
\end{proof}
The semigroup law (\ref{eq:ZB-1}) can be checked directly using the
Shannon kernels. 

Here we are still in case (2), where the middle interval is $I_{0}=[-\frac{1}{2},\frac{1}{2}]$.
But the same argument applies to $I_{0}=[1,\alpha]$ as well. Recall
the infinitesimal generator $G_{B}$ of $Z_{B}(t)$ is 
\[
G_{B}f:=\frac{1}{2\pi i}\lim_{t\rightarrow0_{+}}\frac{1}{t}\left(Z_{B}(t)f-f\right)
\]
with $dom(G_{B})=\{f\in L^{2}(I_{0}):\mbox{ the above limit exits}\}$.
That is,
\[
\mathscr{D}(G_{B})=\{f\in L^{2}(I_{0});\int_{\mathbb{R}}\left|\hat{f}(\lambda)\right|^{2}\lambda^{2}d\lambda<\infty\}.
\]
For motivations, see \cite{LP68}. 
\begin{proof}[Direct proof of Theorem \ref{thm:ZB}]
 Let $s,t\geq0$, $f\in L^{2}(I_{0})$, recall that
\[
\widehat{Z_{B}(t)f}(\lambda)=\int_{\mathbb{R}}\frac{\sin\pi(\xi-\lambda)}{\pi(\xi-\lambda)}e_{\xi}(-t)\hat{f}(\xi)d\sigma_{B}(\xi);
\]
so that 
\begin{alignat*}{1}
 & \left(Z_{B}(s)Z_{B}(t)f\right)^{\wedge}(\lambda)\\
= & \int_{\mathbb{R}}\frac{\sin\pi(\xi-\lambda)}{\pi(\xi-\lambda)}e_{\xi}(-s)\widehat{Z_{B}(t)f}(\xi)d\sigma_{B}(\xi)\\
= & \int_{\mathbb{R}}\frac{\sin\pi(\xi-\lambda)}{\pi(\xi-\lambda)}e_{\xi}(-s)\left(\int_{\mathbb{R}}\frac{\sin\pi(\eta-\xi)}{\pi(\eta-\xi)}e_{\xi}(-t)\hat{f}(\eta)d\sigma_{B}(\eta)\right)d\sigma_{B}(\xi)\\
= & \iint_{\mathbb{R}}\frac{\sin\pi(\xi-\lambda)}{\pi(\xi-\lambda)}\frac{\sin\pi(\eta-\xi)}{\pi(\eta-\xi)}e_{\xi}(-s)e_{\xi}(-t)\hat{f}(\eta)d\sigma_{B}(\eta)d\sigma_{B}(\xi)\\
= & \int_{\mathbb{R}}\left(\int_{\mathbb{R}}\frac{\sin\pi(\xi-\lambda)}{\pi(\xi-\lambda)}\frac{\sin\pi(\eta-\xi)}{\pi(\eta-\xi)}e_{\xi}(-t)d\sigma_{B}(\xi)\right)e_{\xi}(-s)\hat{f}(\eta)d\sigma_{B}(\eta)
\end{alignat*}

It suffices to show that
\begin{equation}
\int_{\mathbb{R}}\frac{\sin\pi(\xi-\lambda)}{\pi(\xi-\lambda)}\frac{\sin\pi(\eta-\xi)}{\pi(\eta-\xi)}e_{\xi}(-t)d\sigma_{B}(\xi)=\frac{\sin\pi(\eta-\lambda)}{\pi(\eta-\lambda)}e_{\eta}(-t).\label{eq:tmp-25}
\end{equation}
Note that 
\[
\frac{\sin\pi(\cdot-\lambda)}{\pi(\cdot-\lambda)},\;\frac{\sin\pi(\eta-\cdot)}{\pi(\eta-\cdot)}\in BL;
\]
hence the measure $d\sigma_{B}(\xi)$ on the LHS in (\ref{eq:tmp-25})
can be replaced by $d\xi$, i.e., the usual Lebesgue measure; and
the result follows from this.
\end{proof}

\subsubsection{Summary of results on $Z_{B}(t)$}

Recall the two setups:
\[
\Omega=(-\infty,0)\cup(1,\alpha)\cup(\beta,\infty)
\]
and 
\[
\tilde{\Omega}=(-\infty,\tilde{\alpha})\cup(-\frac{1}{2},\frac{1}{2})\cup(\tilde{\beta},\infty).
\]
In both cases
\[
\left(Z_{B}(t)f\right)^{\wedge}(\lambda)=\int_{\mathbb{R}}\hat{f}(\xi)K_{Shann}(\xi,\lambda)e_{\xi}(-t)d\sigma_{B}(\xi)
\]
for all $\lambda,t\in\mathbb{R}$, and all $f\in L^{2}(I_{0})$. Note
that
\[
f\in L^{2}(I_{0})\Longleftrightarrow\hat{f}\in BL=\widehat{L^{2}(I_{0})}.
\]

$\widehat{Z_{B}(t)}$ can be seen as defined by
\[
BL\ni\hat{f}\mapsto\widehat{Z_{B}(t)f}\in BL;
\]
then the transform $\widehat{Z_{B}(t)}$ is an integral operator
\[
\hat{f}\mapsto\int_{\mathbb{R}}K_{Shann}(\xi,\lambda)e_{\xi}(-t)\hat{f}(\xi)d\sigma_{B}(\xi).
\]
See eq. (\ref{eq:tmp-23}).
\begin{rem}
A few observations:\end{rem}
\begin{enumerate}
\item The Fourier basis $\{e_{n}\}_{n\in\mathbb{Z}}$ is an ONB in $L^{2}(I_{0})\subset L^{2}(\Omega)$,
but $\{e_{n}\}_{n\in\mathbb{Z}}$ do not belong to $\mathscr{D}(P_{B})$,
$0<w<1$. In deed, the generalized eigenfunction of $P_{B}$, as a
selfadjoint operator in $L^{2}(\Omega)$, are
\[
\psi_{\lambda}^{(B)}=a^{(B)}(\lambda)e_{\lambda}\chi_{I_{-}}+e_{\lambda}\chi_{I_{0}}+c_{\lambda}^{(B)}(\lambda)e_{\lambda}\chi_{I_{+}};
\]
and $a^{(B)}(\lambda)$, $c^{(B)}(\lambda)$ are NOT constants, see
Lemma \ref{lem:m} for an estimate, where
\[
\left|a^{(B)}(\lambda)\right|=\left|c^{(B)}(\lambda)\right|\in[-\frac{w}{2},\frac{w}{2}].
\]
Note $P_{0}\mathscr{D}(P_{B})\nsubseteq\mathscr{D}(P_{B})$ with $0<w<1$,
so
\[
P_{0}\psi_{\lambda}^{(B)}=e_{\lambda}\chi_{I_{0}}\in\mathscr{D}(P_{B})
\]
and so when $\lambda=n$, 
\[
e_{n}\chi_{I_{0}}\notin\mathscr{D}(P_{B}).
\]

\item $Z_{B}(t)$ acts in $L^{2}(I_{0})$, and it is zero on $L^{2}(I_{-})\oplus L^{2}(I_{+})$. \end{enumerate}
\begin{rem}
We have
\begin{equation}
\left(Z_{B}(t)\chi_{I_{0}}e_{n}\right)(x)=\chi_{0}(x)\int_{\mathbb{R}}\frac{\sin\pi(\lambda-n)}{\pi(\lambda-n)}e_{\lambda}(x-t)d\sigma_{B}(\lambda).\label{eq:tmp-26}
\end{equation}
\end{rem}
\begin{proof}
Recall that
\begin{eqnarray*}
\left(Z_{B}(t)f\right)(x) & = & \chi_{0}\left(e(-\lambda t)m^{-2}\hat{f}\right)^{\vee}\\
 & = & \chi_{0}(x)\int e(-\lambda t)e(\lambda x)\hat{f}(\lambda)d\sigma_{B}(\lambda)\\
 & = & \chi_{0}(x)\int e_{\lambda}(x-t)\hat{f}(\lambda)d\sigma_{B}(\lambda).
\end{eqnarray*}
Now, apply this to $f(x):=\chi_{I_{0}}e_{n}$. Note that $\hat{f}$
is the Shannon kernel.\end{proof}
\begin{cor}[application of (\ref{eq:tmp-26}) ]
 
\[
\left\Vert Z_{B}(t)e_{n}\chi_{0}\right\Vert ^{2}=\left|I_{0}\cap(I_{0}+t)\right|=\max(1-t,0),\; t\in\mathbb{R}_{+}.
\]
\end{cor}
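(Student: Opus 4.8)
The plan is to read off $\left\Vert Z_{B}(t)e_{n}\chi_{0}\right\Vert^{2}$ directly from the explicit integral formula (\ref{eq:tmp-26}), by collapsing the spectral measure $\sigma_{B}$ against the band-limited Shannon kernel and then recognizing what remains as a geometric overlap. I work in the unit-length normalization, so $I_{0}=(-\tfrac12,\tfrac12)$ and the period of $m_{B}^{-2}$ is $1$.

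First I would record that the kernel $\frac{\sin\pi(\lambda-n)}{\pi(\lambda-n)}$ occurring in (\ref{eq:tmp-26}) is precisely the Fourier transform of $\chi_{0}e_{n}$, hence lies in the band-limited space $BL$ with band $I_{0}$; in particular its inverse transform is $\chi_{0}e_{n}$ itself. The aim is to show that, against such a band-limited integrand, the measure $d\sigma_{B}(\lambda)=m_{B}^{-2}(\lambda)\,d\lambda$ may be replaced by Lebesgue measure $d\lambda$, exactly the device invoked in the direct proof of Theorem \ref{thm:ZB}. Granting that replacement, Fourier inversion yields $\int_{\mathbb{R}}\frac{\sin\pi(\lambda-n)}{\pi(\lambda-n)}e_{\lambda}(x-t)\,d\lambda=(\chi_{0}e_{n})(x-t)$, so that $\left(Z_{B}(t)e_{n}\chi_{0}\right)(x)=\chi_{0}(x)\,\chi_{0}(x-t)\,e_{n}(x-t)$. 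Taking moduli the phase $e_{n}(x-t)$ drops out, leaving $\left|\left(Z_{B}(t)e_{n}\chi_{0}\right)(x)\right|^{2}=\chi_{0}(x)\chi_{0}(x-t)$, and integrating in $x$ gives $\left\Vert Z_{B}(t)e_{n}\chi_{0}\right\Vert^{2}=\int_{\mathbb{R}}\chi_{0}(x)\chi_{0}(x-t)\,dx=\left|I_{0}\cap(I_{0}+t)\right|$. A one-line geometric evaluation, $I_{0}\cap(I_{0}+t)=(t-\tfrac12,\tfrac12)$ for $0\le t\le 1$ and $\varnothing$ for $t\ge 1$, then produces $\max(1-t,0)$.

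The main obstacle is the measure-replacement step, and I would treat it as the genuine content rather than a formality. Inserting the Fourier expansion (\ref{eq:m-2}), $m_{B}^{-2}(\lambda)=\sum_{k\in\mathbb{Z}}a_{k}\,e(k\lambda)$ with $a_{0}=1$ and $\left|a_{k}\right|=(1-w^{2})^{\left|k\right|/2}$, and interchanging sum and integral, the $k$-th term equals the inverse transform of the integrand evaluated at the integer $k$; thus the replacement is legitimate exactly when every nonzero integer node contributes $0$. Here the integrand $\frac{\sin\pi(\lambda-n)}{\pi(\lambda-n)}e(\lambda(x-t))$ has inverse transform equal to $\chi_{0}e_{n}$ shifted by $x-t$, i.e.\ supported on a unit-length interval centered at $-(x-t)$. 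The delicate point is that this shift can move the support off the origin, so that the surviving integer node need not be $k=0$; establishing that the band-limited structure nevertheless annihilates all modes $a_{k}$ with $k\ne 0$, so that the feedback coefficients $(1-w^{2})^{\left|k\right|/2}$ do not enter, is precisely where the argument must be carried out with care, and is the step I expect to be hardest to secure.
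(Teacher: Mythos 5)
Your route is the one the paper itself implicitly takes (the corollary is stated as a bare ``application of (\ref{eq:tmp-26})'', resting on the same measure-replacement device used for (\ref{eq:tmp-25})), and your honesty about the weak point is exactly on target: the step you flag as ``hardest to secure'' is not merely delicate, it fails for $0<w<1$. Carry your own expansion to the end. With $I_{0}=(-\tfrac{1}{2},\tfrac{1}{2})$ and $m_{B}^{-2}(\lambda)=\sum_{k}a_{k}e(k\lambda)$, $a_{k}=(1-w^{2})^{|k|/2}e(-k\psi)$, formula (\ref{eq:tmp-26}) gives
\[
\left(Z_{B}(t)e_{n}\chi_{0}\right)(x)=\chi_{0}(x)\sum_{k\in\mathbb{Z}}a_{k}\left(\chi_{0}e_{n}\right)(x-t+k),
\]
which is also what (\ref{eq:UB(0)}) says, since $P_{0}U_{B}(t)P_{0}f=\chi_{0}\left(m^{-2}\hat{f}\right)^{\vee}(\cdot-t)$ for $f\in L^{2}(I_{0})$. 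For $0<t<1$ and a.e.\ $x\in(-\tfrac{1}{2},\,t-\tfrac{1}{2})$ the unique integer node landing in the support is $k=1$, not $k=0$, and it contributes $a_{1}e_{n}(x-t+1)$ with $\left|a_{1}\right|=\sqrt{1-w^{2}}\neq0$. So the nonzero modes are not annihilated; they are precisely the feedback (wrap-around) terms of Figure \ref{fig:forward}.

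Because for a.e.\ $x$ at most one summand above is nonzero, the norm computes honestly to
\[
\left\Vert Z_{B}(t)e_{n}\chi_{0}\right\Vert ^{2}=\sum_{k\in\mathbb{Z}}(1-w^{2})^{|k|}\left|I_{0}\cap(I_{0}+t-k)\right|=(1-t)+(1-w^{2})t,\qquad0\leq t\leq1,
\]
and more generally $(1-w^{2})^{j}\left[(j+1-t)+(1-w^{2})(t-j)\right]$ for $j\leq t\leq j+1$, which is strictly positive for every $t\geq0$. Hence the claimed identity $\left\Vert Z_{B}(t)e_{n}\chi_{0}\right\Vert ^{2}=\max(1-t,0)$ holds only in the degenerate case $w=1$ (where $\sigma_{B}$ is Lebesgue measure and $a_{k}=0$ for $k\neq0$), or if $Z_{B}(t)$ is replaced by the purely spatial truncation semigroup $Z_{sp}(t)$ of (\ref{eq:ZB-2}); for $0<w<1$ it contradicts the paper's own formula (\ref{eq:UB(0)}), and also the physics of a finite barrier, through which energy tunnels back into $I_{0}$ so that $Z_{B}(t)$ cannot annihilate a state at $t=1$. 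So your proof cannot be completed as proposed --- not because you missed an idea, but because the step you isolated is a genuine error in the statement itself, inherited from the unjustified replacement of $d\sigma_{B}$ by $d\lambda$ in (\ref{eq:tmp-25}).
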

\begin{rem}
For all $f\in L^{2}(I_{0})$, we define 
\[
\left(W_{0}f\right)(x):=\int\hat{f}(\lambda)e_{\lambda}(x)d\sigma_{B}(\lambda)=\left(m^{-2}\hat{f}\right)^{\vee}(x).
\]
From previous discussion, we see that
\[
\left(Z_{B}(t)f\right)(x)=P_{0}\left(W_{0}f\right)(x-t).
\]
Figure \ref{fig:ZB} below illustrates the semigroup law of $Z_{B}(t)$,
$t\geq0$.
\end{rem}
\begin{figure}[H]
\begin{tabular}{cc}
\includegraphics[scale=0.8]{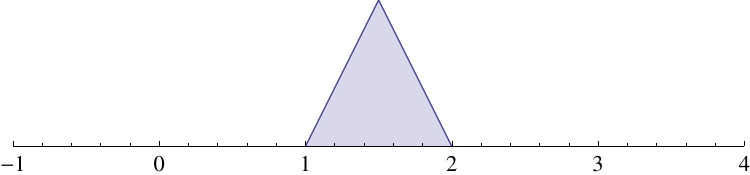} & \includegraphics[scale=0.8]{f}\tabularnewline
$f$ & $f$\tabularnewline
\includegraphics[scale=0.8]{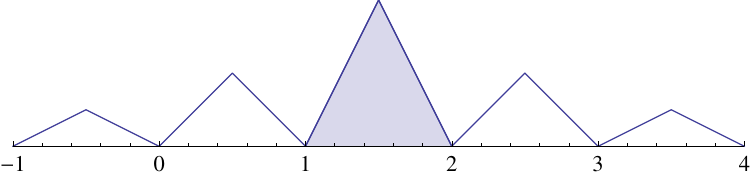} & \includegraphics[scale=0.8]{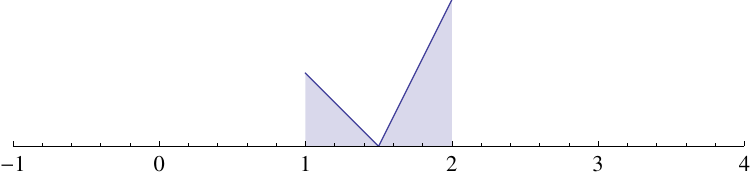}\tabularnewline
$W_{0}f$ & $Z_{B}(t)f=\chi_{0}(W_{0}f)(\cdot-t)$\tabularnewline
\includegraphics[scale=0.8]{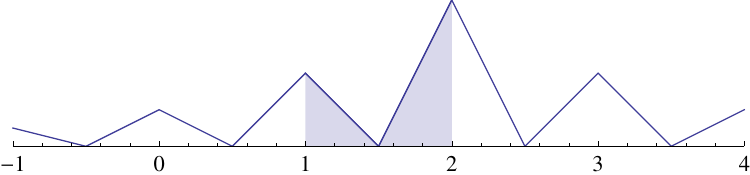} & \includegraphics[scale=0.8]{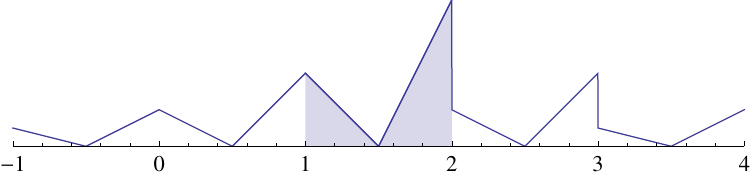}\tabularnewline
$(W_{0}f)(\cdot-t)$ & $W_{0}Z_{B}(t)f$\tabularnewline
\includegraphics[scale=0.8]{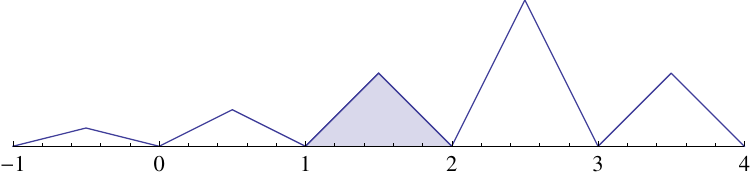} & \includegraphics[scale=0.8]{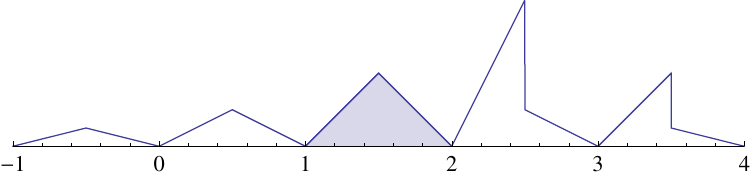}\tabularnewline
$(W_{0}f)(\cdot-s-t)$ & $(W_{0}Z_{B}(t)f)(\cdot-s)$\tabularnewline
\includegraphics[scale=0.8]{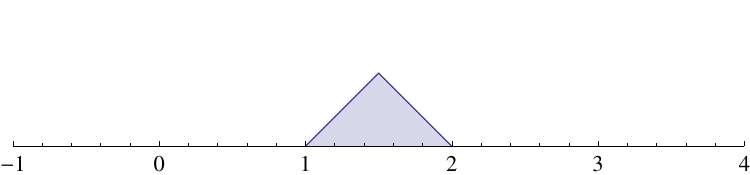} & \includegraphics[scale=0.8]{final}\tabularnewline
$Z_{B}(s+t)f$ & $Z_{B}(s)Z_{B}(t)f$\tabularnewline
\end{tabular}

\caption{\label{fig:ZB}$Z_{B}(s+t)f$}
\end{figure}

\begin{rem}
\label{rem:SB}Let $I_{0}=(1,\alpha)$. We proved in Theorem \ref{thm:ZB}
that, in the general case, the semigroup
\begin{equation}
Z_{B}(t)=P_{0}U_{B}(t)P_{0}:L^{2}(I_{0})\rightarrow L^{2}(I_{0}),\label{eq:ZB}
\end{equation}
defined for $t\in\mathbb{R}_{+}$, can be computed from the simpler
spatial semigroup $Z_{sp}(t):L^{2}(I_{0})\rightarrow L^{2}(I_{0})$
given by
\begin{equation}
\left(Z_{sp}(t)f\right)(x)=\chi_{I_{0}}(x)f(x-t),\; f\in L^{2}(I_{0}),t>0.\label{eq:ZB-2}
\end{equation}
Hence, the generator, and the resolvent operator for $Z_{B}(t)$ in
(\ref{eq:ZB-2}) may be computed from (\ref{eq:ZB-2}). One checks
that the domain of the infinitesimal generator $G_{sp}$ in (\ref{eq:ZB-2})
is 
\[
\mathscr{D}(G_{sp})=\{f\in L^{2}(I_{0});f'\in L^{2}(I_{0}),\mbox{ and }f(1)=0\}.
\]
Recall $x=1$ is the left endpoint in $I_{0}$. If $\lambda\in\mathbb{C}$,
and $\Re\lambda>0$ then the resolvent operator $R_{sp}(\lambda)=(\lambda I-G_{sp})^{-1}$
for (\ref{eq:ZB-2}) is the following Volterra integral operator (see
\cite{Ma10})
\begin{equation}
\left(R_{sp}(\lambda)f\right)(x)=\int_{1}^{x}e^{-\lambda(x-y)}f(y)dy,\label{eq:ZB-3}
\end{equation}
defined for all $f\in L^{2}(I_{0})$, and $x\in I_{0}$ ($=(1,\alpha)$.)
The Volterra property of (\ref{eq:ZB-3}) reflects causality for the
scattering we computed in section \ref{sub:OS}, see also \cite{LP68}. 
\end{rem}

\subsection{The resolvent family of $Z_{B}(t)$}

Let $\Omega=I_{-}\cup I_{0}\cup I_{+}=(-\infty,0)\cup(1,\alpha)\cup(\beta,\infty)$
be as above, i.e., $1<\alpha<\beta<\infty$ are fixed, and we set
$I_{0}=(1,\alpha)$. Fix $B\in U(2)$ such that $w>0$, and set
\begin{equation}
Z_{B}(t)=P_{0}U_{B}(t)P_{0},t\in\mathbb{R}_{+}.\label{eq:ZB-4}
\end{equation}
Here $P_{0}$ denotes the projection of $L^{2}(\Omega)$ onto the
subspace $L^{2}(I_{0})\subset L^{2}(\Omega)$, i.e., 
\begin{equation}
P_{0}f=\chi_{I_{0}}f,\label{eq:tmp-3}
\end{equation}
for all $f\in L^{2}(\Omega)$.

In this section, we shall compare the two $C_{0}$-semigroups $Z_{B}(t)$
and $Z_{sp}(t)$ from sect \ref{sub:sg} and Remark \ref{rem:SB}.
Recall, $Z_{sp}(t)$ is the spatial semigroup in $L^{2}(I_{0})$ given,
for $t\in\mathbb{R}_{+}$, by right-translation by $t$, followed
by truncation; i.e., $Z_{sp}(t)f=0$ if $t>\alpha-1=\mbox{length}(I_{0})$,
see (\ref{eq:ZB-2}). 

Both $\left(Z_{B}(t)\right)$ and $\left(Z_{sp}(t)\right)$ are $C_{0}$-semigroup
of contraction operators in $L^{2}(I_{0})$. 
\begin{lem}[\cite{LP68}]
Let $\mathscr{H}_{0}$ be a Hilbert space, and let $\{Z(t)\}_{t\in\mathbb{R}_{+}}$
be a contraction semigroup in $\mathscr{H}_{0}$. Then there is a
dense subspace $\mathscr{D}(G)$ in $\mathscr{H}_{0}$ such that,
for $f\in\mathscr{D}(G)$, the limit
\begin{equation}
\lim_{t\rightarrow0_{+}}\frac{1}{t}\left(Z(t)f-f\right)=Gf\label{eq:G}
\end{equation}
exists. The operator $G$ is called the infinitesimal generator. For
$\lambda\in\mathbb{C}$, $\Re\lambda>0$, the resolvent operator
\begin{equation}
R(\lambda)=(\lambda I-G)^{-1}:\mathscr{H}_{0}\rightarrow\mathscr{H}_{0}\label{eq:res}
\end{equation}
is an analytic family of bounded operators. We have 
\begin{equation}
\left\Vert R_{G}(\lambda)\right\Vert _{\mathscr{H}_{0}\rightarrow\mathscr{H}_{0}}\leq\frac{1}{\Re\lambda}\label{eq:res-1}
\end{equation}
for $\Re\lambda>0$; and moreover the following two limits hold in
the strong operator-topology:
\begin{equation}
R_{G}(\lambda)=\int_{0}^{\infty}e^{-t\lambda}Z(t)dt,\mbox{ and }\label{eq:res-2}
\end{equation}
\begin{equation}
\lim_{n\rightarrow\infty}\left(\frac{n}{t}R_{G}\left(\frac{n}{t}\right)\right)^{n}=Z(t),\: t\in\mathbb{R}_{+}.\label{eq:sg}
\end{equation}
\end{lem}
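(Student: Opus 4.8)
The plan is to recognize the statement as the classical Hille--Yosida theorem specialized to contraction semigroups, and to assemble its standard proof. Throughout I assume $\{Z(t)\}_{t\in\mathbb{R}_+}$ is a strongly continuous semigroup of contractions, which is the relevant setting since the $Z_B(t)$ constructed above are strongly continuous. First I would define $G$ by the limit (\ref{eq:G}) on the set $\mathscr{D}(G)$ where it exists, and establish that $\mathscr{D}(G)$ is dense. The device here is to consider, for $f\in\mathscr{H}_0$ and $s>0$, the averaged vectors $f_s:=\frac{1}{s}\int_0^s Z(r)f\,dr$; strong continuity makes the integrand continuous, so the integral exists, and a short computation with the semigroup law shows $\frac{1}{h}(Z(h)f_s-f_s)\to\frac{1}{s}(Z(s)f-f)$ as $h\to0_+$, whence $f_s\in\mathscr{D}(G)$. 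Since $f_s\to f$ as $s\to0_+$, again by strong continuity, density follows.

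Next I would define $R(\lambda)$ directly by the Laplace integral in (\ref{eq:res-2}), i.e. $R(\lambda)f=\int_0^\infty e^{-t\lambda}Z(t)f\,dt$ for $\Re\lambda>0$. The contraction bound $\|Z(t)\|\le1$ gives $\|R(\lambda)f\|\le\int_0^\infty e^{-t\Re\lambda}\|f\|\,dt=\frac{1}{\Re\lambda}\|f\|$, which is exactly (\ref{eq:res-1}). Analyticity in $\lambda$ on the right half-plane is immediate from differentiating under the integral sign, the derivatives being the Laplace transforms of $t\mapsto(-t)^nZ(t)$. It then remains to identify this $R(\lambda)$ with $(\lambda I-G)^{-1}$. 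For this I would verify $(\lambda I-G)R(\lambda)=I$ on $\mathscr{H}_0$ and $R(\lambda)(\lambda I-G)=I$ on $\mathscr{D}(G)$. The first follows by computing $\frac{1}{h}(Z(h)-I)R(\lambda)f$ via the semigroup property and a change of variables in the Laplace integral, then letting $h\to0_+$; this shows $R(\lambda)f\in\mathscr{D}(G)$ and $GR(\lambda)f=\lambda R(\lambda)f-f$. In particular $R(\lambda)$ is injective with dense range, so $(\lambda I-G)^{-1}=R(\lambda)$.

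Finally, the inversion formula (\ref{eq:sg}) is the Post--Widder/Yosida step and is where the real work lies. The plan is to introduce the Yosida approximants $G_\mu:=\mu G R(\mu)=\mu(\mu R(\mu)-I)$, which are bounded operators satisfying $G_\mu f\to Gf$ for $f\in\mathscr{D}(G)$, and whose exponentials $e^{tG_\mu}$ are genuine contraction semigroups. One shows $e^{tG_\mu}f\to Z(t)f$ strongly as $\mu\to\infty$, uniformly for $t$ in compact sets, by estimating $\|e^{tG_\mu}f-e^{tG_\nu}f\|$ through the commuting-generators telescoping identity together with the contraction bounds. Relating the powers $\bigl(\tfrac{n}{t}R(\tfrac{n}{t})\bigr)^n$ to these exponentials, by expanding $\mu R(\mu)=I+\mu^{-1}G_\mu$ with $\mu=n/t$ and letting $n\to\infty$, then yields (\ref{eq:sg}). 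I expect this last step, controlling the convergence of the resolvent powers uniformly and passing from the approximant semigroups $e^{tG_\mu}$ back to $Z(t)$, to be the main obstacle; the earlier density, boundedness, and resolvent-identification steps are routine once strong continuity is in hand.
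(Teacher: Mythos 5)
Your proposal is correct in outline, but it is worth noting that the paper itself contains no proof of this lemma at all: its entire ``proof'' is the citation ``See \cite{LP68}'', treating the statement as classical background from Lax--Phillips. What you have reconstructed is the standard Hille--Yosida/Post--Widder argument for $C_{0}$-contraction semigroups: density of $\mathscr{D}(G)$ via the averaged vectors $f_{s}=\frac{1}{s}\int_{0}^{s}Z(r)f\,dr$, the Laplace-transform definition of $R(\lambda)$ giving the bound (\ref{eq:res-1}) and analyticity, the two-sided identification $R(\lambda)=(\lambda I-G)^{-1}$, and the exponential formula (\ref{eq:sg}) via Yosida approximants $G_{\mu}=\mu(\mu R(\mu)-I)$ together with the identity $\frac{n}{t}R(\frac{n}{t})=I+\frac{t}{n}G_{n/t}$ and telescoping estimates for commuting contractions (where, as you anticipate, the only delicate point is controlling $(I+\frac{t}{n}G_{n/t})^{n}-e^{tG_{n/t}}$, which one does first on the dense set $\mathscr{D}(G^{2})$ and then extends by uniform boundedness). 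You also correctly supply the hypothesis of strong continuity, which the paper's statement leaves implicit but which is genuinely needed --- without it the domain of $G$ need not be dense and the Laplace integral (\ref{eq:res-2}) is not even defined strongly --- and which does hold for the semigroups $Z_{B}(t)=P_{0}U_{B}(t)P_{0}$ to which the paper applies the lemma. In short: the paper buys brevity by outsourcing a textbook theorem to the literature, while your route makes the lemma self-contained at the cost of roughly a page of standard semigroup theory; there is no conflict between the two.
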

\begin{proof}
See \cite{LP68}. 
\end{proof}
From (\ref{eq:sg}), we see in particular that a given semigroup is
determined uniquely by its infinitesimal generator.
\begin{thm}
\label{thm:res}Now fix $B\in U(2)$ such that $w_{B}>0$, and denote
the infinitesimal generator of $Z_{B}(t)$ by $G_{B}$ i.e., for $f\in\mathscr{D}(G_{B})$,
we have 
\[
\lim_{t\rightarrow0_{+}}\frac{1}{t}\left(Z_{B}(t)f-f\right)=G_{B}f,
\]
see (\ref{eq:G}). For $\lambda\in\mathbb{C}$, $\Re\lambda>0$, set
\begin{equation}
R_{B}(\lambda):=\left(\lambda I-G_{B}\right)^{-1}.\label{eq:res-3}
\end{equation}
Finally we introduce the function $m_{B}(\cdot)$ from Lemma \ref{lem:m},
i.e.,
\begin{equation}
\mathbb{R}\ni\xi\mapsto m_{B}(\xi)\in\mathbb{R}_{+}.\label{eq:m}
\end{equation}
 Then for all $\lambda\in\mathbb{C}$, $\Re\lambda>0$, we have
\begin{equation}
R_{B}(\lambda)=R_{sp}\left(\lambda m_{B}(0)^{2}\right):L^{2}(I_{0})\rightarrow L^{2}(I_{0}),\label{eq:tmp-30}
\end{equation}
where $R_{sp}(\cdot)$ is the resolvent family from (\ref{eq:ZB-3})
in Remark \ref{rem:SB}.\end{thm}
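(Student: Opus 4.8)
The plan is to compute $R_{B}(\lambda)$ directly from the Laplace--transform representation of the resolvent, equation (\ref{eq:res-2}), namely $R_{B}(\lambda)=\int_{0}^{\infty}e^{-t\lambda}Z_{B}(t)\,dt$ (a strong operator integral, convergent for $\Re\lambda>0$ by (\ref{eq:res-1})), and then to match the outcome against the Volterra operator $R_{sp}$ of (\ref{eq:ZB-3}). The input I would feed into this integral is the explicit description of the compression semigroup on the middle interval: by Lemma \ref{lem:middle} and the formula displayed just before (\ref{eq:tmp-26}), for $f\in L^{2}(I_{0})$ one has $\left(Z_{B}(t)f\right)(x)=\chi_{I_{0}}(x)\,(W_{0}f)(x-t)$, where $W_{0}f:=\left(m_{B}^{-2}\hat{f}\right)^{\vee}$; equivalently $Z_{B}(t)=P_{0}\,U_{0}(t)\,W_{0}$ with $U_{0}(t)$ the free right--translation on $L^{2}(\mathbb{R})$. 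This identity is the rigorous form of the assertion in Remark \ref{rem:SB} that $Z_{B}$ is obtained from the spatial semigroup $Z_{sp}(t)=P_{0}U_{0}(t)$ by pre--composing with the lift $W_{0}$ (note $P_{0}W_{0}=I$ on $L^{2}(I_{0})$ by Lemma \ref{lem:middle}), and it is exactly what makes a comparison of the two resolvents possible.

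Granting this, the computation proceeds by inserting the Fourier expansion of the periodic density, $m_{B}^{-2}(\lambda)=\sum_{k\in\mathbb{Z}}c_{k}\,e^{i2\pi k(\alpha-1)\lambda}$ with $c_{0}=1$ and $c_{k}=(1-w^{2})^{|k|/2}e(-k\psi)$ from (\ref{eq:m-2}). This turns $W_{0}f$ into the ``feedback comb'' $\sum_{k}c_{k}\,f(\,\cdot+k(\alpha-1))$, so that after interchanging the $t$--integral with the sum (Fubini, justified by $\Re\lambda>0$ and $|c_{k}|\le(1-w^{2})^{|k|/2}$) each term becomes a shifted exponential integral against $e^{-\lambda(x-y)}$. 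The $k=0$ term reproduces precisely the Volterra kernel $\int_{1}^{x}e^{-\lambda(x-y)}f(y)\,dy$ of $R_{sp}(\lambda)$; the terms with $k<0$ drop out on $I_{0}$ for support reasons; and the terms with $k\ge1$ collect into a geometric series in the ratio $\sqrt{1-w^{2}}\,e(-\psi)e^{-\lambda(\alpha-1)}$. The scalar $m_{B}(0)^{2}=w^{-2}\bigl|1-\sqrt{1-w^{2}}e(-\psi)\bigr|^{2}$ is then to be recognized as the normalization produced by this resummation evaluated at the band, i.e. from the value of the density $m_{B}^{-2}$ governing the zero--frequency (DC) behaviour of the middle--interval data.

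To convert this into the stated identity I would invoke uniqueness of a contraction semigroup from its generator, via the exponential formula (\ref{eq:sg}): it suffices to exhibit the operator computed above as the resolvent of $G_{B}$ at the reparametrized point, which I would verify by checking that the rescaling $\lambda\mapsto\lambda\,m_{B}(0)^{2}$ sends the computed kernel into the range of $R_{sp}$, i.e. into $\mathscr{D}(G_{sp})$, and inverts $\lambda\,m_{B}(0)^{2}I-G_{sp}$. The delicate point --- where the band--limited ``measure--flattening'' device from the direct proof of Theorem \ref{thm:ZB} is indispensable --- is exactly the handling of the $k\ne0$ feedback (resonance) contributions: one must show that, once the output is restricted to $I_{0}$ and tested against band--limited functions, $d\sigma_{B}=m_{B}^{-2}d\lambda$ may be replaced by a multiple of Lebesgue measure, so that the resonance sum collapses into a single rescaling of the spectral variable by $m_{B}(0)^{2}$ rather than altering the boundary behaviour at the left endpoint $x=1$. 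I expect this collapse to be the main obstacle, since a naive term--by--term evaluation of the feedback series produces a genuine boundary coupling $g(1)=\sqrt{1-w^{2}}e(-\psi)\,g(\alpha)$ that must be reconciled with the purely causal Volterra structure of $R_{sp}$; controlling that reconciliation is the heart of the argument, while the residual bookkeeping (convergence, the $2\pi$ factors in the normalization of $G_{B}$, and the causality reflected in (\ref{eq:ZB-3})) is routine.
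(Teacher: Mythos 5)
Your setup is sound and is in fact the same starting point as the paper's own (very terse) proof: both rest on the formula $\left(Z_{B}(t)f\right)(x)=\chi_{I_{0}}(x)\left(E_{B}f\right)(x-t)$ with $E_{B}f=(m_{B}^{-2}\hat{f})^{\vee}=\sum_{k}a_{k}f(\cdot+k(\alpha-1))$ and the coefficients (\ref{eq:tmp-4}); the paper then only adds the scalar identity (\ref{eq:tmp-29}) and an appeal to ``the argument for Remark \ref{rem:SB}.'' But your text is a plan, not a proof: the single step that carries all of the content --- showing that the $k\geq1$ feedback series ``collapses into a rescaling $\lambda\mapsto\lambda m_{B}(0)^{2}$'' --- is precisely the step you label ``the main obstacle'' and never execute. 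Everything you do carry out (the Laplace representation (\ref{eq:res-2}), Fubini, the $k=0$ Volterra term, the vanishing of the $k<0$ terms) is routine; with the decisive step deferred, the theorem has not been proved.

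Moreover, if you push your own computation to the end, the collapse does not occur, so the gap cannot be closed in the form you envision. For $f$ supported in $I_{0}=(1,\alpha)$ and $x\in I_{0}$ only $k\geq0$ survive, and term-by-term Laplace transform gives
\[
\left(R_{B}(\lambda)f\right)(x)=\int_{1}^{x}e^{-\lambda(x-y)}f(y)\,dy+\frac{q(\lambda)}{1-q(\lambda)}\,e^{-\lambda x}\int_{1}^{\alpha}e^{\lambda y}f(y)\,dy,\qquad q(\lambda)=\sqrt{1-w^{2}}\,e(-\psi)\,e^{-\lambda(\alpha-1)},
\]
i.e.\ $R_{B}(\lambda)=R_{sp}(\lambda)$ \emph{plus a rank-one, non-Volterra kernel}. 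Equivalently, $G_{B}$ is $-d/dx$ on the domain $\{f\in L^{2}(I_{0}):f'\in L^{2}(I_{0}),\;f(1)=\sqrt{1-w^{2}}e(-\psi)f(\alpha)\}$ --- exactly the boundary coupling you noticed --- which differs from $\mathscr{D}(G_{sp})$, where $f(1)=0$. No reparametrization of the causal family (\ref{eq:ZB-3}) can produce that extra term: take $f$ supported in $(\alpha-\epsilon,\alpha)$; then $R_{sp}(\mu)f$ vanishes identically on $(1,\alpha-\epsilon)$ for every $\mu$, while the displayed $R_{B}(\lambda)f$ does not. Abstractly, an operator identity $(\lambda I-G_{B})^{-1}=(\lambda m_{B}(0)^{2}I-G_{sp})^{-1}$ valid for all $\Re\lambda>0$ would force $G_{sp}-G_{B}=\lambda\left(m_{B}(0)^{2}-1\right)I$ for all such $\lambda$, which is impossible unless $m_{B}(0)^{2}=1$ and $G_{B}=G_{sp}$, i.e.\ unless $w=1$. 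So the ``reconciliation'' you hoped to control is not a technical nuisance but a genuine obstruction; your boundary-coupling observation is the correct terminus of the computation, and the conclusion is that (\ref{eq:tmp-30}) can only hold with the rank-one resonance term retained (note that the paper's own two-line proof, resting on (\ref{eq:tmp-29}) alone, does not confront this point either).
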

\begin{proof}
We introduce the following notation, based on Corollary \ref{cor:scatter}.
Let $B\in U(2)$ be as above, i.e., $B=B(w,\phi,\psi,\theta)$, and
assume $w>0$. For $f\in L^{2}(I_{0})$, set 
\begin{alignat}{1}
\left(E_{B}f\right)(x) & :=\left(m_{B}^{-2}\hat{f}\right)^{\vee}(x)\label{eq:EB}\\
 & =\sum_{k\in\mathbb{Z}}a_{k}f(x+k(\alpha-1))\nonumber 
\end{alignat}
where $(a_{k})_{k\in\mathbb{Z}}$ is the set of Fourier coefficients
of $m_{B}^{-2}$, i.e., 
\begin{equation}
a_{k}=(1-w^{2})^{\frac{\left|k\right|}{2}}e(-k\psi),\: k\in\mathbb{Z},\label{eq:tmp-4}
\end{equation}
and 
\begin{equation}
m^{-2}(\xi)=\sum_{k\in\mathbb{Z}}a_{k}e_{k}((\alpha-1)\xi),\:\xi\in\mathbb{R}.\label{eq:tmp-28}
\end{equation}
For the semigroup $\left(Z_{B}(t)\right)_{t\in\mathbb{R}_{+}}$ we
proved in section \ref{sec:scatter} that
\[
\left(Z_{B}(t)f\right)(x)=\chi_{I_{0}}(x)\left(E_{B}f\right)(x-t),\ x\in I_{0},t>0.
\]
Using (\ref{eq:tmp-28}) we get
\begin{equation}
1=\sum_{k\in\mathbb{Z}}m_{B}(0)^{2}a_{k}.\label{eq:tmp-29}
\end{equation}
Using the argument for Remark \ref{rem:SB}, and (\ref{eq:tmp-29}),
the desired formula (\ref{eq:tmp-30}) follows.
\end{proof}

\section{\label{sec:points}Intervals versus Points}

There are good reasons to consider the cases when the scattering by
intervals degenerate to points. Obstacle scattering, both for the
acoustic wave equations and for quantum theory, behaves differently
in the degenerate cases. In quantum mechanics one studies what happens
at quantum scale; and wave-particle duality of matter is realized
experimentally, for example in quantum-tunneling: the phenomenon where
a particle/wave-function tunnels through a barrier (which could not
have been surmounted by a classical particle.) It is often explained
with use of the Heisenberg uncertainty principle. So quantum tunneling
is one of the defining features of quantum mechanics. Quantum differs
from classical mechanics in this way. Classical mechanics predicts
that particles that do not have enough energy to classically surmount
a barrier will not be able to reach the other side. By contrast, in
quantum mechanics, particles behave as waves and can, with positive
probability, tunnel through the barrier.

\subsection{Deleting one point}

Let $U_{t}:=e^{itP_{\theta}}$. Let $\Omega:=\mathbb{R}\setminus\{0\}.$
Let $\chi_{-}:=\chi_{(-\infty,0)}$ and $\chi_{+}:=\chi_{(0,\infty)}.$
The generalized eigenfunctions are
\[
\psi_{\xi}(x):=e(x\xi)\left(e(\theta)\chi_{-}(x)+\chi_{+}(x)\right),\xi\in\mathbb{R}.
\]
Define $V:L^{2}(\mathbb{R})\to L^{2}(\Omega),$ (of course $L^{2}(\Omega)=L^{2}(\mathbb{R})$
but the distinction is important below) by 
\begin{align*}
Vf(x) & :=\left(e(\theta)\chi_{-}(x)+\chi_{+}(x)\right)f(x)\\
 & =\int_{\mathbb{R}}\widehat{f}(\xi)e(x\xi)\left(e(\theta)\chi_{-}(x)+\chi_{+}(x)\right)d\xi\\
 & =\int_{\mathbb{R}}\widehat{f}(\xi)\psi_{\xi}(x)d\xi
\end{align*}
where we used $f(x)=\int_{\mathbb{R}}\widehat{f}(\xi)e(x\xi)d\xi.$
Since $\psi_{\xi}$ is a generalized eigenfunction 
\begin{align*}
U_{t}Vf(x) & =\int_{\mathbb{R}}\widehat{f}(\xi)e(t\xi)e(\xi x)\left(e(\theta)\chi_{-}(x)+\chi_{+}(x)\right)d\xi.
\end{align*}
Consequently, $V^{*}g(x)=\left(e(-\theta)\chi_{-}(x)+\chi_{+}(x)\right)g(x)$
and 
\[
\left(e(-\theta)\chi_{-}(x)+\chi_{+}(x)\right)\left(e(\theta)\chi_{-}(x)+\chi_{+}(x)\right)=1
\]
implies 
\begin{align*}
V^{*}U_{t}Vf(x) & =\int_{\mathbb{R}}\widehat{f}(\xi)e(t\xi)e(x\xi)d\xi=f(x+t)
\end{align*}
for all $f\in L^{2}(\mathbb{R})$ and all $x,t\in\mathbb{R}.$

\subsection{Deleting an interval}

Let $\Omega:=\mathbb{R}\setminus(0,\alpha).$ Let $\chi_{-}:=\chi_{(-\infty,0)}$
and $\chi_{+}:=\chi_{(\alpha,\infty)}.$ The generalized eigenfunctions
are
\[
\psi_{\xi}(x):=e(x\xi)\left(e(\theta)\chi_{-}(x)+e(-\xi\alpha)\chi_{+}(x)\right),\xi\in\mathbb{R}.
\]
Define $V:L^{2}(\mathbb{R})\to L^{2}(\Omega),$ (of course $L^{2}(\Omega)\varsubsetneqq L^{2}(\mathbb{R})$)
by
\begin{align*}
Vf(x) & :=e(\theta)f(x)\chi_{-}(x)+f(x-\alpha)\chi_{+}(x)\\
 & =\int_{\mathbb{R}}\widehat{f}(\xi)e(x\xi)\left(e(\theta)\chi_{-}(x)+e(-\xi\alpha)\chi_{+}(x)\right)d\xi\\
 & =\int_{\mathbb{R}}\widehat{f}(\xi)\psi_{\xi}(x)d\xi
\end{align*}
where we used $f(x)=\int_{\mathbb{R}}\widehat{f}(\xi)e(x\xi)d\xi$
so that $f(x-\alpha)=\int_{\mathbb{R}}\widehat{f}(\xi)e(-\xi\alpha)e(x\xi)d\xi.$
Since $\psi_{\xi}$ is a generalized eigenfunction 
\begin{align*}
U_{t}Vf(x) & =\int_{\mathbb{R}}\widehat{f}(\xi)e(t\xi)e(\xi x)\left(e(\theta)\chi_{-}(x)+e(\xi\alpha)\chi_{+}(x)\right)d\xi.
\end{align*}
Consequently, $V^{*}g(x)=e(-\theta)g(x)\chi_{-}(x)+g(x+\alpha)\chi_{(0,\infty)}(x)$
and $\chi_{+}(x+\alpha)=\chi_{(0,\infty)}(x)$ implies
\begin{align*}
V^{*}U_{t}Vf(x) & =V^{*}\int_{\mathbb{R}}\widehat{f}(\xi)e(t\xi)\left(e(\theta)e(\xi x)\chi_{-}(x)+e(-\xi\alpha)e(\xi x)\chi_{+}(x)\right)d\xi\\
 & =\int_{\mathbb{R}}\widehat{f}(\xi)e(t\xi)\left(e(\xi x)\chi_{-}(x)+e(-\xi\alpha)e(\xi(x+\alpha))\chi_{(0,\infty)}(x)\right)d\xi\\
 & =\int_{\mathbb{R}}\widehat{f}(\xi)e(t\xi)e(x\xi)d\xi=f(x+t)
\end{align*}
 for all $f\in L^{2}(\mathbb{R})$ and all $x,t\in\mathbb{R}.$

\subsection{Deleting two points}

Let $\Omega=(-\infty,0)\cup(0,\alpha)\cup(\alpha,\infty).$ Let $\chi_{-}:=\chi_{(-\infty,0)},$
$\chi_{0}:=\chi_{(0,\alpha)},$$\chi_{+}:=\chi_{(\alpha,\infty)}.$

Suppose $0<w<1$ and the remaining parameters are zero. The generalized
eigenfunctions are
\[
\psi_{\xi}(x):=e(x\xi)\left(a(\xi)\chi_{-}(x)+\chi_{0}(x)+c(\xi)\chi_{+}(x)\right),\xi\in\mathbb{R}.
\]
Define $V:L^{2}(\mathbb{R})\to L^{2}(\Omega),$ (of course $L^{2}(\Omega)=L^{2}(\mathbb{R})$)
by 
\[
Vf(x):=\int_{\mathbb{R}}\widehat{f}(\xi)\psi_{\xi}(x)d\xi.
\]
Since $\psi_{\xi}$ is a generalized eigenfunction 
\begin{align*}
U_{t}Vf(x) & =\int_{\mathbb{R}}\widehat{f}(\xi)e(t\xi)\psi_{\xi}(x)d\xi.
\end{align*}
Using $a(\xi)=\overline{c(\xi)}=\frac{1}{w}\left(1-\sqrt{1-w^{2}}e(\xi\alpha)\right),$
we get 
\begin{align*}
Vf(x) & =\int_{\mathbb{R}}\widehat{f}(\xi)e(x\xi)\left(a(\xi)\chi_{-}(x)+\chi_{0}(x)+c(\xi)\chi_{+}(x)\right)d\xi\\
 & =\left(\frac{1}{w}f(x)-\frac{\sqrt{1-w^{2}}}{w}f(x+\alpha)\right)\chi_{-}(x)\\
 & \quad+f(x)\chi_{0}(x)+\left(\frac{1}{w}f(x)-\frac{\sqrt{1-w^{2}}}{w}f(x-\alpha)\right)\chi_{+}(x).
\end{align*}
Hence, for $g\in L^{2}(\Omega)$ we have 
\begin{align*}
V^{*}g(x) & =\left(\frac{1}{w}g(x)-\frac{\sqrt{1-w^{2}}}{w}g(x-\alpha)\right)\chi_{-}(x)\\
 & \quad+g(x)\chi_{0}(x)+\left(\frac{1}{w}g(x)-\frac{\sqrt{1-w^{2}}}{w}g(x+\alpha)\right)\chi_{+}(x).
\end{align*}
Trying $g=\psi_{\xi}$ we get 
\begin{align*}
V^{*}\psi_{\xi}(x) & =\left(\frac{1}{w}\psi_{\xi}(x)-\frac{\sqrt{1-w^{2}}}{w}\psi_{\xi}(x-\alpha)\right)\chi_{-}(x)\\
 & \quad+\psi_{\xi}(x)\chi_{0}(x)+\left(\frac{1}{w}\psi_{\xi}(x)-\frac{\sqrt{1-w^{2}}}{w}\psi_{\xi}(x+\alpha)\right)\chi_{+}(x)\\
 & =e(x\xi)a(\xi)\left(\frac{1}{w}-\frac{\sqrt{1-w^{2}}}{w}e(-\alpha\xi)\right)\chi_{-}(x)\\
 & \quad+e(x\xi)\chi_{0}+e(x\xi)c(\xi)\left(\frac{1}{w}-\frac{\sqrt{1-w^{2}}}{w}e(\alpha\xi)\right)\chi_{+}(x)\\
 & =e(x\xi)\left(\left|a(\xi)\right|^{2}\chi_{-}+\chi_{0}+\left|a(\xi)\right|^{2}\chi_{+}\right)(x).
\end{align*}
Consequently, 
\begin{align*}
V^{*}U_{t}Vf(x) & =V^{*}\int_{\mathbb{R}}\widehat{f}(\xi)e(t\xi)\psi_{\xi}(x)d\xi\\
 & =\int_{\mathbb{R}}\widehat{f}(\xi)e(t\xi)e(x\xi)\left(\left|a(\xi)\right|^{2}\chi_{-}+\chi_{0}+\left|c(\xi)\right|^{2}\chi_{+}\right)(x)d\xi.
\end{align*}
Where 
\[
\left|a(\xi)\right|^{2}=\left|c(\xi)\right|^{2}=\frac{2-w^{2}}{w^{2}}-\frac{2\sqrt{1-w^{2}}}{w^{2}}\cos(\alpha\xi).
\]

\begin{prop}
\label{prop:m}If $\Omega$ is the complement of two points, then
the associated Fourier multiplier $\left|a\left(\xi\right)\right|$
is positive, bounded, and bounded away from zero. Specifically,
\[
\frac{w}{2}\leq\left|a\left(\xi\right)\right|\leq\frac{2}{w}.
\]
\end{prop}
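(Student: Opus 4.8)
The plan is to read the conclusion straight off the closed form for $a(\xi)$ that was just obtained in the two-point setting, and then to run the same modulus estimate used in the proof of Lemma \ref{lem:m}(1). Recall from the lines immediately preceding the statement that, when the remaining parameters vanish,
\[
a(\xi) = \frac{1}{w}\left(1 - \sqrt{1-w^2}\, e(\xi\alpha)\right),
\]
where $e(x) = e^{i2\pi x}$ as in (\ref{eq:NormalizedExp}). Since $\left|e(\xi\alpha)\right| = 1$ for every $\xi \in \mathbb{R}$, the triangle inequality traps the modulus of the bracketed factor:
\[
1 - \sqrt{1-w^2} \leq \left|1 - \sqrt{1-w^2}\, e(\xi\alpha)\right| \leq 1 + \sqrt{1-w^2}.
\]
Dividing by $w$ gives $w^{-1}\bigl(1-\sqrt{1-w^2}\bigr) \leq \left|a(\xi)\right| \leq w^{-1}\bigl(1+\sqrt{1-w^2}\bigr)$.

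The two stated bounds then follow by a single elementary estimate each. For the upper bound I would simply use $\sqrt{1-w^2} \leq 1$, so that $\left|a(\xi)\right| \leq \frac{2}{w}$. For the lower bound I would invoke the inequality $\sqrt{1-w^2} \leq 1 - \tfrac{1}{2}w^2$, valid for $0 \leq w \leq 1$ (it is verified by squaring, since $1 - w^2 \leq 1 - w^2 + \tfrac{1}{4}w^4$). This yields $1 - \sqrt{1-w^2} \geq \tfrac{1}{2}w^2$, and hence $\left|a(\xi)\right| \geq \frac{1}{w}\cdot\tfrac{1}{2}w^2 = \frac{w}{2}$. Combining the two estimates gives $\frac{w}{2} \leq \left|a(\xi)\right| \leq \frac{2}{w}$; in particular $\left|a(\xi)\right|$ is strictly positive (as $w > 0$), bounded, and bounded away from zero, as claimed.

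There is essentially no obstacle here, and that is the point worth flagging: the estimate is identical to the one performed in Lemma \ref{lem:m}(1) for the two-interval coefficient $a_B(\lambda)$. The only thing that genuinely needs checking is that the two-point coefficient $a(\xi)$ shares the modulus structure $w^{-1}\left|1 - \sqrt{1-w^2}\, e(\cdot)\right|$ of the two-interval coefficient, which is transparent upon comparing the displayed formula above with (\ref{eq:a(lambda)-1})--(\ref{eq:H(lambda)}). Thus the proposition is really a corollary of the earlier lemma, re-derived in the degenerate geometry where the bounded component $I_0$ has collapsed to a single point.
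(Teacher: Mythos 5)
Your proof is correct and follows essentially the same route as the paper's: both start from the closed form $a(\xi)=\tfrac{1}{w}\bigl(1-\sqrt{1-w^{2}}\,e(\xi\alpha)\bigr)$, apply the triangle inequality in both directions, and close the lower bound with the elementary estimate $\sqrt{1-w^{2}}\leq 1-\tfrac{1}{2}w^{2}$ (the upper bound via $\sqrt{1-w^{2}}\leq 1$). Your added observation that this is the same modulus computation as in Lemma \ref{lem:m}(1) is accurate but not needed for the argument.
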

\begin{proof}
Note that
\begin{align*}
\left|a(\xi)\right| & =\left|\frac{1}{w}-\frac{\sqrt{1-w^{2}}}{w}e(\alpha\xi)\right|\geq\frac{1}{w}-\frac{\sqrt{1-w^{2}}}{w}\\
 & =\frac{1}{w}\left(1-\sqrt{1-w^{2}}\right)\geq\frac{1}{w}\left(1-\left(1-\frac{1}{2}w^{2}\right)\right)=\frac{w}{2}.
\end{align*}
On the other hand, 
\[
\left|a(\xi)\right|=\left|\frac{1}{w}-\frac{\sqrt{1-w^{2}}}{w}e(\alpha\xi)\right|\leq\frac{1}{w}+\frac{\sqrt{1-w^{2}}}{w}\leq\frac{2}{w}.
\]

\end{proof}

\section{Vanishing Cross-terms\label{sec:vc}}

We include a direct computation to show the cross-terms in (\ref{eq:tmp-18})
all vanish. 

Let $\Omega=(-\infty,0)\cup(1,\alpha)\cup(\beta,\infty)$. For all
$f\in L^{2}(\Omega)$, write 
\[
f=f_{-}+f_{0}+f_{+}
\]
where $f_{-}=\chi_{-}f$, $f_{0}=\chi_{0}f$, and $f_{+}=\chi_{+}f$.
Recall that
\[
(V_{B}f)(\lambda)=\overline{a(\lambda)}\hat{f}_{-}(\lambda)+\hat{f}_{0}(\lambda)+\overline{c(\lambda)}\hat{f}_{+}(\lambda).
\]

\begin{lem}
For all $f=f_{-}+f_{0}+f_{+}\in L^{2}(\Omega)$, we have 
\begin{alignat*}{1}
\left\langle V_{B}f_{\pm},V_{B}f_{0}\right\rangle _{L^{2}(\sigma_{B})} & =0\\
\left\langle V_{B}f_{-},V_{B}f_{+}\right\rangle _{L^{2}(\sigma_{B})} & =0.
\end{alignat*}
\end{lem}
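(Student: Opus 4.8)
The plan is to show that each of the three cross terms reduces, after a single algebraic cancellation, to an ordinary $L^2(\mathbb{R})$ pairing of two functions whose supports are disjoint. The cancellation that drives everything is $m_B^{-2}=\lvert a_B\rvert^{-2}=\lvert c_B\rvert^{-2}$, valid by $(\ref{eq:m(lambda)})$ and Lemma $\ref{lem:m}$, together with $\sigma_B(d\lambda)=m_B^{-2}(\lambda)\,d\lambda$ from $(\ref{eq:sigma(B)})$. I first record, from $(\ref{eq:tmp-9})$, that since $f_-,f_0,f_+$ are each supported in a single component of $\Omega$ one has $V_Bf_-=\overline{a}\,\widehat{f_-}$, $V_Bf_0=\widehat{f_0}$, and $V_Bf_+=\overline{c}\,\widehat{f_+}$ as functions of $\lambda$.

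For $\langle V_Bf_-,V_Bf_0\rangle_{L^2(\sigma_B)}$, folding the density into the integrand produces the factor $a\,m_B^{-2}=a\lvert a\rvert^{-2}=\overline{a}^{-1}=\overline{a^{-1}}$, so the term collapses to $\int \overline{a^{-1}\widehat{f_-}}\,\widehat{f_0}\,d\lambda$, which by Plancherel equals $\langle (a^{-1}\widehat{f_-})^{\vee},f_0\rangle_{L^2(\mathbb{R})}$. By Corollary $\ref{cor:I-J}$(1) the function $(a^{-1}\widehat{f_-})^{\vee}$ is supported in $J_-=(-\infty,1)$, while $f_0$ is supported in $I_0=(1,\alpha)$; the supports meet in a null set and the pairing vanishes. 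The term $\langle V_Bf_+,V_Bf_0\rangle_{L^2(\sigma_B)}$ is symmetric: now $c\,m_B^{-2}=\overline{c^{-1}}$ turns it into $\langle (c^{-1}\widehat{f_+})^{\vee},f_0\rangle_{L^2(\mathbb{R})}$, and Corollary $\ref{cor:I-J}$(2) places $(c^{-1}\widehat{f_+})^{\vee}$ in $L^2(J_+)=L^2(\alpha,\infty)$, again disjoint from $I_0$. Finally, for $\langle V_Bf_-,V_Bf_+\rangle_{L^2(\sigma_B)}$ the surviving factor is $a\overline{c}\,m_B^{-2}=\overline{c}/\overline{a}=\overline{a^{-1}c}$, giving $\langle ((a^{-1}c)\widehat{f_-})^{\vee},f_+\rangle_{L^2(\mathbb{R})}$; Corollary $\ref{cor:I-J}$(3) shows $((a^{-1}c)\widehat{f_-})^{\vee}\in L^2(-\infty,\beta)$, disjoint from $\mathrm{supp}\,f_+\subset(\beta,\infty)$. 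Hence all three cross terms are zero.

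The only point requiring care is the legitimacy of the weight cancellation and the bookkeeping of the three image intervals $J_-$, $J_+$, and $(-\infty,\beta)$; both are controlled by the standing hypothesis $0<w<1$, which by the estimate $(\ref{eq:m-bound})$ keeps $a,c,m_B$ bounded and bounded away from $0$, so that the multipliers $a^{-1}$, $c^{-1}$, $a^{-1}c$ are invertible elements of $L^\infty$ and every manipulation stays inside $L^2$. If one prefers a self-contained argument not invoking Corollary $\ref{cor:I-J}$, I would instead insert the Fourier-series expansions $(\ref{eq:a-inv(lambda)})$--$(\ref{eq:c-inv(lambda)})$ and check disjointness term by term: in the first cross term the $n$-th summand yields $\int \overline{f_-(y)}\,f_0\!\big(y+1-n(\alpha-1)\big)\,dy$, and the translated copy of $f_0$ has support in $\big(n(\alpha-1),(n+1)(\alpha-1)\big)\subset[0,\infty)$, disjoint from $\mathrm{supp}\,f_-\subset(-\infty,0)$ for every $n\ge 0$, while the weights $(1-w^2)^{n/2}$ guarantee absolute convergence. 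The genuine content is the support geometry; the surrounding estimates are routine.
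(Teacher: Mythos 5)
Your proof is correct, and it is worth saying precisely how it sits relative to the paper. Your primary route---fold the density $\sigma_{B}(d\lambda)=m_{B}^{-2}(\lambda)d\lambda$ into the integrand, cancel against $m_{B}^{2}=|a|^{2}=|c|^{2}$ to produce the multipliers $\overline{a^{-1}}$, $\overline{c^{-1}}$, $\overline{a^{-1}c}$, then apply Plancherel and kill each $L^{2}(\mathbb{R})$ pairing by disjointness of supports via Corollary \ref{cor:I-J}---is \emph{not} the proof the paper gives for this particular lemma, although it is exactly the shortcut the paper itself uses inside the proof of Theorem \ref{thm:sp-w-1}, where the cross terms (\ref{eq:tmp-19}) are dismissed by citing Corollary \ref{cor:I-J}. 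The lemma you were asked to prove lives in Section \ref{sec:vc}, whose announced purpose is to supply a \emph{direct} computation behind that citation: the paper's own proof expands $\overline{a^{-1}}$, $\overline{c^{-1}}$, and $c^{-1}$ in their one-sided Fourier series in $e_{n}((\alpha-1)\lambda)$ (only frequencies $n\leq0$ occur for $\overline{a^{-1}}$, only $n\geq0$ for the others, by (\ref{eq:a-inv(lambda)})--(\ref{eq:c-inv(lambda)})), identifies each resulting integral as a convolution such as $\left(g*f_{0}\right)(n(\alpha-1))$ with $g(x)=\overline{f_{-}(-x)}$, and observes that the sample points $n(\alpha-1)$ lie outside the support of the convolution. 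That is precisely your fallback ``self-contained'' argument, down to the translated supports $\left(n(\alpha-1),(n+1)(\alpha-1)\right)$. So the comparison is: your main route buys brevity and reuses established machinery, and it is logically sound---Corollary \ref{cor:I-J} is proved before and independently of this lemma, so there is no circularity---while the paper's route buys a standalone verification that does not presuppose the corollary, which is the raison d'\^{e}tre of Section \ref{sec:vc}. One genuinely useful addition in your write-up is the explicit observation that the bound (\ref{eq:m-bound}) keeps $a^{-1}$, $c^{-1}$, $a^{-1}c$ in $L^{\infty}$ and bounded away from zero, so all the weight cancellations stay inside $L^{2}$; the paper leaves this justification implicit.
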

\begin{proof}
Note that
\begin{alignat*}{1}
\left\langle V_{B}f_{-},V_{B}f_{0}\right\rangle _{L^{2}(\sigma_{B})} & =\int\overline{\overline{a(\lambda)}\hat{f}_{-}(\lambda)}\hat{f}_{0}(\lambda)d\sigma_{B}(\lambda)\\
 & =\int\overline{\hat{f}_{-}(\lambda)}\hat{f}_{0}(\lambda)a(\lambda)m^{-2}(\lambda)d\lambda\\
 & =\int\overline{\hat{f}_{-}(\lambda)}\hat{f}_{0}(\lambda)\overline{a(\lambda)}^{-1}d\lambda.
\end{alignat*}
By eq. (\ref{eq:a-inv(lambda)}), $\overline{a^{-1}(\lambda)}$ has
Fourier series expansion
\[
\overline{a^{-1}(\lambda)}=\sum_{n=-\infty}^{0}a_{n}e_{n}((\alpha-1)\lambda);
\]
Notice that $a_{n}=0$, for all $n>0$. Therefore,
\begin{alignat}{1}
\int\overline{\hat{f}_{-}(\lambda)}\hat{f}_{0}(\lambda)\overline{a(\lambda)}^{-1}d\lambda & =\sum_{n=-\infty}^{0}a_{n}\int\overline{\hat{f}_{-}(\lambda)}\hat{f}_{0}(\lambda)e_{n}((\alpha-1)\lambda)d\lambda\nonumber \\
 & =\sum_{n=-\infty}^{0}a_{n}\left(g*f_{0}\right)(n(\alpha-1))\label{eq:tmp-1-1}
\end{alignat}
where $g(x):=\overline{f_{-}(-x)}$, and so $\hat{g}(\lambda)=\overline{\hat{f}_{-}(\lambda)}$.
But 

\[
\left(g*f_{0}\right)(n(\alpha-1))=0,\forall n=-1,-2,-3\ldots;
\]
since $g\in L^{2}(0,\infty)$. Thus the RHS of (\ref{eq:tmp-1-1})
vanishes. It follows that
\[
\left\langle V_{B}f_{-},V_{B}f_{0}\right\rangle _{L^{2}(\sigma_{B})}=0.
\]

Similarly, 
\[
\left\langle V_{B}f_{+},V_{B}f_{0}\right\rangle _{L^{2}(\sigma_{B})}=\int\overline{\hat{f}_{+}(\lambda)}\hat{f}_{0}(\lambda)\overline{c(\lambda)}^{-1}d\lambda;
\]
and $\overline{c(\lambda)}^{-1}$ has Fourier series expansion (see
(\ref{eq:c-inv(lambda)}))
\[
\overline{c^{-1}(\lambda)}=\sum_{n=0}^{\infty}c_{n}e_{n}((\alpha-1)\lambda).
\]
Therefore, 
\begin{alignat}{1}
\int\overline{\hat{f}_{+}(\lambda)}\hat{f}_{0}(\lambda)\overline{c(\lambda)}^{-1}d\lambda & =\sum_{n=0}^{\infty}c_{n}\int\overline{\hat{f}_{+}(\lambda)}\hat{f}_{0}(\lambda)e_{n}((\alpha-1)\lambda)d\lambda\nonumber \\
 & =\sum_{n=0}^{\infty}c_{n}\left(h*f_{0}\right)(n(\alpha-1))\label{eq:tmp-27}
\end{alignat}
where $h(x):=\overline{f_{+}(-x)}\in L^{2}(-\beta,0)$. It follows
that (\ref{eq:tmp-27}) vanishes, and
\[
\left\langle V_{B}f_{+},V_{B}f_{0}\right\rangle _{L^{2}(\sigma_{B})}=0.
\]

Finally,
\begin{eqnarray}
\left\langle V_{B}f_{-},V_{B}f_{+}\right\rangle _{L^{2}(\sigma_{B})} & = & \int\overline{\overline{a(\lambda)}\hat{f}_{-}(\lambda)}\hat{f}_{+}(\lambda)\overline{c(\lambda)}m^{-2}(\lambda)d\lambda\nonumber \\
 & = & \int\overline{\overline{a(\lambda)}\hat{f}_{-}(\lambda)}\hat{f}_{+}(\lambda)c^{-1}(\lambda)d\lambda\nonumber \\
 & = & \sum_{n=0}^{\infty}\overline{c_{n}}\int\overline{\overline{a(\lambda)}\hat{f}_{-}(\lambda)}\hat{f}_{+}(\lambda)e_{n}(-(\alpha-1)\lambda)d\lambda\nonumber \\
 & = & \sum_{n=0}^{\infty}\overline{c_{n}}\left(k*f_{+}\right)(-n(\alpha-1));\label{eq:tmp-2-1}
\end{eqnarray}
where $k(x):=\left(\overline{a}\hat{f}_{-}\right)^{\vee}(-x)$. Notice
that $\left(\overline{a}\hat{f}_{-}\right)\in L^{2}(-\infty,\alpha)$,
and so $k\in L^{2}(-\alpha,\infty)$. It follows that (\ref{eq:tmp-2-1})
vanishes, and we have
\[
\left\langle V_{B}f_{-},V_{B}f_{+}\right\rangle _{L^{2}(\sigma_{B})}=0.
\]

\end{proof}

\section*{Acknowledgments}

The co-authors, some or all, had helpful conversations with many colleagues,
and wish to thank especially Professors Daniel Alpay, Ilwoo Cho, Dorin
Dutkay, Alex Iosevich, Paul Muhly, and Yang Wang. And going back in
time, Bent Fuglede (PJ, SP), and Robert T. Powers, Ralph S. Phillips,
Derek Robinson (PJ).

\bibliographystyle{alpha}
\bibliography{Number2}

\end{document}